\title{Actegories for the working amthematician}
\author{Matteo Capucci and Bruno Gavranovi\'c}
\address{Mathematically Structured Programming Group,\\
  Department of Computer and Information Sciences, \\
  University of Strathclyde,\\
Glasgow, Scotland\\[1ex]}
\keywords{actegories, monoidal actegories, algebroidal actegories, optics}
\DeclareFontFamily{U}{FdSymbolA}{}
\DeclareFontShape{U}{FdSymbolA}{m}{n}{
    <-> s * [1] FdSymbolA-Book
}{}
\DeclareFontShape{U}{FdSymbolA}{m}{b}{
    <-> s * [1] FdSymbolA-Medium
}{}
\DeclareSymbolFont{fdsymbols}{U}{FdSymbolA}{m}{n}
\DeclareMathSymbol{\smallblackdiamond}{\mathbin}{fdsymbols}{130}
\DeclareMathSymbol{\whitestar}{\mathbin}{fdsymbols}{146}
\DeclareMathOperator{\upperhalfcirc}{\raisebox{1pt}{\scalebox{0.6}{\LEFTcircle}}}
\tikzstyle{node}=[fill=black, draw=black, shape=circle, scale=0.5]
\tikzstyle{medium_box}=[fill=white, draw=black, shape=rectangle, minimum height=0.8cm, minimum width=0.5cm]
\tikzstyle{pointy}=[->]
\tikzstyle{bluearrow}=[->, fill=none, draw={rgb,255: red,29; green,206; blue,255}, thick]
\tikzstyle{lightnone}=[-, draw={rgb,255: red,191; green,191; blue,191}]
\tikzset{
	Rightarrow/.style={double equal sign distance,>={Implies},->},
	triple/.style={-,preaction={draw,Rightarrow}},
	quadruple/.style={preaction={draw,Rightarrow,shorten >=0pt},shorten >=1pt,-,double,double distance=0.2pt}
}
\newcommand{\eval}{\mathrm{eval}}
\newcommand{\curr}{\mathrm{curr}}
\newcommand{\repl}{\mathrm{repl}}
\newcommand{\id}{\mathrm{id}}
\newcommand{\cat}[1]{\mathcal{#1}}
\newcommand{\ncat}[1]{\mathbf{#1}}
\newcommand{\twocat}[1]{\mathbb{#1}}
\newcommand{\Cat}{\twocat{C}\ncat{at}}
\newcommand{\Set}{\ncat{Set}}
\renewcommand{\Vec}{\ncat{Vec}}
\newcommand{\Met}{\ncat{Met}}
\newcommand{\Meas}{\ncat{Meas}}
\newcommand{\Msbl}{\ncat{Msbl}}
\newcommand{\Prob}{\ncat{Prob}}
\newcommand{\Top}{\ncat{Top}}
\newcommand{\op}{\mathsf{op}}
\newcommand{\coop}{\mathsf{coop}}
\newcommand{\iso}[1][]{\overset{#1}{\cong}}
\newcommand{\equi}{\simeq}
\newcommand{\adj}{\dashv}
\newcommand{\Para}{\ncat{Para}}
\newcommand{\Copara}{\ncat{Copara}}
\newcommand{\Optic}{\ncat{Optic}}
\DeclareMathOperator{\extch}{\&}
\newcommand{\tensor}[1][\cat M]{\otimes_{#1}}
\newcommand{\mtimes}{\otimes}
\newcommand{\ntimes}{\oslash}
\newcommand{\ctimes}{\boxtimes}
\newcommand{\cplus}{\boxplus}
\newcommand{\copow}{\otimes}
\newcommand{\pow}{\pitchfork}
\newcommand{\action}{\bullet}
\newcommand{\actionn}{\circ}
\newcommand{\ostar}{\circledast}
\newcommand{\laction}{\smallblackdiamond}
\newcommand{\raction}{\diamond}
\newcommand{\waff}{\circledcirc}
\newcommand{\bothaction}{{\upperhalfcirc}}
\newcommand{\prodaction}{\bothaction^\times}
\newcommand{\coprodaction}{\bothaction^+}
\newcommand{\extchaction}{\bothaction^{\extch}}
\newcommand{\comp}{\fatsemi}
\newcommand{\PsdMon}{\ncat{PsdMon}}
\newcommand{\MonCat}{{\twocat{M}\ncat{onCat}}}
\newcommand{\SymMonCat}{{\twocat{S}\ncat{ymMonCat}}}
\newcommand{\BrMonCat}{{\twocat{B}\ncat{rMonCat}}}
\newcommand{\deloop}{\twocat{B}}
\newcommand{\Alg}[1]{{#1}\dash\twocat{A}\ncat{lg}}
\newcommand{\lax}{\mathrm{lx}}
\newcommand{\oplax}{\mathrm{ox}}
\newcommand{\pseudo}{\mathrm{ps}}
\newcommand{\strict}{\mathrm{s}}
\newcommand{\braided}{\mathrm{br}}
\newcommand{\cart}{\mathrm{cart}}
\newcommand{\Actt}{\twocat{A}\ncat{ct}}
\newcommand{\Act}[1]{{#1}\dash\Actt}
\newcommand{\Biact}[2][\cat M]{{#1}\dash\Actt\dash{#2}}
\newcommand{\rev}{\mathrm{rev}}
\newcommand{\swap}{\mathrm{swap}}
\newcommand{\Bal}{\mathrm{Bal}}
\newcommand{\equalto}{=\mathrel{\mkern-3mu}=}
\newcommand{\Day}{\mathrm{Day}}
\newcommand{\drinfeld}{\mathcal Z}
\newcommand{\Waff}{\cat W}
\newcommand{\superimpose}[2]{%
  {\ooalign{$#1\@firstoftwo#2$\cr\hfil$#1\@secondoftwo#2$\hfil\cr}}}
\newcommand{\revrel}[1]{\mathrel{\overset{\rev}{#1}}}
\newcommand{\colim}{\operatorname{colim}}
\begin{document}
	\maketitle

	\begin{abstract}
		Actions of monoidal categories on categories, also known as actegories, have been familiar to category theorists for a long time, and yet a comprehensive overview of this topic seems to be missing from the literature. Recently, actegories have been increasingly employed in applied category theory, thereby encouraging an effort to fill this gap according to the new needs of these applications. This work started as an investigation of the notion of monoidal actegory, a compatible pair of monoidal and actegorical structures, and ended up including a sizable reference on the elementary theory of actegories. We cover basic definitions and results on actegories and biactegories, spelling out explicitly many folkloric definitions, including their tensor product and their hom-tensor adjunction. We give new definitions of actegories with monoidal, braided monoidal and symmetric monoidal structure.
		In the last section, we provide three Cayley-like classification results for these structures.
	\end{abstract}

	\tableofcontents

	\section{Introduction}
\label{sec:introduction}
Monoid actions are ubiquitous structures in mathematics.
They provide a concrete implementation of the abstract compositional laws of a monoid as operations on some other object.
Each element $m$ of a monoid $(M, 1, \cdot)$ \emph{acts} on a given set (or manifold, space, etc.) $C$ by specifying an operation $m \action - : C \to C$, and the complex of these operations satisfy laws which are the `image' of those of a monoid:
\begin{equation*}
	n \action (m \action c) = (n\cdot m) \action c, \quad 1 \action c = c.
\end{equation*}
There are many examples of monoid actions (Figure~\ref{fig:actions}): scalar multiplication of vectors is an action of the field of scalars on vectors of a vector space, automata are essentially actions of free monoids on a space of `states', in dynamical systems monoids of time ($\N, \R^+$, etc.) act on spaces of states by tracing out trajectories.

\begin{figure}[H]
	\centering
	\includegraphics[width=\textwidth]{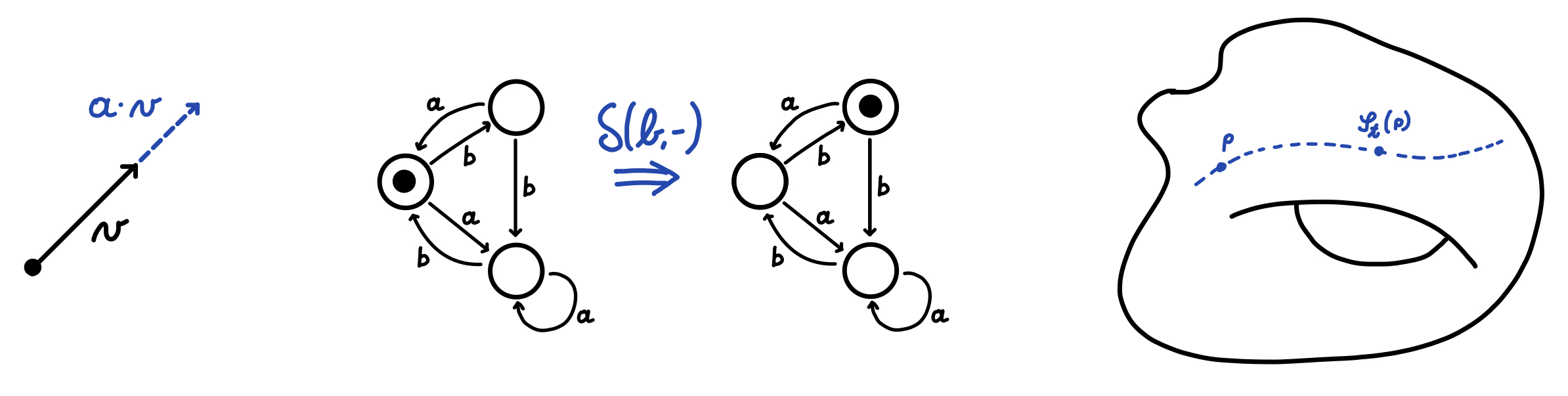}
	\caption{Examples of monoid actions from left to right: scalar multiplication of vectors, automata transitions, trajectories in spaces of configurations.}
	\label{fig:actions}
\end{figure}

Like monoidal categories promote the concept of monoid to a higher categorical dimension, the protagonists of this work, \emph{actegories}, promote the concept of a monoid action to its higher-dimensional equivalent (Figure \ref{diag:overview}).
In doing so, we lose the sharpness of equality we have in sets and we gain the expressivity of isomorphisms in categories.
Hence monoidal categories and actegories alike satisfies their laws (inherited from monoid and their actions, respectively) only up to isomorphism, which are themselves subjected to a new set of coherence equations.

\begin{figure}[ht]
	\centering
	\begin{equation*}
		\let\scriptstyle\textstyle
		\begin{tikzcd}[sep=3ex]
			{} & {\substack{\text{monoidal} \\ \text{category}}} && {\text{actegory}} \\
			&&&& {\phantom{}} \\
			{} & {\text{monoid}} && {\substack{\text{monoid} \\ \text{action}}} \\
			& {} && {}
			\arrow[hook, from=3-2, to=3-4]
			\arrow[hook, from=3-2, to=1-2]
			\arrow[hook, from=3-4, to=1-4]
			\arrow[hook, from=1-2, to=1-4]
			\arrow["{\text{action by another object}}"', dashed, from=4-2, to=4-4]
			\arrow["{\substack{\text{vertical} \\ \text{categorificaton}}}", dashed, from=3-1, to=1-1]
		\end{tikzcd}
	\end{equation*}
	\caption{An actegory as a combination of two different generalizations of a monoid. A monoid is to a monoid action what a monoidal category is to an actegory. Likewise, a monoid is to a monoidal category what a monoid action is to an actegory.}
	\label{diag:overview}
\end{figure}
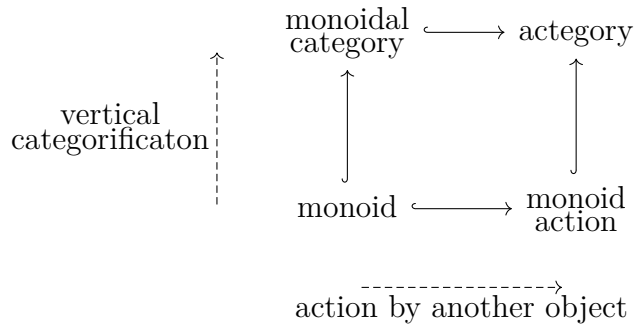

Actegories are not a new invention\footnote{or discovery!}, and not even a recent one. As we learned from \cite[Footnote 1]{cockett2009logic}, the concept was first entertained by B\'enabou in \cite{benabou1967introduction}, while the name itself (a portmanteu of `action' and `category') originated in the Australian Category Seminar at the end of the last century, and first appeared in print in \cite{mccrudden2000categories}.

Actegories have since appeared in categorical algebra \cite{mccrudden2000categories,skoda2009some,pareigis1977non} and in other various works spanning different subdisciplines in category theory \cite{garner2018embedding, street2020, szlachanyi2009fiber, janelidze2001note}; though the lack of consistent terminology makes it quite hard to get a grasp on the extent of the literature on actegories.

Nonetheless, in the past two decades there's been a surge of interest in actegories in a more `applied' context.
Indeed, the reason \emph{we} are interested in actegories in the first place is their effectiveness in capturing the idea of `agency' in categories of systems.
Part of these ideas have been published in \cite{capucci2021towards} which proposes a mathematical-conceptual framework for thinking about cybernetic systems; but our work doesn't stand alone.

The work in categorical cybernetics has been borrowing from the flurry of activity regarding mixed optics \cite{boisseau, riley2018categories, categoricalupdate}, of which actegories, and specifically Tambara modules\footnote{Tambara modules are lax equivariant profunctor between actegories on the same monoidal category. More details can be found in \cite[§5]{roman2020profunctor}.} \cite{pastrostreet, roman2020profunctor}, form the mathematical backbone.

The use of actegories in categorical cybernetics is also quite close to the work of Pastro and Cockett \cite{cockett2009logic, cockettACCATtalk, yeasin2012linear} on the categorical vertex of a Curry--Howard--Lambek correspondence for concurrency.
They resort to \emph{linear actegories} in order model message-passing concurrency, by having a `sequential world' of messages acting on a `concurrent world' of message-passing processes. The action of messages on processes reifies the first as values for the latter.

The work of Nishiwaki and Asai \cite{nishiwaki2020logic} fits in the same pattern.
They develop a generalization of Moggi's calculus of computational effects based on actegories, called \emph{semi-effects calculus}, whose categorical semantics is a linear functor between an actegory of `values' and one of `computations', with the intended meaning of representing a sort of `let' binding.

In a completely different setting, \cite{stefanou2018dynamics} shows how actegories and their morphisms can be fruitfully used to talk about the kind of gadgets topological data analysis produces, namely `persistency modules'. The way actegories are used in this work recalls the actions of monoids of time which give rise to dynamical systems, explaining his adoption of the terminology \emph{dynamical categories}.

Related to actegories, \emph{module categories} seemed to have enjoyed more attention.
These are actions of \emph{(multi-)fusion categories}, which are categories enriched in vector spaces with some extra properties\footnotemark. Their existence also motivates the choice of a distinct terminology for the `vanilla' notion.
Ostrik gives a systematic account of the theory of module categories in \cite{ostrik2003module}, where they indicate \cite{grayson1976higher} as the first appearance of said concepts. We also mention \cite{greenough2010monoidal, greenough2010bimodule}.
\footnotetext{Specifically, multifusion categories are `rigid semisimple $k$-linear tensor categories with finitely many simple objects and finite dimensional spaces of morphisms', where $k$ is an algebraically closed field \cite[p.~583]{etingof2005fusion}.}

Since, ultimately, multi-fusion categories are still monoidal categories, much of the theory of module categories can be reproduced in less structured contexts.
What really makes compendia such as Ostrik's less relevant for the uses of applied or even pure category theory is their focus, which is often strongly biased (and rightly so) on the interaction between the fusion stucture and the actegorical structure.
Moreover, much of the structure of multi-fusion categories is not available in contexts such as Kleisli categories of commutative monads on $\Set$ (i.e.~cats of effectful computations), categories of measurable spaces, or categories of manifolds.
Hence the need of compiling a more streamlined version of the theory behind module categories, closer to the sensibility of modern applied category theory.

In this work, and in all the works cited above, actions and actegories are treated in the same way, as dictated by common algebraic wisdom.
There is a different way to talk about actions though, which is connected with the ideas of cofunctors \cite{ahman2016directed, clarke2020internal} and differential forms (whose pullback is known in computer science as `reverse differentiation').
In this guise, monoidal actions have been studied by Alvarez-Picallo under the name of \textit{change actions} \cite{ChangeActionsThesis, ChangeActionModels}, in the context of incremental computation and differentiation.
To explore this connection more closely is left for future work, together with many other directions that would deserve to be explored.

\subsection{Overview of the paper}
This work has been born out of the practical need of identifying and understanding the structures involved in the proposed foundations of `categorical cybernetics', as outlined in \cite{capucci2021towards}.
In that work, we singled out the $\Para$ and $\Optic$ constructions as useful abstractions for modelling systems with agency, and moreover realized actegories provide an interesting conceptual view of the way agents and systems interact. This is remindful of the ideas put forward in \cite{cockettACCATtalk}.

The original motivating question for this work concerned the monoidal structures of $\Para$ and related constructions.
In \cite{capucci2021towards}, we singled out some structure morphisms necessary for promoting a (symmetric) monoidal product on an actegory $(\cat C, \action)$ to a (symmetric) monoidal product on $\Para_\action(\cat C)$.
It remained unclear how that structure arose and most importantly what were the constraints on that.
Here we clarify these questions by methodically analyzing the compatibility structures between actegorical and monoidal categories, thereby answering the previous question.
In doing so, we ended up compiling a much needed reference on actegories.

In Section~\ref{sec:monads}, we briefly go over some preliminary definitions concerning pseudomonads and their categories of algebras. This is done in order to ground the definitions of Section~\ref{sec:actegories}, which introduce actegories and their morphisms, all the way up to the detailed definition of the indexed 2-category of actegories. We also provide several examples and ancillary results. These two sections do not contain much new content, though the detailed exposition and meticulous definitions can be considered a new contribution.

Instead, Sections~\ref{sec:composition} and~\ref{sec:monoidal-actegories} contain the lion share of original content, and form the central part of the work.

The first is dedicated to the study of `composition of actegories', i.e.~ways in which new actegories can be systematically generated from old ones. Some of these ways are proper monoidal structures on categories of actegories (cartesian and cocartesian product, tensor product), but others are mathematically given by distributive laws, interpreted as decoration on the sequential composition of actegories seen as parametric morphisms.
The latter, Section~\ref{sec:monoidal-actegories}, is devoted to the study of compatibility structures between monoidal and actegorical structure. We explore various ways to combine them, introducing two new definitions: that of monoidal actegory and that of distributive algebroidal actegory. We show how the first is instrumental in making categories of optics monoidal, while the latter has connections to the theory of hybrid composition of optics. We conclude the section by analyzing the way braiding interacts with each of these new notions, and by proving three classification results for monoidal actegories, showing they are indeed equivalent to monoidal functors into suitably defined classifying objects.

Many definitions and proofs of coherence have been moved to Appendix~\ref{appendix:defs} and Appendix~\ref{appendix:proofs} in order to not obstruct the flow of the paper.

\subsection{Acknowledgements}
This paper wouldn't have been possible without the many hours of discussion with
collaborators at the MSP group.
We thank in particular Eigil F. Rischel who suggested many results to us (specifically, Lemma~\ref{lemma:strictification} and the classification theorems of Section~\ref{subsec:classifying}).
Their ranks are joined by Bojana Femi\'c, whose interest in the first author's talk at CT2021 \cite{paratalk} has been a source of inspiration, and who first pointed the first author to the vast literature on module categories; and Emily de Oliveira Santos, with whom the first author has been enthusiastically exchanging notes and opinions on categorified algebra.
We also thank the mantainers of \href{https://q.uiver.app}{quiver} for freely providing a tool which has proven invaluable in the preparation of this work.

\subsection{Notation and conventions}
Throughout the paper, we use $(\cat M, j, \mtimes)$ to refer to a monoidal
category~\cite[Definition 1.2.1]{johnson2021}, using $\lambda, \rho, \alpha$ to refer to left and right unitors and the associator respectively.
When braiding or symmetry are needed, we point it out explicitly.
We reserve the names $\cat M, \cat N$ for \emph{acting} monoidal categories, and
$\cat C, \cat D$ for categories that are being \emph{acted on}.
We make extensive use of diagrammatic notation for composition, using the symbol
$\comp$.
Actions are plentiful in this paper, and we use a variety of symbols: $\action$,
$\actionn$, $\ostar$, and so on, depending on the context.

Many of the structures we are going to contemplate are \emph{pseudo}-somethings (pseudoalgebras, pseudomonoids, etc.), which tends to make prose cumbersome and clumsy. Therefore we sometimes dropped the prefix with the understanding this does not hinder readability as it improves it.


	\newpage
	\section{Preliminaries}
\label{sec:monads}
As eloquently explained by \cite{blackwell1989two}, if universal algebra can be thought as the theory of 1-monads on $\Set$ (`0-categories'), universal 1-algebra\footnotemark\ can be studied as the theory of 2-monads on $\Cat$.
\footnotetext{The reference calls it 2-algebra, but we adopt the same perspective on dimension as HoTT, where 0-types are sets and (directed) 1-types are categories.
Therefore n-dimensional structure is structure on (possibly directed) n-types, and universal n-algebra is concerned with the study of algebraic structure on n-types.}

For the algebraic structures we are interested in, this is very much true.
Let's see what the algebraic story is for actions of monoids (`0-dimensional actegories') from this point of view.
Every monoid $(M, j', \mtimes) : \Set$ induces a monad $(M \times -, j, m) : \Set \to \Set$ where the unit and join are defined from the unit and product of $M$ itself.
We define them componentwise below, for a given $C : \Set$.

\vspace{1ex}
\begin{minipage}{\textwidth}
	\begin{minipage}{.4\textwidth}
		\begin{eqalign}
			j_C : C &\longto M \times C\\
				c &\longmapsto (j', c)
		\end{eqalign}
	\end{minipage}%
	\begin{minipage}{.6\textwidth}
		\begin{eqalign}
			m_C  : M \times M \times C &\longto 	  M \times C\\
					(m,n,c) 			&\longmapsto (m \mtimes n, c)
		\end{eqalign}
	\end{minipage}
\end{minipage}
\vspace{1ex}

An algebra $(C : \Set, \action : M \times C \to C)$ for this monad is precisely a monoid action, where algebra laws correspond to monoid action laws.
We state the laws below both as commutative diagrams, and in equational form.

\vspace{1ex}
\begin{minipage}{\textwidth}
	\begin{minipage}{.4\textwidth}
		\begin{diagram}
		\label{diag:monoid-action-unit}
			{M \times C} & C \\
			C
			\arrow["\action"', from=1-1, to=2-1]
			\arrow["{j_X}"', from=1-2, to=1-1]
			\arrow[Rightarrow, no head, from=2-1, to=1-2]
		\end{diagram}
		\vspace{1ex}
		\begin{center}
			$j \action c = c$, for all $c \in C$.
		\end{center}
	\end{minipage}%
	\begin{minipage}{.6\textwidth}
		\begin{diagram}
			\label{diag:monoid-action-comp}
			{M \times M \times C} & {M \times C} \\
			{M \times C} & C
			\arrow["{M \times \action}"', from=1-1, to=2-1]
			\arrow["{m_X}", from=1-1, to=1-2]
			\arrow["\action"', from=2-1, to=2-2]
			\arrow["\action", from=1-2, to=2-2]
		\end{diagram}
		\begin{center}
			$m \action (n \action c) = (m \mtimes n) \action c$, for all $m,n \in M$, $c \in C$.
		\end{center}
	\end{minipage}
\end{minipage}
\vspace{1ex}

Analogously, ${\cat M}$-actegories turn out to be \emph{pseudo}algebras for the \emph{pseudo}monad ${\cat M} \times -$.
A pseudomonad is the higher equivalent of a monad in the same way monoidal categories are the higher version of a monoid. Indeed, both (pseudomonads and monoidal categories) are instances of the categorified notion of `monoids in monoidal categories', which is `pseudomonoid in monoidal bicategories'.
A pseudomonoid is a monoid whose associativity and unitality only hold up to coherent isomorphism \cite[§2]{mccrudden2000balanced}, a structure which is very natural to find in 2-dimensional contexts, since structures there tend to be defined only up to higher equivalence.

\begin{center}
	\begin{tabular}{c|c}
		\textbf{n = 0} & \textbf{n = 1}\\
		\hline
		monoid $M$ & monoidal category $\cat M$\\[1.5ex]
		monoid homomorphism $f: M \to N$ & strong monoidal functor $F : \cat M \to \cat N$\\[1.5ex]
		free $M$-action monad $M \times -$ & free $\cat M$-actegory pseudomonad $\cat M \times -$\\[1.5ex]
		\begin{tabular}{@{}c@{}}action $M \times X \to X$\\(algebra of $M \times -$)\end{tabular}
		 &
		\begin{tabular}{@{}c@{}}pseudoaction $\cat M \times \cat C \to \cat C$\\(pseudoalgebra of $\cat M \times -$)\end{tabular}\\[2.5ex]
		\begin{tabular}{@{}c@{}}linear morphisms $g : (C, \action) \to (D, \actionn)$\\(algebra morphisms)\end{tabular}
		 &
		\begin{tabular}{@{}c@{}}linear functors $G : (\cat C, \action) \to (\cat D, \actionn)$\\(strong pseudoalgebra morphisms)\end{tabular}
	\end{tabular}
\end{center}
\vspace{2ex}

For example monoidal categories are pseudomonoids in $(\Cat, 1, \times)$, whereas pseudomonads on a 2-category $\twocat X$ are pseudomonoids in $([\twocat X, \twocat X], \id, \circ)$ \cite[§3]{marmolejo1999distributive}:

\subsubsection{Definition.}
\label{def:pseudomonad}
\begin{resdefinition}{defpseudomonad}
	Let $\twocat X$ be a 2-category.
	A \textbf{pseudomonad} on $\twocat X$ is a strict 2-functor $T: \twocat X \to \twocat X$, together with 2-morphisms $j : 1 \twoto T$ (\emph{unit}) and $m : TT \twoto T$ (\emph{multiplication}) and modifications (\emph{left unitor} $\lambda$, \emph{right unitor} $\rho$, \emph{associator} $\alpha$):
	\begin{equation}
		\begin{tikzcd}[sep=4ex,ampersand replacement=\&]
			T \&\& TT \&\& TT \&\& T \\
			\\
			\&\& T \&\& T
			\arrow["{T j}"', Rightarrow, from=1-7, to=1-5]
			\arrow["m"', Rightarrow, from=1-5, to=3-5]
			\arrow[""{name=0, anchor=center, inner sep=0}, Rightarrow, no head, from=3-5, to=1-7]
			\arrow["{j T}", Rightarrow, from=1-1, to=1-3]
			\arrow["m", Rightarrow, from=1-3, to=3-3]
			\arrow[""{name=1, anchor=center, inner sep=0}, Rightarrow, no head, from=1-1, to=3-3]
			\arrow["\rho", triple, shorten <=6pt, shorten >=6pt, to=1-5, from=0]
			\arrow["\lambda", triple, shorten <=6pt, shorten >=6pt, to=1-3, from=1]
		\end{tikzcd}
	\end{equation}

	\begin{equation}
		\begin{tikzcd}[sep=4ex,ampersand replacement=\&]
			TTT \&\& TT \\
			\\
			TT \&\& T
			\arrow["{m T}"', Rightarrow, from=1-1, to=3-1]
			\arrow["{T m}", Rightarrow, from=1-1, to=1-3]
			\arrow["m", Rightarrow, from=1-3, to=3-3]
			\arrow["m"', Rightarrow, from=3-1, to=3-3]
			\arrow["\alpha"{description}, triple, shorten <=11pt, shorten >=11pt, from=1-3, to=3-1]
		\end{tikzcd}
	\end{equation}
\end{resdefinition}
\noindent that satisfy the coherence laws reported in Appendix~\ref{appendix:defs}.

\begin{remark}
\label{rmk:pseud-vs-2}
	This notion of `higher dimensional monad' is intermediate between that of a `fully weak 2-monad', where $T$, $j$ and $m$ are possibly not strict, and a `fully strict 2-monad' (or just 2-monad), where $\lambda$, $\rho$ and $\alpha$ are assumed to be identities \cite{blackwell1989two}.
\end{remark}


\subsection{The `free $\cat M$-actegory' pseudomonad}
\label{subsec:m-actegory-monad}
If $(\cat M, j, \mtimes, \lambda, \rho, \alpha)$ is a monoidal category, all its data and coherence properties map directly to those of a pseudomonadic structure on $T=\cat M \times - : \Cat \to \Cat$, as we prove in Proposition~\ref{prop:m-times-is-pseudomonad}:
\begin{diagram}[sep=4ex]
	{\cat M \times -} && {\cat M \times \cat M \times -} && {\cat M \times \cat M \times -} && {\cat M \times -} \\
	\\
	&& {\cat M \times -} && {\cat M \times -}
	\arrow["{\cat M \times j \times -}"', Rightarrow, from=1-7, to=1-5]
	\arrow["{\mtimes \times -}"', Rightarrow, from=1-5, to=3-5]
	\arrow[""{name=0, anchor=center, inner sep=0}, Rightarrow, no head, from=3-5, to=1-7]
	\arrow["{j \times \cat M \times -}", Rightarrow, from=1-1, to=1-3]
	\arrow["{\mtimes \times -}", Rightarrow, from=1-3, to=3-3]
	\arrow[""{name=1, anchor=center, inner sep=0}, Rightarrow, no head, from=1-1, to=3-3]
	\arrow["{\rho \times -}", triple, shift right=2, shorten <=8pt, shorten >=8pt, from=1-5, to=0]
	\arrow["{\lambda \times -}"', triple, shift left=3, shorten <=4pt, shorten >=8pt, from=1-3, to=1]
\end{diagram}

\begin{diagram}
	{\cat M \times \cat M \times \cat M \times -} && {\cat M \times \cat M \times -} \\
	\\
	{\cat M \times \cat M \times -} && {\cat M \times -}
	\arrow["{\mtimes \times \cat M \times -}"', Rightarrow, from=1-1, to=3-1]
	\arrow["{\cat M \times \mtimes \times -}", Rightarrow, from=1-1, to=1-3]
	\arrow["{\mtimes \times -}", Rightarrow, from=1-3, to=3-3]
	\arrow["{\mtimes \times -}"', Rightarrow, from=3-1, to=3-3]
	\arrow["{\alpha \times -}"', triple, shift left=2, shorten <=20pt, shorten >=20pt, from=1-3, to=3-1]
\end{diagram}

\begin{remark}
	It is a very important fact for this paper that the pseudomonad $\cat M \times -$ can always be strictified to the 2-monad $\cat M_\strict \times -$, where $\cat M_\strict$ is the strictification of the monoidal category $\cat M$ given by MacLane's theorem~\cite[Theorem 1.2.23]{johnson2021}.
	So even though we are, in principle, working with pseudomonads on $\Cat$, in practice we'll reap results about 2-monads (especially from~\cite{blackwell1989two}) by appealing to this strictification.
\end{remark}

\subsection{Algebras for pseudomonads}
Algebras of higher monads also come in various degrees of weakness.
We settle with the following general definition:

\subsubsection{Definition.}
\label{def:pseudoalgebra}
\begin{resdefinition}{defpseudoalgebra}
	A \textbf{pseudoalgebra} for a pseudomonad $(T, j, m)$ on $\twocat X$ consists of an object $x : \twocat X$ and a morphism
	\begin{equation}
		t : Tx \longto x
	\end{equation}
	in $\twocat X$ equipped with invertible 2-cells
	\begin{diagram}[sep=3ex, ampersand replacement=\&]
		x \&\& Tx \&\& TTx \&\& Tx \\
		\\
		\&\& x \&\& Tx \&\& x
		\arrow[""{name=0, anchor=center, inner sep=0}, Rightarrow, no head, from=1-1, to=3-3]
		\arrow["{j_x}", from=1-1, to=1-3]
		\arrow["t", from=1-3, to=3-3]
		\arrow["t"', from=3-5, to=3-7]
		\arrow["t", from=1-7, to=3-7]
		\arrow["{m_x}"', from=1-5, to=3-5]
		\arrow["Tt", from=1-5, to=1-7]
		\arrow["\mu"', shorten <=17pt, shorten >=17pt, Rightarrow, from=1-7, to=3-5]
		\arrow["\eta", shorten <=6pt, Rightarrow, from=0, to=1-3]
	\end{diagram}
\end{resdefinition}
\noindent satisfying some coherence equations (see Appendix~\ref{appendix:defs}).

When $\eta$ and $\mu$ are not invertible, we call the algebra \emph{lax} (or, dually, \emph{oplax}); if they are identities, we say the pseudoalgebra is \textbf{strict}.
For $T=\cat M \times -$, strict algebras (strict $\cat M$-actegories) can be hard to come by.
Still, every actegory is equivalent (in a suitable sense) to a strict one (Lemma~\ref{lemma:strictification}), hence we can make use of this fact to greatly simplify some proofs.

Instead, there is no way to avoid working with `pseudomorphisms' of algebras---similarly to the case of monoidal categories, once you have fixed two objects, the strict morphisms aren't comprehensive enough to exhaust the needs of the theory.

\subsubsection{Definition.}
\label{def:morphism-of-pseudoalgebras}
\begin{resdefinition}{deflaxmorphism}
	Let $(T,j,m)$ be a pseudomonad on $\twocat X$ and let $(x, t, \eta, \mu)$ and $(y, t', \eta', \mu')$ be pseudoalgebras.
	A \textbf{lax morphism} $(f,\ell): (x, t, \eta, \mu) \to (y, t', \eta', \mu')$ consists of a morphism $f: x \to y$ and a 2-cell
	\begin{diagram}[sep=3.5ex, ampersand replacement=\&]
		Tx \&\& Ty \\
		\\
		x \&\& y
		\arrow["t"', from=1-1, to=3-1]
		\arrow["t'", from=1-3, to=3-3]
		\arrow["f"', from=3-1, to=3-3]
		\arrow["Tf", from=1-1, to=1-3]
		\arrow["\ell"', shorten <=17pt, shorten >=17pt, Rightarrow, from=1-3, to=3-1]
	\end{diagram}
\end{resdefinition}
\noindent \noindent satisfying suitable coherence axioms (see Appendix~\ref{appendix:defs}).

A lax morphism $(f, \ell)$ is called \textbf{strong} if $\ell$ is invertible, and \textbf{strict} if $\ell$ is an identity.
When we omit any qualification, we default to strong.

\begin{definition}
\label{def:pseudoalg-transformation}
  Let $(x, t, \eta, \mu)$ and $(y, t', \eta', \mu')$ be pseudoalgebras, and let $(f,\ell)$ and $(g,\ell')$ be lax morphisms $(x, t, \eta, \mu) \to (y, t', \eta', \mu')$.
	A \textbf{transformation of morphisms} is a 2-cell $\xi: f \twoto g$  so that the following axiom holds:

	\begin{diagram}[sep=3.5ex]
		Tx && Ty && Tx && Ty \\
		\\
		x && y && x && y
		\arrow[""{name=0, anchor=center, inner sep=0}, "Tf"{description}, curve={height=-18pt}, from=1-1, to=1-3]
		\arrow[""{name=1, anchor=center, inner sep=0}, "Tg"{description}, curve={height=18pt}, from=1-1, to=1-3]
		\arrow["t"', from=1-1, to=3-1]
		\arrow[""{name=2, anchor=center, inner sep=0}, "f"{description}, from=3-1, to=3-3]
		\arrow[""{name=3, anchor=center, inner sep=0}, "t'", from=1-3, to=3-3]
		\arrow[""{name=4, anchor=center, inner sep=0}, "Tf"{description}, from=1-5, to=1-7]
		\arrow[""{name=5, anchor=center, inner sep=0}, "t"', from=1-5, to=3-5]
		\arrow[""{name=6, anchor=center, inner sep=0}, "f"{description}, curve={height=-18pt}, from=3-5, to=3-7]
		\arrow["t'", from=1-7, to=3-7]
		\arrow[""{name=7, anchor=center, inner sep=0}, "g"{description}, curve={height=18pt}, from=3-5, to=3-7]
		\arrow["T\xi", shorten <=5pt, shorten >=5pt, Rightarrow, from=0, to=1]
		\arrow["{\ell'}", shorten <=6pt, shorten >=6pt, Rightarrow, from=1, to=2]
		\arrow["\ell"{pos=0.4}, shorten <=6pt, shorten >=9pt, Rightarrow, from=4, to=6]
		\arrow["\xi", shorten <=5pt, shorten >=5pt, Rightarrow, from=6, to=7]
		\arrow[shorten <=24pt, shorten >=24pt, Rightarrow, no head, from=3, to=5]
	\end{diagram}
\end{definition}

For any given pseudomonad $T$, pseudoalgebras, algebra morphisms and transformations thereof assemble into various bicategories, depending on the degree of laxity used:

\begin{definition}
\label{def:talg}
	Given a pseudomonad $T$ on $\twocat X$, there is a 2-category $\Alg{T}$ of pseudoalgebras, strong morphisms and transformations \cite[Proposition 4.2]{lack_coherent_2000}.
	If $T$ is a 2-monad, we further have the 2-categories $\Alg{T}_\strict$ of strict algebras, strong morphisms and transformations, and $\Alg{T}_\strict^\strict$ of strict algebras, strict morphisms and tranformations \cite[§1]{blackwell1989two}.
	Clearly there are forgetful functors
	\begin{equation}
		\Alg{T}_\strict^\strict \longto \Alg{T}_\strict \longto \Alg{T} \longto \twocat X.
	\end{equation}
\end{definition}

To denote the various combinations of lax/oplax/strong/strict objects and morphisms, we adopt a notation based on subscripts and superscripts:
\begin{equation}
	\Alg{T}_{\text{laxity of the objects}}^{\text{laxity of the morphisms}}
\end{equation}

The symbols denoting laxity are described in the following table:

\begin{center}
	\begin{tabular}{c|c}
		\textbf{laxity} & \textbf{symbol}\\
		\hline
		lax & $\lax$\\
		oplax & $\oplax$\\
		strong/pseudo & $\pseudo$, (empty string)\\
		strict & $\strict$
	\end{tabular}
\end{center}

This notation is compatible with that used in \cite{blackwell1989two}, except they denote `lax' simply with $l$.


	\newpage
	\section{Actegories and their morphisms}
\label{sec:actegories}

Having defined the necessary pseudoalgebraic machinery in the previous section, we proceed to build on top of it and define $\cat M$-actegories and their morphisms.
We give a number of examples, a strictification lemma, describe the free and cofree adjunction, and lastly, describe the indexed 2-category of all actegories.

\subsection{Actegories}

As anticipated, we can develop the algebraic theory of $\cat M$-actegories systematically by instantiating the definitions of pseudoalgebras and their morphisms for $T=\cat M \times -$.

\begin{definition}
\label{def:left-actegory}
	Let $\cat M$ be a monoidal category. A \textbf{left $\cat M$-actegory} $\cat C$ (or \textbf{left $\cat M$-action}) is a category $\cat C$ equipped with a functor
	\begin{equation}
		-\action {=} : {\cat M} \times {\cat C} \longto {\cat C},
	\end{equation}
	and two natural isomorphisms
	\begin{equation}
		\eta_c : c \isolongto j \action c, \qquad \mu_{m,n,c} : m \action (n \action c) \isolongto (m \mtimes n) \action c, \qquad \text{for $m, n : \cat M$, $c:\cat C$},
	\end{equation}
  respectively called \emph{unitor} and \emph{multiplicator}, satisfying the following coherence laws:
	\begin{diagram}[row sep=4ex, column sep=-4.5ex]
	\label{diag:action-coherence-pentag}
		& {m \action (n \action (p \action c))} && {(m \mtimes n) \action (p \action c)} \\
		{m \action ((n \mtimes p) \action c)} &&&& {((m \mtimes n) \mtimes p) \action c} \\
		&& {(m \mtimes (n \mtimes p)) \action c}
		\arrow["{\mu_{m, n, p \action c}}", from=1-2, to=1-4]
		\arrow["{\mu_{m \mtimes n, p, c}}", from=1-4, to=2-5]
		\arrow["{\alpha_{m, n, p} \action c}", from=2-5, to=3-3]
		\arrow["{m \action \mu_{n, p, c}}"', from=1-2, to=2-1]
		\arrow["{\mu_{m, n \mtimes p, c}}"', from=2-1, to=3-3]
	\end{diagram}
	\begin{diagram}
	\label{diag:action-coherence-ltriang}
		{j \bullet (m \bullet c)} && {(j \mtimes m) \action c} \\
		& {m \action c}
		\arrow["{\eta_{m \action c}}", from=2-2, to=1-1]
		\arrow["{\mu_{j, m, c}}", from=1-1, to=1-3]
		\arrow["{\lambda^{-1}_m\action c}"', from=2-2, to=1-3]
	\end{diagram}
	\begin{diagram}
	\label{diag:action-coherence-rtriang}
		{m \action (j \action c)} && {(m \mtimes j) \action c} \\
		& {m \action c}
		\arrow["{m \action \eta_c}", from=2-2, to=1-1]
		\arrow["{\rho^{-1}_m \action c}"', from=2-2, to=1-3]
		\arrow["{\mu_{m, j, n}}", from=1-1, to=1-3]
	\end{diagram}
  for all $m, n, p : \cat M$ and $c : \cat C$.
\end{definition}

\begin{remark}
	Recall that the triangle axioms for monoidal categories are redundant: any of them is redundant in the presence of the other one and the pentagon.
	The same applies to actegories.
	Indeed, by inspecting Kelly's simplification of MacLane's coherence conditions for monoidal categories \cite{kelly1964maclane}, one can realize that part of his proof can be reproduced \emph{mutatis mutandis} for actegories, so that either of Diagram~\eqref{diag:action-coherence-ltriang} or \eqref{diag:action-coherence-rtriang} can be omitted.
	We learned of this fact from \cite{mccrudden2000categories}.
\end{remark}

\begin{remark}
	Analogous to \emph{left} actegories, it is possible to define \emph{right} actegories.
	Let $c : \cat C$ and $m, n : \cat M$. When the action is on the left side, successive actions combine as follows:
	\begin{equation}
		m \action (n \action c) \iso (m \mtimes n) \action c.
	\end{equation}
	On the other hand, when the action is on the right side, actions combine as:
	\begin{equation}
		(c \action n) \action m \iso c \action (n \mtimes m).
	\end{equation}
	Comparing the two equations above should make the difference clear: multiplying by $m$ and then $n$ is like multiplying by $m \mtimes n$ for left actions, and by $n \mtimes m$ for right actions.
	Indeed, a right $\cat M$-actegory is exactly the same as a left $\cat M^\rev$-actegory, where $\cat M^\rev$ is the monoidal category obtained by equipping the underlying category of $\cat M$ with the product $m \revrel{\mtimes} n := n \mtimes m$ and suitably tweaking the rest of the structure \cite[Example 1.2.9]{johnson2021}.

	Notice that right $\cat M$-actegories can also be conceived as pseudoalgebras for the same endofunctor $\cat M \times -$ but \emph{equipped with a different monad structure}, namely the one where multiplication is precomposed with a symmetry:
	\begin{diagram}
		{\cat M \times \cat M \times -} & {\cat M \times \cat M \times -} & {\cat M \times -}
		\arrow["{\swap \times -}", Rightarrow, from=1-1, to=1-2]
		\arrow["{\mtimes \times -}", Rightarrow, from=1-2, to=1-3]
	\end{diagram}
	Indeed, $\tilde \mtimes := \mathrm{swap} \comp \mtimes$, so that the pseudomonad of left $\cat M^\rev$-actegories is identical to this one, which we might rightly call the \textbf{pseudomonad of right $\cat M$-actegories}.

	As customary in algebra, whenever we do not qualify an action with an handedness we default to left.
\end{remark}

\begin{remark}
\label{rmk:diactegories}
	If a category receives both a left and a right action (i.e.~a $\cat M$-action and a $\cat N^{\rev}$-action), and these interact `nicely', we have a \emph{biactegory}, a structure we treat extensively in Section~\ref{sec:biactegories}.
	Likewise, a category may also receive a `forward' and a `backward' action (i.e.~an $\cat M$-action and an $\cat N^\op$-action) which interact nicely.
	These deserve the name of \textit{diactegories}, but we leave their study for future work.
\end{remark}

\begin{remark}
	Contrary to the praxis in the related field of Tambara theory \cite{pastrostreet, tambara}, in the optics literature~\cite{boisseau, categoricalupdate,roman2020profunctor} definitions implicitly assume every action to be on the left.
	This doesn't make much of a difference when $\cat M$ is symmetric (a common occurence in practice), and when the actions are not lax, but this obfuscates a nice correspondence to algebra in practice.
	In particular, there is a distinction between left and right actions in the algebra of $\Para$ and $\Copara$ \cite[Definition 2]{capucci2021towards}.
\end{remark}

We must mention an equivalent presentation of the structure of actegory, which is handy to have in mind:

\begin{proposition}
\label{prop:curried-action}
	The data of a left $\cat M$-actegory is equivalent to that of a strong monoidal functor, in the sense that there is a bijection of sets:
	\begin{equation}
			\{\,\text{$\cat M \times \cat C \longto \cat C$\; left action}\,\}
		\quad
		\iso
		\quad
			\{\,\text{$\cat M \longto [\cat C, \cat C]$\; strong monoidal}\,\}
	\end{equation}
	The equivalence is supported by the tensor-hom adjunction in $\Cat$.
\end{proposition}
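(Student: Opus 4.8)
The plan is to exhibit the bijection explicitly via currying and then check that the structure data match up on both sides. First I would apply the tensor--hom adjunction in $\Cat$, which gives a natural bijection between functors $\cat M \times \cat C \to \cat C$ and functors $\cat M \to [\cat C, \cat C]$; call $F$ the curried form of an action $\action$, so $F(m) = m \action -$ and $F(f)_c = f \action c$. The claim is that the coherence data of an actegory on $\action$ correspond exactly to the data making $F$ strong monoidal, where $[\cat C, \cat C]$ carries its usual strict monoidal structure with tensor $=$ composition of endofunctors and unit $= \id_{\cat C}$.

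Next I would spell out the translation of the structure isomorphisms. The multiplicator $\mu_{m,n,c} : m \action (n \action c) \iso (m \mtimes n) \action c$ is, after currying, a natural isomorphism $F(m) \comp F(n) \iso F(m \mtimes n)$ (composition in $[\cat C,\cat C]$), i.e. precisely the laxity constraint $\mu^F_{m,n}$ of a (strong) monoidal functor $F$; naturality in $m,n,c$ of $\mu$ is naturality of $\mu^F$ in $m, n$. Similarly the unitor $\eta_c : c \iso j \action c$ curries to an isomorphism $\id_{\cat C} \iso F(j)$, the unit constraint $\epsilon^F$ of $F$. Then I would match the three actegory coherence diagrams with the two monoidal-functor coherence axioms: the pentagon Diagram~\eqref{diag:action-coherence-pentag} is exactly the associativity/hexagon coherence for $\mu^F$ (using that the associator of $[\cat C, \cat C]$ is trivial and $\alpha \action c$ corresponds to $F(\alpha_{m,n,p})$), while Diagrams~\eqref{diag:action-coherence-ltriang} and~\eqref{diag:action-coherence-rtriang} are the left and right unit coherence axioms for a monoidal functor. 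Invertibility of $\eta$ and $\mu$ corresponds to $F$ being \emph{strong} rather than merely lax.

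Finally I would note that this assignment is manifestly inverse to the uncurrying direction: given strong monoidal $F : \cat M \to [\cat C, \cat C]$, set $m \action c := F(m)(c)$, $\mu_{m,n,c} := (\mu^F_{m,n})_c$, $\eta_c := (\epsilon^F)_c$, and the pentagon/triangle identities for the actegory follow from the monoidal-functor axioms read under the same dictionary. Since currying/uncurrying of functors is a strict bijection and the structure-datum translations are likewise bijective and compatible with the coherence conditions, we obtain the claimed bijection of sets.

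The main obstacle is purely bookkeeping: carefully checking that each actegory coherence diagram corresponds, arrow by arrow, to a monoidal-functor axiom once one accounts for the strictness of the monoidal structure on $[\cat C, \cat C]$ (so that occurrences of its associator and unitors collapse) and for the direction conventions of $\eta$ (which points $c \to j \action c$, hence curries to $\id \iso F(j)$, matching the standard convention for $\epsilon^F$, rather than its inverse). No genuinely hard argument is involved; the content is the identification, and the care needed is in the variances and in not conflating $\mtimes$ in $\cat M$ with composition in $[\cat C,\cat C]$.
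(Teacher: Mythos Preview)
Your proposal is correct and follows exactly the approach the paper takes: the paper's own proof simply asserts that it is ``a routine matter to verify'' that $\eta$ and $\mu$ correspond to a strong monoidal structure on $\curr(\action)$ and that Diagrams~\eqref{diag:action-coherence-pentag}--\eqref{diag:action-coherence-rtriang} match the monoidal functor axioms, citing \cite[\S XI.2]{WorkingMathematician}. You have in fact spelled out the dictionary in more detail than the paper bothers to.
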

\begin{proof}
	It is a routine matter to verify that $\eta$ and $\mu$ for a left action $\action$ correspond to a strong monoidal structure on its currying $\curr(\action)$, and that Diagrams~\eqref{diag:action-coherence-pentag}-\eqref{diag:action-coherence-rtriang} are equivalent to the laws obeyed by such structure (see \cite[§XI.2]{WorkingMathematician}).
\end{proof}

\begin{remark}
	This proposition can be read as saying `$[\cat C, \cat C]$ classifies actions on $\cat C$'.
	Indeed, in Section~\ref{subsec:classifying} we are going to extend this result to a full equivalence of categories, as well as generalizing it to more structured notions of actegory.
\end{remark}

\begin{remark}
	The presentation of actegories as monoidal functor into $[\cat C, \cat C]$ is preferred in the literature on \textbf{graded (co)monads} \cite{GradedMonads}.
	In that case, the monoidal functor $\cat M \to [\cat C, \cat C]$ is more often considered to be just lax (for graded monads) or oplax (for graded comonads).
	These correspond to lax and oplax algebras of $\cat M \times -$ (considered as a pseudomonad) from our point of view.
	Much of the definitions we give here still work for these weaker structures, but many of the results do not hold for them.
	Moreover, in the applications we are interested in this is not a limitation, hence in this work we focus on \emph{pseudo}algebras.
\end{remark}

\begin{remark}
	Despite making it easier to state the definition of actegories, the curried presentation of monoidal actions is considerably less elegant when it comes to their morphisms, 2-morphisms and other constructions.
	This can be directly motivated by observing most of the facts we care about for $\cat M$-actegories are instances of general algebraic theories, while Proposition~\ref{prop:curried-action} is a very peculiar property of actegories.
	Therefore we'll continue to prefer Definition~\ref{def:left-actegory}.
\end{remark}

As prescribed by the microcosm principle, actegories can be seen as the structure necessary to define the structure of an `action of a monoid' in a category, in the same way monoidal categories are the minimal setting in which monoids themselves can be defined.

\begin{definition}
\label{def:action-in-actegory}
	Let $(\cat C, \action)$ be an $\cat M$-actegory, and let $c : \cat C$ and $m : \cat M$.
	An action of $m$ on $c$ in $(\cat C, \action)$ is an arrow $\ast : m \action c \to c$ such that the following commute (compare them with Diagrams~\eqref{diag:monoid-action-unit}--\eqref{diag:monoid-action-comp}):
	\begin{diagram}
	\label{diag:internal-monoid-action}
		{m \action c} & {j \action c} & {(m \mtimes m) \action c} && {m \action (m \action c)} \\
		c & c & {m \action c} & c & {m \action c}
		\arrow["\ast"', from=1-1, to=2-1]
		\arrow["{i \action c}"', from=1-2, to=1-1]
		\arrow["{\eta_c}"', from=2-2, to=1-2]
		\arrow[Rightarrow, no head, from=2-1, to=2-2]
		\arrow["{{\cdot} \action c}"', from=1-3, to=2-3]
		\arrow["\ast"', from=2-3, to=2-4]
		\arrow["{\mu_{m,m,c}}"', from=1-5, to=1-3]
		\arrow["{m \action \ast}", from=1-5, to=2-5]
		\arrow["\ast", from=2-5, to=2-4]
	\end{diagram}
\end{definition}

In the same fashion we can define coactions of comonoids and biactions of bimonoids, for which actegories remain the natural setting.\footnote{Though biactions could be further abstracted as to take place in biactegories instead.}

\subsection{Examples}
We describe here some examples, some of which are taken from the literature, although with no pretence to be exhaustive.
In particular, we omit most of the examples from categorical algebra (e.g.~those described in \cite{ostrik2003module}).

\begin{example}[Trivial actegories]
\label{ex:trivial-action}
	Every category $\cat C$ is trivially a $1$-actegory, where $1$ is the one-object category whose monoidal product is defined in the only way possible.
	The action $\action : 1 \times \cat C \to \cat C$ is an isomorphism, and the unitor and multiplicator are trivial.

	This can be generalised: every category $\cat C$ is trivially an $\cat M$-actegory for any monoidal category $\cat M$, with the action given by the projection $\pi_{\cat C} : \cat M \times \cat C \to \cat C$.
\end{example}

\begin{example}[Free actegories]
\label{ex:free-act}
	Applying the pseudomonad of left $\cat M$-actegories to a category $\cat C$ yields the free actegory on $\cat C$, whose multiplication is thus the same as the pseudomonad:
	\begin{equation}
		\cat M \times \cat M \times \cat C \xrightarrow{\otimes \times -} \cat M \times \cat C
	\end{equation}
	Likewise for the unitors and multiplicators, which come from $\cat M \times -$ and thus, ultimately, from $\cat M$ itself.
\end{example}

\begin{example}[Monoid action]
	Any action of a monoid $M$ on a set $X$ is a `discrete actegory'.
	That is, it is a $\operatorname{disc}(M)$-actegory $\operatorname{disc}(X)$, where $\operatorname{disc} : \Set \to \Cat$ casts a set as a discrete category.
	In \cite{ChangeActionsThesis, ChangeActionModels} monoid actions are called \textit{change actions}, although their maps are different.
\end{example}

\begin{example}[Monoidal category]
\label{ex:moncat-self-actions}
	Every monoidal category $\cat M$ is canonically a left and right $\cat M$-actegory. The tensor ${- \mtimes =} : \cat M \times \cat M \to \cat M$, equipped with the left unitor as counitor and the associator as comultiplicator, defines a left action.
	On the right, one has the `same' functor but typed as ${= \mtimes -} : \cat M^\rev \times \cat M \to \cat M$, equipped with the right unitor as counitor and the inverse of the associator as comultiplicator.
	We call these actions \emph{canonical left} and \emph{canonical right self-action}, respectively.
\end{example}

\begin{example}[Action of natural numbers]
\label{ex:natural-numbers-act}
	Any monoidal category $(\cat M, j, \mtimes)$ is naturally an $\N$-actegory, where $\N$ is considered as a discrete category with multiplication as monoidal structure.
	Given $n : \cat M$ and $c : \cat C$, the action $(=)^{\mtimes -} : \N \times \cat M \to \cat M$ takes the pair $(n, c)$ to the $n$-fold monoidal product of $c$ with itself, i.e.
	\begin{equation}
		c^{\mtimes n} :=\; \underbrace{c \mtimes \dots \mtimes c}_{\text{$n$ times}}.
	\end{equation}
	It is clear then that $(c^{\otimes n})^{\otimes m} \iso c^{\otimes mn}$ is given by the associator of $\cat M$ while $c^{\otimes 0} = j$ by definition. Coherence for monoidal categories implies coherence of this data.
\end{example}

\begin{example}[Evaluation]
\label{ex:endomorphisms}
	Every category $\cat C$ is a $([\cat C, \cat C], 1_{\cat C}, \circ)$-actegory, under the action given by evaluation. In fact, this is the action corresponding to the identity functor of $[\cat C, \cat C]$ under the equivalence described in Proposition~\ref{prop:curried-action}.
	This also means than any monoidal subcategory of endofunctors over $\cat C$ acts on $\cat C$.
	For instance, the action of applicative functors (a class of endofunctors definable when $\cat C$ is symmetric monoidal) can be used to define a class of optics called \emph{kaleidoscopes} \cite[§3.2.2]{categoricalupdate}.
\end{example}


\begin{example}[Copower]
\label{ex:copower}
	When a category $\cat C$ has all small coproducts, there is an action $\otimes : \Set \times \cat C \to \cat C$ of $(\Set, 1, \times)$
	\begin{equation}
		A \copow c := \coprod_{a \in A} c
	\end{equation}
  where $A : \Set$ and $c : \cat C$.
	This action is called \emph{$\Set$-copower} and enjoys the universal property of being `left adjoint to the hom functor':
	\begin{equation}
	\label{eq:tensoring}
		\cat C(A \copow c, y) \iso \Set(A, \cat C(c, y)).
	\end{equation}
	In general, we can replace $\Set$ with any closed monoidal category $\cat V$, and talk about \emph{$\cat V$-copowered} (or \emph{$\cat V$-tensored}) \emph{enriched categories} \cite{kelly1982basic}.
	In \cite{janelidze2001note}, the authors prove that $\cat V$-copowered enriched categories are equivalent to $\cat V$-actegories $(\cat C, \action)$ such that $- \action c$ has a right adjoint for each $c : \cat C$ (so called \emph{closed actegories}).
\end{example}

\begin{example}[Power]
	Dually, if $\cat C$ has all products one gets an action $\pow : \Set^\op \times \cat C \to \cat C$ given by
	\begin{equation}
		A \pow c := \prod_{a \in A} c
	\end{equation}
	which enjoys the following universal property:
	\begin{equation}
		\cat C(d, A \pow c) \iso \Set(A, \cat C(d,c))
	\end{equation}
	In \cite[Chapter 1, §7]{wood1976} Wood proves that $\cat M$-actegories are equivalent to `$\cat M$-powered' $[\cat M^\op, \Set]$-categories.\footnote{We learned of this fact from \cite{garner2018embedding}.}
\end{example}

\begin{example}[Day convolaction]
\label{ex:day}
	A given $\cat M$-action $\action$ on $\cat C$ can be canonically extended to an action of $[\cat M, \Set]$ on $[\cat C, \Set]$.
	First of all, the monoidal structure of $\cat M$ extends canonically to a monoidal structure on its category of copresheaves $[\cat M, \Set]$, known as \emph{Day convolution} \cite{day1970closed}.\footnote{We recall the unit of this monoidal structure is $\cat M(j, -)$ while product is given as
	\begin{equation}
	\label{eq:day-prod}
		M \otimes_\Day N = \int^{m,n : \cat M} \cat M(m \otimes n, -) \times M(m) \times N(n).
	\end{equation}}
	The extension of $\action$ is given, \emph{mutatis mutandis}, in the same way:
	\begin{equation}
		M \action_\Day P := \int^{m : \cat M, c : \cat C} \cat C(m \action c, -) \times M(m) \times P(c).
	\end{equation}
	We call this action \emph{Day convolaction}.
	The unitor of the action is given by repeated Yoneda reduction~\cite[Proposition~2.2.1]{loregian_coend_2021} and by using the unitor of $\action$:
	\begin{eqalign}
		\cat M(j, -) \action_\Day P &= \int^{m : \cat M, c : \cat C} \cat C(m \action c, -) \times \cat M(j, m) \times P(c)\\
		&\iso \int^{c : \cat C} \cat C(j \action c, -) \times P(c)\\
		&\iso \int^{c : \cat C} \cat C(c, -) \times P(c)\\
		&\iso P.
	\end{eqalign}
	The multiplicator uses Yoneda reduction, the multiplication for $\action$, the definition of $\otimes_\Day$~\eqref{eq:day-prod} and Fubini's rule~\cite[§1.3]{loregian_coend_2021}:
	\begin{eqalign}
		M \action_\Day &(M' \action_\Day P) = \int^{m : \cat M, c' : \cat C} \cat C(m \action c', -) \times M(m) \times (M' \action_\Day P)(c')\\
		&\iso \int^{m' : \cat M, c : \cat C, m : \cat M, c' : \cat C} \cat C(m \action c', -) \times \cat C(m' \action c, c') \times M(m) \times M'(m') \times P(c)\\
		&\iso \int^{m' : \cat M, c : \cat C, m : \cat M} \cat C(m \action (m' \action c), -) \times M(m) \times M'(m') \times P(c)\\
		&\iso \int^{m' : \cat M, c : \cat C, m : \cat M} \cat C((m \otimes m') \action c, -) \times M(m) \times M'(m') \times P(c)\\
		&\iso \int^{m' : \cat M, c : \cat C, m : \cat M, k : \cat M} \cat C(k \action c, -) \times \cat M(m \otimes m', k) \times M(m) \times M'(m') \times P(c)\\
		&\iso \int^{c : \cat C, k : \cat M} \cat C(k \action c, -) \times (M \otimes M')(k) \times P(c)\\
		&\iso (M \otimes_\Day M') \action_\Day P.
	\end{eqalign}
	As for Day convolution, this action is well-defined also when $\Set$ is replaced with a B\'enabou cosmos \cite{street1974elementary}, such as any quantale.
\end{example}

\begin{example}[{\cite[Theorem 9]{garner2018embedding}}]
\label{ex:tangent}
	In \textit{ibid.}, Garner proves that tangent categories are a special kind of actegory.
	Let $\cat W$ be the `Weil category' the subcategory of that of commutative rigs spanned by objects isomorphic to tensor products of rigs of the form $W_k := \N[x_1, \ldots, x_k]/(x_ix_j, i \leq j \leq k)$.
	A tangent structure on a category $\cat C$ is then a $\cat W$-action $\action$ such that $- \action c$ preserves a certain class of limits (called `tangent limits') for all $c : \cat C$.
\end{example}

\begin{example}[Metric dilation]
	\label{ex:metric-spaces}
	The category $\Met$ of metric spaces and short maps \cite[p.~38]{deza_encyclopedia_2009} receives an action of $(\R^+, 1, \cdot)$, the strict monoidal category of positive real numbers (morphisms are inequalities $\geq$). A scalar $m : \R^+$ acts on a metric space $(X, d)$ by scaling: the resulting metric space $(X, m \action d)$ is equipped with a metric scaled exactly by $m$.
	Maps are left unchanged, since their shortness is not affected by scaling both domain and codomain equally.

\end{example}

\begin{example}[Stochastic action]
\label{ex:stochastic-maps}
	Let $\Msbl$ be the category of measurable spaces and measurable maps between them, and let $\Prob$ be the category of probability spaces and measure preserving maps between them.
	There is an action of $\Prob$ on $\Msbl$ given by
	\begin{equation*}
		(\Omega, \mathcal F, \mathbb P) \action (X, \Sigma_X) := (\Omega \times X, \mathcal F \otimes \Sigma_X)
	\end{equation*}
	where on the right we have the product of measurable spaces.
	This action can be used to model stochastic processes, after being fed to the functor $\Para$, as described in \cite{paratalk}.
\end{example}

\begin{example}[{\cite[Example 2.1.8]{stefanou2018dynamics}}]
\label{ex:stefanou}
	Let $\Top$ be the category of topological spaces and continuous maps, fix a space $X$ and consider the frame $\mathcal O(X)$ of the open sets of its canonical topology, which is a monoidal category with product $\cap$ and unit $X$.
	There's an action of $(\mathcal O(X), X, \cap)$ on $\Top / X$ given on objects by
	\begin{equation}
		U \action (f: Y \to X) = f^{-1}U \text{ with the subspace topology}
	\end{equation}
	and on maps $\varphi : f \to g$, where $f:Y \to X$ and $g:Z \to X$ by
	\begin{equation}
		\varphi\vert_U : f^{-1}U \to g^{-1}U.
	\end{equation}
	To see this is well-defined, notice that since $\varphi \comp g = f$, the preimage of $g^{-1}U$ along $\varphi$ is exactly $f^{-1}U$.
	Clearly $U \action (V \action f) = (U \cap V) \action f$ and $X \action f = f$, so this is a strict $\mathcal O(X)$-action.
\end{example}

\subsection{Morphisms}
We have unpacked how pseudoalgebras of $\cat M \times -$ give $\cat M$-actegories.
We continue to unpack the definitions given in Section~\ref{sec:monads} to get to a full-fledged definition of the 2-category of $\cat M$-actegories. This is the 2-category $\Alg{T}^\lax$ (Definition~\ref{def:talg}) instatiated for $T=\cat M \times -$.

\begin{definition}
	For a fixed monoidal category $\cat M$ we denote the 2-category of $\cat M$-actegories, lax $\cat M$-linear functors and $\cat M$-linear transformations as $\Act{\cat M}^\lax$.
\end{definition}

To get morphisms of $\cat M$-actegories, it is sufficient to instantiate Definition~\ref{def:morphism-of-pseudoalgebras} for the pseudomonad $\cat M \times -$:

\begin{definition}
\label{def:linear-functor}
	Let $(\cat C, \action, \eta^\action, \mu^\action)$ and $(\cat D, \actionn, \eta^\actionn, \mu^\actionn)$ be left $\cat M$-actegories.
	A \textbf{lax $\cat M$-linear functor} between them is a functor $F:\cat C \to \cat D$ equipped with a natural morphism
	\begin{equation}
		\ell_{m,c} : m \actionn F(c) \longto F(m \action c) \qquad \text{for $m : \cat M$, $c:\cat C$},
	\end{equation}
	called \textbf{lineator} satisfying the following coherence laws for all $m, n : \cat M$ and $c : \cat C$:
	\begin{diagram}[row sep=4ex, column sep=-3ex]
	\label{diag:linear-coherence-pentag}
		& {m \actionn (n \actionn F(c))} && {m \actionn F(n \action c)} \\
		{(m \mtimes n) \actionn F(c)} &&&& {F(m \action (n \action c))} \\
		&& {F((m \mtimes n) \action c)}
		\arrow["{\mu^\actionn_{m, n, F(c)}}"', from=1-2, to=2-1]
		\arrow["{\ell_{m \mtimes n, c}}"', from=2-1, to=3-3]
		\arrow["{m \actionn \ell_{n, c}}", from=1-2, to=1-4]
		\arrow["{\ell_{m, n \action c}}", from=1-4, to=2-5]
		\arrow["{F(\mu^\action_{m, n, c})}", from=2-5, to=3-3]
	\end{diagram}
	\begin{diagram}
	\label{diag:linear-coherence-triang}
		{j \actionn F(c)} && {F(j \action c)} \\
		& {F(c)}
		\arrow["{\eta^\actionn_{F(c)}}", from=2-2, to=1-1]
		\arrow["{\ell_{j, c}}", from=1-1, to=1-3]
		\arrow["{F(\eta^\action_c)}"', from=2-2, to=1-3]
	\end{diagram}
	When $\ell$ is invertible, we call $(F, \ell)$ simply \textbf{$\cat M$-linear functor}, or \textbf{strong $\cat M$-linear functor} if we want to emphasize that.
\end{definition}

Morphisms between actegories on different monoidal categories will be defined in Proposition~\ref{prop:actegories-indexed-over-moncat}.

\begin{remark}
\label{rmk:coherence-as-inductive-def}
	The coherence diagrams in this definition (and in some of the later ones) can be interpreted as inductive definitions for the structure morphisms (here $\ell$).
	It is easy to spot this phenomenon in the above triangular diagram, where $\ell_{j,c}$ is being pinned to be ${\eta^\actionn_{F(c)}}^{-1} \comp F(\eta^\action_c)$.
	The same can be said for the pentagon, though: it is a definition of $\ell_{m \mtimes n, -}$ in terms of $\ell_{m,-}$ and $\ell_{n,-}$:
	\begin{equation}
	\label{eq:lineator-inductive-def}
		\ell_{m \mtimes n, c} = {\mu_{m,n,F(c)}^\actionn}^{-1} \comp (m \actionn \ell_{n,c}) \comp \ell_{m,n \action c} \comp F(\mu^\action_{n,m,c}).
	\end{equation}
	This gets particularly interesting when $\cat C$ and $\cat D$ are strict actegories, in which case $\ell_{j,c}$ is forced to be the identity and $\ell_{m \mtimes n, c}$ is simply $(m \actionn \ell_{n,c}) \comp \ell_{m,n \action c}$.
\end{remark}

\begin{example}[Monoidal functors are $\N$-linear]
\label{ex:monoidal-functors-n-linear}
	Following up from Example~\ref{ex:natural-numbers-act}, let $(F, \epsilon, \mu) : (\cat M, j, \mtimes) \to (\cat N, i, \ntimes)$ be a lax monoidal functor.
	Then we can form a $\N$-linear functor $(F, \mu^n) : (\cat M, (=)^{\mtimes -}) \to
	(\cat N, (=)^{\ntimes -})$, whose lineator is defined by repeated applications of the laxator $\mu$:
	\begin{equation}
		\mu^n : \underbrace{F(m) \ntimes \dots \ntimes F(m)}_{\text{$n$ times}} \to \underbrace{F(m \mtimes \dots \mtimes m)}_{\text{$n$ times}}.
	\end{equation}
  where $m : \cat M$.
	For $n=0$, we set $\mu^0 = \epsilon$.
\end{example}

\begin{example}[Tensorial strengths]
	\label{ex:tensorial_strength}
	Sometimes, a lax monoidal structure on an endofunctor (often, a monad) on a monoidal category can be too strong of a condition, in which case the structure of a \emph{tensorial strength} is often more suitable \cite{kock1972strong}.
	Indeed, let $\cat M$ be a monoidal category and $F : \cat M \to \cat M$ an endofunctor.
	Then the structure of a left tensorial strength $\beta_{m, m'} : m \otimes F(m') \to F(m \otimes m')$ on $F$ gives rise to a $\cat M$-linear endomorphism of $\cat M$ with the canonical left self-action (Example~\ref{ex:moncat-self-actions}).
	Likewise, right tensorial strengths on $F$ give rise to analogous endomorphisms of right self-actions.
\end{example}

\begin{example}
\label{ex:hausdorff-meas}
	Let $\Meas$ be the category of measure spaces and non-expanding\footnote{A map $f:(X, \Sigma_X, \mu) \to (Y, \Sigma_Y, \nu)$ is non-expanding if $f_*\mu(A) \leq \nu(A)$ for every $A \in \Sigma_Y$.} maps. Analogously to Example~\ref{ex:metric-spaces}, $\R^+$ acts by dilation on this category, this time dilating the measure instead of the metric.

	For $d > 0$, there is a functor $H^q: \Met \to \Meas$, mapping each metric space $(X,d)$ to the measure space $(X, \mathcal B(X), H^q_X)$ where $\mathcal B(X)$ is the Borel $\sigma$-field on $X$ and $H^q$ is the \emph{$q$-dimensional Hausdorff measure} \cite{noauthor_hausdorff_nodate} defined on a given $A \in \mathcal B(X)$ as:
	\begin{equation*}
		H^q(A) = \lim_{\delta \to 0}\ \inf \{ \Sigma_{i=1}^\infty (\diam U_i)^q \suchthat \text{$\{U_i\}_{i \in \N}$ covers $A$ and $\diam U_i < \delta$ $\forall i \in \N$}\}.
	\end{equation*}
	Since maps in $\Met$ are short, it is easy to see the induced measurable maps are non-expanding.
	When $q=1$, this functor is $\R^+$-linear, that is, there are maps $r \cdot H^q(X, d) \to H^q(X, r \cdot d)$.
	In fact, on the domain we have the space $X$ with measure $r\cdot H^q$, while on the codomain we have the same space with measure $r^q \cdot H^q$. Since $r \leq r^q$ iff $r \geq 1$, one is forced to choose $q=1$.

	Suppose instead we define an action of $(\R^+, 1, \cdot, \geq)$ on $\Msbl$ to be $r \ast (X,d) := \e^r \cdot (X,d)$, and analogously for $\Meas$. Now the functors $H^q$ are all lax $\R^+$-linear since $\e^r \geq 1$ as long as $r \geq 0$. $H^1$ remains the only \emph{strong} $\R^+$-linear functor among them.
\end{example}

One of the main properties of lax linear functors is that they send actions to actions:

\begin{lemma}
\label{lemma:linear-preserves-actions}
	Let $(F, \ell): (\cat C, \action) \to (\cat D, \actionn)$ be a lax $\cat M$-linear functor.
	Given a monoid $(m, i:j \to m, \cdot : m \otimes m \to m)$ in $\cat M$ and an action $\ast : m \action c \to c$ of $m$ on $c:\cat C$ (as defined in Definition~\ref{def:action-in-actegory}), then
	\begin{equation}
		m \action F(c) \nlongto{\ell_{m,c}} F(m \action c) \nlongto{F(\ast)} F(c)
	\end{equation}
	is an action of $m$ on $F(c)$ in $\cat D$.
\end{lemma}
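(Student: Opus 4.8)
The plan is to verify directly that the composite $g := F(\ast) \comp \ell_{m,c} : m \action F(c) \to F(c)$ satisfies the two axioms of Diagram~\eqref{diag:internal-monoid-action}, i.e.\ the unit law and the multiplication law for an action of the monoid $(m,i,\cdot)$ on $F(c)$ inside $(\cat D, \actionn)$. The main ingredients are the two coherence laws for the lineator (Diagrams~\eqref{diag:linear-coherence-pentag} and~\eqref{diag:linear-coherence-triang}), the naturality of $\ell$, the functoriality of $F$, and the hypothesis that $\ast$ is an action, expressed by applying $F$ to the two squares of Diagram~\eqref{diag:internal-monoid-action}.

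First I would handle the unit axiom. We must show that precomposing $g$ with $(i \actionn F(c)) \comp \eta^\actionn_{F(c)} : F(c) \to j \actionn F(c) \to m \actionn F(c)$ gives the identity on $F(c)$. Starting from $\eta^\actionn_{F(c)}$, the triangle law~\eqref{diag:linear-coherence-triang} rewrites $\ell_{j,c} \comp \eta^\actionn_{F(c)}$ as $F(\eta^\action_c)$; naturality of $\ell$ in the $\cat M$-variable along $i : j \to m$ lets us commute $\ell$ past $i \actionn (-)$, turning $\ell_{m,c} \comp (i \actionn F(c))$ into $F(i \action c) \comp \ell_{j,c}$. Assembling these, the composite becomes $F(\ast) \comp F(i \action c) \comp F(\eta^\action_c) = F\bigl(\ast \comp (i\action c) \comp \eta^\action_c\bigr)$, which is $F(\id_c) = \id_{F(c)}$ by the left square of~\eqref{diag:internal-monoid-action} and functoriality of $F$.

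Next the multiplication axiom, which is the more laborious one: starting from $m \actionn (m \actionn F(c))$, one route applies $m \actionn g$ then $g$, the other applies $\mu^\actionn_{m,m,F(c)}$, then $(\cdot \actionn F(c))$, then $g$. I would paste together: the pentagon~\eqref{diag:linear-coherence-pentag} for $\ell$ (relating $\mu^\actionn$, $m\actionn\ell$, $\ell$, $\ell$ and $F(\mu^\action)$); naturality of $\ell$ along $\cdot : m\otimes m \to m$ and along $\ast : m\action c \to c$; functoriality of $F$; and finally $F$ applied to the right square of~\eqref{diag:internal-monoid-action}. Concretely, $m \actionn \ell_{m,c}$ followed by $\ell_{m, m\action c}$ is rewritten via the pentagon as $F(\mu^\action_{m,m,c}) \comp \ell_{m\otimes m, c} \comp \mu^\actionn_{m,m,F(c)}$; naturality of $\ell$ in $\cat M$ then pushes $\cdot$ through, and naturality in $\cat C$ pushes $\ast$ through, until everything is inside $F$ of the commuting square for $\ast$.

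The main obstacle is purely bookkeeping: making sure each application of naturality of $\ell$ is in the correct variable (the $\cat M$-slot versus the $\cat C$-slot) and that the instances of the pentagon and triangle are invoked at the right objects, since $\ell$ need only be lax here (we do not assume invertibility). There is no conceptual difficulty — this is exactly the statement that, under the equivalence of Proposition~\ref{prop:curried-action}, a lax linear functor induces a lax monoidal natural transformation, and lax monoidal functors preserve monoids — but spelling out the diagram chase without strictness requires care. If one is willing to invoke Lemma~\ref{lemma:strictification} to reduce to strict actegories and strict $\ast$, the multiplication chase collapses considerably, since $\mu^\actionn$, $\eta^\actionn$, $\mu^\action$, $\eta^\action$ all become identities and $\ell_{j,c} = \id$, $\ell_{m\otimes m,c} = (m\actionn \ell_{m,c})\comp \ell_{m,m\action c}$; I would mention this as the streamlined alternative.
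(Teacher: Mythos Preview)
Your proposal is correct and follows essentially the same route as the paper: a direct diagram chase using the triangle law~\eqref{diag:linear-coherence-triang} and naturality of $\ell$ for the unit axiom, and the pentagon~\eqref{diag:linear-coherence-pentag}, naturality of $\ell$, and $F$ applied to the action law for the multiplication axiom. You are in fact slightly more explicit than the paper in singling out naturality of $\ell$ along $\cdot : m \otimes m \to m$ (which the paper's diagram uses implicitly on the left path), and your aside about strictification is a reasonable optional shortcut the paper does not take.
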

\begin{proof}
	That $\ell_{m,c} \comp F(\ast)$ is an action of $m$ on $F(c)$ is witnessed by the following diagrams:
	\begin{diagram}[sep=6ex]
		{m \action F(c)} && {j \action F(c)} \\
		{F(m \action c)} & {F(j \action c)} \\
		{F(c)} && {F(c)}
		\arrow["\ast"', from=2-1, to=3-1]
		\arrow["{F(i \action c)}"', from=2-2, to=2-1]
		\arrow[""{name=0, anchor=center, inner sep=0}, "{\eta_c}", from=3-3, to=2-2]
		\arrow[Rightarrow, no head, from=3-1, to=3-3]
		\arrow["{\ell_{m,c}}"', from=1-1, to=2-1]
		\arrow["{i \action F(c)}"', from=1-3, to=1-1]
		\arrow[""{name=1, anchor=center, inner sep=0}, "{\ell_{j,c}}"', from=1-3, to=2-2]
		\arrow["{\eta_{F(c)}}"', from=3-3, to=1-3]
		\arrow["{\text{(\ref{diag:internal-monoid-action} left)}}"{description}, draw=none, from=2-2, to=3-1]
		\arrow["{\text{(nat. of $\ell$)}}"{description}, shift right=3, draw=none, from=1-1, to=1]
		\arrow["{\eqref{diag:linear-coherence-triang}}"{description}, Rightarrow, draw=none, from=0, to=1-3]
	\end{diagram}
	\begin{diagram}[row sep=4ex]
		{(m \mtimes m) \actionn F(c)} && {m \actionn (m \actionn F(c))} \\
		\\
		{F((m \mtimes m) \action c)} & {F(m \action (m \action c))} & {m \actionn F(m \action c)} \\
		&& {m \actionn F(c)} \\
		{F(m \action c)} && {F(m \action c)} \\
		& {F(c)}
		\arrow["{\mu^\actionn_{m,m,F(c)}}"', from=1-3, to=1-1]
		\arrow["{F(\ast)}"', from=5-1, to=6-2]
		\arrow["{F(\ast)}", from=5-3, to=6-2]
		\arrow["{\ell_{m,c}}", from=4-3, to=5-3]
		\arrow[""{name=0, anchor=center, inner sep=0}, "{m \actionn F(\ast)}", from=3-3, to=4-3]
		\arrow[""{name=1, anchor=center, inner sep=0}, "{m \actionn \ell_{m, c}}", from=1-3, to=3-3]
		\arrow[""{name=2, anchor=center, inner sep=0}, "{F({\cdot}\action c)}"', from=3-1, to=5-1]
		\arrow[""{name=3, anchor=center, inner sep=0}, "{F(m \action \ast)}"'{pos=0.2}, from=3-2, to=5-3]
		\arrow["{F(\mu^\action_{m,m,c})}"', from=3-2, to=3-1]
		\arrow[""{name=4, anchor=center, inner sep=0}, "{\ell_{m \mtimes m, c}}"', from=1-1, to=3-1]
		\arrow["{\ell_{m, m \action c}}"', from=3-3, to=3-2]
		\arrow["{\text{(\ref{diag:internal-monoid-action} right)}}"{description, pos=0.4}, Rightarrow, draw=none, from=2, to=5-3]
		\arrow["{\text{(nat.~of $\ell$)}}"{description}, shift left=3, draw=none, from=3, to=0]
		\arrow["{\eqref{diag:linear-coherence-pentag}}"{description}, draw=none, from=4, to=1]
	\end{diagram}
\end{proof}

\begin{remark}
	Dually, oplax linear functors preserve \emph{co}actions of \emph{co}monoid, hence strong linear functors preserve Hopf modules, i.e.~biactions of bimonoids \cite[14.1]{brzezinski2003london}.
\end{remark}

Finally, we instance Definition~\ref{def:pseudoalg-transformation}:

\begin{definition}
\label{def:linear-trans}
	An \textbf{$\cat M$-linear transformation} $\xi : (F, \ell) \twoto (G, \nu)$ between two lax linear functors $F,G:(\cat C, \action) \to (\cat D, \actionn)$ is a natural transformation between the carrier functors of $F$ and $G$ such that
	\begin{diagram}
	\label{diag:lin-transf-coherence}
		m \actionn F(c) \arrow[swap]{d}{\ell_{m, c}} \arrow{r}{m \actionn \xi_c} & m
    \actionn G(c) \arrow{d}{\nu_{m, c}}\\
		F(m \action c) \arrow{r}{\xi_{m \action c}} & G(m \action c)
	\end{diagram}
	commutes for every $m : \cat M$, $c: \cat C$.
\end{definition}

\begin{example}
	Following up from Example~\ref{ex:monoidal-functors-n-linear}, a monoidal natural transformation $\alpha : R \twoto S$ gives rise to a $\N$-linear transformation whose linearity condition follows from monoidality of $\alpha$.
\end{example}

\subsection{Strictification}
General actegories are \emph{pseudo}algebras for $\cat M \times -$.
We call \emph{strict} algebras of $\cat M \times -$ \textbf{strict actions} or \textbf{strict actegories}.
The reader might be glad to learn every actegory can be strictified.
This result is the actegorical analogue of MacLane's coherence theorem for monoidal categories~\cite[Theorem 1.2.23]{johnson2021}.
Indeed, the proof is very similar: higher stucture morphisms are simply moved from the actegory to the equivalence.

\subsubsection{Lemma}
\label{lemma:strictification}
\begin{reslemma}{strictification}
	Let $\cat{M}$ be strict monoidal, and suppose $(\cat C, \action)$ is a left $\cat M$-actegory.
	Then there exists a strict $\cat M$-actegory $(\cat C_\strict, \action_\strict)$ and an equivalence $S: \cat{C}_\strict \to \cat{C}$ in $\Act{\cat M}$.
\end{reslemma}
\begin{proof}
	A proof of this fact can be obtained by appealing to the general strictification result for algebras of pseudomonads with rank,\footnote{i.e.~preserving $\kappa$-small colimits for some cardinal $\kappa$.} namely \cite[Theorem 3.13]{blackwell1989two}.
	$\cat M \times -$ has rank since it preserves all colimits by virtue of being left adjoint.

	Concretely, the objects of $\cat{C}_\strict$ are pairs $(m: \cat{M}, c : \cat{C})$ and its morphisms $f: (m,c) \to (n, d)$ are given by arrows  $m \action c \to n \action d$ in $\cat C$. Then $S$ is identity on morphisms and on objects sends $(m, c)$ to $m \action c$.
\end{proof}

\begin{remark}
	In \cite[§4]{blackwell1989two}, $(\cat C_\strict, \action_\strict)$ is denoted by $(\cat C, \action)'$. It is also noted that strictification defines a functor left adjoint to the inclusion $\Act{\cat M^\strict}_\strict \to \Act{\cat M^\strict}$.
\end{remark}

As a consequence of this theorem, we may in general assume that an actegory is strict, similar to the way we routinely assume that monoidal categories are strict, although one has to be careful about which statements can actually be transported along actegory equivalences, especially if we are considering actions of multiple different monoidal categories.
A more precise statement is that \emph{any diagram which commutes under the assumption of strictness, automatically commutes}.

\begin{corollary}
\label{cor:strictification-equiv}
	There is an equivalence of 2-categories:
	\begin{equation}
		\Act{\cat M}^\lax \equi \Alg{(\cat M^\strict \times -)}_\strict^\lax.
	\end{equation}
	where on the left we have the 2-category of $\cat M$-actegories, lax linear functors and linear transformations and on the right we have the 2-category of strict $\cat M$-actegories, lax linear functors and linear transformations.
\end{corollary}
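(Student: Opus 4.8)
The plan is to obtain the claimed equivalence of 2-categories from the strictification Lemma \ref{lemma:strictification}, organized as a biequivalence that is bijective-on-objects-up-to-equivalence. First I would invoke the preceding remark: strictification is functorial, giving a 2-functor $(-)': \Act{\cat M^\strict}_\strict \to \Act{\cat M^\strict}$ left adjoint to the inclusion, as established in \cite[§4]{blackwell1989two}. Since we must work at the lax-morphism level here, the first step is to upgrade this to the lax setting: extend $(-)'$ to lax $\cat M$-linear functors by sending $(F,\ell): (\cat C,\action) \to (\cat D,\actionn)$ to the lax linear functor $(\cat C_\strict, \action_\strict) \to (\cat D_\strict, \actionn_\strict)$ acting as $(m,c) \mapsto (m, F c)$ on objects, with the lineator on morphisms induced from $\ell$ together with the multiplicators (using the inductive description of lineators in Remark \ref{rmk:coherence-as-inductive-def}), and similarly on $\cat M$-linear transformations componentwise. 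One then checks this respects the laxity structure, i.e.\ that $(-)'$ indeed defines a 2-functor $\Act{\cat M}^\lax \to \Alg{(\cat M^\strict\times-)}^\lax_\strict$.

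Second, I would exhibit the quasi-inverse as the forgetful 2-functor $U: \Alg{(\cat M^\strict\times-)}^\lax_\strict \to \Act{\cat M}^\lax$ which regards a strict actegory as a (pseudo)actegory and leaves lax morphisms and transformations unchanged — this is legitimate because a strict algebra is in particular a pseudoalgebra, and a strict $\cat M$-actegory on $\cat M^\strict$ is canonically an $\cat M$-actegory via MacLane's monoidal equivalence $\cat M \simeq \cat M^\strict$ \cite[Theorem 1.2.23]{johnson2021}. Third, I would produce the two pseudonatural equivalences $U \circ (-)' \simeq \id$ and $(-)' \circ U \simeq \id$. The first is the heart of the matter and is supplied by Lemma \ref{lemma:strictification}: for each $\cat M$-actegory $(\cat C,\action)$ the equivalence $S: \cat C_\strict \to \cat C$ in $\Act{\cat M}$ furnishes the component, and one checks these components are pseudonatural in $(\cat C,\action)$, i.e.\ that for every lax linear $(F,\ell)$ the evident square commutes up to an invertible $\cat M$-linear transformation — this follows from the explicit formula $S(m,c) = m\action c$ and naturality of $\ell$ and the multiplicators. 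The second equivalence, $(-)' \circ U \simeq \id$ on strict actegories, is where a little care is needed: strictifying an already-strict actegory does not return it on the nose, but the canonical comparison $(\cat C_\strict, \action_\strict) \to (\cat C,\action)$ is a strict linear equivalence when $(\cat C,\action)$ is strict, and one verifies it is the unit/counit of the adjunction from the earlier remark, hence an equivalence, and pseudonatural.

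Finally, assembling: by the standard characterization, a pair of 2-functors with pseudonatural equivalences in both directions constitutes a biequivalence; and since all our categories are 2-categories (not merely bicategories) and the comparison 2-cells are invertible $\cat M$-linear transformations, this biequivalence is in fact an equivalence of 2-categories in the strict sense required, giving $\Act{\cat M}^\lax \equi \Alg{(\cat M^\strict\times-)}^\lax_\strict$. The main obstacle I anticipate is the second step — verifying that strictification extends to a genuine 2-functor on the \emph{lax}-morphism 2-categories, since \cite{blackwell1989two} phrases the adjunction primarily for strong/strict morphisms; one must check the coherence laws \eqref{diag:linear-coherence-pentag}--\eqref{diag:linear-coherence-triang} for the induced lineators, which is a routine but slightly lengthy diagram chase using the pentagon and triangle for the multiplicators. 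A secondary subtlety is bookkeeping the two distinct monoidal structures $\cat M$ and $\cat M^\strict$ throughout, exactly the caution flagged in the paragraph preceding the corollary.
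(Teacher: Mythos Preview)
Your approach is correct but takes a different route from the paper. The paper's proof is a one-line chain
\[
\Act{\cat M}^\lax = \Alg{(\cat M \times -)}_\pseudo^\lax \overset{\text{MacLane}}{\equi} \Alg{(\cat M^\strict \times -)}_\pseudo^\lax \overset{\text{Lemma~\ref{lemma:strictification}}}{\equi} \Alg{(\cat M^\strict \times -)}_\strict^\lax,
\]
factoring the equivalence into two independent steps: first replace the acting monoidal category $\cat M$ by its strictification $\cat M^\strict$ (MacLane), then pass from pseudoalgebras to strict algebras of the resulting 2-monad (the strictification lemma, ultimately \cite[Theorem~3.13]{blackwell1989two}). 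You instead build a single explicit biequivalence, constructing the 2-functors $(-)'$ and $U$ and the two pseudonatural equivalences by hand, folding the MacLane step into the definition of $U$. Your route is more concrete and self-contained, and it makes visible the issue you flag---that the \cite{blackwell1989two} adjunction is stated for strong morphisms, so extending to lax morphisms needs checking---whereas the paper's chain sweeps that under the citation. Conversely, the paper's decomposition is cleaner and modular: each step is a separately quotable equivalence, so no bespoke diagram-chasing is needed. Both arguments rest on the same ingredients; the difference is organizational.

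One minor caution: your final remark that a biequivalence between 2-categories is automatically ``an equivalence of 2-categories in the strict sense'' is not quite right as stated---a biequivalence need not upgrade to a strict 2-equivalence---but since the paper's $\equi$ denotes biequivalence anyway, this does not affect the conclusion.
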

\begin{proof}
	\begin{equation}
		\Act{\cat M}^\lax = \Alg{(\cat M \times -)}_\pseudo^\lax \overset{\text{MacLane}}\equi \Alg{(\cat M^\strict \times -)}_\pseudo^\lax \overset{\text{Lemma~\ref{lemma:strictification}}}\equi \Alg{(\cat M^\strict \times -)}_\strict^\lax.
	\end{equation}
\end{proof}

\begin{remark}
	Note that Lemma~\ref{lemma:strictification} allows us to strictify \emph{objects} but not \emph{morphisms}: indeed, in the statements of the equivalence of Corollary~\ref{cor:strictification-equiv}, we still have non-strict morphisms on both sides!
	Anyway, we observe Corollary~\ref{cor:strictification-equiv} still holds if we replace `lax linear' morphisms with `strong linear' morphisms (i.e.~$\lax$ with $\pseudo$ on both sides of the equivalence).
\end{remark}

Moreover, we have the following:

\begin{lemma}
\label{lemma:strictification-functor}
	Let $(\cat M \times \cat C, \action)$ be a free $\cat M$-actegory, for $\cat M$ strict monoidal, and let $(\cat D, \actionn)$ be any other strict $\cat M$-actegory.
	Then given a strong $\cat M$-linear functor $(F, \ell) : (\cat M \times \cat C, \action) \to (\cat D, \actionn)$, there is a unique strict linear functor $(\tilde F, =) : (\cat M \times \cat C, \action) \to (\cat D, \actionn)$ linearly and naturally isomorphic to $F$.
\end{lemma}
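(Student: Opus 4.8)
The statement is a universal-property-style rigidification: a strong linear functor out of a free actegory is uniquely isomorphic, through a linear iso, to a strict one. The natural strategy is to mimic the proof of the analogous fact for monoidal categories (a strong monoidal functor out of a free monoidal category is isomorphic to a strict one) and, more generally, the strictification-of-morphisms technique: rather than changing the target, we exploit that the source is free, so a linear functor out of it is determined by its restriction along the universal inclusion $\cat C \hookrightarrow \cat M \times \cat C$, $c \mapsto (j, c)$, together with coherence data that we can "absorb'' into a redefinition.

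\textbf{Step 1: pin down what a strong linear functor out of a free actegory amounts to.} Since $(\cat M \times \cat C, \action)$ is the free $\cat M$-actegory on $\cat C$ (Example~\ref{ex:free-act}), giving a strong $\cat M$-linear functor $(F,\ell): (\cat M \times \cat C, \action) \to (\cat D, \actionn)$ is equivalent — by the universal property of the free pseudoalgebra, or directly by Remark~\ref{rmk:coherence-as-inductive-def} — to giving just a functor $F_0 := F(j, -) : \cat C \to \cat D$, with no extra data: the value $F(m,c)$ is forced, up to the canonical iso $\ell$, to be $m \actionn F_0(c)$, because $(m,c) = m \action (j,c)$ in the free actegory and $\ell$ is invertible. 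Concretely, Remark~\ref{rmk:coherence-as-inductive-def} applied in the strict setting (both $\cat M \times \cat C$ and $\cat D$ strict) shows $\ell_{j,c} = \id$ and $\ell_{m \mtimes n, c} = (m \actionn \ell_{n,c}) \comp \ell_{m, n \action c}$, so $\ell$ is entirely generated from its "one-step'' components $\ell_{m, (j,c)} : m \actionn F_0(c) \to F(m,c)$, which are the only genuinely free parameters.

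\textbf{Step 2: build $\tilde F$.} Define $\tilde F(m,c) := m \actionn F_0(c)$ on objects. On a morphism $f : (m,c) \to (n,d)$, which by the description in Lemma~\ref{lemma:strictification} (or simply by freeness) is a pair — in the strict free actegory $\cat M \times \cat C$, a morphism $(m,c)\to(n,d)$ exists only when $m = n$ and is then a $\cat C$-morphism $g : c \to d$ — set $\tilde F(g) := m \actionn F_0(g)$. Equip $\tilde F$ with the identity lineator: $\tilde\ell_{m,(n,c)} : m \actionn \tilde F(n,c) = m \actionn (n \actionn F_0(c)) = (m \mtimes n) \actionn F_0(c) = \tilde F((m\mtimes n), c) = \tilde F(m \action (n,c))$, where the middle equality is strictness of $\actionn$ and of the free action. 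The actegory coherence diagrams \eqref{diag:linear-coherence-pentag}--\eqref{diag:linear-coherence-triang} for $(\tilde F, \id)$ then hold on the nose because every arrow in sight is an identity. So $(\tilde F, =)$ is a strict linear functor.

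\textbf{Step 3: exhibit the linear isomorphism and prove uniqueness.} Define $\theta : \tilde F \Rightarrow F$ componentwise by $\theta_{(m,c)} := \ell_{m,(j,c)} : m \actionn F_0(c) \to F(m \action (j,c)) = F(m,c)$, which is invertible since $\ell$ is. Naturality of $\theta$ in $(m,c)$ reduces to naturality of $\ell$ plus functoriality of $F_0$; the linearity condition \eqref{diag:lin-transf-coherence} for $\theta$ — comparing $\id$ (the lineator of $\tilde F$) against $\ell$ — is exactly the inductive formula for $\ell_{m\mtimes n, -}$ from Remark~\ref{rmk:coherence-as-inductive-def}, hence holds. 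For uniqueness: if $(\tilde F', =)$ is another strict linear functor with a linear iso $\theta' : \tilde F' \Rightarrow F$, then $\psi := \theta'^{-1} \comp \theta : \tilde F \Rightarrow \tilde F'$ is a linear iso between two \emph{strict} linear functors, so its linearity square \eqref{diag:lin-transf-coherence} has identity verticals, forcing $\psi_{m \action (n,c)} = m \actionn \psi_{(n,c)}$; combined with strictness this makes $\psi$ determined by its components $\psi_{(j,c)} : F_0(c) \to F_0'(c)$, and $F_0 = F(j,-) = F_0'$ with $\psi_{(j,c)}$ forced to be $\theta'^{-1}_{(j,c)} \comp \theta_{(j,c)}$; but on objects of the form $(j,c)$ the lineator-compatibility already pins these down, yielding $\tilde F = \tilde F'$ and $\psi = \id$.

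\textbf{Main obstacle.} The only real subtlety is bookkeeping: one must be careful that "strict actegory'' makes the relevant equalities — $m \actionn (n \actionn x) = (m \mtimes n) \actionn x$ and $j \actionn x = x$ on the nose — genuinely available, which is why the hypothesis $\cat M$ strict monoidal (and implicitly $\cat D$ strict) is essential; without it $\tilde F$ would only be strong, not strict. The second point requiring care is that the morphisms of the free strict actegory $\cat M \times \cat C$ really are as claimed (pairs with equal first component), so that defining $\tilde F$ on morphisms by $m \actionn F_0(-)$ is well-posed and functorial; this is immediate from the product structure but worth stating. Everything else is a routine diagram chase powered by Remark~\ref{rmk:coherence-as-inductive-def}.
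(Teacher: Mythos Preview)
Your approach is essentially the same as the paper's: define $\tilde F(m,c) := m \actionn F(j,c)$, take the isomorphism $\theta_{(m,c)} := \ell_{m,(j,c)}$, and verify linearity via the inductive formula of Remark~\ref{rmk:coherence-as-inductive-def}. That core is correct.

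There is, however, a concrete error in Step~2. You claim that in the free actegory $\cat M \times \cat C$ a morphism $(m,c)\to(n,d)$ ``exists only when $m=n$ and is then a $\cat C$-morphism $g:c\to d$''. This is false: the free $\cat M$-actegory is the ordinary product category $\cat M \times \cat C$ (Example~\ref{ex:free-act}), so a morphism $(m,c)\to(n,d)$ is a pair $(\varphi:m\to n,\,g:c\to d)$ with $\varphi$ an arbitrary morphism in $\cat M$. Your definition of $\tilde F$ on morphisms is therefore incomplete. The fix is immediate---set $\tilde F(\varphi,g) := \varphi \actionn F_0(g)$, using bifunctoriality of $\actionn$---and the rest of your argument survives, but the misdescription should be corrected (and the appeal to Lemma~\ref{lemma:strictification}, whose $\cat C_\strict$ is a different object with yet different morphisms, removed).

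A smaller point on uniqueness: you assert ``$F_0 = F(j,-) = F_0'$'' for a hypothetical second strictification $\tilde F'$, but the linear iso $\theta'_{(j,c)}:\tilde F'(j,c)\to F(j,c)$ only gives you an isomorphism, not an equality; nothing so far forces $\tilde F'(j,c)=F(j,c)$ on the nose. The paper's own uniqueness argument is terse (``the above diagram drawn for $m=j$ forces $\gamma$ to track $\ell$''), and is really showing that \emph{given} the specific $\tilde F$ constructed, the linear iso to $F$ is unique---which is what the applications (e.g.\ Proposition~\ref{prop:free-act-is-copow}) actually need. Your more ambitious reading of ``unique'' does not quite go through as written.
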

\begin{proof}
	For existence, define
	\begin{equation}
		\tilde F(m, c) = m \actionn F(j, c).
	\end{equation}
	This functor is strictly linear:
	\begin{equation}
		m \actionn \tilde F(n,c) = m \actionn (n \actionn F(j,c)) = (m \mtimes n) \actionn F(j,c) = \tilde F(m \mtimes n, j) = \tilde F(m \action (n, c)).
	\end{equation}
	The natural isomorphism $\gamma_{m,c} : \tilde F(m,c) \twoto F(m, c)$ is given by $\ell$ itself:
	\begin{equation}
		\gamma_{m,c} := \tilde F(m,c) \equalto m \actionn F(j,c) \nlongto{\ell_{m,(j,c)}} F(m \mtimes j, c) \equalto F(m, c).
	\end{equation}
	To see it is linear we draw the following:
	\begin{diagram}[sep=3ex]
		{m \actionn (n \actionn F(j,c))} \\
		{m \actionn \tilde F(n,c)} & {m \actionn F(n,c)} \\
		{\tilde F(m \mtimes n, c)} & {F(m \mtimes n, c)} \\
		{(m \mtimes n) \actionn F(j,c)}
		\arrow[Rightarrow, no head, from=2-1, to=3-1]
		\arrow["{m \actionn \gamma_{n,c}}", from=2-1, to=2-2]
		\arrow["{\ell_{m, (n,c)}}", from=2-2, to=3-2]
		\arrow["{\gamma_{m \mtimes n, c}}", from=3-1, to=3-2]
		\arrow["{m \actionn \ell_{n,c}}", from=1-1, to=2-2]
		\arrow[Rightarrow, no head, from=1-1, to=2-1]
		\arrow["{\ell_{m \mtimes n,(j,c)}}"', from=4-1, to=3-2]
		\arrow[Rightarrow, no head, from=3-1, to=4-1]
	\end{diagram}
	The external perimeter corresponds to Equation~\eqref{eq:lineator-inductive-def} for the strict case.

	For uniqueness, notice the above diagram (drawn for $m=j$) forces $\gamma$ to track $\ell$.
\end{proof}

If the reader is interested in a deeper understanding of strictification, in \cite[§3]{blackwell1989two} the equivalence of strict and pseudoalgebras is analyzed in detail, and more refined statements of the strictification lemma are provided (e.g.~ regarding exactly how it looks on morphisms and transformations of pseudoalgebras).


\subsection{The free and cofree adjunctions}
If the reader is acquainted with the relationships between adjunctions and monads, the following is unsurprising:

\begin{proposition}
\label{prop:free-forgetful-actegory}
	There is a 2-adjunction
	\begin{diagram}[ampersand replacement={\&}]
		{\Act{\cat M}^\lax} \&\& \Cat
		\arrow[""{name=0, anchor=center, inner sep=0}, "{\cat M[-]}"', shift right=2, from=1-3, to=1-1]
		\arrow[""{name=1, anchor=center, inner sep=0}, "U_{\cat M}"', shift right=2, from=1-1, to=1-3]
		\arrow["\dashv"{anchor=center, rotate=-90}, draw=none, from=0, to=1]
	\end{diagram}
	where $U$ forgets the actegorical structure and $\cat M[-]$ is the free $\cat M$-actegory (Example~\ref{ex:free-act}) functor.
\end{proposition}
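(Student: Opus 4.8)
The plan is to recognise this as nothing but the free--forgetful 2-adjunction attached to the pseudomonad $\cat M \times -$, whose pseudomonadic structure on $\Cat$ is supplied by Proposition~\ref{prop:m-times-is-pseudomonad}; just as for ordinary monads, the free construction is left adjoint to the forgetful functor, and the content is the careful handling of the 2-dimensional data. First I would pin down the three assignments: $U_{\cat M}$ takes an actegory $(\cat C,\action,\eta^\action,\mu^\action)$ to $\cat C$, a lax $\cat M$-linear functor to its carrier functor, and a linear transformation to itself; $\cat M[-]$ takes a category $X$ to the free actegory $(\cat M\times X,\ \mtimes\times X)$ of Example~\ref{ex:free-act}, and a functor $h:X\to Y$ to $\cat M\times h$ (which is strictly linear, with identity lineator).

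Next I would write down the unit and counit. The unit $\iota_X:X\to\cat M\times X$ is $x\mapsto(j,x)$, i.e.\ inclusion of the second factor along $j\mtimes-$; the counit $\epsilon_{(\cat C,\action)}:\cat M[\cat C]\to(\cat C,\action)$ is the action functor $\action$ itself, equipped with the multiplicator $\mu^\action$ as its lineator --- this is invertible, so $\epsilon$ is in fact a strong linear functor, and its coherence axioms \eqref{diag:linear-coherence-pentag}--\eqref{diag:linear-coherence-triang} are exactly the actegory axioms \eqref{diag:action-coherence-pentag} and \eqref{diag:action-coherence-ltriang}. The core of the proof is then the universal property: for each category $X$ and actegory $(\cat C,\action)$, precomposition with $\iota_X$ should be an isomorphism of hom-categories
\begin{equation*}
	\Act{\cat M}^\lax\bigl(\cat M[X],(\cat C,\action)\bigr)\ \xrightarrow{\sim}\ \Cat(X,\cat C),
\end{equation*}
whose inverse sends $g:X\to\cat C$ to the linear functor $(m,x)\mapsto m\action g(x)$ carrying the multiplicator $\mu^\action$ as lineator, and sends a natural transformation $\xi$ to its whiskering $\action\circ(\cat M\times\xi)$. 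That the two assignments are mutually inverse and 2-natural in $X$, together with the triangle identities, would all come out of chasing \eqref{diag:linear-coherence-pentag}--\eqref{diag:linear-coherence-triang}: following Remark~\ref{rmk:coherence-as-inductive-def}, on a free actegory the coherence diagrams pin the lineator down inductively from its value on the generating object $j$, so a linear functor out of $\cat M[X]$ --- and likewise a linear transformation --- is rigidly recovered from its restriction along $\iota_X$.

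The step I expect to be the main obstacle is the interaction with \emph{laxity}: a lax $\cat M$-linear functor out of $\cat M\times X$ genuinely has more room than a functor $X\to\cat C$ --- its lineator need not be invertible, and the inductive reconstruction above really only pins things down once the lineator is known at the ``atomic'' objects of $\cat M$ --- so the naive universal property is cleanest for strong morphisms and some care is needed to deploy it in $\Act{\cat M}^\lax$. The way I would route around this is to pass through the strictification: by Corollary~\ref{cor:strictification-equiv} (and its $\pseudo$-variant) one may replace $\cat M\times-$ by the genuine 2-monad $\cat M_\strict\times-$ without changing the 2-categories up to equivalence, and for an honest 2-monad the free--forgetful 2-adjunction onto the base, compatible with the forgetful tower of Definition~\ref{def:talg}, is part of the standard machinery of \cite[§1]{blackwell1989two} (see also \cite{lack_coherent_2000}); the rigidity of morphisms out of free algebras established there is what tames the lax 2-cells. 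With the adjunction in hand on the strictified side, one transports it back along the equivalences of Corollary~\ref{cor:strictification-equiv}. The remaining work --- 2-naturality of $\iota$ and $\epsilon$ and the two triangle identities --- is then routine diagram-chasing against the actegory and lineator coherences displayed above.
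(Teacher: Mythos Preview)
Your strategy is the same as the paper's in outline: reduce via strictification (Corollary~\ref{cor:strictification-equiv}) and then invoke \cite{blackwell1989two}. Where your proposal stays vague, though, the paper is doing real work, and your version leaves a gap precisely at the point you yourself flag.

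Your explicit hom-isomorphism cannot be a bijection on the lax side: the inverse you write down sends $g:X\to\cat C$ to a functor whose lineator is $\mu^\action$, which is \emph{always invertible}, so the image lies in the strong morphisms. A genuinely lax $(F,\ell):\cat M[X]\to(\cat C,\action)$ only yields a comparison $\ell_{m,(j,x)}:m\action F(j,x)\to F(m,x)$ that need not be invertible, and Remark~\ref{rmk:coherence-as-inductive-def} does not force it to be. So your direct argument proves the 2-adjunction for $\Act{\cat M}$ (strong morphisms), not for $\Act{\cat M}^\lax$.

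Your fallback---``standard machinery of \cite[\S1]{blackwell1989two}''---does not close this, because \S1 of BKP sets up the free--forgetful 2-adjunction for strict and strong morphisms, not lax ones. The paper handles the passage in two further steps you do not name: it invokes \cite[Theorem~3.13]{blackwell1989two} to obtain a 2-adjunction $\Alg{(\cat M^\strict\times-)}^\lax_\strict\leftrightarrows\Alg{(\cat M^\strict\times-)}_\strict$ bridging lax and strong morphisms, and then uses \cite[Proposition~3.1]{blackwell1989two}---the criterion that a 1-adjoint to the forgetful functor is automatically a 2-adjoint once arrow objects exist and are preserved---on the strong side. The paper explicitly constructs arrow objects $(\cat C,\action)^\downarrow$ in $\Alg{(\cat M^\strict\times-)}_\strict$ and checks $U_{\cat M^\strict}$ preserves them, then composes the resulting chain of 2-adjunctions. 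Your proposal would be complete once these two ingredients are supplied.
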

\begin{proof}
	By Corollary~\ref{cor:strictification-equiv}, $\Act{\cat M}^\lax$ is equivalent to $\Act{(\cat M^\strict \times -)}^\lax_\strict$.
	Moreover, \cite[Theorem~3.13]{blackwell1989two} gives us a 2-adjunction $\Act{(\cat M^\strict \times -)}^\lax_\strict \leftrightarrows \Act{(\cat M^\strict \times -)}_\strict$.
	It is thus sufficient to prove we have a 2-adjunction:
	\begin{equation}
		\begin{tikzcd}[ampersand replacement=\&]
			{\Act{(\cat M^\strict \times -)}_\strict} \&\& \Cat
			\arrow[""{name=0, anchor=center, inner sep=0}, "{\cat M^\strict[-]}"', shift right=2, from=1-3, to=1-1]
			\arrow[""{name=1, anchor=center, inner sep=0}, "{U_{\cat M^\strict}}"', shift right=2, from=1-1, to=1-3]
			\arrow["\dashv"{anchor=center, rotate=-90}, draw=none, from=0, to=1]
		\end{tikzcd}
	\end{equation}
	To do so, we appeal to~\cite[Proposition~3.1]{blackwell1989two} which says that if $\Act{(\cat M^\strict \times -)}_\strict$ admits arrow objects (i.e.~powers by the walking arrow category $\downarrow$) and $U_{\cat M^\strict}$ preserves them, then $U_{\cat M^\strict}$ admits a left 2-adjoint iff the underlying 1-functor admits a left 1-adjoint.
	First, let us show arrow objects in $\Act{(\cat M^\strict \times -)}_\strict$ exists. For an $\cat M$-actegory $(\cat C, \action)$, $(\cat C, \action)^\downarrow$ is the $\cat M$-action on $\cat C^\downarrow$ given
	\begin{equation}
		\begin{tikzcd}[ampersand replacement=\&]
			{\cat M \times \cat C^\downarrow} \& {\cat M^\downarrow \times \cat C^\downarrow} \& {\cat C^\downarrow.}
			\arrow["{\action^\downarrow}", from=1-2, to=1-3]
			\arrow["{\id \times \cat C^\downarrow}", from=1-1, to=1-2]
		\end{tikzcd}
	\end{equation}
	It's easy to verify that the projections $\cat C \xleftarrow{\rm dom} \cat C^\downarrow \xrightarrow{\rm cod} \cat C$ are $\cat M$-linear, as well as the natural transformation $\rm arr : dom \twoto cod$.
	Moreover, we now prove:
	\begin{equation}
		\Act{(\cat M^\strict \times -)}_\strict((\cat D, \actionn), (\cat C, \action))^\downarrow \iso \Act{(\cat M^\strict \times -)}_\strict((\cat D, \actionn), (\cat C, \action)^\downarrow).
	\end{equation}
	Indeed, the first category has linear transformations between lax linear functors $\cat D \to \cat C$ as objects.
	At the level of the underlying categories, functors $\cat D \to \cat C^\downarrow$ correspond to natural transformations.
	A lax linear structure of such a functor, call it $F$, induces analogous structures on $F^0 := F \comp {\rm dom}$ and $F^1 := F \comp {\rm cod}$, by composition.
	Moreover, the natural transformation $F^{\rm arr} : F^0 \twoto F^1$, whose components are picked by the action of $F$ on objects, is linear.
	In fact Diagram~\eqref{diag:lin-transf-coherence} is instantiated to
	\begin{equation}
		\begin{tikzcd}[ampersand replacement=\&]
			{m \action F^0(d)} \& {m \action F^1(d)} \\
			{F^0(m \actionn d)} \& {F_1(m \actionn d)}
			\arrow["{\ell^0_{m,d}}"', from=1-1, to=2-1]
			\arrow["{\ell^1_{m,d}}", from=1-2, to=2-2]
			\arrow["{m \action F^{\rm arr}_d}", from=1-1, to=1-2]
			\arrow["{F^{\rm arr}_{m \actionn d}}"', from=2-1, to=2-2]
		\end{tikzcd}
	\end{equation}
	whose commutativity is part of the well-definition of $F$ as a lax linear functor into $\cat C^\downarrow$.

	Finally, clearly $U_{\cat M^\strict}$ preserves arrow objects, hence the aforementioned result applies.
	Then we have (this is now at the level of 1-categories):
	\begin{equation}
		\Act{(\cat M^\strict \times -)}_\strict(\cat M \times \cat C, (\cat D, \actionn)) \cong \Cat(\cat C, \cat D)
	\end{equation}
	which is immediate to verify.

	To get the desired adjunction, we pre-compose the 2-adjunction we obtained with the adjoint equivalence equivalent to the strictification equivalence (see again~\cite[Remark~4.1]{blackwell1989two}).
\end{proof}

In lower dimension, the `free $M$-module' over a given \emph{set} $I$ is well-known to be given by direct sum of $I$-many copies of $M$. An analogous fact is true for actegories:

\begin{proposition}
\label{prop:free-act-is-copow}
	The free $\cat M$-actegory construction is a $\Cat$-copower in $\Act{\cat M}$:
	\begin{equation}
		\cat M[\cat C] \equi \cat C \copow \cat M.
	\end{equation}
\end{proposition}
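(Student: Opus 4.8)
The plan is to verify that $\cat M[\cat C]$ satisfies the defining 2-universal property of the $\Cat$-copower $\cat C \copow \cat M$ in the 2-category $\Act{\cat M}$, i.e.\ to produce an equivalence of categories
\[
\Act{\cat M}(\cat M[\cat C], \cat D) \equi [\cat C, \Act{\cat M}(\cat M, \cat D)]
\]
2-natural in $\cat D : \Act{\cat M}$, where on the right $\cat M$ carries its canonical left self-action (Example~\ref{ex:moncat-self-actions}). The pivotal observation is the actegorical shadow of ``$M$ is the free rank-one $M$-module'': unwinding Examples~\ref{ex:free-act} and~\ref{ex:moncat-self-actions}, the canonical isomorphism $\cat M \times 1 \cong \cat M$ transports the free-actegory action $\otimes \times - : \cat M \times \cat M \times 1 \to \cat M \times 1$, with unitor and multiplicator inherited from $\cat M$, exactly onto the self-action $\otimes$ with left unitor and associator. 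Hence $\cat M \cong \cat M[1]$ as $\cat M$-actegories.

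The remainder is a short chain of 2-natural equivalences. First, the free/forgetful 2-adjunction of Proposition~\ref{prop:free-forgetful-actegory} restricts to strong linear functors: the lax linear functor out of a free actegory $\cat M[\cat C]$ induced by a plain functor $G : \cat C \to \cat D$ is $(m, c) \mapsto m \actionn G(c)$, and its lineator is the multiplicator $\mu^\actionn$ of $\cat D$, which is invertible; so the equivalence of hom-categories supplied by the adjunction restricts to one between \emph{strong} linear functors $\cat M[\cat C] \to \cat D$ and functors $\cat C \to U_{\cat M}\cat D$. Taking $\cat C = 1$ and using $\cat M \cong \cat M[1]$ gives $\Act{\cat M}(\cat M, \cat D) \equi \Cat(1, U_{\cat M}\cat D) \cong U_{\cat M}\cat D$, the underlying category of $\cat D$. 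Applying the restricted adjunction once more, and then $[\cat C, -]$ to the previous equivalence,
\[
\Act{\cat M}(\cat M[\cat C], \cat D) \equi \Cat(\cat C, U_{\cat M}\cat D) \cong [\cat C, U_{\cat M}\cat D] \equi [\cat C, \Act{\cat M}(\cat M, \cat D)].
\]
Every step is 2-natural in $\cat D$, so the composite is the copower adjunction and $\cat M[\cat C]$ is a copower $\cat C \copow \cat M$; one reads off that the universal family $\cat C \to \Act{\cat M}(\cat M, \cat M[\cat C])$ sends an object $c$ to the strictly linear functor $\cat M \to \cat M[\cat C]$, $m \mapsto (m, c)$ (equivalently $\cat M[!_c]$, for $!_c : 1 \to \cat C$ the functor picking out $c$).

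The main thing to be careful about is pure bookkeeping: verifying that Proposition~\ref{prop:free-forgetful-actegory} genuinely restricts from lax to strong morphisms --- handled above by the invertibility of the multiplicator --- and that all the equivalences above are 2-natural in $\cat D$ rather than merely natural on objects; both follow from the cited result being a bona fide 2-adjunction and from $\cat M \cong \cat M[1]$ being an honest isomorphism of actegories, so no coherence diagram has to be drawn. One could instead give the elementwise proof, exhibiting the comparison $\cat M[\cat C] \to \cat C \copow \cat M$ by hand and checking its linearity directly, but routing through the representable characterisation above sidesteps that calculation entirely.
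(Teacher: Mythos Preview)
Your proof is correct and follows essentially the same strategy as the paper: both verify the copower universal property by reducing to the identification $\Act{\cat M}(\cat M,\cat D)\simeq U_{\cat M}\cat D$ and then invoking that $\cat M[-]\dashv U_{\cat M}$. The only cosmetic difference is that the paper obtains $\Act{\cat M}(\cat M,\cat D)\simeq U_{\cat M}\cat D$ by citing Lemma~\ref{lemma:strictification-functor} directly (strong linear functors out of a free actegory are determined up to iso by their strict part, hence by the underlying functor on the free generators), whereas you obtain it via the identification $\cat M\cong\cat M[1]$ together with Proposition~\ref{prop:free-forgetful-actegory}; these are two packagings of the same computation, and your explicit remark that the lax adjunction restricts to strong morphisms is a point the paper leaves implicit.
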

\begin{proof}
	The $\cat C$-copower of $\cat M$ in $\Act{\cat M}$ is defined by the $\cat C$-weighted colimit of the one-object diagram at $\cat M$ \cite[§3.7]{kelly1982basic}, thus we have
	\begin{equation}
		\Act{\cat M}({\colim}^{\cat C} \cat M, \cat X) \iso \Cat(\cat C, \Act{\cat M}(\cat M, \cat X)) \overset{\text{Lemma}~\ref{lemma:strictification-functor}}\iso \Cat(\cat C, U_{\cat M}(\cat X)).
	\end{equation}
	This shows that $- \copow \cat M \adj U_{\cat M}$, implying $\cat C \copow \cat M \equi \cat M[\cat C]$.
\end{proof}

We also note there exists a \emph{cofree construction} for actegories, given by $[\cat M, -]$:
\begin{eqalign}
	\ast : \cat M \times [\cat M, \cat C] &\longto [\cat M, \cat C]\\
	(m,\, F)\ &\longmapsto\ F(m \mtimes -).
\end{eqalign}

Indeed, we have the following:

\begin{proposition}
\label{prop:comonadic}
	There is a comonadic 2-adjunction:
	\begin{diagram}[ampersand replacement={\&}]
	{\Act{\cat M}^\lax} \&\& \Cat
		\arrow[""{name=0, anchor=center, inner sep=0}, "{[\cat M, -]}", shift left=2, from=1-3, to=1-1]
		\arrow[""{name=1, anchor=center, inner sep=0}, "{U_{\cat M}}", shift left=2, from=1-1, to=1-3]
		\arrow["\dashv"{anchor=center, rotate=-90}, draw=none, from=1, to=0]
	\end{diagram}
	exhibiting $\Act{\cat M}^\lax$ as the category of pseudocoalgebras of the pseudocomonad $[\cat M, -]$, whose structure is given by:
	\begin{eqalign}
		k_{\cat C} : [\cat M, \cat C] &\longto \cat C\\
		F\ &\longmapsto\ F(j)\\[2ex]
		w_{\cat C} : [\cat M, \cat C] &\longto [\cat M, [\cat M, \cat C]]\\
		F\ &\longmapsto\ (m \mapsto F(m \mtimes -)),
	\end{eqalign}
	plus the obvious left/right counitors and coassociator.
\end{proposition}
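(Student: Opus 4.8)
The plan is to mirror the structure of the proof of Proposition~\ref{prop:free-forgetful-actegory} — strictify, then reduce to a known 2-categorical fact about $\Alg{T}$ for a 2-monad $T$, then dualize. As in that proof, by Corollary~\ref{cor:strictification-equiv} we may replace $\Act{\cat M}^\lax$ by $\Alg{(\cat M^\strict\times-)}^\lax_\strict$, so it suffices to exhibit a comonadic 2-adjunction $U_{\cat M^\strict}\dashv[\cat M^\strict,-]\colon\Cat\to\Act{(\cat M^\strict\times-)}_\strict$ (and then precompose with the adjoint equivalence from the strictification, exactly as before). First I would verify that $[\cat M,-]$ really carries the claimed pseudocomonad structure: that $w$ and $k$ are well-defined functors (this is just plugging in; $w_{\cat C}(F)(m)(n)=F((m\mtimes n)\mtimes-)$ versus $F(m\mtimes(n\mtimes-))$, mediated by the associator of $\cat M$, which is the identity after strictification) and that the counitors and coassociator obey the pseudocomonad coherence laws — all of which are inherited directly from the monoidal coherence of $\cat M$, in strict parallel with the fact that $\cat M\times-$ is a pseudomonad (Proposition~\ref{prop:m-times-is-pseudomonad}). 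Dually one checks that $\ast\colon\cat M\times[\cat M,\cat C]\to[\cat M,\cat C]$, $(m,F)\mapsto F(m\mtimes-)$, is a genuine (strict, once $\cat M$ is strict) $\cat M$-action: $m\ast(n\ast F)=F((n\mtimes m)\mtimes -)$ and... — more carefully, $n\ast F = F(n\mtimes-)$ so $m\ast(n\ast F)=F(n\mtimes(m\mtimes-))=F((n\mtimes m)\mtimes-)=(n\mtimes m)\ast F$; wait, this would make it a \emph{right} action. The correct reading is that $w_{\cat C}(F)(m)=F(m\mtimes-)$ and the multiplication of the induced comonad gives the action by which $m$ acts via $F\mapsto F(m\mtimes-)$, and one must track the handedness so that the associativity square commutes on the nose; I would pin down the convention (using $F((m\mtimes n)\mtimes-)$, matching the left-action convention $\mu_{m,n,c}\colon m\action(n\action c)\iso(m\mtimes n)\action c$) and proceed.

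Next I would establish the adjunction $U_{\cat M}\dashv[\cat M,-]$ at the level of 1-categories, i.e.\ the bijection
\[
\Cat(U_{\cat M}(\cat C,\action),\cat D)\;\cong\;\Act{\cat M}\big((\cat C,\action),[\cat M,\cat D]\big),
\]
natural in both arguments. In one direction, from a functor $G\colon\cat C\to\cat D$ produce the linear functor $\hat G\colon c\mapsto G((-)\action c)\colon\cat C\to[\cat M,\cat D]$, whose lineator comes from $\mu^\action$; in the other, from a linear functor $H\colon\cat C\to[\cat M,\cat D]$ produce $k_{\cat D}\comp H$, i.e.\ $c\mapsto H(c)(j)$. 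The composites are identities using the unitor $\eta^\action$ and the triangle/pentagon coherence of the action (Diagrams~\eqref{diag:action-coherence-pentag}–\eqref{diag:action-coherence-rtriang}), in a computation dual to the one verifying $U_{\cat M}\dashv\cat M[-]$ in Proposition~\ref{prop:free-forgetful-actegory}. To upgrade this to a 2-adjunction I would again invoke~\cite[Proposition~3.1]{blackwell1989two} (arrow objects exist in $\Act{(\cat M^\strict\times-)}_\strict$ and $U_{\cat M^\strict}$ preserves them, both already shown in the proof of Proposition~\ref{prop:free-forgetful-actegory}), so that a 1-adjoint automatically yields a 2-adjoint — though one should check $U_{\cat M}$ also admits a \emph{right} adjoint, which is the dual hypothesis; since $U_{\cat M}$ preserves all limits and colimits (it is monadic \emph{and} we are claiming comonadic), adjoint functor theorem considerations on the locally presentable 2-category $\Act{\cat M}$ give existence, and the explicit formula $[\cat M,-]$ identifies it. Finally, comonadicity: the comonad generated by $U_{\cat M}\dashv[\cat M,-]$ on $\Cat$ is $U_{\cat M}[\cat M,-]=[\cat M,-]$ with the stated $w,k$, and I would check the 2-categorical (Beck) comonadicity criterion — $U_{\cat M}$ creates equalizers of $U_{\cat M}$-split pairs, in the $\Cat$-enriched sense — which holds because actegory structure, being algebraic over $\Cat$, is created by limits along $U_{\cat M}$; equivalently, invoke the dual of the 2-dimensional monadicity theorem of~\cite{blackwell1989two} for $[\cat M,-]$ viewed as a 2-comonad after strictification.

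The main obstacle I anticipate is \emph{not} the adjunction itself — that is a routine dualization of the free case — but bookkeeping the variance and laxity so that everything lines up: the claim is about $\Act{\cat M}^\lax$ (lax linear functors) being \emph{pseudo}coalgebras, and one must confirm that lax morphisms of actegories correspond precisely to lax morphisms of pseudocoalgebras under the adjunction, i.e.\ that the comparison 2-functor $\Act{\cat M}^\lax\to\mathrm{Coalg}([\cat M,-])^\lax$ is a 2-equivalence and not merely that it exists. The subtlety is that comonadicity results in the literature (e.g.~\cite{blackwell1989two, lack_coherent_2000}) are usually stated for the pseudo/strong morphism classes, so I would either (i) prove it for strong morphisms and then separately argue, as in the Remark after Corollary~\ref{cor:strictification-equiv}, that the lax variant follows by the same construction, or (ii) appeal directly to the doctrinal-adjunction machinery to transport laxity along the equivalence. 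A secondary nuisance is that $[\cat M,\cat C]$ is only a \emph{pseudo}coalgebra (the counitors and coassociator are non-identity isomorphisms built from the unitors/associator of $\cat M$) even when $\cat M$ is strict — unlike the free side, which is genuinely strict — so the comonad $[\cat M,-]$ is a pseudocomonad that does \emph{not} obviously strictify to a 2-comonad in the naive way; here I would note that since $\cat M^\strict$ is strict monoidal, $[\cat M^\strict,-]$ \emph{is} a 2-comonad on $\Cat$ (its structure maps are identities), so the strictified statement is clean and the pseudo-ness in the final statement is entirely an artifact of working with a non-strict $\cat M$, to be absorbed by the strictification equivalence.
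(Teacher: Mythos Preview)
Your outline is not wrong, but it is massively overengineered relative to the paper's argument. The paper's proof is a single observation: the tensor--hom adjunction in $\Cat$ gives a bijection
\[
\{\,\action : \cat M \times \cat C \to \cat C\,\} \;\iso\; \{\,\action : \cat C \to [\cat M,\cat C]\,\},
\]
and under this currying the pseudoalgebra data $(\eta,\mu)$ for $\cat M\times-$ transpose directly to pseudocoalgebra data for $[\cat M,-]$, with the coherence axioms transported verbatim; the same transposition carries lax morphisms of pseudoalgebras to lax morphisms of pseudocoalgebras, and likewise for 2-cells. No strictification, no Beck comonadicity, no adjoint functor theorem, no appeal to \cite{blackwell1989two} at all.

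What each approach buys: your route, via strictification and 2-monad machinery, would in principle work and is closer in spirit to how the paper handles the \emph{left} adjoint in Proposition~\ref{prop:free-forgetful-actegory}; but the asymmetry is illusory. The free side genuinely needs some argument because $\cat M[-]$ is not obviously a right adjoint of anything, whereas here $[\cat M,-]$ is \emph{by definition} the right adjoint of $\cat M\times-$ in the closed structure of $\Cat$, and the mates correspondence for a 2-adjunction formally transports (pseudo)monad structure on a left adjoint to (pseudo)comonad structure on its right adjoint, together with the identification of algebras and coalgebras. All of the obstacles you anticipate --- handedness, laxity matching, whether the comparison 2-functor is an equivalence, whether $[\cat M^\strict,-]$ is strict --- dissolve under this viewpoint: currying is an \emph{isomorphism} of hom-categories, so lax goes to lax on the nose, and nothing needs to be strictified or checked via Beck.
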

\begin{proof}
	This is a consequence of the tensor-hom adjunction in $\Cat$:
	\begin{equation}
			\{\, \action : \cat M \times \cat C \longto \cat C\,\}
		\quad
		\iso
		\quad
			\{\, \action : \cat C \longto [\cat M, \cat C]\,\}
	\end{equation}
	One easily verifies the rest of a pseudoalgebra structure for $\cat M \times -$ is carried along to a pseudocoalgebra structure of $[\cat M, -]$, and the same happens to morphisms and 2-cells.
\end{proof}

Hence actegories also admit a presentation as coalgebras. The usefulness of such presentation can be seen, for instance, in Garner's result we reported in Example~\ref{ex:tangent}. In fact that is saying that tangent categories are coalgebras of the pseudocomonad $\ncat{Tang}(\cat W, -)$, i.e.~the subcomonad of $[\cat W, -]$ obtained by restricting to functors preserving tangent limits.

\begin{remark}
	The coalgebraic presentation of actegories has a relative one dimension down.
	In fact coalgebras of $M \times -$, for $M=\Sigma^*$ a free monoid on a given (usually finite) alphabet $\Sigma=\{a,b,\ldots\}$ are `half of an automaton', lacking a choice of output map.
	Nevertheless, this `dynamical' interpretation of actegories seems worthy to explore.
	In particular we might ask: what is a 1-automaton, i.e. an automaton whose underlying states form a category instead of a mere set, and likewise does its alphabet?
\end{remark}

Finally, knowing $\Act{\cat M}^\lax$ is monadic and comonadic has various technical advantages, among which:

\begin{corollary}
	$\Act{\cat M}^\lax$ admits all pseudo and lax limits and colimits.
\end{corollary}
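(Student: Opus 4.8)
The heart of the matter is that one and the same forgetful $2$-functor $U_{\cat M} : \Act{\cat M}^\lax \to \Cat$ is \emph{monadic} with respect to its left adjoint $\cat M[-]$ (Proposition~\ref{prop:free-forgetful-actegory}) and \emph{comonadic} with respect to its right adjoint $[\cat M,-]$ (Proposition~\ref{prop:comonadic}). A monadic right adjoint creates limits, a comonadic left adjoint creates colimits, so $U_{\cat M}$ creates both; and $\Cat$, viewed as a $2$-category, possesses all small weighted $2$-limits and $2$-colimits, in particular all pseudo and lax limits and colimits. The plan is therefore to transport these along $U_{\cat M}$, the one point requiring care being that the $1$-cells here are \emph{lax} (not strong) linear functors.

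For limits, I would first replace $\Act{\cat M}^\lax$ by the strict model $\Alg{(\cat M^\strict \times -)}_\strict^\lax$ of Corollary~\ref{cor:strictification-equiv}, so that we are dealing with the $2$-category of algebras and lax morphisms of the honest $2$-monad $T = \cat M^\strict \times -$ on $\Cat$, and then invoke the results of \cite{blackwell1989two} on limits in $2$-categories of algebras: $\Cat$ has products, inserters and equifiers --- indeed all flexible limits --- hence so does $\Alg{T}^\lax$, and $U_{\cat M}$ preserves them. Since products, inserters and equifiers generate the entire class of flexible limits, and every pseudolimit and every lax limit is flexible, this suffices. Concretely the limit is formed on underlying categories, and the action together with the unitor, the multiplicator, and the lineators of the structure maps are all forced by the universal property --- a routine verification I would not spell out.

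For colimits, the point is that one cannot simply dualise \emph{inside} $\Alg{T}^\lax$, since forgetful functors of $2$-monads do not in general reflect colimits; this is exactly where the comonadic presentation of Proposition~\ref{prop:comonadic} earns its keep. Viewing $\Act{\cat M}^\lax$ instead as the $2$-category of (lax-morphism) pseudocoalgebras of the pseudocomonad $[\cat M,-]$ on $\Cat$, the same $U_{\cat M}$ is now a comonadic left adjoint, and the statement formally dual to the one used above --- flexible colimits of coalgebras are inherited from the base --- yields all pseudo and lax colimits, computed underlyingly in $\Cat$ with the coaction co-induced by the universal property.

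The hard part, to the extent there is one, will be the bookkeeping around the laxity of morphisms: for strict morphisms the forgetful functor creates \emph{all} weighted limits, but with lax morphisms only the flexible ones are guaranteed, so I must be careful to claim no more than pseudo and lax (co)limits --- which is precisely what the corollary asks --- and to lean on \cite{blackwell1989two} and its formal dual for the exact class of $2$-(co)limits that survives. One could upgrade the statement to cover all small weighted $2$-limits and colimits at the cost of observing that $\cat M \times -$ has a rank, being a left adjoint as in the proof of Lemma~\ref{lemma:strictification}; but that lies beyond the present claim.
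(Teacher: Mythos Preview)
Your proposal is correct and follows essentially the same route as the paper: invoke \cite[\S2]{blackwell1989two} for pseudo and lax limits in the category of (pseudo)algebras, and then obtain colimits by duality via the comonadic presentation of Proposition~\ref{prop:comonadic}. The paper's proof is the two-line version of what you wrote; your added care about strictification and the restriction to flexible limits for lax morphisms is welcome but not strictly needed here, since the corollary claims only pseudo and lax (co)limits.
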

\begin{proof}
	\cite[§2]{blackwell1989two} proves that categories of pseudoalgebras admit all pseudo and lax limits. By duality we obtain the rest.
\end{proof}

\subsection{The indexed 2-category \texorpdfstring{$\Actt$}{Act}}
So far we have been focusing on $\cat M$-actegories, for a single monoidal base $\cat M$.
In the below proposition we show that actegories admit a canonical notion of `change of base', or, as it is known in algebra, `restriction of scalars'.

\begin{proposition}
	\label{prop:actegories-change-of-base}
	Let $(\cat M, j, \mtimes)$ and $(\cat N, i, \ntimes)$ be monoidal categories.
	Then every strong monoidal functor $R : \cat M \to \cat N$ induces a 2-functor $R^* : \Act{\cat N}^\lax \to \Act{\cat M}^\lax$ given by \emph{\textbf{restriction of scalars}}:
	\begin{equation}
		R^*(\cat C, \action) := (\cat C,\ \cat M \times \cat C \nlongto{R \times \cat C} \cat N \times \cat C \nlongto{\action} \cat C).
	\end{equation}
\end{proposition}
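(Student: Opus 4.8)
The plan is to verify that $R^*(\cat C, \action)$ is a genuine $\cat M$-actegory, then that $R^*$ acts functorially on lax linear functors and on linear transformations, and finally that all this is $2$-functorial. For the first part, the key observation is that both the data and the coherence of an actegory are, by Proposition~\ref{prop:curried-action}, the same as a strong monoidal functor into $[\cat C, \cat C]$. Under this dictionary, $\action$ corresponds to some strong monoidal $A : \cat N \to [\cat C, \cat C]$, and $R^*(\cat C, \action)$ corresponds simply to the composite $A \circ R : \cat M \to [\cat C, \cat C]$, which is strong monoidal because a composite of strong monoidal functors is strong monoidal. Concretely, the unitor of $R^*\action$ is built from $\eta^{\action}$ and the unit comparison $R(j) \cong i$ of $R$, and the multiplicator from $\mu^{\action}$ and the product comparison $R(m) \ntimes R(n) \cong R(m \mtimes n)$; the three coherence diagrams \eqref{diag:action-coherence-pentag}--\eqref{diag:action-coherence-rtriang} then follow from those for $\action$ together with the monoidal-functor axioms for $R$ (this is exactly the pasting one does to show strong monoidal functors compose). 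I would state this using Proposition~\ref{prop:curried-action} to avoid the bookkeeping, but spell out the unitor/multiplicator explicitly for the reader.

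Next, given a lax $\cat M$-linear functor wait — we need a lax $\cat N$-linear functor $(F,\ell) : (\cat C, \action) \to (\cat D, \actionn)$, I would define $R^*(F,\ell)$ to have the same underlying functor $F$ and lineator
\begin{equation}
	(R^*\ell)_{m,c} \;:\; m \cdot_{R^*\actionn} F(c) \;=\; R(m) \actionn F(c) \;\xrightarrow{\;\ell_{R(m),c}\;}\; F(R(m) \action c) \;=\; F(m \cdot_{R^*\action} c),
\end{equation}
i.e.\ reindex $\ell$ along $R$. The two lineator coherence laws \eqref{diag:linear-coherence-pentag}--\eqref{diag:linear-coherence-triang} for $R^*(F,\ell)$ are obtained from those for $(F,\ell)$ by whiskering with $R$ and inserting the monoidal comparisons of $R$ in the evident places; invertibility of $\ell$ is preserved, so $R^*$ sends strong linear functors to strong ones. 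For an $\cat M$-linear — again, an $\cat N$-linear transformation $\xi : (F,\ell) \twoto (G,\nu)$ I would take $R^*\xi := \xi$ on underlying natural transformations; the naturality square \eqref{diag:lin-transf-coherence} at $(m,c)$ for $R^*\xi$ is literally the square for $\xi$ at $(R(m),c)$, hence commutes. Functoriality of $R^*$ on composites and identities of functors and transformations is then immediate, since $R^*$ leaves underlying data untouched and only reindexes structure morphisms along the fixed $R$.

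The main obstacle, such as it is, is purely organisational: checking the coherence pastings for the actegory structure of $R^*(\cat C,\action)$ and for the lineator of $R^*(F,\ell)$ without drowning in diagrams. My strategy for keeping this clean is to lean entirely on Proposition~\ref{prop:curried-action}: actegories on $\cat C$ are strong monoidal functors $\cat N \to [\cat C,\cat C]$, lax linear functors correspond to suitable $2$-cells between such, and $R^*$ is simply precomposition $(-) \circ R$ along the strong monoidal $R$. Precomposition by a strong monoidal functor is a standard $2$-functor on the $2$-category $\MonCat$ (or rather on the relevant slice/comma structure), so once the correspondence is set up, $2$-functoriality of $R^*$ is inherited rather than recomputed. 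I would therefore present the explicit formulas above as the ``computational'' description and remark that the conceptual content is exactly precomposition along $R$ after currying, leaving the remaining verifications as routine. (One should also note in passing that when $R = \id_{\cat M}$ one recovers the identity $2$-functor, and when $R$ is composed the assignment $R \mapsto R^*$ reverses composition strictly — this is what makes $\Actt$ into an indexed $2$-category, anticipating Proposition~\ref{prop:actegories-indexed-over-moncat}.)
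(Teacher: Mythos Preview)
Your proposal is correct and, at the level of explicit formulas, matches the paper's proof closely: both define the restricted unitor and multiplicator from those of $\action$ together with the monoidal comparisons $(\epsilon,\varpi)$ of $R$, both define $R^*(F,\ell)$ by keeping $F$ and reindexing the lineator along $R$, both observe that $R^*\xi = \xi$ with the linearity square at $(m,c)$ being the original square at $(R(m),c)$, and both leave 2-functoriality as routine.

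The one genuine difference is your organising principle. You invoke Proposition~\ref{prop:curried-action} to recast the object-level verification as ``composite of strong monoidal functors is strong monoidal'', and you extend this framing to morphisms by asserting that lax linear functors correspond to suitable 2-cells after currying. The paper deliberately avoids this: Proposition~\ref{prop:curried-action} as stated is only a bijection on \emph{objects}, and the remark following it explicitly warns that the curried presentation is ``considerably less elegant when it comes to their morphisms, 2-morphisms and other constructions''; the full categorical equivalence is deferred to Section~\ref{subsec:classifying}. So your conceptual shortcut for the object part is legitimate and does save bookkeeping, but your appeal to currying for the 1- and 2-cell parts is not yet available at this point in the paper. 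Since you also supply the explicit descriptions, this does not create a gap---it just means the curried narrative should be presented as motivation rather than as the actual argument for morphisms.
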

\begin{proof}
	First of all, let's complete the definition on objects. Denote by $(\epsilon, \varpi)$ the monoidal structure of $R$. Then the unitor of $R^*(\cat C, \action)$ is given by
	\begin{equation}
		\eta^{R^*\action}_c : c \nlongto{\eta^\action_c} i \action c \nlongto{\epsilon \action c} R(j) \action c
	\end{equation}
	while the multiplicator is defined as
	\begin{equation}
		\mu^{R^*\action}_{m,m',c} : R(m) \action (R(m') \action c) \nlongto{\mu^\action_{R(m), R(m'), c}} (R(m) \ntimes R(m')) \action c \nlongto{\varpi_{m,m'} \action c} R(m \mtimes m') \action c.
	\end{equation}
  Now let $(\cat C, \action)$ and $(\cat D, \actionn)$ be $\cat N$-actegories and $(F, \ell)$ a lax $\cat N$-linear functor between them.
	We define the action of $R^*$ on this to be $R^*(F, \ell) := (F, R^*\ell)$ where $R^*\ell$ denotes the 2-cell:
	\begin{diagram}[sep=4.5ex]
		{\cat M\times \cat C} & {\cat M \times \cat D} \\
		{\cat N \times \cat C} & {\cat N \times \cat D} \\
		{\cat C} & {\cat D}
		\arrow["{R \times \cat C}"', from=1-1, to=2-1]
		\arrow["\action"', from=2-1, to=3-1]
		\arrow["F", from=3-1, to=3-2]
		\arrow["\actionn", from=2-2, to=3-2]
		\arrow["{\cat N \times F}", from=2-1, to=2-2]
		\arrow["\ell"', shorten <=12pt, shorten >=12pt, Rightarrow, from=2-2, to=3-1]
		\arrow["{R \times \cat D}", from=1-2, to=2-2]
		\arrow["{\cat M \times F}", from=1-1, to=1-2]
	\end{diagram}
	defined by whiskering along $R \times \cat C$, hence whose components are
	\begin{equation}
		R^*\ell_{n, c} : R(n) \actionn F(c) \isolongto F(R(n) \action c).
	\end{equation}

	Finally, let $\varphi : (F, \ell) \twoto (G, \nu)$ be an $\cat M$-linear natural transformation. Since $F$ and $G$ are mapped to themselves, $\varphi$ is still a well-defined natural transformation.
	Its $\cat N$-linearity can be concluded by observing that the squares which have to commute \eqref{diag:lin-transf-coherence} are a subset of those that commute by assumption of $\cat M$-linearity (since every action of a scalar $n:\cat N$ is mediated by the $\cat M$-action).

	Functoriality and 2-functoriality can be verified by routine.
\end{proof}

\begin{remark}
	In Proposition~\ref{prop:extension-of-scalars}, we prove each of these functors $R^*$ (when restricted to strong linear functors) is actually a right adjoint, with its left adjoint $R_!$ being known as \emph{extension of scalars}.
\end{remark}

\begin{example}
	In Example~\ref{ex:stefanou} we described an action of $\mathcal O(X)$ on $\Top/X$, for a given topological space $X$.
	Observe this action can be obtained from the canonical left self-action (Example~\ref{ex:moncat-self-actions}) of $(\Top/X, 1, \times_X)$ by restriction along the evident monoidal functor $\mathcal O(X) \to \Top/X$.
\end{example}

Furthermore, a monoidal natural transformation between monoidal functors induces a natural transformation between the corresponding restriction functors:

\begin{proposition}
\label{prop:actegories-change-of-base-nattrans}
	Given strong monoidal functors $R, S : \cat M \to \cat N$ and a monoidal natural transformation $\alpha : R \twoto S$, there is a strictly 2-natural transformation (notice the change in direction)
	\begin{equation}
		\alpha^* : S^* \longtwoto R^*
	\end{equation}
	between the corresponding restrictions $S^*, R^* : \Act{\cat N}^\lax \to \Act{\cat M}^\lax$.
\end{proposition}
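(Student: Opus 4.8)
The plan is to define $\alpha^*$ by an explicit formula on components and then check (strict) $2$-naturality by unwinding definitions. Write $(\epsilon^R,\varpi^R)$ and $(\epsilon^S,\varpi^S)$ for the monoidal structures of $R$ and $S$. By Proposition~\ref{prop:actegories-change-of-base}, for an $\cat N$-actegory $(\cat C,\action)$ the two $\cat M$-actegories $S^*(\cat C,\action)$ and $R^*(\cat C,\action)$ share the underlying category $\cat C$, their actions being $m\mapsto S(m)\action c$ and $m\mapsto R(m)\action c$. Since $\alpha$ supplies a morphism $\alpha_m:R(m)\to S(m)$ for each $m:\cat M$, whiskering by $\action$ gives morphisms $\alpha_m\action c:R(m)\action c\to S(m)\action c$, which have exactly the type of a lineator for the identity functor regarded as a map $S^*(\cat C,\action)\to R^*(\cat C,\action)$. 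I would therefore set
\[
\alpha^*_{(\cat C,\action)}:=\bigl(\id_{\cat C},\ \ell^\alpha\bigr),\qquad \ell^\alpha_{m,c}:=\alpha_m\action c,
\]
with naturality of $\ell^\alpha$ in $(m,c)$ immediate from bifunctoriality of $\action$ and naturality of $\alpha$.

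The first thing to verify is that $(\id_{\cat C},\ell^\alpha)$ really is a lax $\cat M$-linear functor, i.e.\ that it obeys Diagrams~\eqref{diag:linear-coherence-pentag} and~\eqref{diag:linear-coherence-triang}. Substituting the explicit unitors $\eta^{R^*\action}_c=\eta^\action_c\comp(\epsilon^R\action c)$ and $\eta^{S^*\action}_c=\eta^\action_c\comp(\epsilon^S\action c)$ of Proposition~\ref{prop:actegories-change-of-base}, the triangle becomes $-\action c$ applied to the unit axiom $\epsilon^R\comp\alpha_j=\epsilon^S$ for the monoidal natural transformation $\alpha$, post-composed with the isomorphism $\eta^\action_c$. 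The pentagon is the one computation with any content: substituting $\mu^{R^*\action}_{m,n,c}=\mu^\action_{R(m),R(n),c}\comp(\varpi^R_{m,n}\action c)$ and $\mu^{S^*\action}_{m,n,c}=\mu^\action_{S(m),S(n),c}\comp(\varpi^S_{m,n}\action c)$, and using bifunctoriality of $\action$ to fuse the two lineators in the top leg into $\alpha_m\action(\alpha_n\action c)$, the diagram factors as a naturality square for $\mu^\action$ (instantiated at $\alpha_m$, $\alpha_n$, $\id_c$) pasted, after the common prefix $\mu^\action_{R(m),R(n),c}$, against the image under $-\action c$ of the multiplicativity axiom $(\alpha_m\ntimes\alpha_n)\comp\varpi^S_{m,n}=\varpi^R_{m,n}\comp\alpha_{m\mtimes n}$ for $\alpha$. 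I would display this as a single pasting diagram; this is the step I expect to be the main (and essentially the only) obstacle, although it is entirely mechanical.

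Finally I would establish strict $2$-naturality. Given a lax $\cat N$-linear functor $(G,k):(\cat C,\action)\to(\cat D,\actionn)$, the composites $\alpha^*_{(\cat C,\action)}\comp R^*(G,k)$ and $S^*(G,k)\comp\alpha^*_{(\cat D,\actionn)}$ both have underlying functor $G$ (each of $R^*$, $S^*$ and $\alpha^*$ being carrier-preserving), so it is enough to compare their lineators. By the composition rule for lax morphisms of pseudoalgebras, the former has lineator $k_{R(m),c}\comp G(\alpha_m\action c)$ at $(m,c)$ and the latter has $(\alpha_m\actionn G c)\comp k_{S(m),c}$; these agree exactly by naturality of the family $k$ along the morphism $\alpha_m:R(m)\to S(m)$ of $\cat N$. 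For $2$-cells, an $\cat N$-linear transformation $\varphi$ is carried by both $R^*$ and $S^*$ to itself (Proposition~\ref{prop:actegories-change-of-base}), and whiskering it by the identity-on-carriers components $\alpha^*_{(-)}$ does nothing, so the required modification equation holds on the nose. Hence $\alpha^*$ is a strictly $2$-natural transformation, as claimed.
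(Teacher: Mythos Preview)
Your proof is correct and follows essentially the same approach as the paper's own proof: define the component at $(\cat C,\action)$ to be the identity-on-carriers functor with lineator $\alpha_m\action c$, observe that the lineator coherence reduces to the two axioms of a monoidal natural transformation, and deduce $1$-naturality from naturality of the given lineator $k$ in its first variable. Your account is more detailed than the paper's (which omits the pentagon computation and the $2$-cell check), but the structure and key observations are the same.
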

\begin{proof}
	For each actegory $(\cat D, \actionn)$ in $\Act{\cat N}^\lax$ the natural transformation $\alpha^*$ picks out a lax $\cat M$-linear morphism $(1_{\cat D}, \ell^\alpha_{(\cat D, \actionn)}) : S^*(\cat D, \actionn) \longto R^*(\cat D, \actionn)$ defined as
	\begin{equation}
		\ell^\alpha_{(\cat D, \actionn)} := \alpha_{m} \actionn d : R(m) \actionn d \to S(m) \actionn d.
	\end{equation}
	It is evident $\ell^\alpha_{(\cat D, \actionn)}$ is strictly natural because is the whiskering of strictly natural transformations, and monoidality of $\alpha$ makes $\ell^\alpha_{(\cat D, \actionn)}$ a well-defined lineator (i.e.~its coherence as monoidal transformation maps to coherence for $\ell^\alpha_{(\cat D, \actionn)})$.
	The fact $\ell^\alpha$ is natural amounts to check that for a given lax $\cat N$-linear functor $(F, \nu) : (\cat D, \actionn) \to (\cat C, \action)$, the following commutes:
	\begin{diagram}
		{R(m) \action F(d)} & {S(m)\action F(d)} \\
		{F(R(m)\actionn d)} & {F(S(m) \actionn d)}
		\arrow["{\nu_{R(m),d}}"', from=1-1, to=2-1]
		\arrow["{\nu_{S(m),d}}", from=1-2, to=2-2]
		\arrow["{F(\alpha_m \actionn d)}"', from=2-1, to=2-2]
		\arrow["{\alpha_m \action F(d)}", from=1-1, to=1-2]
	\end{diagram}
	But this is a naturality square for $\ell$, hence commutes by assumption.
\end{proof}

\begin{proposition}
\label{prop:actegories-indexed-over-moncat}
	The assignment $\cat M \mapsto \Act{\cat M}^\lax$ extends to an indexed 2-category
	\begin{equation}
		\Act{(-)}^\lax : \MonCat^\coop \to 2\Cat;
	\end{equation}
	whose action on a strong monoidal functor $R:\cat M \to \cat N$ is defined in Proposition~\ref{prop:actegories-change-of-base} and whose action on a monoidal natural transformation $\alpha : R \twoto S$ is defined in Proposition~\ref{prop:actegories-change-of-base-nattrans}.
\end{proposition}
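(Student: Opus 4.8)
The plan is to check that the three assignments already in hand --- $\cat M \mapsto \Act{\cat M}^\lax$ on objects, $R \mapsto R^*$ of Proposition~\ref{prop:actegories-change-of-base} on 1-cells, and $\alpha \mapsto \alpha^*$ of Proposition~\ref{prop:actegories-change-of-base-nattrans} on 2-cells --- satisfy the (strict) 2-functor axioms out of $\MonCat^\coop$. The two variance reversals are exactly what $\coop$ records: reversing 1-cells sends a strong monoidal $R:\cat M\to\cat N$ to a morphism $\cat N\to\cat M$, matching $R^*:\Act{\cat N}^\lax\to\Act{\cat M}^\lax$; reversing 2-cells then sends a monoidal transformation $\alpha:R\twoto S$ to one pointing the other way, matching $\alpha^*:S^*\twoto R^*$. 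Since each $R^*$ is already a strict 2-functor and each $\alpha^*$ a strictly 2-natural transformation, all that is left is functoriality \emph{of the assignment $(-)^*$ itself}, and I expect every comparison cell to be an identity, so that $\Act{(-)}^\lax$ is in fact a \emph{strict} 2-functor (in particular an indexed 2-category).

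First I would dispatch the 1-cells. For the identity, $\id_{\cat M}^*(\cat C,\action)$ has underlying action $\action\circ(\id\times\cat C)=\action$, and the monoidal structure $(\epsilon,\varpi)$ of $\id_{\cat M}$ being trivial, its unitor and multiplicator are unchanged; similarly on lax linear functors and linear transformations, so $\id_{\cat M}^*=\id_{\Act{\cat M}^\lax}$ on the nose. For composition, given $\cat M\xrightarrow{R}\cat N\xrightarrow{S}\cat P$ in $\MonCat$, I would verify $(S\circ R)^*=R^*\circ S^*$ as 2-functors $\Act{\cat P}^\lax\to\Act{\cat M}^\lax$ level by level: on an actegory $(\cat C,\action)$ the underlying action functors agree because $(S\circ R)\times\cat C=(S\times\cat C)\circ(R\times\cat C)$, and the unitors (resp.\ multiplicators) agree because the monoidal structure of $S\circ R$ is by definition the pasting of those of $R$ and $S$ --- which is precisely the nesting that applying $R^*$ after $S^*$ produces in the formulas of Proposition~\ref{prop:actegories-change-of-base}; on a lax linear functor $(F,\ell)$ the lineator of $(S\circ R)^*(F,\ell)$ is the whiskering of $\ell$ along $(S\circ R)\times\cat C$, hence the same 2-cell as whiskering twice, i.e.\ $R^*S^*\ell$; and linear transformations are carried along unchanged by every restriction 2-functor, so agree trivially.

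Next I would handle the 2-cells, for which it is enough to check identity 2-cells, vertical composition, and whiskering by 1-cells. Identities: $(\id_R)^*$ has lineator components $(\id_R)_m\actionn d=\id$, so $(\id_R)^*=\id_{R^*}$. Vertical composition: for $\alpha:R\twoto S$ and $\beta:S\twoto Q$ in $\MonCat$, the component of $(\beta\cdot\alpha)^*$ at $(\cat D,\actionn)$ has lineator $(\beta\cdot\alpha)_m\actionn d$, and functoriality of $-\actionn d$ splits this as the composite of $\alpha_m\actionn d$ and $\beta_m\actionn d$ --- i.e.\ $\alpha^*\cdot\beta^*$ with the order reversed, as $\coop$ demands. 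Whiskering of $\alpha^*$ by a restriction 2-functor $T^*$ reduces, componentwise, to functoriality of the actions together with the naturality squares already established in Proposition~\ref{prop:actegories-change-of-base-nattrans}. Putting these checks together yields strict 2-functoriality, hence the claimed indexed 2-category $\Act{(-)}^\lax:\MonCat^\coop\to 2\Cat$.

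The genuinely delicate part is not any single calculation but the bookkeeping: one must keep the two direction reversals of $\coop$ straight, and one must fix the convention for the monoidal structure of a composite strong monoidal functor so that $(S\circ R)^*=R^*\circ S^*$ holds on the nose rather than merely up to a coherent isomorphism. Should one prefer to stay agnostic about that convention, the fallback is to supply invertible comparison 2-cells $(S\circ R)^*\twoto R^*\circ S^*$ and $\id_{\cat M}^*\twoto\id$, assembled from the coherence isomorphisms for composites of monoidal functors, and to verify the two pseudofunctor coherence axioms; these commute because the underlying categorical data is literally untouched and the monoidal-functor coherence diagrams take care of the structure cells. Either way the conclusion is the same.
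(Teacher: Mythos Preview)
The paper does not actually prove this proposition: it is stated without proof, the authors apparently regarding it as an immediate consequence of the two preceding propositions (the definition of $R^*$ on 1-cells and $\alpha^*$ on 2-cells). Your proposal therefore does strictly more than the paper --- you supply the functoriality checks that the paper leaves implicit.

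Your argument is correct. The level-by-level verification (identities and composites on 1-cells, identities, vertical composites, and whiskerings on 2-cells) is exactly what is needed, and each step reduces to the associativity of composition in $\Cat$ and functoriality of the actions, as you observe. Your remark about the bookkeeping is apt: the only place where strictness could fail is in the comparison $(S\circ R)^* = R^*\circ S^*$, which depends on the convention for the monoidal structure of a composite strong monoidal functor; you handle this honestly by offering the pseudofunctor fallback. Given that the paper's statement says ``indexed 2-category'' without committing to strictness, either reading is acceptable, and your treatment covers both.
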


By effecting a bicategorical Grothendieck construction \cite[Definition 10.7.2]{johnson2021}, we obtain a 2-category of actegories over an arbitrary base that we denote by $\Actt$.

The 2-category $\Actt$ has pairs $(\cat M : \MonCat, (\cat C, \action) : \Act{\cat M}^\lax)$ as objects.
A morphism from an $\cat M$-actegory $(\cat C, \action)$ to an $\cat N$-actegory $(\cat D, \actionn)$ consists of a strong monoidal functor $R : \cat M \to \cat N$, and a lax $\cat M$-linear functor between between actegories $(\cat C, \action)$ and $(\cat D, (R \times \cat D) \comp \actionn)$.
This lax linear functor itself consists of a functor $R^\sharp : \cat C \to \cat D$ and a lineator $\ell_{m, c} : R(m) \actionn R^{\sharp}(c) \isolongto R^{\sharp}(m \action c)$.
All in all, the data of a morphism in $\Actt$ consist of a triple $(R,
R^{\sharp}, \ell)$ that can be neatly arranged in a lax commutative square:

\begin{diagram}[row sep= 4ex, column sep=4ex]
    \label{eq:act_comm_square}
    {\cat M \times \cat C} && {N \times \cat D} \\
    \\
    {\cat C} && {\cat D}
    \arrow["\action"', from=1-1, to=3-1]
    \arrow["{R^{\sharp}}"', from=3-1, to=3-3]
    \arrow["\actionn", from=1-3, to=3-3]
    \arrow["{R \times R^{\sharp}}", from=1-1, to=1-3]
    \arrow["\ell"', shorten <=18pt, shorten >=18pt, Rightarrow, from=1-3, to=3-1]
\end{diagram}

A 2-cell in $\Actt$ between $(R, R^{\sharp}, \ell)$ and $(S, S^{\sharp}, \nu)$ consists of a pair $(\alpha, \alpha^{\sharp})$, where $\alpha : R \twoto S$ is a monoidal natural transformation and $\alpha^{\sharp}$ is an $\cat M$-linear transformation filling the following lax triangle in $\Act{\cat M}^\lax$:

\begin{diagram}[row sep=2ex, column sep=2ex]
	{} &&& {R^*(\cat D, \actionn)} \\
	\\
	{(\cat C, \action)} \\
	\\
	& {} && {S^*(\cat D, \actionn)} & {}
	\arrow["{(S^{\sharp}, \nu)}"', from=3-1, to=5-4]
	\arrow["{(R^{\sharp}, \ell)}", from=3-1, to=1-4]
	\arrow["{\alpha^*_{(\cat D, \actionn)}}"', from=5-4, to=1-4]
	\arrow["{\alpha^{\sharp}}"', shorten <=58pt, shorten >=58pt, Rightarrow, from=1-1, to=5-5]
\end{diagram}

\begin{remark}
\label{rmk:cartesian-factorization}
	There is a cartesian factorization system on $\Actt$ deriving from the indexing it was born from. Its left maps are vertical maps, i.e.~lax linear functors as described in Definition~\ref{def:linear-functor}, living inside a given fibre. Right maps are cartesian maps, which turn out to be maps between different actegorical structures on a fixed category $\cat C$.
	\begin{equation}
		(R, 1_{R^*(\cat C, \action)}, {=}) : (\cat M, (\cat C, \action)) \longto (\cat N, (\cat C, \actionn)).
	\end{equation}
	Additionally, 2-cells between such morphisms are 2-cells $(\alpha, {=})$ in $\Actt$, i.e.~2-cells whose vertical part is trivial.
	We already have a name for the `vertical subcategories' of $\Actt$, namely its fibres $\Act{\cat M}^\lax$ as $\cat M$ varies.
	Correspondingly, its cartesian subcategories (the `transverse fibres') will be denoted by $\Actt^\cart(\cat C)$, as $\cat C$ varies, not to be confused with $\Act{\cat C}$, which would denote actions \emph{of} $\cat C$.
\end{remark}

\begin{example}
\label{ex:curried-action-terminal}
	In Proposition~\ref{prop:curried-action}, we have seen how every action can be curried in order to get a strong monoidal functor into $[\cat C, \cat C]$.
	We can now rephrase this observation as the fact that the evaluation action
	\begin{eqalign}
		\eval : [\cat C, \cat C] \times \cat C &\longto \cat C\\
		(F, c) &\longmapsto F(c)
	\end{eqalign}
	is (pseudo)terminal in $\Actt^\cart(\cat C)$, meaning every other action $\action$ of a monoidal category $\cat M$ factors uniquely through it:
	\begin{diagram}
		{[\cat C,\cat C] \times \cat C} & {\cat C} \\
		{\cat M \times \cat C}
		\arrow["\action"', from=2-1, to=1-2]
		\arrow["\eval", from=1-1, to=1-2]
		\arrow["{\exists!\, \curr(\action) \times \cat C}", dashed, from=2-1, to=1-1]
	\end{diagram}
\end{example}

It is a useful, if trivial, fact that every monoidal category gives an example of actegory.
Whence, we record the following:

\begin{proposition}
\label{prop:moncat-embed-act}
	The canonical fibration $\pi : \Actt^\lax \to \MonCat$ has a section $\Upsilon: \MonCat \to \Actt^\lax$ that maps each monoidal category to left self-action.
\end{proposition}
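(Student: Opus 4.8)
The plan is to construct $\Upsilon$ by hand on objects, $1$-cells and $2$-cells, exploiting the fact (Example~\ref{ex:moncat-self-actions}) that every monoidal category acts on itself, and then to observe that the coherence one is forced to check is literally the axioms for lax monoidal functors and monoidal transformations. On objects, set $\Upsilon(\cat M) := (\cat M, (\cat M, \mtimes))$, the canonical left self-action. On a strong monoidal functor $R : \cat M \to \cat N$ with monoidal structure $(\epsilon, \varpi)$ (notation as in Proposition~\ref{prop:actegories-change-of-base}), set $\Upsilon(R) := (R, R, \varpi)$: the base component is $R$ itself, the fibre functor $R^\sharp$ is again the underlying functor of $R$, and the lineator is $\varpi_{m,m'} : R(m) \ntimes R(m') \to R(m \mtimes m')$, which is invertible since $R$ is strong. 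On a monoidal natural transformation $\alpha : R \twoto S$, set $\Upsilon(\alpha) := (\alpha, \alpha)$, with both the base $2$-cell and the fibre $\cat M$-linear transformation $\alpha^\sharp$ taken to be $\alpha$. By construction $\pi \circ \Upsilon = \id_{\MonCat}$ strictly, so the work is to check that $\Upsilon$ lands in $\Actt^\lax$ and is a $2$-functor.

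The one substantive verification is that $(R, \varpi)$ is a lax $\cat M$-linear functor $(\cat M, \mtimes) \to R^*(\cat N, \ntimes)$. Recalling from Proposition~\ref{prop:actegories-change-of-base} that the unitor and multiplicator of $R^*(\cat N, \ntimes)$ are assembled from $\epsilon$, $\varpi$ and the unitors and associator of $\cat N$, one substitutes these into the linear-functor pentagon~\eqref{diag:linear-coherence-pentag} and triangle~\eqref{diag:linear-coherence-triang} and, after cancelling the structural isomorphisms of $\cat N$, is left with precisely the associativity and left-unitality axioms of the lax monoidal functor $R$ (in the form of \cite[§XI.2]{WorkingMathematician}). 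Dually, the $\cat M$-linearity square~\eqref{diag:lin-transf-coherence} for $\alpha^\sharp = \alpha$, once the lineator $\alpha_m \ntimes -$ of the morphism $\alpha^*_{(\cat N, \ntimes)}$ from Proposition~\ref{prop:actegories-change-of-base-nattrans} is plugged in, becomes exactly the coherence axiom of the monoidal transformation $\alpha$. Hence $\Upsilon$ is well-defined on objects, $1$-cells and $2$-cells.

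It then remains to see $\Upsilon$ respects identities and composition. Identities are immediate: $\varpi$ for $\id_{\cat M}$ is an identity and $\alpha = \id$ goes to $\id$. For composition, one compares the formula for the laxator of a composite of strong monoidal functors with the description of composition of morphisms in $\Actt^\lax$ spelled out after Proposition~\ref{prop:actegories-indexed-over-moncat}; in both cases the composite lineator/laxator is produced by whiskering and pasting along the base functor, so they agree, and the structural $2$-cells of $\Actt^\lax$ that could intervene are trivial here because the fibre components have been chosen to coincide with the base functors. Vertical and horizontal composition of $2$-cells is handled the same way, yielding a (strict) $2$-functor $\Upsilon$ with $\pi\Upsilon = \id_{\MonCat}$.

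The only point requiring genuine care is the well-definedness check in the second paragraph: one must push the linear-functor coherence of $(R, \varpi)$ through the somewhat intricate description of the restricted action $R^*(\cat N, \ntimes)$. Morally this is the slogan ``a morphism of self-actegories over $R$ whose fibre part is $R$ is a monoidal structure on $R$'', and it is a direct diagram chase with no delicate step; I would relegate the explicit diagrams to Appendix~\ref{appendix:proofs} alongside the other coherence verifications.
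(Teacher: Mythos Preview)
Your proof is correct and follows essentially the same approach as the paper: define $\Upsilon$ on objects by the canonical self-action, on $1$-cells by $(R, R, \varpi)$ with the laxator as lineator, and on $2$-cells by $(\alpha, \alpha)$, reducing the linearity checks to the monoidal functor and monoidal transformation axioms. The paper's proof is terser and does not spell out the functoriality verifications you include in your final paragraph, but the content is the same.
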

\begin{proof}
	On objects, this has been defined in Example~\ref{ex:moncat-self-actions}.
	On morphisms, a strong monoidal functor $(R, \epsilon, \mu) : (\cat M, j, \mtimes) \to (\cat N, i, \ntimes)$ is sent to the triple $(R, R, \mu)$.
	The fact that the functors between the scalars and the underlying categories of the actegories are actually the same strong monoidal functor $R$ allows us to use the laxator $\mu$ as lineator. Indeed, $(R, \mu)$ forms a lax $\cat M$-linear morphism of actegories $(\cat M, \mtimes) \to R^*(\cat N, \ntimes)$.
	Lastly, a monoidal natural transformation $\alpha : R \twoto S$ is sent to the pair $(\alpha, \alpha)$, where the $\cat M$-linearity of the second component follows from the monoidality of $\alpha$.
\end{proof}


	\newpage
	\section{Composing actegories}
\label{sec:composition}
In this section we explore some ways actegories can be combined together to give rise to new ones.
Although all of the following can be obtained by general abstract considerations, we deem interesting to explain the way we got some of these definitions, and the way we think about them, coming from operations on parametric morphisms. This will yield the cartesian and cocartesian products (Proposition~\ref{prop:cartesian-product-act} and~\ref{prop:coproduct-act}) and an `hybrid' we call \emph{external choice} (Definition~\ref{def:extch-prod-act}) by analogy with the same operation on open games and servers \cite{capucci2021translating, videla2022lenses}.
Besides those, we are going to consider other ways to combine actegories, notably their \textbf{tensor product} (Definition~\ref{def:tensor-actegories}) and the corresponding \textbf{internal hom} construction (Proposition~\ref{prop:biact-internal-hom}), which are suggested by the algebra of the situation.
Throughout this section, let $(\cat M, j, \mtimes)$ and $(\cat N, i, \ntimes)$
be two monoidal categories.

\subsection{Actegories as parametric morphisms}
\label{subsec:actegories-as-para}
An action $\action : \cat M \times \cat C \to \cat C$ can be seen as an $\cat M$-parametric morphism $\cat C \to \cat C$, and as such it can be conceptualized as a process which consumes $\cat C$, produces $\cat C$, and whose execution is commanded by a choice of $\cat M$, which we picture as chosen by an agent `guiding' the execution of the process.\footnote{This `game semantics' of parametric morphisms is discussed in detail in \cite{capucci2021towards, paratalk}.}
Therefore, actions of monoidal categories can be effectively represented as morphisms in $\Para(\Cat)$, the bicategory of parametric functors in $\Cat$ (for a definition of $\Para$, see \cite[Definition 2]{capucci2021towards}).

Since $\Cat$ is rich in structure, $\Para(\Cat)$ becomes a monoidal category in two ways:

\begin{enumerate}
	\item By \textbf{parallel product}, that is, by putting parametric morphisms in
	parallel, thereby taking the product of all three boundaries:
	\begin{figure}[H]
		\label{fig:parallel}
		\centering
		\includegraphics[width=.3\textwidth]{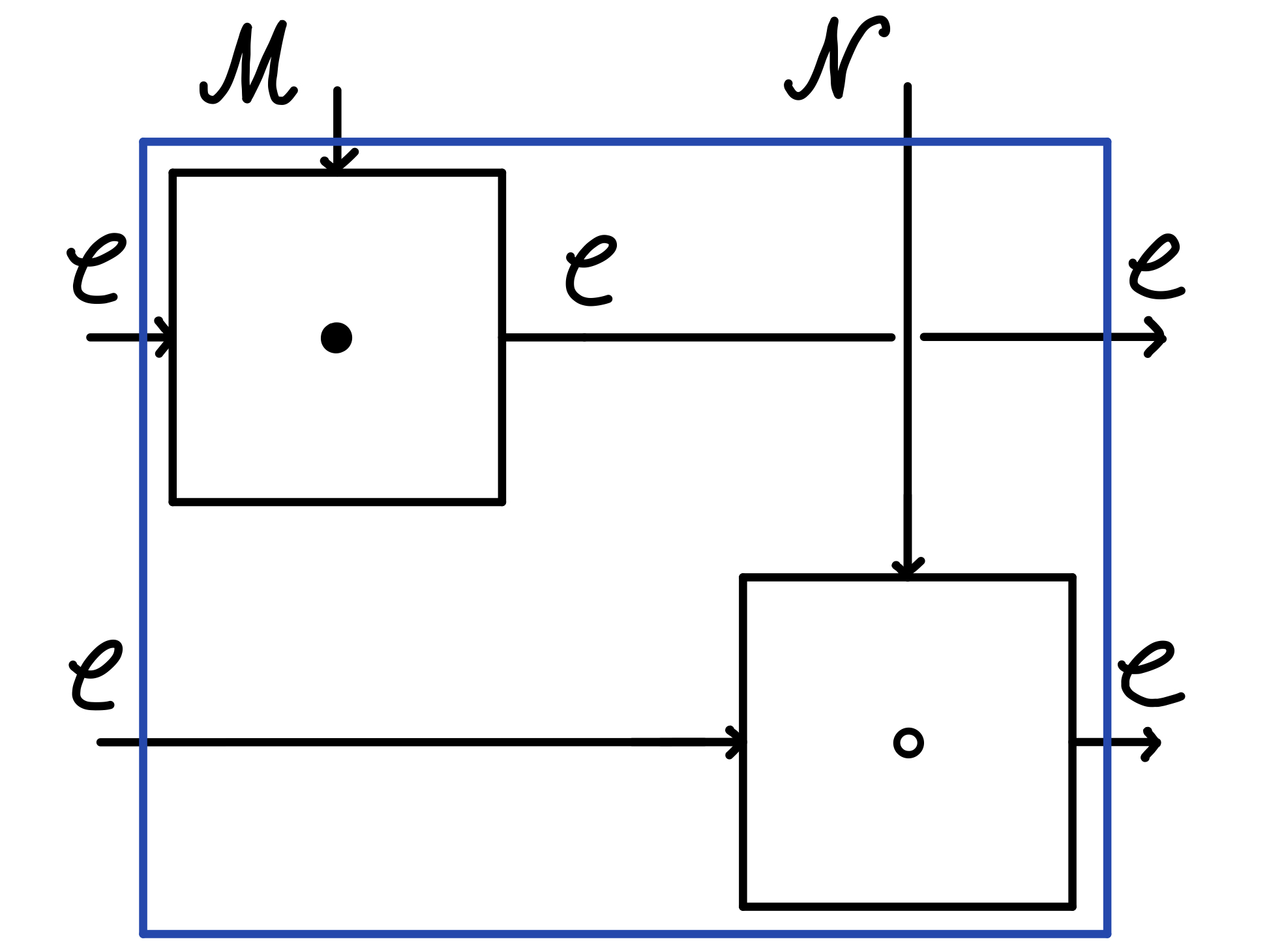}
	\end{figure}
	Concretely, we can do this thanks to the symmetry of $\times$:
	\begin{equation}
	\label{eq:parallel-product}
		(\cat M, \action) \otimes (\cat N, \actionn) :=
		\cat M \times \cat N \times \cat C \times \cat D \nlongto{\cat M \times \swap \times \cat D} \cat M \times \cat C \times \cat N \times \cat D \nlongto{\action \times \actionn} \cat C \times \cat D.
	\end{equation}
	\item By \textbf{external choice} (terminology borrowed from \cite{capucci2021translating}), hence by summing the boundaries except for the top ones which are multiplied:
    \begin{figure}[h]
		\label{fig:ext-ch}
		\centering
		\includegraphics[width=.5\textwidth]{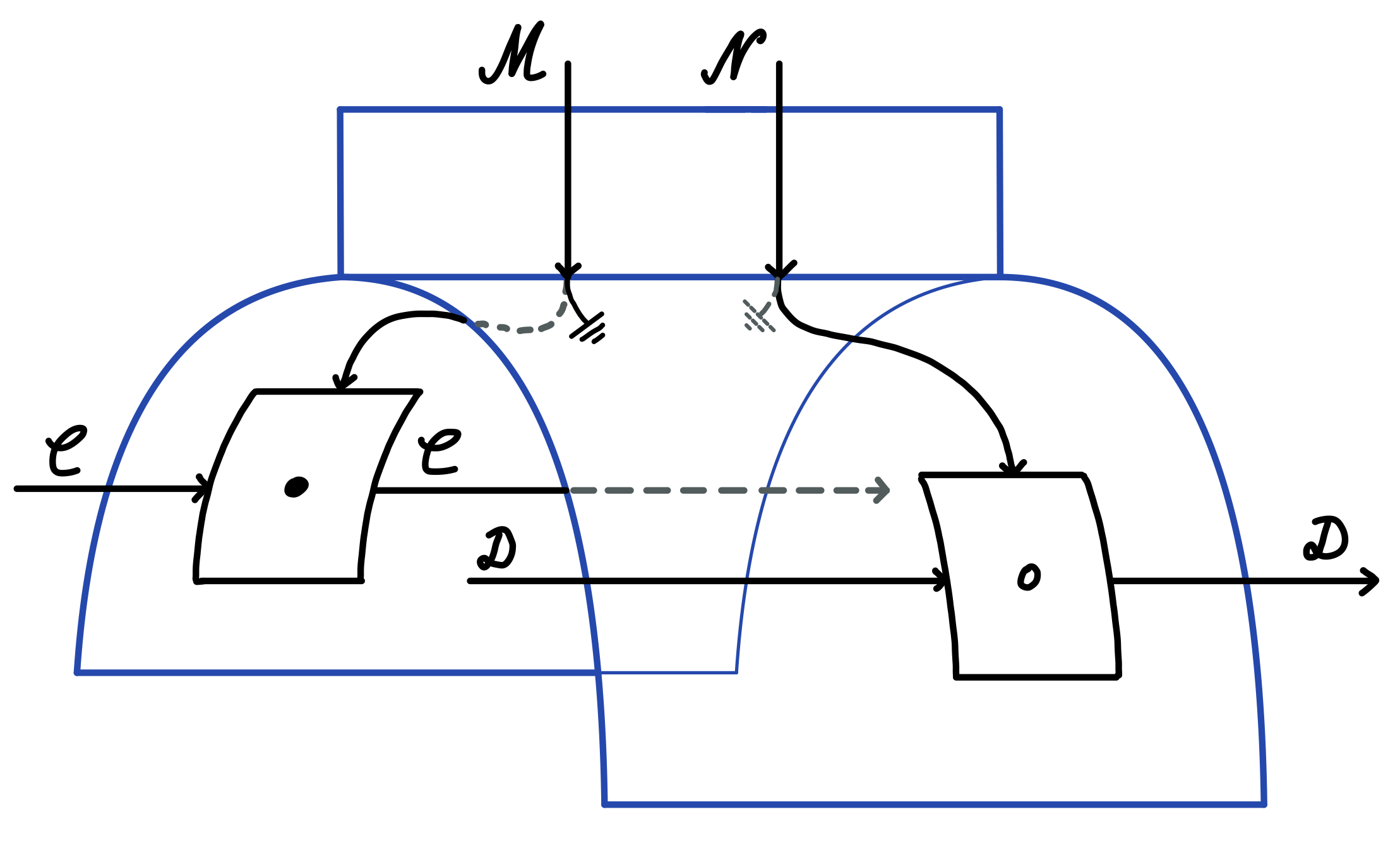}
	\end{figure}
	Concretely, we can do this thanks to the distributivity of $\times$ over $+$ in $\Cat$:
	\begin{eqalign}
	\label{eq:external-choice}
		(\cat M, \action) \extch (\cat N, \actionn) :=
		\cat M \times \cat N \times (\cat C + \cat D) &\longto \cat M \times \cat N \times \cat C + \cat M \times \cat N \times \cat D\\
		&\nlongto{\pi_{\cat M, \cat C} + \pi_{\cat N, \cat D}} \cat M \times \cat C + \cat N \times \cat D\\
		&\nlongto{\action \times \actionn} \cat C + \cat D.
	\end{eqalign}
\end{enumerate}

Moreover, $\Para(\Cat)$ is a bicategory, so we can also:

\begin{enumerate}[wide, labelwidth=!, labelindent=0pt]
	\addtocounter{enumi}{2}
	\item \textbf{Sequentially compose} two parametric morphisms:
	\begin{figure}[H]
		\label{fig:seq-comp}
		\centering
		\includegraphics[width=.3\textwidth]{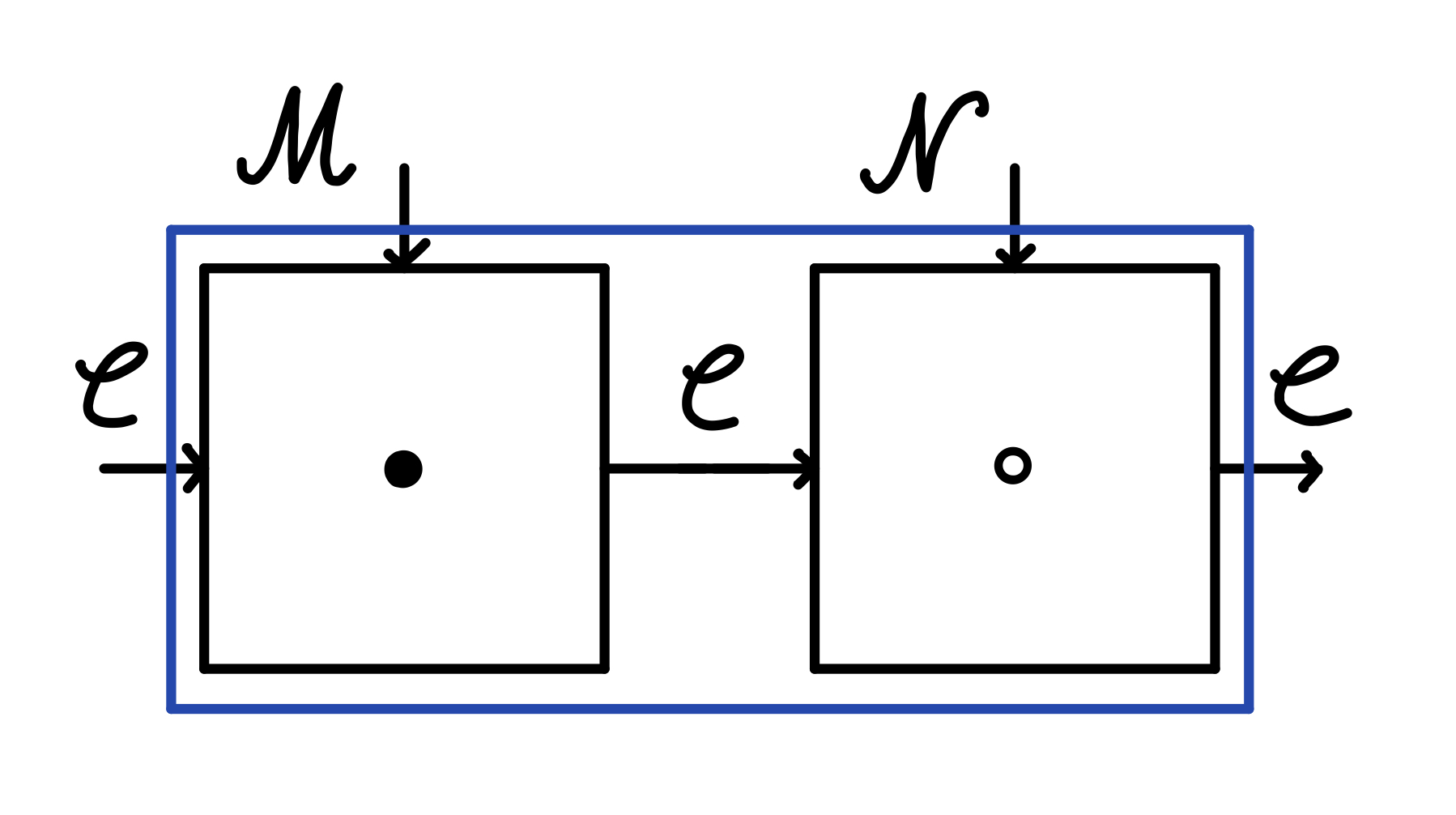}
	\end{figure}
	yielding the morphism
	\begin{equation}
	\label{eq:seq-composition}
		(\cat M, \action) \comp (\cat N, \actionn) :=
		\cat N \times \cat M \times \cat C \nlongto{\cat N \times \action} \cat N \times \cat C \nlongto{\actionn} \cat C.
	\end{equation}
	\item \textbf{Reparameterise} a given morphism:
	\begin{figure}[H]
		\label{fig:reparam}
		\centering
		\includegraphics[width=.15\textwidth]{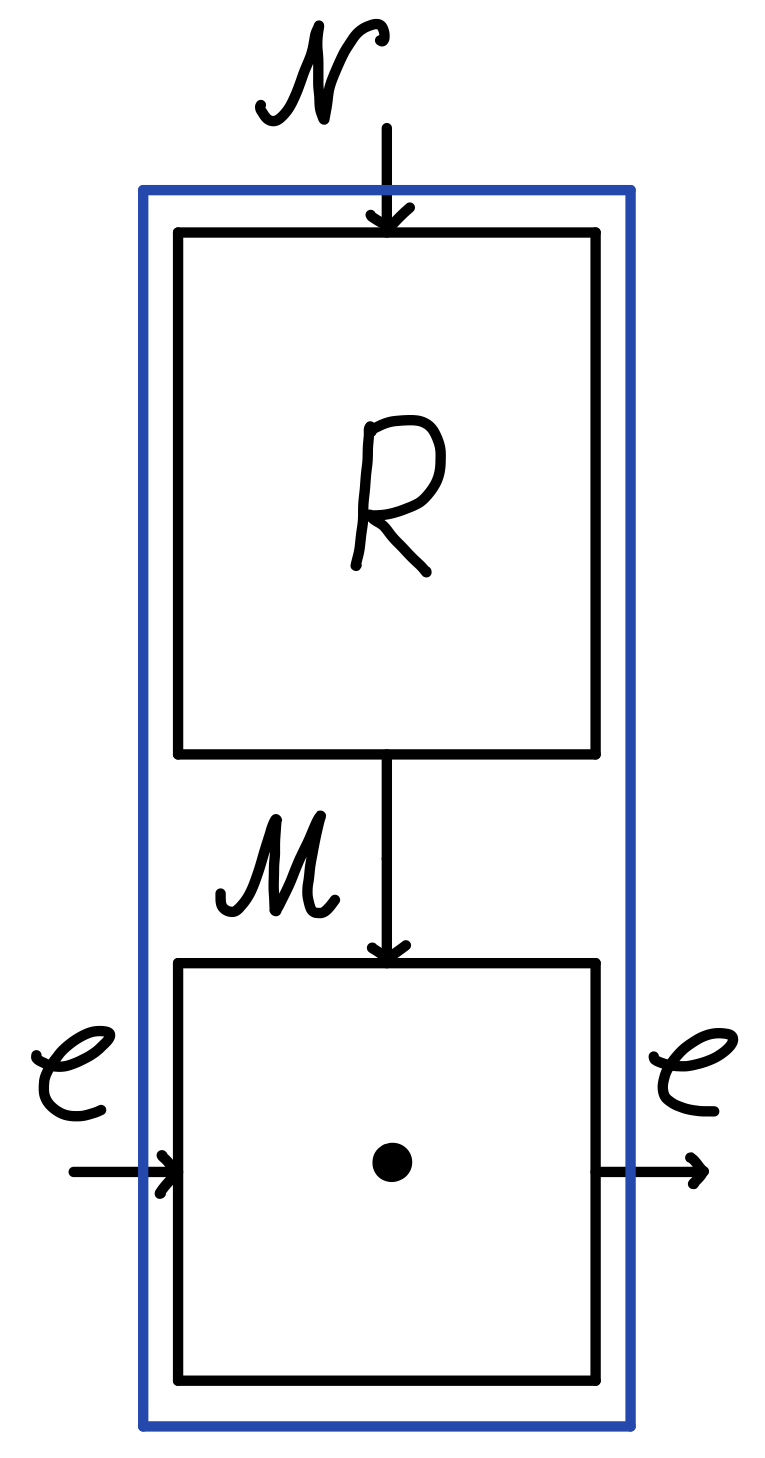}
	\end{figure}
	obtaining the morphism
	\begin{equation}
		R^*(\cat M, \action) :=
		\cat N \times \cat C \nlongto{R \times \cat C} \cat M \times \cat C \nlongto{\action} \cat C.
	\end{equation}
\end{enumerate}

When $R$ is a strong monoidal functor, this last operation can be recognized as being restriction of scalars along $R$, as defined in Proposition~\ref{prop:actegories-indexed-over-moncat} (even better, it represents a cartesian morphism in $\Actt$, as observed in Remark~\ref{rmk:cartesian-factorization}).
In particular, we know that doing so yields again an action. So actions are stable under reparameterisation along strong monoidal functors.

Can we say the same thing for operations (1)--(3)?
We anticipate the answer is \emph{yes} for parallel product and external choice.
For sequential composition, however, we'll see that to get another action we have to supply additional structure in the form of a \textbf{compatibility morphism}, i.e.~a kind of distributive law between the two actions.
This will be a wonderful excuse to introduce the two-sided analogue of actegories, \emph{biactegories}.
However, as we'll see in Section \ref{sec:monoidal-actegories}, this will not be
the only form of sequential composition.

\subsection{Product, coproduct and external choice of actegories}
In what follows, we are going to need a description of the product and
coproduct of monoidal categories (we have learned about the latter from \cite[§6.3]{roman2020profunctor}).
The cartesian product is easy: the product of $\cat M$ and $\cat N$ is supported by their product as categories and equipped with the expected componentwise monoidal structure: $(\cat M \times \cat N, (j,i), \mtimes \times \ntimes)$. Projections are given exactly as in $\Cat$.

However, their coproduct looks a bit more bizarre. Its construction can be carried out by analogy with that of (non-commutative!) monoids.
The objects of $\cat M + \cat N$ consists of finite words of objects from both $\cat M$ and $\cat N$, quotiented by the equivalences
\begin{eqalign}
	m_1m_2 \sim m_1 \mtimes m_2,& \quad j \sim e,\\
	n_1n_2 \sim n_1 \ntimes n_2,& \quad i \sim e,
\end{eqalign}%
where $e$ denotes the empty word.
An analogous construction is carried out on the morphisms.
The monoidal product on $\cat M + \cat N$ is given by juxtaposition of words, and the unit is given by $e$.
It is easy to realize that under the given equivalences, every object (and similarly for morphisms) in $\cat M + \cat N$ can be written uniquely as $m_1n_2 \cdots m_kn_k$ for $m_1, \ldots, m_k : \cat M$ and $n_1, \ldots, n_k : \cat N$.
The injection $\varepsilon_{\cat M} : \cat M \to \cat M + \cat N$ is simply defined as $m \longmapsto m$, and likewise for $\varepsilon_{\cat N}$.
Observe these are strictly monoidal because of the quotients defining $\cat M + \cat N$.

The coproduct of two monoidal categories falls short of being a biproduct because the words $mn$ and $nm$ are kept distinct.
Still, there is a universal morphism $\Gamma : \cat M + \cat N \to \cat M \times \cat N$ given by
\begin{equation}
\label{eq:gamma-morphism}
	\Gamma(m_1n_1 \cdots m_kn_k) := (m_1\mtimes \cdots \mtimes m_k,\ n_1 \ntimes \cdots \ntimes n_k)
\end{equation}
This functor is strict monoidal because of the quotient imposed on the objects of $\cat M + \cat N$.

Having defined the product and coproduct of monoidal categories, we can now
describe the product, coproduct, and external choice in $\Actt$.

\begin{proposition}[Cartesian product in $\Actt$]
\label{prop:cartesian-product-act}
	The \textbf{cartesian product} of an $\cat M$-actegory $(\cat C, \action, \eta^{\action}, \mu^{\action})$ and an $\cat N$-actegory $(\cat D, \actionn, \eta^{\actionn}, \mu^{\actionn})$ is an $\cat M \times \cat N$-actegory $(\cat C \times \cat D, \prodaction)$, where $\prodaction$ is the parallel product of $\action$ and $\actionn$ described in Equation~\eqref{eq:parallel-product}:
	\begin{equation}
		\prodaction := \cat M \times \cat N \times \cat C \times \cat D \nlongto{\cat M \times \swap \times \cat D} \cat M \times \cat C \times \cat N \times \cat D \nlongto{\action \times \actionn} \cat C \times \cat D.
	\end{equation}
\end{proposition}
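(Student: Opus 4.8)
The plan is to exhibit the required pseudoalgebra structure on $(\cat C \times \cat D, \prodaction)$ and verify the coherence laws, observing throughout that everything reduces to component-wise data for which the axioms are already known. First I would note that $\cat M \times \cat N$ is a monoidal category with the component-wise structure, so the unit is $(j,i)$ and the tensor is $(m,n) \mtimes (m',n') = (m \mtimes m', n \ntimes n')$ with associator and unitors the pairs of those of $\cat M$ and $\cat N$. The candidate unitor for $\prodaction$ is then $\eta^{\prodaction}_{(c,d)} := (\eta^\action_c, \eta^\actionn_d) : (c,d) \to (j \action c, i \actionn d)$, and the candidate multiplicator is $\mu^{\prodaction}_{(m,n),(m',n'),(c,d)} := (\mu^\action_{m,m',c}, \mu^\actionn_{n,n',d})$, both of which are natural isomorphisms because they are pairs of natural isomorphisms. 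The bulk of the verification is that Diagrams~\eqref{diag:action-coherence-pentag}, \eqref{diag:action-coherence-ltriang} and \eqref{diag:action-coherence-rtriang} commute for this data.

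The key observation making the verification painless is that the swap functor $\cat M \times \swap \times \cat D$ is an isomorphism of categories, and that under it $\prodaction$ is literally $\action \times \actionn$ as a functor $(\cat M \times \cat C) \times (\cat N \times \cat D) \to \cat C \times \cat D$ — i.e.\ $\prodaction$ is the image of the product action under the evident identification $\cat M \times \cat N \times \cat C \times \cat D \cong (\cat M \times \cat C)\times(\cat N \times \cat D)$. Since the forgetful functor $\Cat \to \Cat$ creates products and the pseudomonad $(\cat M \times \cat N) \times -$ is, up to the same reshuffling, the product of the pseudomonads $\cat M \times -$ and $\cat N \times -$, a pseudoalgebra for the product pseudomonad on a product category is exactly a pair of pseudoalgebras. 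Concretely: a diagram in $\cat C \times \cat D$ commutes iff both its projections to $\cat C$ and to $\cat D$ commute, and the projection to $\cat C$ of the pentagon/triangle for $\prodaction$ is precisely the pentagon/triangle \eqref{diag:action-coherence-pentag}--\eqref{diag:action-coherence-rtriang} for $\action$, which holds by hypothesis (and similarly for $\cat D$ and $\actionn$). So I would phrase the proof as: exhibit the structure, remark that each coherence diagram projects component-wise onto the corresponding diagram for $\action$ and for $\actionn$, and conclude.

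I do not expect a genuine obstacle here; the statement is essentially a packaging lemma. The one point requiring a little care — and the place I would spend a sentence or two being explicit — is the bookkeeping of the symmetry isomorphism $\swap$: one must check that the reassociation/swap coherence in $\Cat$ does not introduce extra cells, i.e.\ that the monoidal structure of $\cat M \times \cat N$ really is sent by the identification to the product of the monoidal structures of $\cat M$ and of $\cat N$ (this is standard, e.g.\ \cite[Example 1.2.9]{johnson2021}-style reasoning applied to products rather than $(-)^\rev$). Once that identification is pinned down, the pentagon and both triangles are immediate from those of the two factors, and naturality of $\eta^{\prodaction}$ and $\mu^{\prodaction}$ is inherited the same way. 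I would also remark — since the proposition asserts this is the \emph{cartesian product in} $\Actt$ — that the projections $(\pi_{\cat M}, \pi_{\cat C})$ and $(\pi_{\cat N}, \pi_{\cat D})$ are morphisms in $\Actt$ with identity lineators, and that their universal property follows from the universal property of the product in $\Cat$ together with the component-wise description of lineators; this last part is again routine and can be left to the reader or deferred, as the paper's conventions allow.
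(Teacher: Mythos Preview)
Your proposal is correct and follows essentially the same approach as the paper: define the unitor and multiplicator as the pairs $(\eta^\action,\eta^\actionn)$ and $(\mu^\action,\mu^\actionn)$, observe that all coherence diagrams hold componentwise, then supply the componentwise projections (strictly linear) and the universal pairing $(\langle F,G\rangle,\langle F^\sharp,G^\sharp\rangle,\langle\ell,\nu\rangle)$. The paper is terser about the coherence step and the swap bookkeeping, but the content is the same.
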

\begin{proof}
	First of all, let's be explicit on the structure morphisms $\prodaction$ comes with: its unitor is given by the pair of unitors
	\begin{equation}
		\eta^{\prodaction}_{c, d} : (c, d) \nlongto{(\eta^{\action}_c, \eta^{\actionn}_d)} (j \action c, i \actionn d) = (j, i) \prodaction (c, d)
	\end{equation}
	and the multiplicator as a pair of underlying multiplicators
	\begin{eqalign}
		\mu^{\prodaction}_{(c, d)} &: (m, n) \prodaction ((m', n') \prodaction (c, d))\\
		&\equalto (m, n) \prodaction (m' \action c, n' \actionn d)\\
		&\equalto (m \action (m' \action c), n \actionn (n' \actionn d))\\
		&\nlongto{(\mu^{\action}_{c}, \mu^{\actionn}_{d})} ((m \mtimes m') \action c, (n \ntimes n') \actionn d)\\
		&\equalto ((m \mtimes m', n \ntimes n')) \prodaction (c, d)
	\end{eqalign}
	It is easy to convince oneself these make $\cat C \times \cat D$ a well-defined actegory.

	To say this is the cartesian product we need to supply projections, and these are built componentwise by the projections in $\MonCat$ and $\Cat$, and happen to be strictly linear.
	Finally, consider a  $\cat P$-actegory $(\cat E, \ast)$ and two morphisms $(F, F^\sharp, \ell) : (\cat P, (\cat E, \ast)) \to (\cat M, (\cat C, \action))$  and $(G, G^\sharp, \nu) : (\cat P, (\cat E, \ast)) \to (\cat N, (\cat D, \actionn))$. 
	Then there is a unique morphism $(\langle F, G\rangle, \langle F^\sharp, G^\sharp\rangle, \langle \ell, \nu \rangle) : (\cat P, (\cat E, \ast)) \to (\cat M \times \cat N, (\cat C \times \cat D, \prodaction))$ from the $\cat P$-actegory $\cat E$ to the product, given by universal pairing in each component.
	It is trivial to check $\langle \ell, \nu \rangle$ is indeed a well-defined lineator for $\langle F^\sharp, G^\sharp \rangle$, since coherence can be proven componentwise.
\end{proof}

We now proceed to describe the external choice in $\Actt$, which will be a
crucial ingredient in defining the coproduct in $\Actt$.

\begin{definition}[External choice product in $\Actt$]
\label{def:extch-prod-act}
	The \textbf{external choice product} of an $\cat M$-actegory $(\cat C, \action, \eta^{\action}, \mu^{\action})$ and an $\cat N$-actegory $(\cat D, \actionn, \eta^{\actionn}, \mu^{\actionn})$ is a $\cat M \times \cat N$-actegory $(\cat C + \cat D, \extchaction)$, where $\extchaction$ is defined as the external choice of $\action$ and $\actionn$ described in Equation~\eqref{eq:external-choice}:
	\begin{eqalign}
		\extchaction :=
		\cat M \times \cat N \times (\cat C + \cat D) &\longto \cat M \times \cat N \times \cat C + \cat M \times \cat N \times \cat D\\
		&\nlongto{\pi_{\cat M, \cat C} + \pi_{\cat N, \cat D}} \cat M \times \cat C + \cat N \times \cat D\\
		&\nlongto{\action \times \actionn} \cat C + \cat D.
		\end{eqalign}
	The unitor $\eta^{\extchaction}_{c} : c \isoto (j, i) \extchaction c$ and the multiplicator $\mu^{\extchaction}_c : (m, n) \extchaction ((m', n') \extchaction c) \isoto ((m \mtimes m', n \ntimes n')) \extchaction c$ are defined by cases to coincide with $\eta^\action$ and $\mu^\action$ on $\cat C$ and $\eta^\actionn$ and $\mu^\actionn$ on $\cat D$.
\end{definition}

Restricting scalars (or `reparametrising' if we want to use the language of
parametric morphisms) of the above defined external choice product along the natural morphism $\Gamma$ defined in Remark~\ref{eq:gamma-morphism}, we obtain the coproduct of actegories:

\begin{proposition}[Coproduct in $\Actt$]
\label{prop:coproduct-act}
	The \textbf{coproduct} of an $\cat M$-actegory $(\cat C, \action, \eta^{\action}, \mu^{\action})$ and an $\cat N$-actegory $(\cat D, \actionn, \eta^{\actionn}, \mu^{\actionn})$ is an $\cat M + \cat N$-actegory $\cat C + \cat D$ whose underlying functor $\coprodaction$ is defined as:
	\begin{equation}
		\coprodaction := \Gamma^*(\extchaction) = (\cat M + \cat N) \times (\cat C + \cat D) \nlongto{\Gamma \times (\cat C+ \cat D)} \cat M \times \cat N \times (\cat C + \cat D) \nlongto{\extchaction} \cat C + \cat D.
	\end{equation}
\end{proposition}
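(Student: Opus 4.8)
The plan is to verify the universal property of the coproduct in $\Actt$ by combining the fact (Definition~\ref{def:extch-prod-act}) that $\extchaction$ is a well-defined $\cat M \times \cat N$-actegory with the fact (Proposition~\ref{prop:actegories-change-of-base}) that restriction of scalars along a strong monoidal functor sends actegories to actegories. First I would observe that $\Gamma : \cat M + \cat N \to \cat M \times \cat N$ is strict monoidal (already noted in the discussion preceding Equation~\eqref{eq:gamma-morphism}), so $\coprodaction := \Gamma^*(\extchaction)$ is automatically a legitimate $\cat M + \cat N$-actegory, with unitor and multiplicator obtained by the recipe in Proposition~\ref{prop:actegories-change-of-base}, namely precomposing $\eta^{\extchaction}$ and $\mu^{\extchaction}$ with $\Gamma$ applied to the relevant arguments and inserting the (identity, since $\Gamma$ is strict) coherence cells of $\Gamma$. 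So the bulk of the work is the universal property, not well-definedness.

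Next I would exhibit the coprojections and check universality. The coprojection $\iota_{\cat C} : (\cat M, (\cat C, \action)) \to (\cat M + \cat N, (\cat C + \cat D, \coprodaction))$ should be the morphism in $\Actt$ whose base component is the strict monoidal injection $\varepsilon_{\cat M} : \cat M \to \cat M + \cat N$, whose underlying functor is the categorical coprojection $\cat C \hookrightarrow \cat C + \cat D$, and whose lineator is an identity (everything here is strict on the nose: $\varepsilon_{\cat M}$ is strict monoidal, and by construction $\coprodaction$ restricted along $\varepsilon_{\cat M}$ on the $\cat C$-summand is literally $\action$, since $\Gamma \comp \varepsilon_{\cat M} $ collapses a word of $\cat M$-letters to their tensor and the external choice $\extchaction$ acts on the $\cat C$-summand precisely via $\action$). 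Symmetrically for $\iota_{\cat D}$. Then, given a $\cat P$-actegory $(\cat E, \ast)$ together with morphisms $(F, F^\sharp, \ell)$ into the $\cat M$-actegory and $(G, G^\sharp, \nu)$ into the $\cat N$-actegory, I would produce the mediating morphism: its base component is the copairing $[F, G] : \cat M + \cat N$ — wait, the base goes the other way — its base is a strong monoidal functor \emph{out of} $\cat P$, but $\cat P$ is fixed; rather, since $\Actt \to \MonCat$ is a fibration and $\cat M + \cat N$ is a coproduct in $\MonCat$, a morphism $\cat P \to \cat M + \cat N$ is \emph{not} what we copair. Instead the correct statement is that $(\cat M + \cat N, \cat C + \cat D)$ is the coproduct, so we need morphisms \emph{from} the summands \emph{into} a test object $(\cat Q, (\cat E, \ast))$; given $(F, F^\sharp, \ell) : (\cat M, (\cat C,\action)) \to (\cat Q, (\cat E,\ast))$ and $(G, G^\sharp, \nu) : (\cat N, (\cat D,\actionn)) \to (\cat Q, (\cat E,\ast))$, the mediating morphism has base component the copairing $[F, G] : \cat M + \cat N \to \cat Q$ (which exists and is strong monoidal by the universal property of $\cat M + \cat N$ in $\MonCat$, since it is strict monoidal on each injection), underlying functor $[F^\sharp, G^\sharp] : \cat C + \cat D \to \cat E$, and lineator defined by cases to be $\ell$ on the $\cat C$-summand and $\nu$ on the $\cat D$-summand. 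I would then check the lineator coherence diagrams~\eqref{diag:linear-coherence-pentag}–\eqref{diag:linear-coherence-triang} hold: on each summand separately they reduce exactly to the coherence of $\ell$ or of $\nu$, because $\coprodaction$, $\extchaction$ and $\ast$ all respect the coproduct decomposition and no arrow in $\cat C + \cat D$ crosses between summands, so there are no `mixed' instances to check. Uniqueness follows because both the base copairing in $\MonCat$ and the functor copairing in $\Cat$ are unique, and the lineator is then pinned down summand-wise by commutativity with the coprojection lineators (which are identities).

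The step I expect to be the main obstacle is the bookkeeping around the equivalence $\Gamma \comp \varepsilon_{\cat M} \iso \pi_{\cat M}$ (and similarly for $\cat N$) at the level of monoidal functors — one must be slightly careful that this composite is the product projection only up to the quotient relations defining $\cat M + \cat N$, and that restriction along it really does return $\action$ and not merely something isomorphic to it; since $\Gamma$ and $\varepsilon_{\cat M}$ are both strict, this works on the nose, but it is the one place where the idiosyncratic construction of $\cat M + \cat N$ enters and deserves an explicit line. Everything else is a routine `check componentwise' argument of exactly the flavour already used in the proof of Proposition~\ref{prop:cartesian-product-act}, and I would simply remark that coherence is verified on each summand in isolation rather than grinding through the pentagon and triangle in full.
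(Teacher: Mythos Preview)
Your approach mirrors the paper's exactly: build the coprojections from those in $\MonCat$ and $\Cat$ (noting they are strictly linear), then obtain the mediating morphism by universal copairing in each component and verify the lineator coherence by cases on the summand. One small slip in your closing remark: $\Gamma \comp \varepsilon_{\cat M}$ is the section $m \mapsto (m,i)$ of $\pi_{\cat M}$, not $\pi_{\cat M}$ itself, but your actual claim---that $\coprodaction$ restricted along $\varepsilon_{\cat M}$ is literally $\action$ on the $\cat C$-summand---is correct and is all you need.
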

\begin{proof}
	Injections out of $(\cat M + \cat N, (\cat C+\cat D, \coprodaction))$ are built out of those of $\MonCat$ and $\Cat$, and happen to be strictly linear.
	Consider a  $\cat P$-actegory $(\cat E, \ast)$ and two morphisms $(F,F^\sharp, \ell) : (\cat M, (\cat C, \action)) \to (\cat P, (\cat E, \ast))$ and $(G, G^\sharp, \nu) : (\cat N, (\cat D, \actionn)) \to (\cat P, (\cat E, \ast))$. 
	Then there is a unique morphism $((F, G), (F^\sharp, G^\sharp), (\ell, \nu)) : (\cat M + \cat N, (\cat C + \cat D, \coprodaction)) \to (\cat P, (\cat E, \ast))$ from the coproduct to $\cat E$, given by universal copairing in each component.
	It is trivial to check $(\ell, \nu)$ is indeed a well-defined lineator for $(F^\sharp, G^\sharp)$, since coherence can be proven by cases.
\end{proof}

We can represent the coproduct of actegories with sheet diagrams of parametric
morphisms:
\begin{figure}[H]
	\label{fig:coproduct}
	\centering
	\includegraphics[width=.5\textwidth]{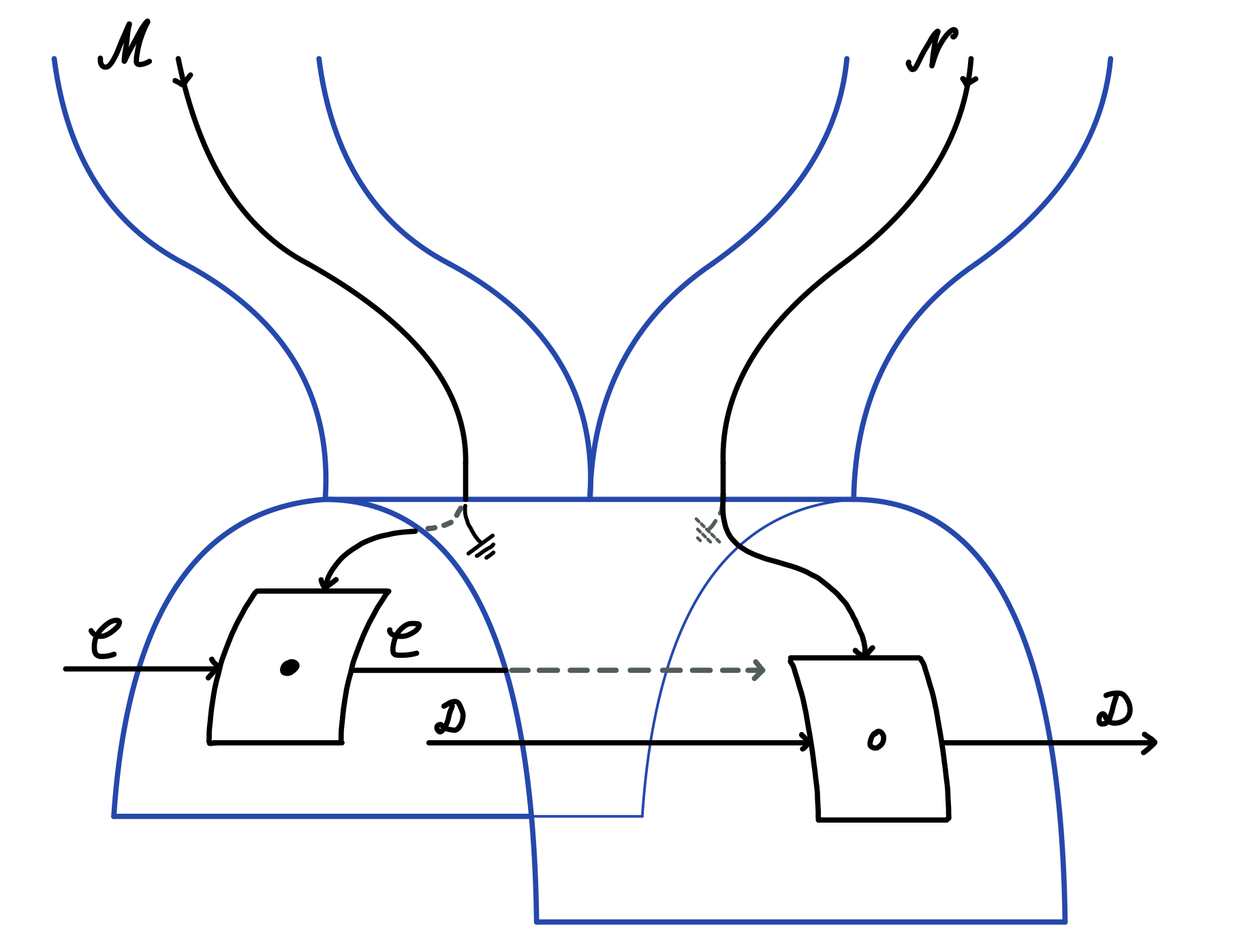}
\end{figure}

\begin{remark}
\label{rmk:act-fibrewise-cartesian}
	These two monoidal structures on $\Actt$ specialize to each of its fibres thanks to the (co)cartesian structure of $\MonCat$ \cite[Theorem 12.7]{shulman2008framed}.
	Given two $\cat M$\nobreakdash-actegories $(\cat C, \action)$ and $(\cat D, \actionn)$, we obtain another $\cat M$-actegory by first taking their product in $\Actt$ and then restricting along the copy functor ${\Delta_{\cat M} : \cat M \to \cat M \times \cat M}$ to turn the product ${\cat M \times \cat M}$-actegory into an $\cat M$-actegory.
	Concretely, we get an action supported by the functor:
	\begin{equation}
		m \prodaction_{\cat M} (c, d) := (m \action c, m \actionn d).
	\end{equation}
	The case for the coproduct is analogous.
	Notably, this restriction of the product of $\Actt$ to a fibre $\Act{\cat M}^\lax$ yields the product in the latter category. That is, $\Delta^*(\cat C \times \cat D, \action \times \actionn)$ is the product of $(\cat C, \action)$  and $(\cat D, \actionn)$ in $\Act{\cat M}^\lax$; and likewise applies to $+$.
\end{remark}

\begin{remark}
\label{rmk:prod-coprod-cartesian-subcats}
	The `transverse fibres' $\Actt^\cart(\cat C)$ (Remark~\ref{rmk:cartesian-factorization}) also have products and coproducts.
	These are easily described by deploying the fact illustrated in Example~\ref{ex:curried-action-terminal} and Proposition~\ref{prop:curried-action} that actions in $\cat C$ are exactly given by monoidal functors in $[\cat C, \cat C]$.
	The coproduct action is thus induced by universal copairing (as shown in \cite[§6.3]{roman2020profunctor}):
	\begin{diagram}
		{\cat M} & {\cat M+\cat N} & {\cat N} \\
		& {[\cat C, \cat C]}
		\arrow["\curr(\action)"', curve={height=12pt}, from=1-1, to=2-2]
		\arrow["\curr(\actionn)", curve={height=-12pt}, from=1-3, to=2-2]
		\arrow["{\varepsilon_{\cat M}}", from=1-1, to=1-2]
		\arrow["{\varepsilon_{\cat N}}"', from=1-3, to=1-2]
		\arrow["{\exists! \curr(\coprodaction_\cart)}"{description}, dashed, from=1-2, to=2-2]
	\end{diagram}
	Coproducts in $\Actt^\cart(\cat C)$ have particular relevance in the theory of optics since they provide the mathematical basis for hybrid composition of optics, that is, for composing different classes of optics (the original motivation behind the profunctor encoding \cite{pickering2017profunctor}).
	Indeed, both optics for $\action$ and for $\actionn$ can be embedded in the category of optics for $\coprodaction_\cart$, and by universal property of $+$, the latter is the smallest category of optics with this property. For instance, lenses (stemming from the self-action of a cartesian monoidal category) and prisms (stemming from the self-action of a cocartesian monoidal category) `compose' to affine traversals.
	A much more detailed discussion of this can be found in \textit{ibid.}

	We can also define a product action, by weak pullback:
	\begin{diagram}[row sep=3.5ex]
		& {\cat M \times_{[\cat C, \cat C]} \cat N} \\
		{\cat M} && {\cat N} \\
		& {[\cat C, \cat C]}
		\arrow["\curr(\action)"', curve={height=12pt}, from=2-1, to=3-2]
		\arrow["\curr(\actionn)", curve={height=-12pt}, from=2-3, to=3-2]
		\arrow["{p_{\cat M}}"', from=1-2, to=2-1]
		\arrow["{p_{\cat N}}", from=1-2, to=2-3]
		\arrow["{\exists! \curr(\prodaction_\cart)}"{description}, shift left=2, dashed, from=1-2, to=3-2]
		\arrow["\lrcorner"{anchor=center, pos=0.125, rotate=-45}, shift right=2, draw=none, from=1-2, to=3-2]
	\end{diagram}
	The monoidal category $\cat M \times_{[\cat C, \cat C]} \cat N$ contains only those pairs $(m,n)$ such that $m \action - \iso n \actionn -$. Hence optics for $\prodaction_\cart$ are the maximal category of optics which can be embedded both in optics for $\action$ and optics for $\actionn$.
	Using again lenses and prisms as an example, this category would be that of adapters, since the product of $\times$ and $+$ (as actions) is trivial (indeed, $a \times - \iso b + -$ iff $a=1$ and $b=0$).
\end{remark}

\subsection{Biactegories}
\label{sec:biactegories}
When we have a category $\cat C$ equipped with two actions, we might wonder whether these actions can be `sequentially composed'.
That is, starting with a $\cat M$-action $(\action, \eta^{\action}, \mu^{\action})$ and a $\cat N$-action $(\actionn, \eta^{\actionn}, \mu^{\actionn})$ on $\cat C$ we can form the functor
\begin{equation}
	\cat N \times \cat M \times \cat C \nlongto{\cat N \times \action} \cat N \times \cat C \nlongto{\actionn} \cat C
\end{equation}
and conjecture it is the underlying functor of an $\cat N \times \cat M$-actegory.

What kind of actegory is it?
To answer the question, it is better to approach the situation more formally, borrowing from the algebraic intuition on actegories.
We are contemplating a category receiving two actions, i.e.~carrying two pseudoalgebra structures, one for $\cat M \times -$ and one for $\cat N \times -$.
In other words, we can multiply objects (and morphisms) of $\cat C$ with scalars (and morphisms) from both $\cat M$ and $\cat N$.
It seems natural to ask for these two actions to be compatible with each other in some way.
This is even more tempting when the action of $\cat N$ is actually a right action, in which case it feels very natural to ask that
\begin{equation}
	m \action (c \actionn n) \iso (m \action c) \actionn n.
\end{equation}
This isomorphism is all that is needed to make $\cat C$ into an $\cat N \times \cat M$-actegory.
A pair of a left and a right action with a compatibility isomorphism as above is called a \emph{biactegory} \cite{skoda2006biactegories, skoda2009some}.

The `bi' in biactegory \emph{does not} refer to some additional dimension added to an actegory (as in \emph{bicategory}): if actegories are analogue to modules over monoids, biactegories are analogue to bimodules.

Even though the intuition of `sequential composition' of actegories as parametric morphisms invites to consider situations in which a category is equipped with two \emph{left} actions, following \cite{skoda2006biactegories} we define biactegories as equipped with a left and a right action.
Since left $\cat N$-actegories are equivalently right $\cat N^\rev$-actegories, this choice does not make any mathematical difference, and it is simply a matter of convenience: having a left and a right action makes it easier to work with the tensor product we are introducing later.

Coming back to our initial question, we are going to prove (Theorem~\ref{th:biact-are-bialg}) that algebras of $\cat N^\rev \times \cat M \times -$ (hence of its strictification) are equivalently given by one algebra structure for $\cat M \times -$, one for $\cat N^\rev \times -$, and the aforementioned `compatibility structure' that we call \emph{bimodulator}:
\begin{diagram}[row sep=5ex, column sep=1ex]
\label{eq:bimod-def}
	&[-7ex] {\cat M \times \cat N^\rev \times \cat C} &[1ex]&[1ex] {\cat N^\rev \times \cat M \times \cat C} &[-7ex]\\
	{\cat M \times \cat C} &&&& {\cat N^\rev \times \cat C} \\
	&& {\cat C}
	\arrow["\action"', from=2-1, to=3-3]
	\arrow["\actionn", from=2-5, to=3-3]
	\arrow["{\cat M \times \actionn}"', from=1-2, to=2-1]
	\arrow["\zeta", shift right=1, shorten <=36pt, shorten >=36pt, Rightarrow, from=2-1, to=2-5]
	\arrow["{\cat N^\rev \times \action}", from=1-4, to=2-5]
	\arrow["{\swap \times \cat C}"', from=1-4, to=1-2]
\end{diagram}
This morphism is a kind of `distributive law' between each action of a scalar in $\cat M$ and each action of a scalar in $\cat N$.
Indeed, we are going to come back to this idea in Section~\ref{sec:monoidal-actegories}.

\begin{definition}
\label{def:biactegories}
	An \textbf{$(\cat M, \cat N)$-biactegory} is a category $\cat C$ equipped with a left $\cat M$-action $(\action, \eta^\action, \mu^\action)$, a right $\cat N$-action $(\actionn, \eta^\actionn, \mu^\actionn)$ and a natural isomorphism called \emph{bimodulator}:
	\begin{equation}
		\zeta_{m,c,n} : m \action (c \actionn n) \isolongto (m \action c) \actionn n.
	\end{equation}
	The bimodulator needs to satisfy the following coherence axioms:
	\begin{diagram}[row sep=5ex, column sep=-3ex]
	\label{diag:bimod-coherence-pentag1}
		&[-7ex] {(m \mtimes n) \action (c \actionn p)} && {((m \mtimes n) \action c) \actionn p} &[-7ex]\\
		{m \action (n \action (c \actionn p)} &&&& {(m \action (n \action c)) \actionn p} \\
		&& {m \action ((n \action c) \actionn p)}
		\arrow["{\zeta_{m\mtimes n,c,p}}", from=1-2, to=1-4]
		\arrow["{\mu^\action_{m,n}}", from=2-1, to=1-2]
		\arrow["{m \action \zeta_{n,c,p}}"', from=2-1, to=3-3]
		\arrow["{\zeta_{m,n\action c,p}}"', from=3-3, to=2-5]
		\arrow["{\mu^\action_{m,n,c} \actionn p}"'{pos=0.3}, from=2-5, to=1-4]
	\end{diagram}
	\begin{diagram}[row sep=5ex, column sep=-3ex]
	\label{diag:bimod-coherence-pentag2}
		&[-6.5ex] {p \action (c \actionn (m \mtimes n))} && {(p \action c) \actionn (m \mtimes n)} &[-6.5ex]\\
		{p \action ((c \actionn m) \actionn n)} &&&& {((p \action c) \actionn m) \actionn n} \\
		&& {(p \action (c \actionn m)) \actionn n}
		\arrow["{\zeta_{p,c, m \mtimes n}}", from=1-2, to=1-4]
		\arrow["{\mu^\actionn_{p \action c, m, n}}"', from=2-5, to=1-4]
		\arrow["{\zeta_{p,c,m} \actionn n}"', from=3-3, to=2-5]
		\arrow["{\zeta_{p, c \actionn m, n}}"', from=2-1, to=3-3]
		\arrow["{p \action \mu^\actionn_{c,m,n}}"{pos=0.3}, from=2-1, to=1-2]
	\end{diagram}
	\begin{diagram}
	\label{diag:bimod-coherence-triang1}
		{j \action (c \actionn m)} && {(j \action c) \actionn m} \\
		& {c \actionn m}
		\arrow["{\eta^\action_c \actionn m}"'{pos=0.7}, from=2-2, to=1-3]
		\arrow["{\eta^\action_{c \actionn m}}"{pos=0.7}, from=2-2, to=1-1]
		\arrow["{\zeta_{j,c,m}}", from=1-1, to=1-3]
	\end{diagram}
	\begin{diagram}
	\label{diag:bimod-coherence-triang2}
		{m \action (c \actionn j)} && {(m \action c) \actionn j} \\
		& {m \action c}
		\arrow["{\eta^\actionn_{m \action c}}"', from=2-2, to=1-3]
		\arrow["{m \action \eta^\actionn_{c}}", from=2-2, to=1-1]
		\arrow["{\zeta_{m,c,j}}", from=1-1, to=1-3]
	\end{diagram}
\end{definition}

Like one-sided $\cat M$-actegories, $\cat M$-biactegories form a 2-category:

\begin{definition}
\label{def:bilinear-functor}
	A \textbf{lax $(\cat M, \cat N)$-bilinear functor} between $(\cat M, \cat N)$-biactegories $(\cat C, \action, \actionn, \zeta^{\cat C})$ and $(\cat D, \laction, \raction, \zeta^{\cat D})$ is a functor $F: \cat C \to \cat D$ equipped with a left lax $\cat M$-linear structure $\ell$ and a right lax $\cat N$-linear structure $r$, obeying the following compatibility axiom:
	\begin{diagram}
	\label{diag:bilinear-hexagon}
		&[-6.5ex] {m \laction (F(c) \raction n)} & {(m \laction F(c)) \raction n} &[-6.5ex]\\
		{m \laction F(c \actionn n)} &&& {F(m \action c) \raction n} \\
		& {F(m \action (c \actionn n))} & {F((m \action c) \actionn n)}
		\arrow["{\ell_{m,c} \action n}", from=1-3, to=2-4]
		\arrow["{F(\zeta^{\cat C}_{m,c,n})}"', from=3-2, to=3-3]
		\arrow["{\zeta^{\cat D}_{m,F(c),n}}", from=1-2, to=1-3]
		\arrow["{m \action r_{c,n}}"', from=1-2, to=2-1]
		\arrow["{\ell_{m,c \actionn n}}"', from=2-1, to=3-2]
		\arrow["{r_{m \action c, n}}", from=2-4, to=3-3]
	\end{diagram}
\end{definition}

\begin{remark}
	Observe bilinear functors are not maps which are `linear in each argument separately'--those are balanced maps, the subject of the next section (Definition~\ref{def:balanced}).
	The two are sometimes confused since it is often assumed the left and right structures of a biactegory (or a bimodule) `coincide' (see Proposition~\ref{prop:mirroring-biact}), in which case balance and bilinearity are related.
\end{remark}

\begin{definition}
\label{def:bilinear-transformation}
	A \textbf{bilinear transformation} $\xi : (F, \ell, r) \twoto (G, \nu, \varpi): (\cat C, \action, \actionn, \zeta^{\cat C}) \to (\cat D, \laction, \raction, \zeta^{\cat D})$ is a natural transformation $F \twoto G$ which is both a linear transformation (Definition~\ref{def:linear-trans}) $(F, \ell) \twoto (G, \nu)$ (`on the left') and $(F, r) \twoto (G, \varpi)$ (`on the right').
\end{definition}

\begin{definition}
	The 2-category of $(\cat M, \cat N)$-biactegories, lax bilinear functors and bilinear natural transformation is denoted by $\Biact{\cat N}^\lax$.
\end{definition}

Consider the composite pseudomonad of left $\cat M$- and right $\cat N$-actegories.
This pseudomonad exists because there is a distributive law of pseudomonads (as defined in \cite[§4]{marmolejo1999distributive})
\begin{equation}
	\swap \times - : \cat N^\rev \times \cat M \times - \longtwoto \cat M \times \cat N^\rev \times -
\end{equation}
given by symmetry of $\times$ in $\Cat$.
The four structure morphisms such a distributive law should be endowed with \cite[§4]{marmolejo1999distributive} are all trivial in this simple case.
Moreover, we can strictify $\cat M$ and $\cat N$ to make the coherence laws hold strictly \cite[(coh 1)--(coh 9)]{marmolejo1999distributive}, thus making the pseudomonad structure of $\cat N^\rev \times \cat M \times -$ \cite[§5]{marmolejo1999distributive} more manageable.

Ultimately, as anticipated, we have the following result which equates the definition of biactegories as pseudoalgebras of this composite monad to the definition we gave in terms of pairs of compatible actions:

\subsubsection{Theorem.}
\label{th:biact-are-bialg}
\begin{restheorem}{biactegoriesarebialgebras}
	There is an equivalence of 2-categories
	\begin{equation}
		\Biact[\cat M]{\cat N}^\lax \equi \Act{(\cat N^\rev \times \cat M)}^\lax.
	\end{equation}
	\vspace{-5ex}
\end{restheorem}
\begin{proof}
	See Appendix~\ref{appendix:proofs}.
\end{proof}

A very important fact for the following section (and also for the `zoology' of biactegories) is that left or right $\cat M$-actegories can be canonically promoted to $\cat M$-biactegories every time $\cat M$ is braided, since in that case there is an isomorphism $\cat M^\rev \iso \cat M$.
This can also be interpreted as saying that actegories for braided categories are biactegories in the sense of `sequential composition', that is, for two left actions (which in this case happen to coincide).

\subsubsection{Proposition.}
\label{prop:mirroring-biact}
\begin{resproposition}[Mirroring]{leftmodofsymisbi}
	Let $(\cat C, \action)$ be a left $\cat M$-actegory, where $(\cat M, j, \mtimes, \beta)$ is a braided monoidal category.
	Then there is a canonical right $\cat M$-actegorical structure on $\cat C$ and a canonical $\cat M$-biactegorical structure induced by the braiding.
	We call the resulting biactegory the \emph{mirroring} of $(\cat C, \action)$.
\end{resproposition}
\begin{proof}
	Consider a braiding $\beta$ on $\cat M$ as a strong monoidal structure on the identity functor $\cat M^\rev \to \cat M$.
	We can pullback $\action$ along this morphism (Proposition~\ref{prop:actegories-indexed-over-moncat}) to obtain a left $\cat M^\rev$-action $\revrel{\action}$ on $\cat C$.
	Concretely, this is given by the same functor $- \action {=} : \cat M^\rev \times \cat C \to \cat C$, now equipped with
	\begin{equation}
		\eta^\rev_c := c \revrel{\action} j \equalto j \action c \nlongto{\eta_c} c,
	\end{equation}
	\begin{eqalign}
		\mu^\rev_{c,m,n} :=\ (c \revrel{\action} n) \revrel{\action} m &\equalto m \action (n \action c)\\
		&\nlongto{\mu_{n,n,c}} (m \mtimes n) \action c\\
		&\nlongto{\beta \action c} (n \mtimes m) \action c\\
		&\equalto (m \revrel{\mtimes} n) \action c.
	\end{eqalign}
	To make $(\cat C, \action, \revrel{\action})$ into an $\cat M$-biactegory, hence a natural isomorphism
	\begin{equation}
		\zeta_{m,c,n} : m \action (c \revrel{\action} n) \equalto m \action (n \action c) \isolongto n \action (m \action c) \equalto (m \action c) \revrel{\action} n
	\end{equation}
	that we obtain by conjugating $\beta$ (or better, the action thereof) with $\mu$:
	\begin{equation}
	\label{eq:bimodulator-for-symm}
		m \action (n \action c) \nlongto{\mu^{-1}_{m,n,c}} (m \mtimes n) \action c \nlongto{\beta_{m,n} \action c} (n \mtimes m) \action c \nlongto{\mu_{n,m,c}} n \action (m \action c).
	\end{equation}
	To prove this is a well-defined bimodulator, we appeal to strictification (Lemma~\ref{lemma:strictification}) and move to a setting where both the monoidal structure of $\cat M$ and the structure of $\action$ are strict.
	Note this doesn't apply to the braiding of $\cat M$, which can't be strictified.
	Anyway, in this setting we notice immediately that~\eqref{diag:bimod-coherence-pentag1} and~\eqref{diag:bimod-coherence-pentag2} collapse to the two hexagonal axioms for a braiding \cite[Definition 1.2.34]{johnson2021}.
	The two triangular axioms~\eqref{diag:bimod-coherence-triang1}--\eqref{diag:bimod-coherence-triang2} hold for analogous reasons.
\end{proof}

\begin{corollary}
	Since $(\cat M^\rev)^\rev = \cat M$, any right $\cat M$-actegory, for $\cat M$ braided, can be canonically turned into an $\cat M$-biactegory.
\end{corollary}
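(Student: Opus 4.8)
The plan is to deduce this directly from the mirroring construction of Proposition~\ref{prop:mirroring-biact} applied to the reversed category. Recall from the remarks following Definition~\ref{def:left-actegory} that a right $\cat M$-actegory $(\cat C, \actionn)$ is, by definition, nothing but a left $\cat M^\rev$-actegory. So the first step is simply to reinterpret the given data as a left $\cat M^\rev$-action on $\cat C$.

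Second, I would invoke the standard fact that $\cat M^\rev$ is braided whenever $\cat M$ is: a braiding $\sigma$ on $\cat M^\rev$ is a natural isomorphism $m \revrel{\mtimes} n \isolongto n \revrel{\mtimes} m$, i.e.~a natural isomorphism $n \mtimes m \isolongto m \mtimes n$, which $\beta$ (suitably relabelled) provides, and whose two hexagon axioms for $\cat M^\rev$ are exactly the two hexagon axioms for $\beta$ with arguments permuted; see the treatment of $\cat M^\rev$ in \cite[Example 1.2.9]{johnson2021}. With this in hand, Proposition~\ref{prop:mirroring-biact} applies to the left $\cat M^\rev$-actegory $(\cat C, \actionn)$ and produces an $\cat M^\rev$-biactegory structure on $\cat C$: the original left $\cat M^\rev$-action, a mirrored right $\cat M^\rev$-action, and a bimodulator satisfying Diagrams~\eqref{diag:bimod-coherence-pentag1}--\eqref{diag:bimod-coherence-triang2} for $\cat M^\rev$.

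Third, I would translate this back. Since $(\cat M^\rev)^\rev = \cat M$, a right $\cat M^\rev$-action is the same datum as a left $\cat M$-action, while the left $\cat M^\rev$-action is the original right $\cat M$-action; so the mirroring has equipped $\cat C$ with a left $\cat M$-action, a right $\cat M$-action and a bimodulator between them. Because the four coherence diagrams of Definition~\ref{def:biactegories} are invariant under the simultaneous exchange of `left' with `right' and of $\cat M$ with $\cat M^\rev$, the diagrams obtained from the $\cat M^\rev$-biactegory are precisely those required of an $\cat M$-biactegory, and we are done. The only step needing genuine attention---and really the only thing worth spelling out---is this last bookkeeping: verifying that the bimodulator, once the two actions are relabelled to their original handedness, points in the direction $m \action (c \actionn n) \isolongto (m \action c) \actionn n$ demanded by Definition~\ref{def:biactegories}, and that no hexagon is inadvertently transposed into a non-equivalent one. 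This is routine provided one keeps careful track of which argument lives in $\cat M$ and which in $\cat M^\rev$; alternatively one can avoid diagram chasing altogether by using Theorem~\ref{th:biact-are-bialg}, which identifies $\cat M$-biactegories with algebras for $\cat M^\rev \times \cat M \times -$ and $\cat M^\rev$-biactegories with algebras for $\cat M \times \cat M^\rev \times -$; these two pseudomonads are interchanged by the symmetry distributive law of $\times$ on $\Cat$, so their $2$-categories of algebras coincide, and one simply transports the mirroring of $(\cat C, \actionn)$ across this equivalence.
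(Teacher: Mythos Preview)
Your proposal is correct and matches the paper's intended reasoning. The paper states this as a bare corollary with no proof, the justification being entirely contained in the clause ``Since $(\cat M^\rev)^\rev = \cat M$'' together with the immediately preceding Proposition~\ref{prop:mirroring-biact}; you have simply unpacked that one-line justification carefully, including the (tacit) observation that $\cat M^\rev$ inherits a braiding and the bookkeeping that an $\cat M^\rev$-biactegory is the same data as an $\cat M$-biactegory.
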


\begin{remark}
	The above proves that $\Act{\cat M}^\lax$ embeds into $\Biact{\cat M}^\lax$, but this embedding is not wide, even when $\cat M$ is braided.
	There still are biactegories whose left and right action are genuinely distinct and not simply the symmetrization of each other.
	Despite this, often we abuse notation denoting the left and right structure of $\cat M$-biactegory with the same symbol.
\end{remark}

\begin{example}[Monoidal categories]
\label{ex:moncat-self-biaction}
	As observed in Example~\ref{ex:moncat-self-actions}, every monoidal category $(\cat C, i, \ctimes)$ is canonically a left and right self-actegory (both supported by $\ctimes$).
	It is trivial to observe that these two actions  are compatible in the sense of biactegories:
	\begin{equation}
		\zeta_{a,b,c} := \alpha^{-1}_{a,b,c} : a \ctimes (b \ctimes c) \isolongto (a \ctimes b) \ctimes c.
	\end{equation}
	We call $(\cat C, \ctimes, \ctimes)$ the \emph{canonical self-biactegory} associated to $\cat C$.
\end{example}

\begin{example}[Braided monoidal categories]
\label{ex:braidcat-mirror-biaction}
	When $(\cat C, i, \ctimes)$ comes with a braiding $\beta$, there is an additional way to make it into a self-biactegory, given by the mirroring construction described above (Proposition~\ref{prop:mirroring-biact}) applied to the left action.
	This yields the bicategory $(\cat C, \ctimes, \revrel{\ctimes})$, where the bimodulator is given by braiding:
	\begin{eqalign}
		m \ctimes (c \revrel{\ctimes} n) &\equalto m \ctimes (n \ctimes c)\\
		&\nlongto{\alpha^{-1}_{m,n,c}} (m \ctimes n) \ctimes c\\
		&\nlongto{\beta_{m,n} \ctimes c} (n \ctimes m) \ctimes c\\
		&\nlongto{\alpha_{n,m,c}} c \ctimes (m \ctimes c)\\
		&\equalto (m \ctimes c) \revrel{\ctimes} n.
	\end{eqalign}
	The identity map $1_{\cat C} : \cat C \to \cat C$ is a bilinear functor from $(\cat C, \ctimes, \ctimes)$ (the canonical self-biactegory) to $(\cat C, \ctimes, \revrel{\ctimes})$, equipped with the linear structures
	\begin{eqalign}
		\ell_{c,d} &: c \ctimes d \equalto c \ctimes d,\\
		r_{c,d} &: c \ctimes d \nlongto{\beta_{c,d}} d \ctimes c \equalto c \revrel{\ctimes} d.
	\end{eqalign}
	The hexagonal coherence for these then reproduces the first hexagonal identity for $\beta$.
	We recover the second one from repeating the same construction for $\cat C^\rev$.
\end{example}


	\subsection{Tensor product of actegories}
\label{sec:tensor-product}
Like any category of `modules', actegories possess a natural notion of monoidal product, namely that of tensor product.
This notion is indeed totally analogous to the eponymous product on vector spaces, modules, and the closely related cousins of actegories, module categories \cite{ostrik2003module}.
Also, let us remind the reader we are not the first to observe the existence of such a product, nor to define it explicitly. See, for instance, \cite{skoda2006biactegories,skoda2009some}.

The tensor operation is defined between actegories of different handedness: the tensor $\cat D \tensor \cat C$ requires $\cat D$ to be a \emph{right} $\cat M$-actegory and $\cat C$ to be a \emph{left} one.
As usual, these actions are then `used up' and are not available on $\cat D \tensor \cat C$ anymore.
However, if further actegorical structures are present, then these are not affected. Therefore if $\cat D$ is a left $\cat N$-actegory or $\cat C$ is a right $\cat P$-actegory, so will be $\cat D \tensor \cat C$.

Here we define the tensor product of actegories using its universal properties of representing \emph{balanced} functors, i.e.~functors $F: \cat D \times \cat C \to \cat E$ such that $F(d \actionn m, c) \iso F(d, m \action c)$, which is of course the categorification of the notion of balanced map of modules, where the isomorphism is simply an equality.
That isomorphism is now structure obeying its own set of coherence laws:

\begin{definition}
\label{def:balanced}
	Let $(\cat C, (\action, \eta^\action, \mu^\action))$ be a left $\cat M$-actegory, $(\cat D, (\actionn, \eta^\actionn, \mu^\actionn))$ a right $\cat M$-actegory and $\cat E$ a category.
	An \textbf{$\cat M$-balanced} functor is a functor $F:\cat D \times \cat C \to \cat E$ together with a natural isomorphism
	\begin{equation}
		\varepsilon_{d,m,c} : F(d \actionn m, c) \isolongto F(d, m \action c)
	\end{equation}
	called \textbf{equilibrator}, obeying the following coherence laws:
	\begin{diagram}[row sep=5ex, column sep=-3ex]
		\label{diag:bal-coherence-pentag}
		& {F(d \actionn (m \mtimes n), c)} && {F(d, (m \mtimes n) \action c)} \\
		{F((d \actionn m) \actionn n, c)} &&&& {F(d, m \action (n \action c))} \\
		&& {F(d \actionn m, n \action c)}
		\arrow["{\varepsilon_{d, m\mtimes n, c}}", from=1-2, to=1-4]
		\arrow["{F(\mu^\actionn_{d,m,n},c)}", from=2-1, to=1-2]
		\arrow["{\varepsilon_{d \actionn m,n,c}}"', from=2-1, to=3-3]
		\arrow["{\varepsilon_{d,m,n\action c}}"', from=3-3, to=2-5]
		\arrow["{F(d, \mu^\action_{m,n,c})}"', from=2-5, to=1-4]
	\end{diagram}
	\begin{diagram}
	\label{diag:bal-coherence-triang}
		{F(d \actionn j, c)} && {F(d, j \action c)} \\
		& {F(d,c)}
		\arrow["{F(\eta^\actionn, c)}", from=2-2, to=1-1]
		\arrow["{F(d, \eta^\action)}"', from=2-2, to=1-3]
		\arrow["{\varepsilon_{d,j,c}}", from=1-1, to=1-3]
	\end{diagram}
\end{definition}

\begin{definition}
	Let $(F, \varepsilon), (G, \vartheta) : \cat D \times \cat C \to \cat E$ be $\cat M$-balanced functors.
	An \textbf{$\cat M$-balanced transformation} is a natural transformation $\xi : F \twoto G$ for which all the following squares commute:
	\begin{diagram}
	\label{diag:balanced-nat-trans}
		{F(d \actionn m, c)} & {F(d, m \action c)} \\
		{G(d \actionn m, c)} & {G(d, m \action c)}
		\arrow["{\xi_{d \actionn m, c}}"', from=1-1, to=2-1]
		\arrow["{\xi_{d, m \action c}}", from=1-2, to=2-2]
		\arrow["{\varepsilon_{d,m,c}}", from=1-1, to=1-2]
		\arrow["{\vartheta_{d,m,c}}"', from=2-1, to=2-2]
	\end{diagram}
\end{definition}

Evidently, balanced functors $\cat D \times \cat C \to \cat E$ and balanced transformations thereof gather in a category $\Bal(\cat D \times \cat C, \cat E)$, and this construction is 2-functorial in $\cat E$.

\begin{definition}
\label{def:tensor-actegories}
	Let $(\cat C, \action)$ be a left $\cat M$-actegory and $(\cat D, \actionn)$ a right $\cat M$-actegory.
	Their \textbf{tensor product} is the corepresenting object $\cat D \tensor \cat C$ of the 2-functor $\Bal(\cat D \times \cat C, -)$.
	That is, it is a category $\cat D \tensor \cat C$ together with the structure of an isomorphism
	\begin{equation}
		\Bal(\cat D \times \cat C, \cat E) \;\iso\; \Cat(\cat D \tensor \cat C, \cat E), \qquad \text{for all $\cat E : \Cat$}.
	\end{equation}
\end{definition}

One can `read off' a concrete construction of the tensor product from this universal property.
Saying every functor out of $\cat D \tensor \cat C$ is automatically balanced implies such a category must have the same objects as $\cat D \times \cat C$, but more isomorphisms, namely all those of the form $(d \actionn m, c) \iso (d, m \action c)$.

\begin{proposition}
\label{prop:tens-is-pseudocoeq}
	Let $(\cat C, \action)$ be a left $\cat M$-actegory and $(\cat D, \actionn)$ a right $\cat M$-actegory.
	Their tensor product $\cat D \tensor \cat C$ is given by the following isocoinserter (in $\Cat$):
	\begin{equation}
		\begin{tikzcd}[ampersand replacement=\&, sep=scriptsize]
			{\cat D \times \cat M \times \cat C} \&\& {\cat D \times \cat C} \& {\cat D \tensor \cat C}
			\arrow["{\actionn \times \cat C}", shift left, from=1-1, to=1-3]
			\arrow["{\cat D \times \action}"', shift right, from=1-1, to=1-3]
			\arrow["Q", from=1-3, to=1-4]
		\end{tikzcd}
	\end{equation}
	meaning it comes equipped with a universal natural isomorphism $\tau$ which coequalizes, up to iso, the two actions of $\cat M$:
	\begin{equation}
		\begin{tikzcd}[ampersand replacement=\&, sep=scriptsize, row sep=tiny]
			\&\& {\cat D \times \cat C} \\
			{\cat D \times \cat M \times \cat C} \&\&\& {\cat D \tensor \cat C} \\
			\&\& {\cat D \times \cat C}
			\arrow["{\actionn \times \cat C}", shift left, curve={height=-5pt}, from=2-1, to=1-3]
			\arrow["Q", curve={height=-8pt}, from=1-3, to=2-4]
			\arrow["{\cat D \times \action}"', shift right, curve={height=5pt}, from=2-1, to=3-3]
			\arrow["Q"', curve={height=8pt}, from=3-3, to=2-4]
			\arrow["\tau", "\wr"', shift right=2, shorten <=6pt, shorten >=6pt, Rightarrow, from=1-3, to=3-3]
		\end{tikzcd}
	\end{equation}
\end{proposition}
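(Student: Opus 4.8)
The plan is to verify the universal property of Definition~\ref{def:tensor-actegories} by showing that the pseudocoequalizer construction exhibits exactly the corepresentation of $\Bal(\cat D \times \cat C, -)$. First I would recall the explicit form of a pseudocoequalizer in $\Cat$: given two parallel functors $f, g : \cat A \to \cat B$, the pseudocoequalizer $\cat B \to \mathrm{Pcoeq}(f,g)$ is characterized by the property that functors $\mathrm{Pcoeq}(f,g) \to \cat E$ correspond naturally to pairs $(H : \cat B \to \cat E, \sigma : Hf \iso Hg)$, with $\sigma$ a natural isomorphism. Instantiating $f = \actionn \times \cat C$ and $g = \cat D \times \action$ (both from $\cat D \times \cat M \times \cat C$ to $\cat D \times \cat C$), such a pair is precisely a functor $H : \cat D \times \cat C \to \cat E$ together with a natural isomorphism with components $H(d \actionn m, c) \iso H(d, m \action c)$ — which is the underlying data of an $\cat M$-balanced functor, and the components of $\tau$ are exactly the equilibrator.

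Next I would check that the two coherence axioms for a balanced functor, Diagrams~\eqref{diag:bal-coherence-pentag} and~\eqref{diag:bal-coherence-triang}, are automatically satisfied by any $(H, \tau)$ factoring through the pseudocoequalizer. The key observation is that the pentagon and triangle are precisely the coherence conditions that the universal 2-cell $\tau$ of the pseudocoequalizer is forced to satisfy with respect to the pseudomonad structure $\mu^\action$, $\mu^\actionn$ on the two sides — i.e.\ they express that $\tau$ is compatible with the iterated actions and the unitors. Since the pseudocoequalizer is taken in $\Cat$ along parallel functors that are themselves the action functors, the relevant 2-cell identities in $\mathrm{Pcoeq}$ are exactly these diagrams (after transporting along $\mu$ and $\eta$). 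This means $\Cat(\cat D \tensor \cat C, \cat E)$ is naturally isomorphic not just to the category of pairs $(H, \sigma)$ but to the full subcategory of those pairs satisfying the pentagon and triangle, i.e.\ to $\Bal(\cat D \times \cat C, \cat E)$.

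For the morphism level, I would observe that a natural transformation $\mathrm{Pcoeq}(f,g) \to \cat E$ corresponds to a natural transformation $H \twoto H'$ commuting with $\tau, \tau'$, which is exactly the defining square~\eqref{diag:balanced-nat-trans} of a balanced transformation; so the correspondence is an isomorphism of categories, 2-natural in $\cat E$, as required. Concretely, one can describe $\cat D \tensor \cat C$ as the category with the same objects as $\cat D \times \cat C$, with morphisms freely generated by those of $\cat D \times \cat C$ together with formal isomorphisms $\tau_{d,m,c} : (d \actionn m, c) \to (d, m \action c)$, modulo naturality and the relations encoded by~\eqref{diag:bal-coherence-pentag}--\eqref{diag:bal-coherence-triang}; the resulting quotient is a legitimate small category since the actions are functors and $\cat D$, $\cat C$, $\cat M$ are (locally) small.

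The main obstacle I anticipate is checking carefully that the 2-dimensional universal property of the pseudocoequalizer in $\Cat$ genuinely matches the balanced-functor coherence on the nose, rather than merely up to some further isomorphism — in particular, making sure the pentagon~\eqref{diag:bal-coherence-pentag} (which involves \emph{two} different multiplicators $\mu^\action$ and $\mu^\actionn$) is precisely the compatibility that the universal 2-cell of the pseudocoequalizer must satisfy, and that no extra conditions sneak in or drop out. This is essentially a bookkeeping argument: one compares the cocone conditions for the (pseudo)coequalizer of the pair $(\actionn \times \cat C, \cat D \times \action)$ with the explicit list of coherence axioms, using that $\cat M \times -$, $- \times \cat M$ are strict 2-functors so the relevant whiskerings are computed componentwise. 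I would either spell this out diagrammatically or, to keep the exposition light, invoke the standard fact (e.g.\ from the theory of codescent objects / two-sided bar constructions) that the tensor product of a right and a left module over a pseudomonoid is computed by exactly this pseudocoequalizer, and remark that $\cat D \tensor \cat C$ so defined visibly has the same objects as $\cat D \times \cat C$ with the extra isomorphisms $\tau$, matching the informal description preceding the proposition.
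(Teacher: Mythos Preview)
Your approach matches the paper's. The paper's argument is a one-liner: being balanced is exactly the condition of pseudo-coequalizing the two action maps, so by universal property functors out of $\cat D \tensor \cat C$ correspond to balanced functors out of $\cat D \times \cat C$, and \emph{vice versa}.

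One point deserves sharpening, and the paper does not address it either. A \emph{bare} pseudocoequalizer of two parallel functors $f,g$ has as its universal property precisely ``functor $H$ plus natural iso $Hf\cong Hg$'' with no further constraints; it does \emph{not} force the pentagon~\eqref{diag:bal-coherence-pentag} or the triangle~\eqref{diag:bal-coherence-triang}. So your sentence ``the pentagon and triangle are precisely the coherence conditions that the universal 2-cell $\tau$ of the pseudocoequalizer is forced to satisfy'' is not correct as stated. What is true is that these relations hold once one passes to the richer colimit you mention at the end --- a codescent object of the two-sided bar resolution --- and indeed the paper's follow-up explicit description (Proposition~\ref{prop:tensor-actegory-desc}) silently builds those relations into the quotient. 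In short, both you and the paper use ``pseudocoequalizer'' loosely to mean this enriched quotient; your instinct to fall back on the codescent-object framing is the honest way to make the argument precise.
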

\begin{proof}
	It suffices to prove that if $F: \cat D \times \cat C \to \cat E$ is balanced, then it yields a unique functor out of $\cat D \tensor \cat C$ (which here denotes the pseudocoequalizer) and \emph{vice versa}.
	But this is immediate: being balanced means exactly coequalizing up to iso the two maps that $Q$ is a pseudocoequalizer of, thus by universal property there is an isomorphism as in Definition~\ref{def:balanced}, or, \emph{vice versa}, such an isomorphism provides the required universal map.
\end{proof}

We can describe such a isocoinserter as the category obtained by adjoining isomorphisms and equations to $\cat D \times \cat C$:

\begin{proposition}
\label{prop:tensor-actegory-desc}
	Let $(\cat C, (\action, \eta^\action, \mu^\action))$ be a left $\cat M$-actegory and $(\cat D, (\actionn, \eta^\actionn, \mu^\actionn))$ a right $\cat M$-actegory.
	Their tensor product is the category $\cat D \tensor \cat C$ has the same objects and morphisms of $\cat D \times \cat C$, together with a family of isomorphisms
	\begin{equation}
		\tau_{d,m,c} : (d \actionn m, c) \isolongto (d, m \action c), \qquad \text{for all $m : \cat M$, $d : \cat D$, $c : \cat C$}.
	\end{equation}
	satisfying the following relations:
	\begin{enumerate}[label=\Roman*)]
		\item For every morphism $f: d \to d'$ in $\cat D$, $g:c \to c'$ in $\cat C$ and $\alpha : m \to n$ in $\cat M$, the following square must commute:
		\begin{diagram}
		\label{diag:tensor-relations-nat-square}
			{(d \actionn m, c)} & {(d'\actionn n, c')} \\
			{(d, m \action c)} & {(d',n \action c')}
			\arrow["{(f \actionn \alpha, g)}", from=1-1, to=1-2]
			\arrow["{(f, \alpha \action g)}"', from=2-1, to=2-2]
			\arrow["{\tau_{d',n,c'}}", from=1-2, to=2-2]
			\arrow["{\tau_{d,m,c}}"', from=1-1, to=2-1]
		\end{diagram}
		\item For every $d: \cat D$, $m,n : \cat M$, $c:\cat C$, the following diagrams must commute:
		\begin{diagram}[row sep=5ex, column sep=-3ex]
		\label{diag:tensor-relations-pentag}
			& {(d \actionn (m \mtimes n), c)} && {(d, (m \mtimes n) \action c)} \\
			{((d \actionn m) \actionn n, c)} &&&& {(d, m \action (n \action c))} \\
			&& {(d \actionn m, n \action c)}
			\arrow["{\tau_{d, m\mtimes n, c}}", from=1-2, to=1-4]
			\arrow["{(\mu^\actionn_{d,m,n},c)}", from=2-1, to=1-2]
			\arrow["{\tau_{d \actionn m,n,c}}"', from=2-1, to=3-3]
			\arrow["{\tau_{d,m,n\action c}}"', from=3-3, to=2-5]
			\arrow["{(d, \mu^\action_{m,n,c})}"', from=2-5, to=1-4]
		\end{diagram}
		\begin{diagram}
		\label{diag:tensor-relations-triang}
			{(d \actionn j, c)} && {(d, j \action c)} \\
			& {(d,c)}
			\arrow["{(\eta^\actionn, c)}", from=2-2, to=1-1]
			\arrow["{(d, \eta^\action)}"', from=2-2, to=1-3]
			\arrow["{\tau_{d,j,c}}", from=1-1, to=1-3]
		\end{diagram}
	\end{enumerate}
\end{proposition}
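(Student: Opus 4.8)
The plan is to take the explicit presentation in the statement as the definition of a category, call it $\cat Q$, and then verify directly that $\cat Q$ corepresents the $2$-functor $\Bal(\cat D \times \cat C, -)$; by uniqueness of corepresenting objects this identifies $\cat Q$ with the tensor product $\cat D \tensor \cat C$ of Definition~\ref{def:tensor-actegories} (equivalently, with the pseudocoequalizer of Proposition~\ref{prop:tens-is-pseudocoeq}). This route goes straight for the universal property and avoids an independent analysis of the pseudocoequalizer.

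First I would make the presentation precise. Let $\cat Q$ be the category with the same objects as $\cat D \times \cat C$, whose morphisms are generated by the morphisms of $\cat D \times \cat C$ together with formal isomorphisms $\tau_{d,m,c} : (d \actionn m, c) \to (d, m \action c)$ and their inverses, subject to the composition relations of $\cat D \times \cat C$ and the relations~I) and~II) of the statement. There is an evident identity-on-objects functor $Q : \cat D \times \cat C \to \cat Q$, and the family $\tau$ is by construction a natural transformation $Q \circ (\actionn \times \cat C) \twoto Q \circ (\cat D \times \action)$: its naturality squares are exactly relation~I), while relations~II) are the pentagon~\eqref{diag:bal-coherence-pentag} and triangle~\eqref{diag:bal-coherence-triang} instantiated at $Q$. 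Hence $(Q, \tau) \in \Bal(\cat D \times \cat C, \cat Q)$.

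Next I would check the universal property, i.e.\ that precomposition with $(Q, \tau)$ yields an isomorphism $\Cat(\cat Q, \cat E) \iso \Bal(\cat D \times \cat C, \cat E)$, $2$-naturally in $\cat E$. On objects: given a balanced functor $(F, \varepsilon)$, define $\bar F : \cat Q \to \cat E$ to agree with $F$ on objects and on the generators coming from $\cat D \times \cat C$, and to send $\tau_{d,m,c} \mapsto \varepsilon_{d,m,c}$ (and $\tau^{-1} \mapsto \varepsilon^{-1}$). This assignment respects all the defining relations of $\cat Q$ --- the relations of $\cat D \times \cat C$ because $F$ is functorial, relation~I) because $\varepsilon$ is natural, and relations~II) because $(F, \varepsilon)$ satisfies~\eqref{diag:bal-coherence-pentag}--\eqref{diag:bal-coherence-triang} --- so $\bar F$ is a well-defined functor with $\bar F Q = F$ and $\bar F \tau = \varepsilon$, and it is the unique such functor since it is forced on objects and on every generator. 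On $2$-cells: a natural transformation $\bar F \twoto \bar G$ is a family of components indexed by the objects of $\cat Q$ (= objects of $\cat D \times \cat C$) which is natural against all generators; naturality against the generators from $\cat D \times \cat C$ makes it a natural transformation $F \twoto G$, while naturality against the $\tau$'s is exactly the balancing square~\eqref{diag:balanced-nat-trans}. Thus natural transformations out of $\cat Q$ correspond bijectively to balanced transformations, compatibly with composition and with reindexing in $\cat E$.

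The main obstacle --- and really the only subtle point --- is the bookkeeping in the ``generators and relations'' construction of $\cat Q$: one should be careful that a morphism of $\cat Q$ is a zig-zag of morphisms of $\cat D \times \cat C$ interspersed with $\tau^{\pm 1}$'s, taken modulo~I) and~II), and that this is precisely the data the pseudocoequalizer of Proposition~\ref{prop:tens-is-pseudocoeq} pins down; since we verify corepresentability of $\Bal(\cat D \times \cat C, -)$ directly, this is handled automatically, but it is worth spelling out so the reader sees the presentation is neither too large nor too small. (As with monoidal categories and actegories, only one of the two triangle relations is in fact needed, the other following from it together with the pentagon and naturality; we keep the one matching Definition~\ref{def:balanced}.)
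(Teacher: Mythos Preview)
The paper does not actually supply a proof for this proposition: it is stated immediately after Proposition~\ref{prop:tens-is-pseudocoeq} with the lead-in ``We can describe such a pseudoquotient as the category obtained by adjoining isomorphisms and equations to $\cat D \times \cat C$'', and then left without a proof environment. So there is nothing to compare against at the level of argument.

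Your approach is correct and is the natural one the authors presumably had in mind. You build the category $\cat Q$ by generators and relations, observe that the relations~I) and~II) are literally the naturality and coherence conditions of Definition~\ref{def:balanced} instantiated at the quotient functor $Q$, and then verify the corepresentation $\Cat(\cat Q,\cat E)\iso\Bal(\cat D\times\cat C,\cat E)$ directly on objects and $2$-cells. This is exactly how one unpacks the pseudocoequalizer of Proposition~\ref{prop:tens-is-pseudocoeq} concretely, so your argument simultaneously proves the proposition and explains why the presentation matches the abstract definition. The parenthetical remark about redundancy of one triangle is a nice touch and in the spirit of the analogous observation the paper makes for actegories themselves.
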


\begin{proposition}
	Let $(\cat C, \action, \action)$ be an $(\cat N, \cat M)$-biactegory and $(\cat D, \actionn, \actionn)$ be an $(\cat M, \cat P)$-biactegory.
	Then $\cat D \tensor \cat C$ is an $(\cat N, \cat P)$-biactegory, inheriting the left action from $\cat D$ and the right action from $\cat C$.
\end{proposition}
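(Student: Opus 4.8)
The plan is to work with the explicit presentation of $\cat D \tensor \cat C$ from Proposition~\ref{prop:tensor-actegory-desc}. So that the tensor is defined at all (cf.\ Section~\ref{sec:tensor-product}), $\cat D$ is a right $\cat M$-actegory and $\cat C$ a left $\cat M$-actegory; in addition $\cat D$ carries a left $\cat N$-action with bimodulator $\zeta^{\cat D}$ (comparing its $\cat N$- and $\cat M$-actions) and $\cat C$ carries a right $\cat P$-action with bimodulator $\zeta^{\cat C}$ (comparing its $\cat M$- and $\cat P$-actions). By Proposition~\ref{prop:tensor-actegory-desc}, $\cat D \tensor \cat C$ has the objects and morphisms of $\cat D \times \cat C$, together with adjoined isomorphisms $\tau_{d,m,c}\colon (d \actionn m, c) \iso (d, m \action c)$ subject to the naturality squares \eqref{diag:tensor-relations-nat-square} and the coherence diagrams \eqref{diag:tensor-relations-pentag}--\eqref{diag:tensor-relations-triang}. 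Since the surviving actions touch only the coordinates that the tensor leaves untouched, the idea is to transport them coordinatewise through the quotient functor $Q$, using $\zeta^{\cat D}$ and $\zeta^{\cat C}$ to dictate how they interact with the $\tau$'s.

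First I would define the left $\cat N$-action $\laction$ on $\cat D \tensor \cat C$. On the image of $\cat D \times \cat C$ put $n \laction (d, c) := (n \laction d, c)$, acting by the left $\cat N$-action of $\cat D$ on the first coordinate and trivially on the second; on the adjoined isomorphisms put $n \laction \tau_{d,m,c} := \tau_{n \laction d,\, m,\, c} \comp Q(\zeta^{\cat D}_{n,d,m},\, c)$. Well-definedness on the presented category amounts to checking that this assignment respects the relations of Proposition~\ref{prop:tensor-actegory-desc}: the naturality square \eqref{diag:tensor-relations-nat-square} for the transported $\tau$'s follows from naturality of $\zeta^{\cat D}$, and the diagrams \eqref{diag:tensor-relations-pentag}--\eqref{diag:tensor-relations-triang} translate, after cancelling the $\tau$'s, into the pentagon and triangle coherences of the bimodulator $\zeta^{\cat D}$. (Equivalently and more invariantly: for each $n$ the composite $Q \comp \big((n \laction_{\cat D} -) \times \cat C\big)$ is an $\cat M$-balanced functor with equilibrator the composite above, so it factors through the pseudocoequalizer $Q$ of Proposition~\ref{prop:tens-is-pseudocoeq}; assembling these factorisations 2-naturally in $n$ yields $\laction\colon \cat N \times (\cat D \tensor \cat C) \to \cat D \tensor \cat C$.) Its unitor, multiplicator and the coherence diagrams of Definition~\ref{def:left-actegory} are then inherited from those of the left $\cat N$-action on $\cat D$ by whiskering with $Q$ and invoking the uniqueness half of the universal property. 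The right $\cat P$-action $\raction$ on $\cat D \tensor \cat C$ is defined symmetrically, acting on the second coordinate and using $\zeta^{\cat C}$ for its interaction with $\tau$.

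It remains to produce the bimodulator of $\cat D \tensor \cat C$, a natural isomorphism $\zeta'_{n,x,p}\colon n \laction (x \raction p) \iso (n \laction x) \raction p$ obeying the axioms \eqref{diag:bimod-coherence-pentag1}--\eqref{diag:bimod-coherence-triang2} of Definition~\ref{def:biactegories}. Here the work is trivial: $\laction$ modifies only the $\cat D$-coordinate and $\raction$ only the $\cat C$-coordinate, so on objects both composites send $(d,c)$ to $(n \laction d,\, c \raction p)$, and a short computation on the generating morphisms $\tau$ -- applying $\zeta^{\cat D}$, then $\tau$, then $\zeta^{\cat C}$ gives the same arrow regardless of the order of the two actions -- shows the two functors are literally equal. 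Hence we may take $\zeta'$ to be the identity; the coherence axioms hold vacuously, the left $\cat N$- and right $\cat P$-actions are strictly compatible, and $(\cat D \tensor \cat C, \laction, \raction, \id)$ is the desired $(\cat N, \cat P)$-biactegory, inheriting its left action from $\cat D$ and its right action from $\cat C$.

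The main obstacle is bookkeeping rather than conceptual: pinning down the equilibrators for $\laction$ and $\raction$ so that the factorisations through the pseudocoequalizer are natural in the acting parameter, and verifying that relations \eqref{diag:tensor-relations-pentag}--\eqref{diag:tensor-relations-triang} really do collapse to the coherence axioms of $\zeta^{\cat D}$ (and, for the right action, of $\zeta^{\cat C}$). Carrying the argument out in the explicit model of Proposition~\ref{prop:tensor-actegory-desc} -- where $Q$ is the identity on objects and the only morphisms to test against are images of $\cat D \times \cat C$-morphisms and the $\tau$'s -- keeps this manageable. One could instead route everything through Theorem~\ref{th:biact-are-bialg}, realising the structures as pseudoalgebras for composite pseudomonads, but for this statement the hands-on approach is shorter.
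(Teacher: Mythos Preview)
Your proof is correct and follows the same line as the paper's: define the surviving actions coordinatewise and observe that, since the left $\cat N$-action touches only the $\cat D$-coordinate and the right $\cat P$-action only the $\cat C$-coordinate, the bimodulator of $\cat D \tensor \cat C$ is the identity. The paper's proof is extremely terse and omits exactly the well-definedness bookkeeping you spell out (how $n \laction -$ acts on the adjoined $\tau$'s via $\zeta^{\cat D}$, and why the relations of Proposition~\ref{prop:tensor-actegory-desc} are preserved), so your version is strictly more complete; you have also silently corrected the swap of $\cat C$ and $\cat D$ in the biactegory labels of the statement, which is indeed needed for $\cat D \tensor[\cat M] \cat C$ to be formed.
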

\begin{proof}
	On $\cat D \tensor \cat C$, we define
	\begin{equation}
		m \actionn (d,c) := (m \actionn d, c)
	\end{equation}
	and keep $\eta$ and $\mu$ of the left action $\actionn$ of $\cat D$.
	The same applies on the right, and the bimodulator for $\cat D \tensor \cat C$ is trivial:
	\begin{equation}
		\zeta_{m,(d,c), n} : m \actionn ((d,c) \action n) \equalto (m \action d, c \action n) \equalto
		(m \actionn (d,c)) \action n.
	\end{equation}
\end{proof}

The fact the tensor of actegories is a more natural choice of monoidal product between actegories can also be motivated by the following---which sometimes is used as definition:

\subsubsection{Proposition.}
\label{prop:biact-internal-hom}
\begin{resproposition}{biactinternalhom}
	Let $\cat C$ be an $(\cat N, \cat M)$-biactegory, $\cat D$ be a $(\cat M, \cat P)$-biactegory and $\cat E$ be a $(\cat N, \cat P)$-biactegory, let $\action$ denote all actions by abuse of notation.
	Then the category of (right) $\cat P$-linear maps $\cat D \to \cat E$, which we denote by $[\cat D, \cat E]_{\cat P}$ has a canonical $(\cat N, \cat M)$-biactegory structure, and is such that:
	\begin{equation}
	\label{eq:internal-hom-adj}
		\Biact[\cat N]{\cat P}(\cat C \tensor \cat D, \cat E) \iso \Biact[\cat N]{\cat M}(\cat C, [\cat D, \cat E]_{\cat P}).
	\end{equation}
	We refer to this bracket as the \textbf{internal hom $(\cat N, \cat M)$-biactegory} of $\cat D$ and $\cat E$.
\end{resproposition}
\begin{proof}
	See Appendix~\ref{appendix:proofs}.
\end{proof}

\begin{remark}
	The above doesn't hold if we take biactegories with \emph{lax} linear functors between them---that would instead require $\tensor$ to be replaced by an asymmetric version in which $\tau$ from Proposition~\ref{prop:tens-is-pseudocoeq} is not asked to be invertible (thus defining a coinserter instead).%
	\footnote{We made this mistake in a previous version of this work. We thank Subhajit Das for bringing this problem to our attention.}
\end{remark}

By duality, we also have (note the switch between $\cat C$ and $\cat D$ during transposition):

\begin{corollary}
\label{cor:dual-tensor-hom}
	Let $\cat C$ be an $(\cat M, \cat N)$-biactegory, $\cat D$ be a $(\cat P, \cat M)$-biactegory and $\cat E$ be a $(\cat P, \cat N)$-biactegory.
	Then the category of (left) $\cat P$-linear maps $\cat D \to \cat E$, which we denote by $[\cat D, \cat E]_{\cat P}$ has a canonical $(\cat M, \cat N)$-biactegory structure, and is such that:
	\begin{equation}
	\label{eq:internal-hom-dual-adj}
		\Biact[\cat P]{\cat N}(\cat D \tensor \cat C, \cat E) \iso \Biact[\cat M]{\cat N}(\cat C, [\cat D, \cat E]_{\cat P}).
	\end{equation}
\end{corollary}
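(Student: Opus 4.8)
The proof will be entirely parallel to that of Proposition~\ref{prop:biact-internal-hom}, with the roles of the left and right actions interchanged throughout; the cleanest way to make this precise, and the route I would take, is to deduce the corollary formally from that proposition by a \emph{reversal} (``mirroring'') argument, resting on the fact that a left action over a monoidal category is the same datum as a right action over its reverse. The first step is to record a \textbf{reversal equivalence of $2$-categories}
\[
	(-)^{\rev}\colon\ \Biact[\cat A]{\cat B}^\lax\ \equi\ \Biact[\cat B^\rev]{\cat A^\rev}^\lax ,
\]
which fixes the underlying category, reads the left $\cat A$-action as a right $\cat A^\rev$-action and the right $\cat B$-action as a left $\cat B^\rev$-action, replaces the bimodulator $\zeta$ by $\zeta^{-1}$, and on morphisms and $2$-cells merely swaps the left lineator $\ell$ with the right lineator $r$. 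The slick way to see this is well defined is to compose the identification $\Biact[\cat A]{\cat B}^\lax \equi \Act{(\cat B^\rev \times \cat A)}^\lax$ of Theorem~\ref{th:biact-are-bialg} with restriction of scalars (Proposition~\ref{prop:actegories-indexed-over-moncat}) along the strong monoidal isomorphism $\cat B^\rev \times \cat A \cong \cat A \times \cat B^\rev = (\cat A^\rev)^\rev \times \cat B^\rev$ coming from the symmetry of $\times$ in $\Cat$; alternatively one checks by hand that the coherence axioms \eqref{diag:bimod-coherence-pentag1}--\eqref{diag:bimod-coherence-triang2} for $\zeta$ are carried, after this swap, onto those for $\zeta^{-1}$, the two pentagons being exchanged and likewise the two triangles.

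Next I would verify that the two constructions entering Proposition~\ref{prop:biact-internal-hom} are compatible with $(-)^{\rev}$. For the tensor product: given a right $\cat M$-actegory $\cat D$ and a left $\cat M$-actegory $\cat C$, the pseudocoequalizer presentation of Proposition~\ref{prop:tens-is-pseudocoeq} is visibly invariant under the swap $\cat D \times \cat C \cong \cat C \times \cat D$ together with the interchange of the two parallel legs (to which a pseudocoequalizer is insensitive), so $(\cat D \tensor \cat C)^{\rev} \cong \cat C^{\rev} \tensor[\cat M^\rev] \cat D^{\rev}$; equivalently, an $\cat M$-balanced functor out of $\cat D \times \cat C$ is exactly an $\cat M^\rev$-balanced functor out of $\cat C \times \cat D$ in the sense of Definition~\ref{def:balanced}, with the equilibrator inverted. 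For the internal hom: since the left $\cat P$-action becomes a right $\cat P^\rev$-action, a \emph{left} $\cat P$-linear functor $\cat D \to \cat E$ is the same datum as a \emph{right} $\cat P^\rev$-linear functor $\cat D^{\rev} \to \cat E^{\rev}$, so the bracket $[\cat D, \cat E]_{\cat P}$ of the corollary is the reversal of the bracket $[\cat D^{\rev}, \cat E^{\rev}]_{\cat P^\rev}$ of Proposition~\ref{prop:biact-internal-hom}; in particular it inherits the asserted canonical $(\cat M, \cat N)$-biactegory structure from the $(\cat N^\rev, \cat M^\rev)$-biactegory structure produced there.

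Finally I would assemble the pieces. Applying Proposition~\ref{prop:biact-internal-hom} to $\cat C^{\rev}$ (an $(\cat N^\rev, \cat M^\rev)$-biactegory), $\cat D^{\rev}$ (an $(\cat M^\rev, \cat P^\rev)$-biactegory) and $\cat E^{\rev}$ (an $(\cat N^\rev, \cat P^\rev)$-biactegory) yields
\[
	\Biact[\cat N^\rev]{\cat P^\rev}^\lax\bigl(\cat C^{\rev} \tensor[\cat M^\rev] \cat D^{\rev},\ \cat E^{\rev}\bigr)\ \iso\ \Biact[\cat N^\rev]{\cat M^\rev}^\lax\bigl(\cat C^{\rev},\ [\cat D^{\rev}, \cat E^{\rev}]_{\cat P^\rev}\bigr).
\]
Substituting $\cat C^{\rev} \tensor[\cat M^\rev] \cat D^{\rev} \cong (\cat D \tensor \cat C)^{\rev}$ and $[\cat D^{\rev}, \cat E^{\rev}]_{\cat P^\rev} \cong [\cat D, \cat E]_{\cat P}^{\rev}$ from the previous step, and transporting both hom-categories back along the reversal equivalences $\Biact[\cat P]{\cat N}^\lax \equi \Biact[\cat N^\rev]{\cat P^\rev}^\lax$ and $\Biact[\cat M]{\cat N}^\lax \equi \Biact[\cat N^\rev]{\cat M^\rev}^\lax$, produces exactly the isomorphism~\eqref{eq:internal-hom-dual-adj} (and, since every identification used is $2$-natural, it is natural in the appropriate sense). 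The one real hazard is bookkeeping: keeping straight which monoidal factor picks up a $\rev$, and noticing that passing from $\cat D \tensor \cat C$ to $\cat C^{\rev} \tensor[\cat M^\rev] \cat D^{\rev}$ is precisely what realises the interchange of $\cat C$ and $\cat D$ flagged in the statement. No step requires a coherence computation beyond the ones already performed for Proposition~\ref{prop:biact-internal-hom}.
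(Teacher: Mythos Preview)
Your proposal is correct and follows essentially the same approach as the paper, which proves this corollary simply by the remark ``By duality, we also have (note the switch between $\cat C$ and $\cat D$ during transposition)''. You have merely made explicit the reversal equivalence and the compatibility of $\tensor$ and the internal hom with $(-)^{\rev}$ that the paper leaves implicit in that one-line appeal to duality.
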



\subsubsection{Corollary.}
\label{cor:biactegories-are-moncat}
\begin{rescorollary}{biactaremoncat}
	The tensor product of $\cat M$-actegories $\tensor$ can be equipped with the structure of a monoidal product on $\Biact{\cat M}^\lax$.
	Its restriction to $\Biact{\cat M}$ is pseudoclosed, with internal hom $[-,=]_{\cat M}$.
\end{rescorollary}
\begin{proof}
	The extra structure required to obtain a monoidal structure on the 2-category $\Biact{\cat M}^\lax$ is described in Appendix~\ref{appendix:proofs}.
	The fact $[-,=]_{\cat M}$ is a pseudoclosed structure has been proven in Proposition~\ref{prop:biact-internal-hom}.
\end{proof}



We can now prove the following:

\begin{proposition}
\label{prop:extension-of-scalars}
	Let $R : \cat M \to \cat N$ be a strong monoidal functor, and let $R^* : \Act{\cat N} \to \Act{\cat M}$ be the associated restriction of scalars, as defined in Proposition~\ref{prop:actegories-change-of-base}.
	Then $R^*$ has a left adjoint $R_!$, called \textbf{extension of scalars}, given by
	\begin{equation}
		R_!(\cat C, \action) \iso \cat N_R \tensor \cat C.
	\end{equation}
	where $\cat N_R$ is the canonical biactegory associated to $\cat N$ except the right action has been restricted by $R$.
\end{proposition}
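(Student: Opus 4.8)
The plan is to exhibit the adjunction $R_! \dashv R^*$ via a natural isomorphism of hom-categories, using the tensor–hom adjunction for biactegories (Proposition~\ref{prop:biact-internal-hom}) as the workhorse. First I would make sense of the claimed formula: an $\cat N$-actegory $(\cat D, \actionn)$ is the same as a $(\cat N, 1)$-biactegory, hence in particular an $(\cat M, 1)$-biactegory by restricting the left action along $R$; dually $\cat N_R$ is the canonical $\cat N$-self-biactegory $(\cat N, \ntimes, \ntimes)$ (Example~\ref{ex:moncat-self-biaction}) with its \emph{right} action pulled back along $R$, so that $\cat N_R$ is an $(\cat N, \cat M)$-biactegory. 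Feeding a left $\cat M$-actegory $(\cat C, \action)$ — equivalently a $(1, \cat M)$-biactegory — into the tensor product then yields $\cat N_R \tensor \cat C$, which by the `inherit left action from $\cat N_R$, right from $\cat C$' recipe is an $(\cat N, 1)$-biactegory, i.e.\ an $\cat N$-actegory. This is precisely the type of $R_!(\cat C,\action)$.

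Next I would check $R^*$ matches up with the internal hom. Taking $\cat N = \cat P = 1$ in Corollary~\ref{cor:dual-tensor-hom} (or the appropriate specialization of Proposition~\ref{prop:biact-internal-hom}), the internal hom $[\cat N_R, \cat D]_{\cat M}$ of the $(\cat N,\cat M)$-biactegory $\cat N_R$ and the $(\cat N,1)$-biactegory $\cat D$ is the category of \emph{left} $\cat N$-linear functors $\cat N_R \to \cat D$. I would then observe that such a functor is determined, up to the biactegory structure, by its value at the monoidal unit $i : \cat N$: the assignment $F \mapsto F(i)$ together with the lineator gives an equivalence $[\cat N_R, \cat D]_{\cat M} \equi R^*(\cat D, \actionn)$, much as in Lemma~\ref{lemma:strictification-functor} and Proposition~\ref{prop:comonadic} where a functor out of a free/self object is pinned down by one value. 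The right $\cat M$-action on $[\cat N_R, \cat D]_{\cat M}$ (coming from the left $\cat M$-action on $\cat N_R$, which is the restriction along $R$ of the left self-action) transports under this equivalence precisely to the restricted action $(R \times \cat D)\comp\actionn$ defining $R^*$.

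With these two identifications in hand, the adjunction is immediate from the tensor–hom adjunction: for any $\cat N$-actegory $(\cat D,\actionn)$, viewed as biactegories on the appropriate sides,
\begin{equation}
	\Act{\cat N}^\lax(\cat N_R \tensor \cat C,\ \cat D) \iso \Biact[1]{1}^\lax(\cat N_R \tensor \cat C,\ \cat D) \iso \Biact[1]{\cat M}^\lax(\cat C,\ [\cat N_R, \cat D]_{\cat M}) \iso \Act{\cat M}^\lax(\cat C,\ R^*\cat D),
\end{equation}
where the middle isomorphism is Proposition~\ref{prop:biact-internal-hom} (dualized as in Corollary~\ref{cor:dual-tensor-hom}) and the outer ones are the type-reconciliations just discussed, natural in all arguments. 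Naturality in $(\cat C,\action)$ and $(\cat D,\actionn)$ is inherited from naturality of the tensor–hom isomorphism, so this displays $R_! = \cat N_R \tensor (-)$ as left adjoint to $R^*$.

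The main obstacle I expect is the bookkeeping in the second paragraph: carefully verifying that the left $\cat N$-linear functors $\cat N_R \to \cat D$ really do form (equivalently) $R^*(\cat D,\actionn)$, and that the leftover $\cat M$-actegory structure on the internal hom — which is genuinely a $(1,\cat M)$-biactegory structure built from the $\cat M$-side of $\cat N_R$ — corresponds on the nose to precomposition-with-$R$ on $\actionn$. This requires unwinding the definition of the internal hom from Appendix~\ref{appendix:proofs} and matching its action with $R^*$'s, including the unitor/multiplicator comparison (the monoidal structure $(\epsilon,\varpi)$ of $R$ reappears here exactly as in Proposition~\ref{prop:actegories-change-of-base}). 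Everything else is formal manipulation of already-established adjunctions and equivalences.
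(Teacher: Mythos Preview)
Your approach is essentially the paper's: identify $R^*(\cat D)$ with the internal hom of left $\cat N$-linear functors $\cat N_R \to \cat D$ and then invoke the tensor--hom adjunction of Corollary~\ref{cor:dual-tensor-hom}. The only slip is notational: in the paper's convention the subscript on the hom names the action being absorbed, so you want $[\cat N_R, \cat D]_{\cat N}$ rather than $[\cat N_R, \cat D]_{\cat M}$, and your displayed chain should carry $\Biact[\cat N]{1}^\lax$ and $\Biact[\cat M]{1}^\lax$ in place of $\Biact[1]{1}^\lax$ and $\Biact[1]{\cat M}^\lax$.
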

\begin{proof}
	Notice if $\cat D$ is a left $\cat N$-actegory, we have $R^*(\cat D) \iso{} [\cat N_R, \cat D]_{\cat N}$, since $[\cat N, \cat D]_{\cat N} \iso \cat D$ as categories, and the right $\cat M$-action on $\cat N_R$ permeates through the hom (Proposition~\ref{prop:biact-internal-hom}) to give the same left $\cat M$-action as $R^*(\cat D)$.
	Therefore, using the tensor-hom adjunction described in Corollary~\ref{cor:dual-tensor-hom}, we obtain the desired transposition isomorphism:
	\begin{equation}
		\Act{\cat M}(\cat C, R^*(\cat D))
		\iso
		\Act{\cat M}(\cat C, [\cat N_R, \cat D]_{\cat N})
		\iso
		\Act{\cat N}(\cat N_R \tensor \cat C, \cat D).
	\end{equation}
\end{proof}

\begin{proposition}
\label{prop:free-act-is-monoidal}
	Let $\cat C$ and $\cat D$ be categories, then
	\begin{equation}
		\cat M[\cat C] \tensor \cat M[\cat D] \equi \cat M[\cat C \times \cat D].
	\end{equation}
\end{proposition}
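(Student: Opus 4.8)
The plan is to avoid computing the tensor product or the free actegory by hand, and instead chase the universal properties supplied earlier in the paper; I will use the copower presentation of $\cat M[-]$ as the most economical route. Recall from Proposition~\ref{prop:free-act-is-copow} that $\cat M[\cat C] \equi \cat C \copow \cat M$, the $\cat C$-copower of $\cat M$, where $\cat M$ carries its canonical self-(bi)actegory structure (Example~\ref{ex:moncat-self-biaction}). The two facts I need are: (i) the tensor product $\tensor$ is cocontinuous in each variable --- this is exactly the content of the tensor-hom adjunctions in Proposition~\ref{prop:biact-internal-hom} and Corollary~\ref{cor:dual-tensor-hom}, which exhibit $-\tensor\cat X$ and $\cat X\tensor-$ as left adjoints to $[\cat X,-]_{\cat M}$, hence colimit-preserving, in particular copower-preserving; and (ii) the actegorical analogue of $R\otimes_R R\cong R$, namely $\cat M\tensor\cat M\equi\cat M$. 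Fact (ii) follows from Proposition~\ref{prop:extension-of-scalars} applied to $R=\id_{\cat M}$: then $\id^*=\id$, so its left adjoint satisfies $\id_!\equi\id$, and since $\cat M_{\id}$ is precisely the canonical self-biactegory, $\cat M\tensor\cat M=\cat M_{\id}\tensor\cat M\iso\id_!(\cat M)\iso\cat M$.

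Putting these together, and using that $\tensor$ preserves copowers in each argument, we get
\begin{equation}
  \cat M[\cat C]\tensor\cat M[\cat D]\ \equi\ (\cat C\copow\cat M)\tensor(\cat D\copow\cat M)\ \equi\ \cat C\copow\bigl(\cat D\copow(\cat M\tensor\cat M)\bigr)\ \equi\ \cat C\copow(\cat D\copow\cat M).
\end{equation}
Iterated copowers compose, $\cat C\copow(\cat D\copow-)\equi(\cat C\times\cat D)\copow-$, since for every target $\cat Y$ one has $\Act{\cat M}(\cat C\copow(\cat D\copow\cat X),\cat Y)\iso\Cat(\cat C,\Cat(\cat D,\Act{\cat M}(\cat X,\cat Y)))\iso\Cat(\cat C\times\cat D,\Act{\cat M}(\cat X,\cat Y))$, whence the claim by the $2$-categorical Yoneda lemma. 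Therefore $\cat M[\cat C]\tensor\cat M[\cat D]\equi(\cat C\times\cat D)\copow\cat M\equi\cat M[\cat C\times\cat D]$.

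The part that will require care --- and which I expect to be the only real obstacle --- is the bookkeeping of handedness and of the residual (bi)actegorical structure: the tensor $\cat M[\cat C]\tensor\cat M[\cat D]$ consumes the right $\cat M$-action of $\cat M[\cat C]$ and the left $\cat M$-action of $\cat M[\cat D]$, and one must confirm that the left action it inherits from $\cat M[\cat C]$ (and, for the biactegory-level statement, the right action inherited from $\cat M[\cat D]$) is carried by the displayed equivalences to the free $\cat M$-actegory structure on $\cat M[\cat C\times\cat D]$; this is automatic because each isomorphism above is assembled from adjunctions internal to $\Act{\cat M}$ (resp.\ to $\Biact{\cat M}$), but it should be spelled out. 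As an independent sanity check, one can instead invoke Proposition~\ref{prop:tens-is-pseudocoeq}: $\cat M[\cat C]\tensor\cat M[\cat D]$ is the pseudocoequalizer of the two evident functors from $(\cat M\times\cat C)\times\cat M\times(\cat M\times\cat D)$ to $(\cat M\times\cat C)\times(\cat M\times\cat D)$, and the functor $Q\colon\bigl((m,c),(n,d)\bigr)\mapsto(m\mtimes n,c,d)$ into $\cat M\times\cat C\times\cat D=\cat M[\cat C\times\cat D]$ exhibits it as such, with coequalizing $2$-cell given by the associator of $\cat M$ in the first coordinate; universality then amounts to the fact that every object of the pseudocoequalizer is isomorphic to one of the form $(j,c,n,d)$ and that a balanced functor out of $(\cat M\times\cat C)\times(\cat M\times\cat D)$ is uniquely determined, up to coherent isomorphism, by its restriction along $Q$.
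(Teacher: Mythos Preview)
Your argument is correct and follows essentially the same route as the paper: rewrite $\cat M[-]$ as a copower, use that $\tensor$ preserves colimits (being a left adjoint on each side), and then identify iterated copowers via the universal property. The only cosmetic differences are that you make the intermediate step $\cat M\tensor\cat M\equi\cat M$ explicit (the paper absorbs it silently, since $\cat M$ is the monoidal unit by Corollary~\ref{cor:biactegories-are-moncat}; your derivation via extension of scalars along $\id$ works too but is slightly more circuitous), and that you append the pseudocoequalizer cross-check, which the paper does not include.
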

\begin{proof}
	Recall from Proposition~\ref{prop:free-act-is-copow} that $\cat M[-] \equi - \copow \cat M$, where $\copow$ denotes the $\Cat$-copower operation (as described, for instance, in Example~\ref{ex:copower} or \cite[§3.7]{kelly1982basic}) on $\Act{\cat M}$.
	We proved $\tensor$ is left adjoint (on both sides), so it preserves colimits. Hence we have
	\begin{equation}
		\cat M[\cat C] \tensor \cat M[\cat D] \iso (\cat C \copow \cat M) \tensor (\cat D \copow \cat M) \iso \cat C \copow (\cat D \copow \cat M).
	\end{equation}
	If we can prove $\cat C \copow (\cat D \copow \cat M) \iso (\cat C \times \cat D) \copow \cat M$, we are done. Indeed, we can prove the first satisfies the universal property of the latter:
	\begin{eqalign}
		\Act{\cat M}(\cat C \copow (\cat D \copow \cat M), -) &\iso \Cat(\cat C, \Act{\cat M}(\cat D \copow \cat M, -))\\
		&\iso \Cat(\cat C, \Cat(\cat D, \Act{\cat M}(\cat M, -)))\\
		&\iso \Cat(\cat C \times \cat D, \Act{\cat M}(\cat M, -))\\
		&\iso \Act{\cat M}((\cat C \times \cat D) \copow \cat M, -).
	\end{eqalign}
\end{proof}


	\newpage
	\section{Interaction of monoidal and actegorical structures}
\label{sec:monoidal-actegories}
For the purposes of categorical cybernetics, it is almost impossible to avoid pondering situations in which an $\cat M$-actegory $\cat C$ is also equipped with a monoidal structure $(i,\ctimes)$.
When this happens, it is natural to ask whether the two structures are compatible in any way.
In practice, this compatibility turns out to be a requirement for making some constructions run smoothly, chiefly the $\Para$ construction \cite{capucci2021towards}.
In fact for the latter construction to be monoidal,\footnote{In the bicategorical sense, see \cite[Explanation 12.1.3]{johnson2021}.} we need exactly the data of an $\cat M$-actegory $\cat C$ equipped with a monoidal structure \emph{and} a natural morphism
\begin{equation}
\label{eq:interchanger}
	\iota_{m,n,c,d} : (m \mtimes n) \action (c \ctimes d) \longto (m \action c) \ctimes (n \action d)
\end{equation}
In \cite[§2.1]{capucci2021towards} this was called \emph{mixed interchanger}, and it was observed it can be decomposed in strength-like morphisms
\begin{eqalign}
\label{eq:strengths}
	\kappa_{m,c,d} : m \action (c \ctimes d) \longto c \ctimes (m \action d),\\
	\chi_{m,c,d} : m \action (c \ctimes d) \longto (m \action c) \ctimes d.
\end{eqalign}
The three were related by making heavy use of symmetric structure on $\cat M$, and it wasn't clear which properties of $\cat M$, $\cat C$ and $\action$ were actually relevant.

In this section, we expound the situation by framing those sparse observations in the context of the theory of actegories we have built so far.
We are going to investigate multiple ways to combine actegorical and monoidal structure, as summarized later in Table~\ref{table:mon-act-distributive-laws}. In Section~\ref{subsec:monoidal-actegories} we define `monoidal actegories' as pseudoactions in $\MonCat$ and show these are those for which the $\Para$ and $\Optic$ constructions are monoidal. Later, in Section~\ref{subsec:algebroidal-actegories}, we study pseudomonoids in $\Act{\cat M}$, taken with respect to either its tensor or its cartesian monoidal structures. We show these turn out to be equivalent to monoidal actegories in the first case, and explore some connections with hybrid optics in the second.
In Section~\ref{subsec:braided-monact} we extend the previous definitions to the braided and symmetric cases.
Finally, in Section~\ref{subsec:classifying} we prove three Cayley-type classification theorems for monoidal, braided and symmetric actegories.

\subsection{Monoidal actegories}
\label{subsec:monoidal-actegories}
The easiest approach to describe actions on monoidal categories is to just have the action of $\cat M$ to take place in $\MonCat$.
To even talk about `actions' of $\cat M$, $\cat M$ has to be a (pseudo)monoid in $\MonCat$, i.e.~a braided monoidal category.
Hence, from now on, unless explicitly specified, we will assume $\cat M$ is equipped with a braiding $\beta_{m,n}: m \mtimes n \isoto n \mtimes m$.

\subsubsection{Definition.}
\label{def:monoidal-actegory}
\begin{resdefinition}{monact}
	An \textbf{oplax monoidal left $\cat M$-actegory} is a monoidal category $(\cat C, i, \ctimes)$ equipped with an oplax monoidal functor $\action : \cat M \times \cat C \to \cat C$ and two monoidal natural transformations $\mu$ and $\eta$ defined analogously to Definition~\ref{def:left-actegory}.
	When $\action$ is strong, we call it simply \textbf{monoidal actegory}.
\end{resdefinition}

\begin{remark}
\label{rmk:upsilon-is-redundant}
	Explicitly, the monoidal structure on $\action$ is given by natural morphisms
	\begin{eqalign}
		\upsilon &: i \longto j \action i,\\
		\iota_{m,c,n,d} &: (m \mtimes n) \action (c \ctimes d) \longto (m \action c) \ctimes (n \action d).
	\end{eqalign}
	where we still call $\iota$ the \emph{mixed interchanger}.
	As $\eta$ is now a \emph{monoidal} natural transformation, it comes equipped with two additional compatibility conditions, one of which unpacks to the following diagram
	\begin{diagram}[ampersand replacement=\&, sep=4ex, row sep=2ex]
		\& i \\
		i \&\& {j \action i}
		\arrow[Rightarrow, no head, from=1-2, to=2-1]
		\arrow["{\eta_i}", from=1-2, to=2-3]
		\arrow["\upsilon"', from=2-1, to=2-3]
	\end{diagram}
	This diagram tells us that when defining an oplax monoidal structure on an actegory, all we need to define is the mixed interchanger naturally, since $\upsilon$ can always be defined as $\eta_i^{-1}$.
	We have drawn the remaining coherence laws satisfied by $\mu$, $\eta$ and $\iota$ in Appendix~\ref{appendix:defs}.
\end{remark}

\begin{remark}
	Diagram~\eqref{diag:mu-monoidal} (monoidality of $\mu$) shows why $\cat M$ needs to be braided to even spell this definition out: we need to be able to talk about the monoidal structure of $\mtimes$ itself, which is indeed given by the braiding.
\end{remark}

\begin{remark}
\label{rmk:monoidal-actegory-strengths}
	From an interchanger, both $\kappa$ and $\chi$ (as in~\eqref{eq:strengths}) can be derived:
	\begin{eqalign}
		\kappa_{m,c,d} := m \action (c \ctimes d) &\nlongto{\lambda^{-1}_m \action (c \ctimes d)} (j \mtimes m) \action (c \ctimes d)\\
		&\nlongto{\iota_{j,c,m,d}} (j \action c) \ctimes (m \action d)\\
		&\nlongto{\eta_c \ctimes (m \action d)} c \ctimes (m \action d),\\[1.5ex]
		\chi_{m,c,d} := m \action (c \ctimes d) &\nlongto{\rho^{-1}_m \action (c \ctimes d)} (m \mtimes j) \action (c \ctimes d)\\
		&\nlongto{\iota_{m,j,c,d}} (m \action c) \ctimes (j \action d)\\
		&\nlongto{(m \action c) \ctimes \eta_d} (m \action c) \ctimes d.
	\end{eqalign}
	Vice versa, $\kappa$ and $\chi$ determine $\iota$:
	\begin{eqalign}
		\iota_{m,n,c,d} := (m \mtimes n) \action (c \ctimes d) &\nlongto{\mu^{-1}_{m,n,c\ctimes d}} m \action (n \action (c \ctimes d))\\
		&\nlongto{m \action \kappa_{n,c,d}} m \action (c \ctimes (n \action d))\\
		&\nlongto{\chi_{n,c,d}} (m \action c) \ctimes (n \action d)).
	\end{eqalign}
\end{remark}

\begin{remark}
\label{rmk:mon-acts-are-biacts}
	Observe that monoidal actegories could be presented as particular biactegories. In fact a monoidal $\cat M$-actegory is equivalently a $(\cat M, \cat C)$-biactegory and a $(\cat M, \cat C^\rev)$-biactegory: the structure morphisms $\kappa$ and $\chi$ are bimodulators for $\action$ and $\boxtimes$, where the latter is used both as a left action (hence $\kappa$) and a right action (hence $\chi$), as in Example~\ref{ex:moncat-self-biaction}.
\end{remark}

It is evident the definition of lax linear functors and linear natural transformation can be adapted too to this case, by asking the underlying functors and natural transformations, respectively, to be monoidal. Therefore there is an indexed 2-category $(-)\dash\Actt_\MonCat^\lax$, analogous to the one described in Proposition~\ref{prop:actegories-indexed-over-moncat}, giving rise to a 2-category $\Actt_\MonCat^\lax$.
The same applies to all the other kinds of actegories we are going to define in this section, so we won't bother the reader with a similar remark again and we will tacitly consider analogous 2-categories to have been defined.

\begin{example}
	The most trivial example of monoidal actegory is given by the self-action (Example~\ref{ex:moncat-self-actions}) of any braided monoidal category.
\end{example}

\begin{example}
	In \cite{cockett2009logic}, the central structure is that of \emph{linear actegory}.
	Using our terminology, it is an $\cat M$-diactegory (Remark~\ref{rmk:diactegories}) supported by a category $\cat C$ equipped with two monoidal products $\boxtimes$, $\boxplus$ and isomorphisms $a \boxtimes (b \boxplus c) \iso (a \boxtimes b) \boxplus c \iso b \boxplus (a \boxtimes c)$.
	Then $\cat C$ is a monoidal $\cat M$-actegory since the $\cat M$-action $\action$ and the $\cat M^\op$-action $\actionn$ are required to be monoidal with respect to both $\boxtimes$ and $\boxplus$. Moreover, $m \action - \adj m \actionn -$ for every $m : \cat M$.
\end{example}

\begin{example}
\label{ex:cart-cocart-oplax-monact}
	Similarly as above, suppose $\cat C$ is cartesian and cocartesian monoidal. Then there always is an oplax monoidal structure on the self-action induced by $\times$ on $(\cat C, 0, +)$:
	\begin{equation*}
		(a \times b) \times (c + d) \nlongto{\exists!} a \times b \times c + a \times b \times d \nlongto{\pi_{a,c} + \pi_{b,d}} a \times c + b \times d.
	\end{equation*}
\end{example}

As anticipated, we investigated the notion of monoidal actegory in order to capture the structure needed to make the results of $\Copara$ and $\Para$ constructions monoidal bicategories.
For the sake of clarity, we remind the reader of \cite[Definition 2]{capucci2021towards}.
Given an $\cat M$-actegory $(\cat C, \action)$, the bicategory $\Para_\action(\cat C)$ has the same objects of $\cat C$ but a morphism $(m,f):c \to d$ is now given by a choice of scalar $m : \cat M$ and a morphism $f:m \action c \to d$. Composition and identities are built from the multiplicator and unitor of $\action$.

Suppose now we additionally equip $\cat C$ with the monoidal structure $(i, \ctimes)$. The object part of $\ctimes$ trivially extends to $\Para_\action(\cat C)$, but in order to extend it to morphisms, we need to assume $\action$ is an oplax monoidal $\cat M$-action. In fact if $(m,f): c \to d$ and $(n, g) : c' \to d'$ are morphisms, we can define (denoting by $\ctimes_\action$ the candidate monoidal product on $\Para_\action(\cat C)$):
\vspace{-1.1ex}
\begin{equation}
\label{eq:para-product}
	f \ctimes_\action g := (m \mtimes n, \ (m \mtimes n) \action (c \ctimes c') \nlongto{\iota_{m,n,c,c'}} (m \action c) \ctimes (n \action c') \nlongto{f \ctimes g} d \ctimes d').
\end{equation}
Ultimately, bifunctoriality follows from the braidedness of $\cat M$, the bifunctoriality of $\ctimes$, and the monoidality of $\mu^\action$ (see Diagram~\ref{diag:mu-monoidal}).
Conversely, given a monoidal product on $\Para_\action(\cat C)$, we can recover $\iota_{m, n, c, c'}$ by setting $f= (m, 1_{m \action c})$ and $g=(n, 1_{n \action c'})$.
The same can be shown for $\Copara_\action(\cat C)$.

\begin{remark}
\label{rmk:mon-of-optics}
	Let $(\cat C, \action)$ and $(\cat D, \actionn)$ be $\cat M$-actegories.
	In \cite{braithwaite2021fibre} it is shown how $\Optic_{\action,\actionn}$ arises as the (local discretization of) the pullback of
	\begin{diagram}[sep=4ex]
		\Para_\action(\cat C)^\coop \arrow{r}{P} & (\deloop \cat M)^\op & \arrow[swap]{l}{C} \Copara_\actionn(\cat D),
	\end{diagram}
	where $P$ and $C$ respectively project out the parameter and coparameter of a given (co)parametric morphism.
	Therefore, in case $\cat C$ and $\cat D$ are actually oplax monoidal actegories, the induced monoidal structure on $\Para_\action(\cat C)$ and $\Copara_\actionn(\cat D)$ would automatically yield a monoidal structure on $\Optic_{\action,\actionn}$, defined `componentwise' on the pullback.
\end{remark}

\begin{remark}
	Observe the product defined in Equation~\ref{eq:para-product} yields both the parallel product and external choice mentioned in Section~\ref{subsec:actegories-as-para}. Indeed, when $\cat C$ is in the situation outlined in Example~\ref{ex:cart-cocart-oplax-monact}, we get parallel product as $\times_\times$ and external choice as $+_\times$.
\end{remark}




\subsection{Algebroidal actegories}
\label{subsec:algebroidal-actegories}
Monoidal actegories are thus actions in $\MonCat$, but nothing forbids going the other way: instead of expressing the $\cat M$-action on $\cat C$ internally to $\MonCat$, we might express the monoidal structure internally to $\Act{\cat M}$.

Monoidal objects in categories of modules have been traditionally called `algebras', thus we are going to call their categorified version `algebroidal actegories'.%
\footnote{Quantum algebra literature seems to use `algebra' anyway for this kind of objects, like in~\cite{ben2010integral}.}
In order to talk about these `algebroidal structures', we have to choose a monoidal structure for in $\Act{\cat M}$.
As proven in Remark~\ref{rmk:act-fibrewise-cartesian}, this category is cartesian monoidal, but that is not the only useful monoidal product around.
We have proven in Proposition~\ref{prop:mirroring-biact} that when $\cat M$ is braided, every left $\cat M$-actegory is canonically a $\cat M$-biactegory therefore we can restrict to $\Act{\cat M}$ the monoidal structure of $\Biact{\cat M}$ given by tensor product.

We start by analyzing the notion of algebroidal actegory we get from the tensor product of actegories:

\subsubsection{Definition.}
\label{def:balanced-algebroidal-cat}
\begin{resdefinition}[Balanced algebroidal $\cat M$-actegory]{balalgebroidalact}
	A \textbf{balanced algebroidal $\cat M$-actegory} is a pseudomonoid in $(\Act{\cat M}, \cat M, \tensor)$, amounting to a left $\cat M$-actegory $(\cat C, \action)$ together with $\cat M$-linear functors
	\begin{eqalign}
		I &: \cat M \longto \cat C,\\
		\ctimes &: \cat C \tensor \cat C \longto \cat C
	\end{eqalign}
	and $\cat M$-linear natural transformations $\lambda, \rho, \alpha$ satisfying the usual axioms for unitors and associators in pseudomonoids.
\end{resdefinition}

By appealing to Lemma~\ref{lemma:strictification-functor}, most of the data pertaining $I$ (i.e.~its left and right lineators) can be safely discarded by agreeing that $I(m) = m \action I(j)$, leaving just the choice of $I(j) : \cat C$ as data, that from now on we simply denote by $i$.

By Proposition~\ref{prop:tens-is-pseudocoeq} and Corollary~\ref{cor:biactegories-are-moncat}, the product map is equivalently given by a \emph{balanced} (and still linear) functor out of the cartesian product:
\begin{diagram}
	{\cat C \times \cat C} \\
	{\cat C \tensor \cat C} & {\cat C}
	\arrow["{\exists !\,\ctimes}", dashed, from=1-1, to=2-2]
	\arrow["Q"', from=1-1, to=2-1]
	\arrow["\ctimes"', from=2-1, to=2-2]
\end{diagram}
Therefore a balanced $\cat M$-algebroidal structure amounts to a monoidal structure $(i, \ctimes, \lambda, \rho, \alpha)$ on $\cat C$, plus an equilibrator (Definition~\ref{def:balanced}) for $\ctimes$:
\begin{equation}
	\varepsilon_{m,c,d} : (m \action c) \ctimes d \isolongto c \ctimes (m \action d);
\end{equation}
and a left lineator (Definition~\ref{def:linear-functor}) for $\ctimes$:
\begin{eqalign}
	\ell_{m,c,d} &: m \action (c \ctimes d) \isolongto (m \action c) \ctimes d.
\end{eqalign}
We spelled out the coherence laws obeyed by these in Appendix~\ref{appendix:defs}.

\begin{proposition}
\label{prop:balance-algebra-is-monoidal-actegory}
	For a braided monoidal category $\cat M$, every balanced algebroidal $\cat M$-actegory can be presented as a (strong) monoidal $\cat M$-actegory and vice versa.
\end{proposition}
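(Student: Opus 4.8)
The plan is to unpack both notions and observe they encode the same underlying data: a monoidal structure $(i,\ctimes,\lambda,\rho,\alpha)$ on $\cat C$ together with invertible strength-like comparisons between $\action$ and $\ctimes$. On the algebroidal side these comparisons are the lineator $\ell_{m,c,d}\colon m\action(c\ctimes d)\isolongto(m\action c)\ctimes d$ and the equilibrator $\varepsilon_{m,c,d}\colon(m\action c)\ctimes d\isolongto c\ctimes(m\action d)$ of $\ctimes$; on the monoidal side they are the mixed interchanger $\iota$, equivalently (Remark~\ref{rmk:monoidal-actegory-strengths}) the strengths $\chi_{m,c,d}\colon m\action(c\ctimes d)\to(m\action c)\ctimes d$ and $\kappa_{m,c,d}\colon m\action(c\ctimes d)\to c\ctimes(m\action d)$. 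Accordingly I would set up the dictionary
\[
  \chi = \ell,\qquad \kappa = \ell\comp\varepsilon,\qquad \varepsilon = \chi^{-1}\comp\kappa,\qquad \upsilon = \eta_i^{-1},
\]
where on the monoidal side $\cat C$ is regarded as a right $\cat M$-actegory through the mirroring of Proposition~\ref{prop:mirroring-biact} (so $c\revrel\action m = m\action c$ and the displayed type of $\varepsilon$ makes sense, cf.\ Remark~\ref{rmk:mon-acts-are-biacts}), recover $\iota$ from $\kappa,\chi$ as in Remark~\ref{rmk:monoidal-actegory-strengths}, and then check that the two bundles of coherence axioms match under this dictionary.

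Several pieces are immediate. The unit functor $I\colon\cat M\to\cat C$ of the pseudomonoid is, by Lemma~\ref{lemma:strictification-functor} and the discussion preceding the statement, determined by $i = I(j)$, which on the monoidal side is exactly the monoidal unit of $\cat C$, and $\upsilon$ is redundant by Remark~\ref{rmk:upsilon-is-redundant}. The pseudomonoid axioms for $\lambda,\rho,\alpha$ (pentagon and triangle) in $(\Act{\cat M},\cat M,\tensor)$ reduce to the ordinary monoidal-category axioms for $(\cat C,\ctimes)$, since the forgetful $2$-functor $\Act{\cat M}^\lax\to\Cat$ is locally faithful and hence reflects equalities of $2$-cells. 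Moreover a pseudomonoid morphism (a lax monoidal functor with $\cat M$-linear underlying functor and $\cat M$-linear laxators) is precisely a lax monoidal $\cat M$-linear functor in the sense used for monoidal actegories, so the correspondence will be compatible with morphisms and $2$-cells. What genuinely remains is to verify that (i) the $\cat M$-linearity of $\lambda,\rho,\alpha$ and the status of $\ell,\varepsilon$ as lineator/equilibrator for $\ctimes$ translate into the naturality of $\iota$ together with its coherence against $\lambda,\rho,\alpha$ of $\cat C$, and (ii) the pentagon/triangle of Definition~\ref{def:balanced} for $\varepsilon$ and of Definition~\ref{def:linear-functor} for $\ell$ translate into the coherence of $\iota$ against $\mu$, $\eta$ and the braiding $\beta$ of $\cat M$ --- the laws drawn in Appendix~\ref{appendix:defs} for monoidal actegories.

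To discharge (i)--(ii) without drowning in diagrams, I would strictify exactly as in the proof of Proposition~\ref{prop:mirroring-biact}: by Lemma~\ref{lemma:strictification} assume $\cat M$ strict monoidal (the braiding $\beta$ is unaffected but not strictified) and $\action$ strict, so $\mu$ and $\eta$ are identities, and by MacLane take $(\cat C,\ctimes)$ strict, so $\lambda,\rho,\alpha$ are identities. In this setting both coherence packages collapse: on the monoidal side the surviving content of the axioms for $\iota$ is a pair of hexagons expressing compatibility of $\iota$ with $\beta$, while on the algebroidal side $\ell$ becomes a strict lineator and the pentagon for $\varepsilon$ together with the residual $\ell$--$\varepsilon$ compatibility reduce, via $\chi=\ell$ and $\kappa=\ell\comp\varepsilon$, to the same two hexagons. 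Matching them is then a direct comparison, and transporting back along the strictification equivalences finishes the proof.

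The main obstacle is organizational rather than conceptual: the two definitions are presented via different, fairly long lists of coherence diagrams in the appendix, and reconciling them requires care. The one place where something could actually go wrong is the braiding: $\beta$ cannot be strictified, so the hexagons relating $\iota$ (equivalently $\kappa$ and $\chi$) to $\beta$ must be checked by hand, and it is precisely there that one confirms the two definitions were designed to be compatible. Everything else is routine naturality and triangle-chasing.
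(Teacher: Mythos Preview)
Your proposal is correct and follows essentially the same approach as the paper: the dictionary $\chi=\ell$, $\kappa=\ell\comp\varepsilon$ (and conversely $\ell=\chi$, $\varepsilon=\chi^{-1}\comp\kappa$) is exactly what the paper uses. The only difference is organizational: the paper discharges coherence by enumerating explicit correspondences between the appendix diagrams (e.g.\ \eqref{diag:monact-comp-ass} $\leftrightarrow$ \eqref{diag:alpha-left-lin}$+$\eqref{diag:alpha-balanced}, \eqref{diag:mu-monoidal} $\leftrightarrow$ \eqref{diag:bal-algebr-left-lin-mult}$+$\eqref{diag:equilib-ass}, etc.), whereas you invoke strictification to collapse most of them---note, however, that your claim that only ``a pair of hexagons in $\beta$'' survives is slightly too optimistic, since conditions like the associativity and unit constraints on $\iota$ (Diagrams~\eqref{diag:monact-comp-ass} and~\eqref{diag:monact-comp-unit}) remain nontrivial even after strictification and must still be matched with the residual lineator/equilibrator laws.
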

\begin{proof}
	Given a balanced algebroidal $\cat M$-actegory, we get the data needed to define a monoidal $\cat M$-actegory by setting $\chi := \ell$ and $\kappa := \ell \comp \varepsilon$.
	Vice versa, we set $\ell := \chi$ and $\varepsilon = \chi^{-1} \comp \kappa$.
	Regarding coherence, it is a bit hard to eyeball exactly the correspondence between the two set of diagrams, since those for monoidal actegories are naturally spelled in terms of $\iota$ instead of $\kappa$ and $\chi$.
	In broad terms, Diagram~\eqref{diag:monact-comp-ass} corresponds to Diagram~\eqref{diag:alpha-left-lin} and Diagram~\eqref{diag:alpha-balanced}, Diagram~\eqref{diag:monact-comp-unit} to Diagram~\eqref{diag:lambda-rho-lin}, Diagram~\eqref{diag:eta-monoidal} to Diagram~\eqref{diag:bal-algebr-left-lin-unit} and Diagram~\eqref{diag:equilib-unit}, and finally Diagram~\eqref{diag:mu-monoidal} to Diagram~\eqref{diag:bal-algebr-left-lin-mult} and Diagram~\eqref{diag:equilib-ass}.
\end{proof}

\begin{remark}
	Even though balanced $\cat M$-algebroidal actegories `trivialize' to monoidal actegories, if we let Definition~\ref{def:balanced-algebroidal-cat} take place in \emph{bona fide} $\cat M$-biactegories with their tensor product, we obtain something genuinely different.
	Indeed, the equivalence we just sketched hinges crucially on $\cat C$ being used as a `mirrored' biactegory, i.e.~one for which left and right action only differ by a braiding twist in the multiplicator (Proposition~\ref{prop:mirroring-biact}).
	When $\cat C$ doesn't have this property, a morphism like $\kappa$, that \emph{changes the order of symbols} in an expression like $m \action (c \ctimes d)$, does not emerge anymore.
	Therefore a generic pseudomonoid object in $(\Biact{\cat M}, \cat M, \tensor)$, even when $\cat M$ is braided, is not `just' a `monoidal $\cat M$-biactegory' (i.e.~Definition~\ref{def:biactegories} internalized to $\Biact{\cat M}$, as we did for Definition~\ref{def:left-actegory} to get Definition~\ref{def:monoidal-actegory}), whereas the converse is true: every monoidal $\cat M$-biactegory defines an $\cat M$-algebroidal biactegory (by forgetting $\kappa$).
\end{remark}

We now turn to the second possible definition of algebroidal actegory, using the cartesian monoidal structure on $\Act{\cat M}$.

\subsubsection{Definition.}
\label{def:distributive-algebroidal-act}
\begin{resdefinition}{distalgebract}[Distributive algebroidal $\cat M$-actegory]
	A \textbf{distributive algebroidal $\cat M$-actegory} is a pseudomonoid in $(\Act{\cat M}, 1, \times)$, i.e.~a left $\cat M$-actegory $(\cat C, \action)$ together with $\cat M$-linear functors:
	\begin{eqalign}
		o &: 1 \longto \cat C,\\
		\cplus &: \cat C \times \cat C \longto \cat C
	\end{eqalign}
and $\cat M$-linear natural transformations $\lambda, \rho, \alpha$ satisfying the usual axioms for unitors and associators in pseudomonoids described in Appendix~\ref{appendix:defs}.
\end{resdefinition}

\begin{remark}
  Explicitly, the lineators of $o$ and $\cplus$ are called the \emph{absorber} and \emph{distributor}, respectively, componentwise unpacking to
	\begin{eqalign}
		\gamma_m &: m \action o \isolongto o,\\
		\delta_{m,c,d} &: m \action (c \cplus d) \isolongto (m \action c) \cplus (m \action d)
	\end{eqalign}
  We note that Diagram~\eqref{diag:absorber-laws} shows $\gamma_j = \eta_o^{-1}$.
\end{remark}

We are not aware of this definition being in the literature already, except for the case in which $\action$ descends from a monoidal structure $(i, \ctimes)$ already present on $\cat C$, i.e.~$\cat C$ is an actegory of the kind of Example~\ref{ex:moncat-self-actions}.
In that case, the above definition captures a fragment of that of Laplaza~\cite{laplaza1972coherence}, including coherences.

\begin{example}
	The canonical $\N$-action on a monoidal category $\cat M$, defined in Example~\ref{ex:natural-numbers-act}, is distributive algebroidal whenever $\cat M$ is symmetric.
	In fact, in that circumstance, $j^{\otimes n} \iso j$ and $(m \otimes m')^{\otimes n} \iso m^{\otimes n} \otimes {m'}^{\otimes n}$ for any $m: \cat M$ and $n : \N$.
\end{example}

\begin{example}
\label{ex:cocartesian-dist-actegory}
	In \cite{cockett2009logic,alcolei2014concurrent}, they consider the structure of a category being acted upon in two adjoint ways, that is, a category $\cat C$ receiving an $\cat M$-action $\actionn$ and an $\cat M^\op$-action $\action$ such that for each $m:\cat M$, $m \actionn - \adj m \action -$.
	This happens, for example, when $\cat C = \cat M$ is cartesian closed in which case $\actionn = \otimes$ and $\action = [-,=]$.
	Then if $\cat C$ has coproducts, $m \actionn -$ preserves them by virtue of being left adjoint, therefore $(\cat C, \action, 0, +)$ is a distributive algebroidal $\cat M$-actegory.
\end{example}

\begin{example}
  Consider the category $\Vec_K$ of finite-dimensional vector spaces over a field $K$.
  This is a distributive algebroidal $\Vec_K$-actegory where the acting monoidal structure is given by the tensor product $(\otimes, K)$, and the underlying product structure is given by the direct product $(\oplus, K^0)$.
  Unpacked concretely, the absorber and the distributor reduce to the well-known isomorphisms in vector spaces
  \begin{align*}
               U \otimes K &\isolongto U,\\
    U \otimes (V \oplus W) &\isolongto (U \otimes V) \oplus (U \otimes W)
  \end{align*}
\end{example}

\begin{proposition}[From distributive algebroidal to monoidal]
  Let $\cat M$ be a symmetric monoidal category suitably equipped with a commutative comonoid structure on every object (also known as a CD-category, see \cite[Def. 2.2.]{Cho_2019}).
  Then any distributive algebroidal actegory $(\cat C, \action)$ (with $\cat M$-linear functors $o$ and $\boxtimes$) can be turned into an oplax monoidal actegory whose interchanger is defined as the following composite:

  \begin{align*}
    & (m \otimes m') \action (a \boxtimes a')\\
    \xrightarrow{\delta_{m \otimes m', a, a'}} & ((m \otimes m') \action a) \boxtimes ((m \otimes m') \action a')\\
    \xrightarrow{((m \otimes \epsilon_{m'}) \action a) \boxtimes ((\epsilon_{m} \otimes m') \action a')} & ((m \otimes j) \action a) \boxtimes ((j \otimes m') \action a')\\
    \xrightarrow{(\rho_{m} \action a) \boxtimes (\lambda_{m'} \action a')} & (m \action a) \boxtimes (m' \action a')
  \end{align*}
  and where $\epsilon_{m}$ and $\epsilon_{m'}$ denote the counit morphism on $m$ and $m'$ of $\cat M$, respectively.
  It's routine to show that this is natural.
\end{proposition}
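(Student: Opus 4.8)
The plan is to exhibit the claimed interchanger and then verify it satisfies the coherence laws for an oplax monoidal actegory (the diagrams in Appendix~\ref{appendix:defs} satisfied by $\mu$, $\eta$ and $\iota$, cf.\ Remark~\ref{rmk:upsilon-is-redundant}). First I would note that by Remark~\ref{rmk:upsilon-is-redundant} the only datum that needs supplying is a natural morphism $\iota_{m,m',a,a'} : (m \otimes m') \action (a \boxtimes a') \to (m \action a) \boxtimes (m' \action a')$, with $\upsilon := \eta_i^{-1}$; the composite displayed in the statement gives exactly this, built from the distributor $\delta$, the counits $\epsilon_m, \epsilon_{m'}$ of the CD-structure, and the unitors $\rho, \lambda$ of $\cat M$. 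Naturality of $\iota$ in all four variables follows from naturality of $\delta$ (as an $\cat M$-linear structure on $\boxplus$), naturality of the counits, bifunctoriality of $\boxtimes$, and naturality of $\rho, \lambda$; this is the ``routine'' part the statement already flags, so I would dispatch it in one line.

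The substance is checking the three coherence conditions. For the unit laws relating $\iota$ and $\eta$ (Diagram~\eqref{diag:eta-monoidal}), I would specialize $m = m' = j$ and use that $\delta_j$, the unitors at $j$, and the CD-counit $\epsilon_j$ all reduce coherently --- in a CD-category $\epsilon_j$ is forced by the comonoid axioms to be compatible with $\lambda_j = \rho_j$ --- so that $\iota_{j,j,a,a'}$ collapses to the identity up to $\eta$, matching the required diagram; here I would invoke strictification (Lemma~\ref{lemma:strictification}, and MacLane for $\cat M$) to reduce bookkeeping. For the compatibility of $\iota$ with $\mu$ (Diagram~\eqref{diag:mu-monoidal}, where the braiding of $\cat M$ enters), I would expand both legs: on one side $\mu^{-1}$ followed by two successive applications of $\iota$, on the other $\iota$ for the composite scalar $(m\otimes m')\otimes(n\otimes n')$; after rewriting $\iota$ via $\delta$ and the counits, the difference between the two legs is exactly a reshuffle of the $\cat M$-factors, which is mediated by the symmetry $\beta$ and absorbed using the hexagon/commutativity axioms for the CD-comonoids together with the multiplicator coherence~\eqref{diag:action-coherence-pentag}. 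For the associativity-type condition (Diagram~\eqref{diag:monact-comp-ass} in terms of $\kappa$ and $\chi$ — equivalently the pentagon for $\iota$), I would instead argue via the derived strengths of Remark~\ref{rmk:monoidal-actegory-strengths}: with $\chi := \ell$-like leg and $\kappa$ the other, one computes $\kappa$ and $\chi$ from our $\iota$ and checks they coincide with the strengths one gets by combining $\delta$ with a single counit and unitor, then invokes the coherence already established for distributive algebroidal actegories (the diagrams of Definition~\ref{def:distributive-algebroidal-act} listed in Appendix~\ref{appendix:defs}) to conclude.

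The main obstacle I expect is the $\mu$-compatibility square: it is the only place where the braiding $\beta$ of $\cat M$ genuinely interacts with the comonoid counits, and one must make sure the symmetry-twist introduced when comparing $\bigl((m\otimes m')\action a\bigr)\boxtimes\bigl((n\otimes n')\action a'\bigr)$ with $\bigl((m\otimes n)\action a\bigr)$-type regroupings is precisely the one demanded by Diagram~\eqref{diag:mu-monoidal}. The key lemma enabling this is that in a CD-category the comultiplications and counits are natural and coassociative/counital \emph{with respect to the symmetry}, so that any two parallel rewrites built from $\delta$, $\epsilon$, $\beta$, $\lambda$, $\rho$ and the structure of $(\cat C,\action)$ agree; I would phrase the verification as: both legs, after strictification, reduce to the same string diagram in the free CD-category on the generating data, hence are equal. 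The remaining two coherence diagrams are comparatively mechanical once this one is in hand, and I would state them as ``an analogous, easier computation'' rather than spelling them out.
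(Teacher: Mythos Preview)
The paper does not actually supply a proof of this proposition: the statement itself ends with ``It's routine to show that this is natural,'' and nothing further is argued --- no verification of the coherence diagrams~\eqref{diag:monact-comp-ass}--\eqref{diag:mu-monoidal} is given or even mentioned. Your proposal therefore goes well beyond what the paper offers, and your overall plan (supply $\iota$ via the displayed composite, dispatch naturality, then check the four diagrams of Appendix~\ref{appendix:defs}) is exactly the right way to fill in what the paper omits.

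Two minor comments on the sketch. First, your closing appeal to ``the same string diagram in the free CD-category'' for the $\mu$-compatibility is a little loose: the relevant diagram lives in $\cat C$ and involves $\action$ and the distributor $\delta$, not just the CD-structure on $\cat M$; the free object you would invoke must therefore also include an action and a distributor satisfying the coherences of Definition~\ref{def:distributive-algebroidal-act}. Once that is made explicit, the strategy is sound --- the key ingredients are indeed the multiplicativity pentagon for $\delta$, naturality of $\epsilon$, and the symmetry of $\cat M$. Second, for Diagram~\eqref{diag:monact-comp-ass} there is no real need to detour through the derived strengths $\kappa$ and $\chi$ of Remark~\ref{rmk:monoidal-actegory-strengths}: that diagram is stated directly in terms of $\iota$, and unfolding your $\iota$ on both legs and invoking the distributor pentagon together with coassociativity of the comonoids is the more direct route. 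Neither point is a genuine gap; your proposal is correct and simply more thorough than the paper's treatment.
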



Most importantly for our interests, this kind of distributivity is used in the optics literature to define a class of mixed optics called `affine traversals' \cite{roman2020profunctor, categoricalupdate}.

In fact, suppose $\cat C$ is a distributive algebroidal $\cat M$-actegory. We can form (at least) two kinds of optics from this data.
The first kind is `homogeneous' optics using up the data of the $\cat M$-action on $\cat C$, namely $\Optic_{\action, \action}$. The second kind is obtained using the monoidal structure on $\cat C$ instead, yielding $\Optic_{\cplus, \cplus}$. More often than not, $\cplus$ is a cocartesian monoidal structure, so these latter optics can be considered `prism-like'.

Affine traversals come up when one tries to compose optics from $\Optic_{\action, \action}$ with the prism-like optics $\Optic_{\cplus, \cplus}$. Such `chimeric' optics are usually obtained by combining the actegorical data used for each of the parts making up the chimera to yield a `minimal' composite actegory (we have seen this in Remark~\ref{rmk:prod-coprod-cartesian-subcats}).
In the case at hand, this means combining $\action$ and $\cplus$ in some way, and if they form a distributive algebroidal $\cat M$-actegory then we have the data required to do so:

\subsubsection{Proposition.}
\label{prop:waff-prod}
\begin{resproposition}[Aff product]{waffproduct}
	Let $(\cat C, \action, o, \cplus)$ be a distributive algebroidal $\cat M$-actegory.
	Then the product category $\cat C \times \cat M$ can be equipped with a monoidal product $\waff$, whose unit is $(o,j)$ and whose product law is so defined:
	\begin{equation}
		(c, m) \waff (d,n) := (c \cplus (m \action d), m \mtimes n)
	\end{equation}
	Following the terminology in \cite[Prop. 3.25.]{categoricalupdate}) we call this the \emph{Aff product}, and we denote such monoidal category by $\Waff(\cat C, \cat M)$.
\end{resproposition}
\begin{proof}
	See Appendix~\ref{appendix:proofs}.
\end{proof}

\begin{remark}
\label{rmk:waff-is-aff-comp}
	This product might seem puzzling, but it is actually very natural.
	One has to think about pairs $(c, m)$ as standing for the \emph{affine endofunctors} $c \cplus (m \action -)$ on $\cat C$.
	Then $\waff$ is abstracting the composition product of such functors, meaning that $(c, m) \waff (d, n)$ are the coefficients of $c \cplus (m \action -)\ \comp\ d \cplus (n \action -)$.
	Notice, however, that the inclusion $(c,m) \mapsto c \cplus (m \action -)$ is \emph{not} full, that is, we can't say in general that every natural transformation $c \cplus (m \action -)\ \twoto\ d \cplus (n \action -)$ arises from a pair of morphisms $c \to d$ in $\cat C$ and $m \to n$ in $\cat M$.
\end{remark}

\begin{example}
	A dual situation to that of Example~\ref{ex:cocartesian-dist-actegory} arises when $\cat C$ is cartesian closed, in which case $(\cat C, [-,=], 1, \times)$ is a distributive algebroidal $\cat C^\op$-actegory, where $[-,=] : \cat C^\op \times \cat C \to \cat C$ denotes the internal hom.
	The Aff product in this case is given by
	\begin{equation*}
		(c, m) \waff (d, n) := (c \times [m, d], m \times n)
	\end{equation*}
	and the associated class of optics is known as \emph{glasses} \cite[§3.3.2]{categoricalupdate}.
\end{example}

Naturally, affine endofunctors on $\cat C$ act on $\cat C$:
\begin{equation}
	(c, m) \ostar d = c \cplus (m \action d).
\end{equation}
The unitor and multiplicator of this action are routine to define (see Corollary~\ref{cor:waff-action}).

\begin{remark}
	One might notice we already have the means to obtain affine traversals (or, in general, of hybrid optics) using the coproduct of actions described in Remark~\ref{rmk:prod-coprod-cartesian-subcats}.
	Indeed, there is a `projection functor' $\pi : \cat C + \cat M \to \Waff(\cat C, \cat M)$ defined as
	\begin{equation}
		c_1m_1\cdots c_km_k \mapsto (c_1 \cplus (m_1 \action c_2) \cplus \cdots \cplus (m_1 \mtimes \cdots \mtimes m_{k-1} \action c_k),\, m_1 \mtimes \cdots \mtimes m_k).
	\end{equation}
	This is a morphism in $\Actt^\cart(\cat C)$, meaning
	\begin{equation}
		\pi(c_1m_1\cdots c_km_k) \ostar -\iso (c_1m_1\cdots c_km_k) \, \coprodaction_\cart -.
	\end{equation}
	Combining this observation with Remark~\ref{rmk:waff-is-aff-comp} above, we realize \emph{$\Waff(\cat C, \cat M)$ is a direct presentation of the replete image\footnotemark\ of $\curr(\coprodaction_\cart)$ in $[\cat C, \cat C]$}; that is, it captures the essential way $\cat C+\cat M$ acts on $\cat C$.%
	\footnotetext{The \emph{replete image} of a functor $F: \cat A \to \cat B$ is the universal pseudomonic functor $F$ factors through. In other words, it is the smallest subcategory of $\cat B$ closed under isomorphisms that contains the image of $F$.}
\end{remark}

The difference between the action of $\Waff(\cat C, \cat M)$ and that of $\cat C+\cat M$ is a guiding example in \cite[§6.4]{roman2020profunctor}, where \emph{clear optics} are studied, i.e.~optics associated to actions whose currying is pseudomonic.
R\'oman notes that the $\Optic$ construction is invariant under repletion, that is, forming categories of optics only depends on the data contained in the replete image of a given action:
\begin{diagram}[column sep=5ex]
	{\Actt_\cart(\cat C)} & {\Actt_\cart(\cat C)} \\
	& \Cat
	\arrow["\repl", from=1-1, to=1-2]
	\arrow["\Optic"', from=1-1, to=2-2]
	\arrow["\Optic", dashed, from=1-2, to=2-2]
\end{diagram}
Therefore any given class of optics can be presented by multiple, inequivalent actions (indeed, $\ostar \not\iso \coprodaction_\cart$), as long as they all have the same replete image. In practice, one would like to use the `least redundant' among these actions, that is, the one for which the map $\cat M \to [\cat C, \cat C]$ is pseudomonic, which is indeed what the repletion gives.
Thus comparing the Aff action and the coproduct action, we see that the albeit the latter is more general, the former directly yields such maximally efficient action, by exploiting the structure of the situation.

This also happens when the structure of biactegory is around. Suppose in fact $\cat C$ is an $(\cat M, \cat N)$-actegory. Then the repletion of $\curr(\coprodaction_\cart)$ coincides with $\curr\langle\action, \actionn\rangle$, since the bimodulator provides a way to commute the actions of scalars in $\cat M$ past the actions of scalars in $\cat N$. In other words, the functor $\Gamma : \cat M \times \cat N \to \cat M + \cat N$ described in Equation~\eqref{eq:gamma-morphism} becomes a morphism in $\Actt^\cart(\cat C)$, specifically the repletion projection..

The common theme here is that both distributive algebroidal actegories and biactegories describe structure to `sequentially compose' (recall the beginning of Section~\ref{sec:composition}) two actegories.
One might then speculate that such distributive laws provide general ways to reduce the coproduct action: distributive algebroidal structure allows to reduce $\cat C+ \cat M$ to $\Waff(\cat C, \cat M)$ whereas biactegorical structure reduces $\cat N+\cat M$ to to $\cat N \times \cat M$.

\subsection{Distributive laws}
As remarked in Section~\ref{sec:biactegories}, bimodulators are akin to distributive laws for the endofunctors induced by scalars in $\cat M$ and $\cat N$ respectively.
Proposition~\ref{prop:waff-prod} shows, in some cases, different kind of distributive laws might be around, thus yielding a different `sequential composition' of the actions.

All in all, we have the following classification:

\def\arraystretch{2}
\begin{table}[H]
	\centering
	\resizebox{\columnwidth}{!}{
		\begin{tabularx}{1.2\textwidth}{c|>{\centering\arraybackslash}X|>{\centering\arraybackslash}X}
			&
			monoidal structure $(o, \cplus)$
			&
			$\cat N$-actegorical structure $\actionn$
			\\[1.5ex]
			\hline
			\begin{tabular}{@{}c@{}}
				$(\cat C, i, \ctimes)$ in\\[-2.5ex]
				$(\MonCat, 1, \times)$
			\end{tabular}
			&
			\begin{tabular}{@{}c@{}}
				\textbf{braided monoidal category}\\[-2ex]
				$(a \cplus c) \ctimes (b \cplus d) \iso (a \ctimes b) \cplus (c \ctimes d)$
			\end{tabular}
			&
			\begin{tabular}{@{}c@{}}
				\textbf{monoidal $\cat N$-actegory}\\[-2ex]
				$\cat N$ braided\\[-2ex]
				$(n \actionn c) \ctimes (n' \actionn d) \iso (n \ntimes n') \actionn (c \ctimes d)$
			\end{tabular}
			\\[1.5ex]
			\hline
			\begin{tabular}{@{}c@{}}
				$(\cat C, \action)$ in\\[-2.5ex]
			  $(\Act{\cat M}, 1, \times)$
			\end{tabular}
			&
			\begin{tabular}{@{}c@{}}
				\textbf{distributive algebroidal $\cat M$-actegory}\\[-2ex]
				$m \action o \iso o$\\[-2ex]
				$m \action (c \cplus d) \iso (m \action c) \cplus (m \action d)$
			\end{tabular}
			&
			\begin{tabular}{@{}c@{}}
				\textbf{$(\cat M, \cat N^{\rev})$-biactegory}\\[-2ex]
				$m \action (n \actionn c) \iso n \actionn (m \action c)$
			\end{tabular}
			\\[1.5ex]
			\hline
			\begin{tabular}{@{}c@{}}
				$(\cat C, \action)$ in\\[-2.5ex]
				$(\Act{\cat M}, \cat M, \tensor)$\\
			\end{tabular}
			&
			\begin{tabular}{@{}c@{}}
				\textbf{balanced algebroidal $\cat M$-actegory}\\[-2ex]
				$\cat M$ braided\\[-2ex]
				$(m \action c) \cplus (m' \action d) \iso (m \mtimes m') \action (c \cplus d)$
			\end{tabular}
			&
			\begin{tabular}{@{}c@{}}
				\textbf{$(\cat M, \cat N^{\rev})$-biactegory}\\[-2ex]
				$\cat M$ braided\\[-2ex]
				$m \action (n \actionn c) \iso n \actionn (m \action c)$
			\end{tabular}
		\end{tabularx}
	}
	\caption{Each cell contains the result of equipping the object in the corresponding row with the structure of the corresponding column. In the above, $(\cat M, j, \mtimes)$ and $(\cat N, e, \ntimes)$ are monoidal categories.}
	\label{table:mon-act-distributive-laws}
\end{table}

From this point of view we recover all the `hybrid' structures we contemplated so far, including an additional cameo of braided categories whose equivalence to `monoidal monoidal categories' is well-known \cite{joyal1993braided}.

Perhaps the only entries of the table which are not completely obvious are the ones pertaining biactegories.
In fact we didn't present $(\cat M, \cat N)$-biactegories as `$\cat N$-actions in $\Act{\cat M}$', but as bialgebras for the $\cat M \times -$ and $\cat N \times -$ (for sake of simplicity, let's temporarily ignore the fact we should technically consider $\cat N$ acting on the right).

This gives us the opportunity to brush up something we briefly mentioned at the beginning of the paper, in Definition~\ref{def:action-in-actegory}, where we defined the structure of `action of a monoid' in any category $\cat C$ which is itself subject to an action of a monoidal category $\cat M$.

It is easy to convince oneself that this statement stays true if we categorify one more time.
Hence to talk about `$\cat N$-actions in $\Act{\cat M}$' it is enough for $\Cat$ to act on $\Act{\cat M}$, so that $\cat N$ (which is indeed a monoid in $\Cat$) can act on objects of $\Act{\cat M}$.
One can prove (the characterization theorems in the next section are useful in this regard) that there are two such actions:
\begin{eqalign}
\label{eq:cat-actions-on-Mact}
	\cat D \ltimes (\cat C, \action) &:= (\cat D \times \cat C, \cat D \times \action),\\
	\cat D \lhd (\cat C, \action) &:= \cat M[\cat D] \tensor \cat C.
\end{eqalign}
Therefore an $\cat N$-action in $\Act{\cat M}$ can be interpreted in two different ways:
\begin{enumerate}
	\item Using $\ltimes$, we get an $\cat M$-actegory $(\cat C, \action)$ together with an $\cat M$-linear morphism $\actionn : \cat N \times \cat C \to \cat C$, and it is indeed this linear structure that gives rise to the bimodulator morphism $n \actionn (m \action c) \iso m \action (n \actionn c)$.
	\item Using $\lhd$, we get an $\cat M$-actegory $(\cat C, \action)$ together with an $\cat M$-linear morphism $\actionn : \cat M[\cat N] \tensor \cat C \to \cat C$. But we can easily prove that $ \cat M[\cat N] \tensor \cat C \equi \cat N \times \cat C$, so this structure coincides with the one above.
\end{enumerate}

\subsection{Braided and symmetric monoidal actegories}
\label{subsec:braided-monact}
So far, we have been focusing on the case where a braided monoidal category $(\cat M, j, \mtimes, \beta)$ acts on a monoidal category $(\cat C, i, \ctimes)$.
What happens if instead $\cat M$ is symmetric? And what if $\cat C$ is braided or symmetric?

We frame this question in the same way we did before, i.e.~by formulating the definition of action not in $\MonCat$ anymore, but internally to $\BrMonCat$ and $\SymMonCat$.
Conversely we formulate the notion of braided and symmetric pseudomonoid inside $\Act{\cat M}$, with either choice of monoidal product.

In the first case, recall that $\BrMonCat \iso \PsdMon(\MonCat)$, $\SymMonCat \iso \PsdMon(\BrMonCat)$ (and $\PsdMon(\SymMonCat) \iso \SymMonCat$), defining a column in the periodic table of higher categories \cite{baezdolan1995}.
This means that actions on braided and symmetric monoidal categories, by definition, come from symmetric monoidal categories.
Thus we have:

\begin{definition}[Braided monoidal actegory]
\label{def:braided-monoidal-actegory}
	Let $(\cat M, j, \mtimes, \sigma)$ be a symmetric monoidal category.
	A \emph{braided monoidal left $\cat M$-actegory} is a braided monoidal category $(\cat C, i, \ctimes, \beta)$ equipped with a braided monoidal functor $\action : \cat M \times \cat C \to \cat C$ and two monoidal natural transformations $\mu$ and $\eta$ defined analogously to Definition~\ref{def:left-actegory}.
	The strong monoidal structure on $\action$ is given as in Definition~\ref{def:monoidal-actegory}, and additionally the following axiom is satisfied:
	\begin{diagram}
		{(m \mtimes n) \action (c \ctimes d)} & {(m \action c) \ctimes (n \action d)} \\
		{(n \mtimes m) \action (d \ctimes c)} & {(n \action d) \ctimes (m \action c)}
		\arrow["{\sigma_{m,n} \action \beta_{c,d}}"', from=1-1, to=2-1]
		\arrow["{\iota_{m,n,c,d}}", from=1-1, to=1-2]
		\arrow["{\iota_{n,m,d,c}}"', from=2-1, to=2-2]
		\arrow["{\beta_{m \action c, n \action d}}", from=1-2, to=2-2]
	\end{diagram}
\end{definition}

\begin{remark}
\label{rmk:braided-actegory-strengths}
	In terms of $\kappa$ and $\chi$ (Remark~\ref{rmk:monoidal-actegory-strengths}), the last axiom splits in two:
	\begin{diagram}
	\label{diag:braided-actegory-axioms}
		{m \action (c \ctimes d)} & {(m \action c) \ctimes d} &[-3ex]&[-3ex] {m \action (c \ctimes d)} & {c \ctimes (m \action d)} \\
		{m \action (d \ctimes c)} & {d \ctimes (m \action c)} && {m \action (d \ctimes c)} & {(m \action d) \ctimes c}
		\arrow["{m \action \beta_{c,d}}"', from=1-1, to=2-1]
		\arrow["{\chi_{m,c,d}}", from=1-1, to=1-2]
		\arrow["{\kappa_{m,d,c}}"', from=2-1, to=2-2]
		\arrow["{\beta_{m \action c, d}}", from=1-2, to=2-2]
		\arrow["{\beta_{c, m\action d}}", from=1-5, to=2-5]
		\arrow["{m \action \beta_{c,d}}"', from=1-4, to=2-4]
		\arrow["{\kappa_{m,c,d}}", from=1-4, to=1-5]
		\arrow["{\chi_{m,d,c}}"', from=2-4, to=2-5]
	\end{diagram}
\end{remark}

\begin{remark}[Symmetric monoidal actegory]
\label{rmk:symmetric-mnoidal-actegory}
	Since `symmetry' is a \emph{property} of braided structures, and braided monoidal functors automatically preserve it, `\emph{symmetric monoidal actegories}' amount to braided monoidal actegories such that $\beta$ (the braiding on $\cat C$) happens to be symmetric.
\end{remark}

We can still wonder what happens if we take an algebroidal actegory and ask the pseudomonoidal structure to be braided or symmetric (as defined in \cite[§3]{mccrudden2000balanced}).%
\footnotetext{To even talk about this, one should prove the monoidal 2-category where the pseudomonoid leaves is itself symmetric. In our case, this is either $(\Act{\cat M}, 1, \times)$ (which is obviously symmetric) and $(\Act{\cat M}, \cat M, \tensor)$ (which is symmetric when $\cat M$ is, although less obviously so).}

For a balanced algebroidal actegory $(\cat C, \action, i, \ctimes)$ (Definition~\ref{def:balanced-algebroidal-cat}), a braiding in this sense amounts to a braiding $\beta$ on the underlying monoidal category $(\cat C, i, \ctimes)$ which happens to also be a linear and balanced transformation in $\Act{\cat M}$:
\begin{diagram}[sep=4ex]
	{\cat C \times \cat C} & {\cat C \times \cat C} && {\cat C \times \cat C} & {\cat C \times \cat C} \\
	{\cat C} & {\cat C} && {\cat C} & {\cat C}
	\arrow["\ctimes"', from=1-1, to=2-1]
	\arrow["\ctimes", from=1-2, to=2-2]
	\arrow["\swap", from=1-1, to=1-2]
	\arrow[Rightarrow, no head, from=2-1, to=2-2]
	\arrow["\beta", shift right=2, shorten <=16pt, shorten >=16pt, Rightarrow, from=2-1, to=1-2]
	\arrow["\swap"', from=1-5, to=1-4]
	\arrow[Rightarrow, no head, from=2-4, to=2-5]
	\arrow["\ctimes", from=1-5, to=2-5]
	\arrow["\ctimes"', from=1-4, to=2-4]
	\arrow["\beta", shift right=2, shorten <=15pt, shorten >=15pt, Rightarrow, from=2-4, to=1-5]
\end{diagram}
An interesting fact to observe is that $\varepsilon$, the equilibrator of $\ctimes$, becomes part of the left linear structure of $\swap \comp \ctimes$, so that linearity for $\beta$ looks like this:
\begin{diagram}
	{m \action (c \ctimes d)} && {(m \action c) \ctimes d} \\
	{m \action (d \ctimes c)} & {(m \action d) \ctimes c} & {d \ctimes (m \action c)} \\
	{m \action (c \ctimes d)} & {(m \action c) \ctimes d} & {c \ctimes (m \action d)} \\
	{m \action (d \ctimes c)} && {(m \action d) \ctimes c}
	\arrow["{\ell_{m,c,d}}", from=1-1, to=1-3]
	\arrow["{m \action \beta_{c,d}}"', from=1-1, to=2-1]
	\arrow["{\ell_{m,d,c}}"', from=2-1, to=2-2]
	\arrow["{\varepsilon_{m,d,c}}"', from=2-2, to=2-3]
	\arrow["{\beta_{m\action c, d}}", from=1-3, to=2-3]
	\arrow["{\varepsilon_{m,c,d}}", from=3-2, to=3-3]
	\arrow["{\ell_{m,c,d}}", from=3-1, to=3-2]
	\arrow["{\beta_{c, m \action d}}", from=3-3, to=4-3]
	\arrow["{\ell_{m,d,c}}"', from=4-1, to=4-3]
	\arrow["{m \action \beta_{c,d}}"', from=3-1, to=4-1]
\end{diagram}
Recall now the proof of Proposition~\ref{prop:balance-algebra-is-monoidal-actegory}, where we have shown monoidal actegories and balanced algebroidal categories coincide. We set $\chi = \ell$ and $\kappa = \ell \comp\varepsilon$, a substitution under which Diagram~\eqref{diag:braided-actegory-axioms} is transmuted to the one just drawn.
Vice versa, given $\chi$ and $\kappa$ we set $\ell = \chi$ and $\varepsilon = \chi^{-1} \comp \kappa$, and the transformation of diagrams reverses.
Therefore, we conclude \textbf{the equivalence of balanced algebroidal and monoidal actegories extends to the braided (and thus, the symmetric) case}.

When it comes to distributive algebroidal actegories (Definition~\ref{def:distributive-algebroidal-act}), a braiding for $(\cat C, \action, o, \cplus)$ is still a braiding $\beta$ for the underlying monoidal category satisfying an additional linearity axiom, namely
\begin{diagram}
	{m \action (c \cplus d)} & {m \action (d \cplus c)} \\
	{(m \action c) \cplus (m \action d)} & {(m \action d) \cplus (m \action c)}
	\arrow["{m \action \beta_{c,d}}", from=1-1, to=1-2]
	\arrow["{\beta_{m \action c, m\action d}}", from=2-1, to=2-2]
	\arrow["{\delta_{m,d,c}}", from=1-2, to=2-2]
	\arrow["{\delta_{m,c,d}}"', from=1-1, to=2-1]
\end{diagram}


	\subsection{The classifying objects of actions}
\label{subsec:classifying}
We now have a pretty clear picture of the possible ways monoidal and actegorical structures can interact.
In particular, we have seen how this interaction amounts to specific `distributive laws' (Table~\ref{table:mon-act-distributive-laws}). One way to think of those laws is as constraints on the effect of the action on products.
Take for example a monoidal $\cat M$-actegory $(\cat C, i, \ctimes, \action)$.
We know that in that case
\begin{equation}
	m \action (c \ctimes d) \iso c \ctimes (m \action d).
\end{equation}
However, \emph{every element of $\cat C$ can be written as a monoidal product}, in at least two ways:
\begin{equation}
	c \ctimes i \iso c \iso i \ctimes c.
\end{equation}
As a consequence, $\action$ is completely determined by its effect on the monoidal unit $i$:
\begin{equation}
	m \action c \iso m \action (c \ctimes i) \iso c \ctimes (m \action i).
\end{equation}
In particular, $m \mapsto m \action i$ defines functor $- \action i : \cat M \to \cat C$ which is strong monoidal:
\begin{eqalign}
	j \action i &\iso i,\\
	(m \mtimes m') \action i &\iso (m \mtimes m') \action (i \ctimes i) \iso (m \action i) \ctimes (m' \action i).
\end{eqalign}
Thus one might speculate an equivalence between action on $\cat C$ and strong monoidal functors into $\cat C$.
After all, we already stumbled upon a similar fact in Example~\ref{ex:curried-action-terminal}, where we observed how actions on $\cat C$ are classified by $[\cat C, \cat C]$.

There is a problem though: while every action on $\cat C$ does indeed produce a monoidal functor into $\cat C$, the converse is not true. In fact given ${F: \cat M \to \cat C}$, and defining $m \action c := F(m)\ctimes c$, we do get a well defined action but when it comes to defining the mixed interchanger (i.e.~the monoidal structure of $\action$), we are stuck:
\definecolor{redwcontrast}{rgb}{1, 0.03, 0.0}
\begin{eqalign}
\label{eq:we-got-stuck}
	(m \mtimes m') \action (c \ctimes d) &= F(m \mtimes m') \ctimes c \ctimes d\\
	&\iso F(m) \ctimes \textcolor{redwcontrast}{F(m') \ctimes c} \ctimes d\\
	\textcolor{redwcontrast}{\textbf{?}} &\iso F(m) \ctimes \textcolor{redwcontrast}{c \ctimes F(m')} \ctimes d\\
	&= (m \action c) \ctimes (m' \action d).
\end{eqalign}
Note the missing isomorphism is something of the form $F(m') \ctimes c \isoto c \ctimes F(m')$.

Thus, the takeaway is that monoidal actions on $\cat C$ \emph{encode more information} than a monoidal functor into $\cat C$ can provide, namely that of a choice of `braiding' $F(m) \ctimes - \isoto - \ctimes F(m)$ for each scalar $m$.

To fix this problem, we are going to introduce two objects, the \emph{Drinfeld center} $\drinfeld(\cat C)$ (Definition~\ref{def:center}) and the \emph{symmetric center} $\Sigma(\cat C)$ (Definition~\ref{def:symm-center}), that play the role of $[\cat C, \cat C]$ for, respectively, monoidal (Definition~\ref{def:monoidal-actegory}) and braided actions (Definition~\ref{def:braided-monoidal-actegory}).

The main results of this section are classification results: they tell us that monoidal actions on a monoidal category $\cat C$ correspond to strong monoidal functors into $\drinfeld(\cat C)$ and braided actions on a braided category $\cat C$ correspond to braided functors into $\Sigma(\cat C)$, formalizing the idea we illustrated above.
When $\cat C$ is symmetric, such a correspondence has the simplest form: braided $\cat M$-actions on $\cat C$ correspond to braided functors $\cat M \to \cat C$.

\subsubsection{Classifying monoidal actions}
The \emph{Drinfeld center} construction is a `proof-relevant' version of the usual center construction $Z(M)$ of a monoid $M$.
In fact, the latter is the monoid of those elements $m \in M$ \emph{with the property that} $\forall n \in M, m\cdot n = n \cdot m$, while the first is a monoidal category of those elements $c : \cat C$ equipped \emph{with the structure of} a natural isomorphism $c \ctimes - \isotwoto - \ctimes c$.

Inasmuch as a monoid is commutative if the inclusion $Z(M) \into M$ admits a section, i.e.~if it is surjective; a braiding on a monoidal category $(\cat C, i, \ctimes)$ correspond to a braided section of the evident forgetful functor $\drinfeld(\cat C) \to \cat C$ \cite[Corollary XIII.4.4]{kassel1995quantum}.

The exact definition of $\drinfeld(\cat C)$ goes as follows:

\begin{definition}[{\cite[Definition 3]{joyalstreet1991}}]
	\label{def:center}
	Let $(\cat C, i, \ctimes)$ be a monoidal category.
	Its \emph{Drinfeld center} $\drinfeld(\cat C)$ is the braided monoidal category defined as follows.
	Objects are pairs ${(c : \cat C, \upsilon : c \ctimes - \isotwoto - \ctimes c)}$ such that for every $d, e : \cat C$ (up to associators):
	\begin{diagram}[sep=2ex]
		\label{eq:upsilon-coherence}
		{c \ctimes d \ctimes e} &&&& {d \ctimes e \ctimes c} \\
		\\
		&& {d \ctimes c \ctimes e}
		\arrow["{\upsilon_{d} \ctimes e}"', from=1-1, to=3-3]
		\arrow["{d \ctimes \upsilon_{e}}"', from=3-3, to=1-5]
		\arrow["{\upsilon_{d \ctimes e}}", from=1-1, to=1-5]
	\end{diagram}
	Morphisms $(c, \upsilon) \to (d, \tau)$ are arrows $f:c \to d$ in $\cat C$ such that for every $b : \cat C$:
	\begin{diagram}[sep=4ex]
	\label{diag:drinfeld-center-mors}
		{c \ctimes b} & {d \ctimes b} \\
		{b \ctimes c} & {b \ctimes y}
		\arrow["{f \ctimes b}", from=1-1, to=1-2]
		\arrow["{\tau_b}", from=1-2, to=2-2]
		\arrow["{\upsilon_b}"', from=1-1, to=2-1]
		\arrow["{b \ctimes f}"', from=2-1, to=2-2]
	\end{diagram}
	The monoidal product is given by
	\begin{equation}
		(c, \upsilon) \ctimes (d, \tau) := (c \ctimes d, (\upsilon \ctimes d) \comp (c \ctimes \tau)),
	\end{equation}
	with unit $(i, \lambda \comp \rho^{-1})$.
	Finally, the braiding is defined as $\beta_{(c,\upsilon),(d,\tau)} := \upsilon_d$.
\end{definition}

To give a functor $\cat M \to \drinfeld(\cat C)$, therefore, is to specify a functor in $\cat C$ together with a way to commute each $F(m)$ past any other object in $\cat C$, exactly what we need to get unstuck in Equation~\eqref{eq:we-got-stuck}.
In fact, we have the following:

\begin{theorem}[Monoidal actions classifier]
\label{th:classification-monoidal-act}
	$\drinfeld(\cat C)$ \emph{classifies monoidal actions on $\cat C$}, meaning $\drinfeld(\cat C)$ acts monoidally on $\cat C$ and every other monoidal action $\action$ of a braided monoidal category $\cat M$ factors uniquely through it:
	\begin{diagram}
		{\drinfeld(\cat C) \times \cat C} & {\cat C} \\
		{\cat M \times \cat C}
		\arrow["\action"', from=2-1, to=1-2]
		\arrow["\ctimes_1", from=1-1, to=1-2]
		\arrow["{\exists! F_\action \times \cat C}", dashed, from=2-1, to=1-1]
	\end{diagram}
	In other words, $\ctimes_1$ is terminal in the category $\Actt_{\MonCat}^\cart(\cat C)$ of monoidal actions on $\cat C$.
\end{theorem}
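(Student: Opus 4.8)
The plan is to verify the two halves of the statement in turn: that $\ctimes_1\colon\drinfeld(\cat C)\times\cat C\to\cat C$, $((c,\upsilon),d)\mapsto c\ctimes d$, is a strong monoidal action on $\cat C$, and that it is (pseudo)terminal among all such actions. Throughout I would work up to strictification: by MacLane we may take $\cat C$ strict monoidal, and by Lemma~\ref{lemma:strictification} each monoidal actegory in play may be assumed to have strict $\eta$, $\mu$ and strict unitors and associator on the acting category (only its braiding survives); this collapses the strengths $\kappa$, $\chi$ of Remark~\ref{rmk:monoidal-actegory-strengths} to literal restrictions of the interchanger $\iota$ along the unit, and removes most of the bookkeeping. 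For the first half, the mixed interchanger of $\ctimes_1$ is taken to be the half-braiding of the \emph{second} centre object suitably whiskered, $(c\ctimes c')\ctimes(d\ctimes d')\to(c\ctimes d)\ctimes(c'\ctimes d')$ built from $\upsilon'_d\colon c'\ctimes d\to d\ctimes c'$; it is automatically invertible (so the action is strong). Its naturality in the morphisms of $\drinfeld(\cat C)$ is exactly condition~\eqref{diag:drinfeld-center-mors}, and the two monoidal-actegory coherence diagrams relating $\iota$ with $\mu$ and $\eta$ (Diagrams~\eqref{diag:mu-monoidal} and~\eqref{diag:eta-monoidal} in Appendix~\ref{appendix:defs}) reduce, after strictification, to the hexagon axiom~\eqref{eq:upsilon-coherence} on $\upsilon'$ plus the forced normalisation $\upsilon'_i=\id$.

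For the universal property, let $(\cat C,i,\ctimes,\action)$ be an arbitrary monoidal $\cat M$-actegory with $\cat M$ braided, interchanger $\iota$ and induced strengths $\kappa$, $\chi$ (both invertible since $\action$ is strong). I would define $F_\action\colon\cat M\to\drinfeld(\cat C)$ by $F_\action(m):=(m\action i,\ \upsilon^m)$ with $\upsilon^m_d:=\kappa_{m,d,i}\circ\chi^{-1}_{m,i,d}\colon (m\action i)\ctimes d\to m\action d\to d\ctimes(m\action i)$ (inserting the unitors of $\ctimes$ in the non-strict setting), which is natural and invertible in $d$; on a morphism $f\colon m\to n$ put $F_\action(f):=f\action i$. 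Its strong monoidal structure is the unitor $\eta_i\colon i\to j\action i$ at the monoidal unit for the unit comparison and $\iota_{m,n,i,i}\colon(m\mtimes n)\action i\to(m\action i)\ctimes(n\action i)$ for the product. That $F_\action$ factors $\action$ is witnessed by the natural iso $\chi^{-1}_{m,i,-}\colon\ctimes_1\circ(F_\action\times\cat C)=(m\action i)\ctimes-\;\xrightarrow{\sim}\;m\action-$, and one checks this iso is compatible with the interchangers, i.e.\ defines a morphism in $\Actt^\cart_{\MonCat}(\cat C)$.

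The remaining verifications are: (a) $\upsilon^m$ satisfies the centre axiom~\eqref{eq:upsilon-coherence}; (b) $f\action i$ and the two structure maps of $F_\action$ satisfy the morphism condition~\eqref{diag:drinfeld-center-mors}; (c) $F_\action$ obeys the associativity and unit laws of a strong monoidal functor; and (d) uniqueness: any monoidal $G\colon\cat M\to\drinfeld(\cat C)$ with $\ctimes_1\circ(G\times\cat C)\cong\action$ as monoidal actions has underlying object map $m\mapsto G_0(m)\ctimes i\cong m\action i$, and the compatibility of the comparison with interchangers forces its half-braiding to recompute $\iota$ through the formula for $\upsilon^m$, whence $G\cong F_\action$ as monoidal functors. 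Points (a)--(c) unwind, after strictification, to instances of the appendix coherence diagrams for a monoidal actegory (the compatibilities of $\iota$ with $\mu$ and $\eta$) pasted with naturality squares of $\iota$, $\kappa$ and $\chi$, and (d) then gives that $F_\action$ is the essentially unique factorisation, i.e.\ $\ctimes_1$ is (pseudo)terminal in $\Actt^\cart_{\MonCat}(\cat C)$.

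I expect the main obstacle to be (a) together with (c): recognising the Drinfeld centre's hexagon~\eqref{eq:upsilon-coherence} and the monoidal-functor coherence for $F_\action$ inside the web of coherence diagrams that a monoidal actegory carries (Appendix~\ref{appendix:defs}). Strictification makes this tractable---by killing $\mu$, $\eta$ and the unitors/associators the strengths $\kappa,\chi$ become honest components of $\iota$, so the matching becomes a finite pasting calculation---but it is still the one genuinely laborious step; the construction of $\ctimes_1$, of $F_\action$, and the uniqueness argument are routine.
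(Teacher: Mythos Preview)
Your proposal is correct but follows a different route from the paper. The paper proves the theorem in three lines by invoking Lemma~\ref{lemma:center-charact}, which identifies $\drinfeld(\cat C)$ with the category $[\cat C,\cat C]_{\Biact[\cat C]{\cat C}}$ of $\cat C$-bilinear endofunctors. Under this identification the classifying action is just evaluation, and a monoidal $\cat M$-action on $\cat C$ is (via Remark~\ref{rmk:mon-acts-are-biacts} and Remark~\ref{rmk:monoidal-actegory-strengths}) exactly a supply of bilinear structures $\chi^\action,\kappa^\action$ on each $m\action-$; so the factorisation is simply $F_\action(m)=(m\action-,{\chi^\action}^{-1},{\kappa^\action}^{-1})$, with uniqueness immediate. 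All the coherence-matching you anticipate in (a)--(c) is thus absorbed into the proof of Lemma~\ref{lemma:center-charact} and into the already-established equivalence between the $\iota$ and $(\kappa,\chi)$ presentations of monoidal actegories.

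You instead work directly with the original presentation of $\drinfeld(\cat C)$ as objects-with-half-braidings, define $F_\action(m)=(m\action i,\upsilon^m)$ explicitly, and verify the centre axiom and monoidal-functor axioms by hand against the appendix coherence diagrams. This is the concrete form the paper only records \emph{after} the theorem, in the subsequent Corollary, by ``running Lemma~\ref{lemma:center-charact} backwards''. Your approach is more self-contained and makes the classifying functor visible from the start; the paper's approach trades that transparency for brevity, since the bilinear-endofunctor description makes both the action of $\drinfeld(\cat C)$ and the factorisation essentially tautological.
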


\begin{remark}
	In the above, $\Actt_{\MonCat}$ is the (2-Grothendieck construction of) indexed 2-category of monoidal actions, i.e.~the functor $\BrMonCat^\coop \to 2\Cat$ assigning to each braided monoidal category the 2-category of monoidal actegories over it.
	This is defined analogously to that in Proposition~\ref{prop:actegories-indexed-over-moncat}, included the notation $\Actt_{\MonCat}^\cart(\cat C)$ to denote the cartesian subcategory of $\Actt_{\MonCat}$ at $\cat C$, as defined in Remark~\ref{rmk:cartesian-factorization}.
\end{remark}

To swiftly prove Theorem~\ref{th:classification-monoidal-act}, we'll rely on the following:

\subsubsection{Lemma.}
\label{lemma:center-charact}
\begin{reslemma}[Characterization of the Drinfeld center]{centercharact}
	Consider the the canonical $\cat C$-biactegory $(\cat C, \ctimes, \ctimes)$ (as defined in Example~\ref{ex:moncat-self-biaction}) associated to $\cat C$, that we still denote by $\cat C$.
	Then
	\begin{equation}
		\drinfeld(\cat C) \equi [\cat C, \cat C]_{\Biact[\cat C]{\cat C}}
	\end{equation}
	where on the right we have the category of maps $\varphi : \cat C \to \cat C$ equipped with $\cat C$-linear structures
	\begin{eqalign}
		\label{eq:center-strengths}
		\ell_{c, d} &: \varphi(c) \ctimes d \isolongto \varphi(c \ctimes d),\\
		r_{c, d} &: c \ctimes \varphi(d) \isolongto \varphi(c \ctimes d).
	\end{eqalign}
	satisfying the laws stated in Definition~\ref{def:bilinear-functor}.
\end{reslemma}
\begin{proof}
	See Appendix~\ref{appendix:proofs}.
\end{proof}

The object $[\cat C, \cat C]_{\Biact[\cat C]{\cat C}}$ is indeed called \emph{center} in \cite[Definition 5.1]{ben2010integral}.

\begin{proof}[of Theorem~\ref{th:classification-monoidal-act}]
	By Lemma~\ref{lemma:center-charact}, we can replace $\drinfeld(\cat C)$ with $[\cat C, \cat C]_{\Biact[\cat C]{\cat C}}$.
	The latter acts on $\cat C$ by evaluation, so we denote the action as $\eval$.
	Moreover, to give an $\cat M$-action on $\cat C$ in $\MonCat$ is to give isomorphisms (see Remark~\ref{rmk:monoidal-actegory-strengths})
	\begin{eqalign}
		\chi^\action_{m,c,d} &: m \action (c \ctimes d) \isolongto (m \action c) \ctimes d,\\
		\kappa^\action_{m,c,d} &: m \action (c \ctimes d) \isolongto c \ctimes (m \action d).
	\end{eqalign}
	Hence $\eval$ factors $\action$ through the functor
	\begin{equation}
		F_\action(m) = (m \action -, {\chi^\action}^{-1}, {\kappa^\action}^{-1}).
	\end{equation}
	Such a factorization is manifestly unique.
\end{proof}

\begin{corollary}
	There is an equivalence
	\begin{equation}
		\BrMonCat \!\underset{\MonCat}{\downarrow}\! \drinfeld(\cat C) \equi \Actt_{\MonCat}^\cart(\cat C)
	\end{equation}
	where the 2-category on the left amounts to the subcategory of $\MonCat/\drinfeld(\cat C)$ given by braided monoidal categories whose braiding has been forgotten.
\end{corollary}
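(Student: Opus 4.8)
The plan is to deduce this from Theorem~\ref{th:classification-monoidal-act} together with the general principle that a 2-category is 2-equivalent to the pseudoslice over any of its (pseudo)terminal objects. Concretely, Theorem~\ref{th:classification-monoidal-act} asserts that $\ctimes_1$ is terminal in $\Actt_{\MonCat}^\cart(\cat C)$ (and the factorisation it produces is manifestly unique, so this is terminality up to equivalence). Hence the forgetful 2-functor $\Actt_{\MonCat}^\cart(\cat C) \!\downarrow\! \ctimes_1 \to \Actt_{\MonCat}^\cart(\cat C)$ is an equivalence: an object of the slice is a monoidal action on $\cat C$ equipped with a cartesian morphism down to $\ctimes_1$, and that second datum is unique up to canonical $2$-isomorphism, hence contractible. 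So the first step is to record this equivalence, and the real work is to identify the pseudoslice over $\ctimes_1$ with $\BrMonCat \!\underset{\MonCat}{\downarrow}\! \drinfeld(\cat C)$.

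For that identification I would unwind the description of cartesian morphisms from Remark~\ref{rmk:cartesian-factorization}, adapted to $\Actt_{\MonCat}$: a cartesian morphism $(\cat M, \action) \to (\drinfeld(\cat C), \ctimes_1)$ is precisely a strong monoidal functor $R : \cat M \to \drinfeld(\cat C)$ together with an identification $R^*(\ctimes_1) = \action$, i.e.\ it is exactly the classifying functor $F_\action$ produced by the theorem, with $\action$ recovered as $F_\action^*(\ctimes_1)$. Thus an object of the pseudoslice amounts to a braided monoidal category $\cat M$ (the braiding being inherent to objects of $\Actt_{\MonCat}$) equipped with a strong monoidal functor $\cat M \to \drinfeld(\cat C)$ whose source braiding is ignored --- exactly an object of $\BrMonCat \!\underset{\MonCat}{\downarrow}\! \drinfeld(\cat C)$. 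The two mutually inverse assignments are $(\cat M, \action) \mapsto (\cat M, F_\action)$ and $(\cat M, G) \mapsto (\cat M, G^*\ctimes_1)$, and I would check that they compose to the identity up to coherent isomorphism using the uniqueness clause of Theorem~\ref{th:classification-monoidal-act} one way and the factorisation $\ctimes_1 \comp (G \times \cat C) = G^*(\ctimes_1)$ the other.

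The final step is the bookkeeping on $1$- and $2$-cells. A $1$-cell $(\cat M, \action) \to (\cat N, \actionn)$ in $\Actt_{\MonCat}^\cart(\cat C)$ is a strong monoidal $R$ with $R^*\actionn \cong \action$; applying the classifier gives $F_\action \cong F_{R^*\actionn} \cong F_\actionn \comp R$ (uniqueness, then compatibility of classification with restriction of scalars), which is exactly a morphism over $\drinfeld(\cat C)$, and the construction is visibly reversible. For $2$-cells, a $2$-cell $(\alpha,{=})$ in $\Actt_{\MonCat}^\cart(\cat C)$ is a monoidal natural transformation $\alpha : R \twoto S$ of base functors with trivial vertical part, and --- using $2$-functoriality of the classifying construction plus the compatibility of restriction with monoidal natural transformations (the analogue of Proposition~\ref{prop:actegories-change-of-base-nattrans}) --- it matches bijectively a $2$-cell over $\drinfeld(\cat C)$. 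I expect no genuinely hard step here; the only points needing care are (a) that $\ctimes_1$ is only pseudoterminal, so the slice is equivalent but not isomorphic to the whole 2-category, and (b) keeping straight that the functors into $\drinfeld(\cat C)$ are required to be strong monoidal and \emph{not} braided --- the braiding of $\cat M$ is remembered as data but not preserved, which is precisely what ``whose braiding has been forgotten'' is meant to capture.
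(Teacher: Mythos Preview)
Your proposal is correct and follows essentially the same approach as the paper: both directions of the equivalence are the assignments $(\cat M, \action) \mapsto F_\action$ and $(\cat M, G) \mapsto G^*(\ctimes_1)$, with well-definedness drawn from Theorem~\ref{th:classification-monoidal-act}. The paper's proof is terser---it only spells out the object-level correspondence and leaves morphisms implicit---whereas you package the argument via the slice-over-terminal principle and explicitly treat $1$- and $2$-cells; this is a cleaner presentation of the same content rather than a different route.
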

\begin{proof}
	Observe that the action $\eval$ of $[\cat C, \cat C]_{\Biact[\cat C]{\cat C}}$ translates back to the action $\ctimes_1$ of $\drinfeld(\cat C)$, which is simply monoidal product after forgetting the braiding:
	\begin{equation}
		(c, \upsilon) \ctimes_1 d := c \ctimes d.
	\end{equation}
	Running the proof of Lemma~\ref{lemma:center-charact} backwards, we thus get that the correspondence is given as follows.
	Given a monoidal functor $F : \cat M \to \drinfeld(\cat C)$, where $\cat M$ is braided, we get the $\cat M$-action
	\begin{equation}
		m \action^F c := F_1(m) \ctimes c,
	\end{equation}
	while given a monoidal $\cat M$-action $\action$ on $\cat C$, we get the monoidal functor $\cat M \to \drinfeld(\cat C)$ defined as
	\begin{equation}
		F_\action(m) := (m \action i, \chi^\action_{m,i}).
	\end{equation}
	That these are well-defined objects follows from the main theorem.
\end{proof}

\subsubsection{Classifying braided actions}
Now the same can be done for actions of symmetric monoidal categories on braided and symmetric monoidal categories but, again, the classifying object will be different.

\begin{definition}
\label{def:symm-center}
	Let $(\cat{C}, i, \ctimes, \beta)$ be a braided monoidal category.
	Its \emph{symmetric center} $\Sigma(\cat C)$ is the full subcategory of $\cat C$ determined by those objects ${c : \cat C}$ so that, for all ${d : \cat C}$, the following commutes:
	\begin{diagram}[sep=3ex]
		& {d \ctimes c} \\
		{c \ctimes d} && {c \ctimes d}
		\arrow["{\beta_{c,d}}", from=2-1, to=1-2]
		\arrow["{\beta_{d,c}}", from=1-2, to=2-3]
		\arrow[Rightarrow, no head, from=2-1, to=2-3]
	\end{diagram}
\end{definition}

\begin{theorem}[Braided actions classifier]
\label{th:classification-braided-act}
	Let $(\cat{C}, i, \ctimes, \beta)$ be a braided monoidal category.
	$\Sigma(\cat C)$ \emph{classifies braided actions on $\cat C$}, meaning $\Sigma(\cat C)$ acts braidely on $\cat C$ and every other braided action $\action$ of a symmetric monoidal category $\cat M$ factors uniquely through it:
	\begin{diagram}
		{\Sigma(\cat C) \times \cat C} & {\cat C} \\
		{\cat M \times \cat C}
		\arrow["\action"', from=2-1, to=1-2]
		\arrow["\ctimes", from=1-1, to=1-2]
		\arrow["{\exists! F_\action \times \cat C}", dashed, from=2-1, to=1-1]
	\end{diagram}
	In other words, $\ctimes$ is terminal in the category $\Actt_{\BrMonCat}^\cart(\cat C)$ of braided actions on $\cat C$.
\end{theorem}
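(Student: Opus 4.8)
The plan is to reduce to the monoidal case already settled in Theorem~\ref{th:classification-monoidal-act}. The key structural fact I would establish first is that $\Sigma(\cat C)$ is a symmetric monoidal category sitting fully faithfully inside $\drinfeld(\cat C)$: the assignment $c \mapsto (c, \beta_{c,-})$ defines a fully faithful braided monoidal functor $\cat C \hookrightarrow \drinfeld(\cat C)$ (its monoidality and braidedness being exactly the hexagon axioms for $\beta$, and its fullness and faithfulness holding because every morphism of $\cat C$ automatically satisfies Diagram~\eqref{diag:drinfeld-center-mors} by naturality of $\beta$), and $\Sigma(\cat C)$ is by definition the full subcategory of $\cat C$ on the objects $c$ for which $\beta_{c,-}$ is a symmetric half-braiding against every object of $\cat C$. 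Using the hexagons one checks $i \in \Sigma(\cat C)$ and that $\Sigma(\cat C)$ is closed under $\ctimes$, while the defining condition (taken with $d$ ranging in $\Sigma(\cat C)$) shows $\beta$ restricts to a symmetry there; thus $\Sigma(\cat C)$ is symmetric monoidal, it acts on $\cat C$ by $\ctimes$ with mixed interchanger built from the associators and the half-braidings $\beta_{a,-}$ of the acting objects, and the braided-actegory axiom of Definition~\ref{def:braided-monoidal-actegory} holds on the nose precisely because those objects are transparent. So $\Sigma(\cat C)$ acts braidely on $\cat C$, and this action is nothing but $\ctimes_1$ restricted along $\Sigma(\cat C) \hookrightarrow \drinfeld(\cat C)$.

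Next I would treat the universal property. Given a braided action $\action$ of a symmetric monoidal category $\cat M$ on $\cat C$, forgetting the braiding of $\cat M$ yields a monoidal action, so Theorem~\ref{th:classification-monoidal-act} supplies the unique monoidal factorization $F_\action : \cat M \to \drinfeld(\cat C)$ with $F_\action(m) = (m \action i, \chi^\action_{m,i})$. Two things remain: that $F_\action$ takes values in $\Sigma(\cat C) \subseteq \drinfeld(\cat C)$, and that it is braided. For the first, setting $c := i$ in the left-hand square of Diagram~\eqref{diag:braided-actegory-axioms} and using that braiding against the unit is the canonical coherence isomorphism identifies the half-braiding $\upsilon$ on $m \action i$ with $\beta_{m\action i,-}$; setting $d := i$ in the right-hand square, with the same coherence identifications, gives $\beta_{c,\,m\action i} = \upsilon_c^{-1}$ for every $c : \cat C$; combining the two yields $\beta_{c,\,m\action i} \comp \beta_{m\action i,\,c} = \id$ for all $c$, that is, $m \action i \in \Sigma(\cat C)$, so $F_\action(m)$ lies in the image of $\Sigma(\cat C)$. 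For the second, $\action$ being a \emph{braided} monoidal functor means it carries the braiding of $\cat M \times \cat C$ --- which on the $\cat M$-factor is the symmetry $\sigma$ --- to $\beta$; restricting to the $\cat M$-factor shows $F_\action$ carries $\sigma$ to the braiding of $\Sigma(\cat C)$, i.e.\ $F_\action$ is braided monoidal.

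Conversely, a braided monoidal functor $G : \cat M \to \Sigma(\cat C)$ from a symmetric $\cat M$ produces the braided $\cat M$-action $m \action^G c := G(m) \ctimes c$, whose mixed interchanger is assembled from the associators and $\beta_{G(m),-}$, and whose braided-actegory axiom holds because $G$ carries $\sigma$ to $\beta$ and $G(m)$ is transparent. These two assignments are mutually inverse up to coherent isomorphism, and any factorization of $\action$ through $\ctimes$ is forced, on evaluating at $i$, to be $m \mapsto m \action i$; hence the factorization is unique, which is the statement that $\ctimes$ is terminal in $\Actt_{\BrMonCat}^\cart(\cat C)$. The symmetric case is then immediate, since $\Sigma(\cat C) = \cat C$ as soon as $\cat C$ is symmetric, recovering the claim that braided $\cat M$-actions on a symmetric $\cat C$ correspond to braided functors $\cat M \to \cat C$.

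I expect the main obstacle to be the transparency step: the diagram chase through Diagram~\eqref{diag:braided-actegory-axioms} with one slot pinned to the unit, keeping the left- and right-unit coherences and the relations $\beta_{c,i} = \beta_{i,c}^{-1}$ straight, so as to conclude that the half-braiding on $m \action i$ is genuinely symmetric against \emph{all} of $\cat C$ and not merely against the objects in the image of $\action$. A secondary, more routine but still fiddly point is confirming that $\ctimes : \Sigma(\cat C) \times \cat C \to \cat C$ is a \emph{strong} braided monoidal functor: one must check that the interchanger built from the associators and $\beta$ is invertible (clear) and that the extra braided axiom reduces, via the hexagons and transparency of the acting objects, to a coherence identity.
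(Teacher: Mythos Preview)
Your proposal is correct and follows essentially the same route as the paper: both reduce to Theorem~\ref{th:classification-monoidal-act} and then verify that the braided-actegory axioms force the factorizing functor to land in $\Sigma(\cat C)$. The only cosmetic difference is that the paper packages the ``pin a slot to the unit'' computation you describe as a separate characterization (Lemma~\ref{lemma:symm-center-charact}, identifying $\Sigma(\cat C)$ with the subcategory $[\cat C,\cat C]^{\braided}_{\Biact[\cat C]{\cat C}}$ of the bilinear-endomorphism model of $\drinfeld(\cat C)$), and then simply observes that Diagram~\eqref{diag:braided-actegory-axioms} and Diagram~\eqref{diag:braided-bilinear} coincide; you instead work directly with the $(c,\upsilon)$ description and inline that lemma.
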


As before, in order to prove this we rely on the following characterization:

\begin{lemma}[Characterization of the symmetric center]
\label{lemma:symm-center-charact}
	The symmetric center of $(\cat{C}, i, \ctimes, \beta)$ is equivalent to the full subcategory of $\drinfeld(\cat C)$ spanned by those $\varphi$ for which the following diagrams commute:
	\begin{diagram}
	\label{diag:braided-bilinear}
		{\varphi(c) \ctimes d} & {\varphi(c \ctimes d)} &[-2ex]&[-2ex] {c \ctimes \varphi(d)} & {\varphi(c \ctimes d)} \\
		{d \ctimes \varphi(c)} & {\varphi(d \ctimes c)} && {\varphi(d) \ctimes c} & {\varphi(d \ctimes c)}
		\arrow["{\beta_{\varphi(c),d}}"', from=1-1, to=2-1]
		\arrow["{\varphi(\beta_{c,d})}", from=1-2, to=2-2]
		\arrow["{r_{c,d}}", from=1-1, to=1-2]
		\arrow["{\ell_{d,c}}"', from=2-1, to=2-2]
		\arrow["{\ell_{c,d}}", from=1-4, to=1-5]
		\arrow["{r_{d,c}}"', from=2-4, to=2-5]
		\arrow["{\beta_{c, \varphi(d)}}"', from=1-4, to=2-4]
		\arrow["{\varphi(\beta_{c,d})}", from=1-5, to=2-5]
	\end{diagram}
	We denote this subcategory by $[\cat C, \cat C]_{\Biact[\cat C]{\cat C}}^\braided$.
\end{lemma}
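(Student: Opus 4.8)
The plan is to prove this by locating both categories as full subcategories of the Drinfeld center and matching them on objects. Using Lemma~\ref{lemma:center-charact} to identify $\drinfeld(\cat C) \equi [\cat C, \cat C]_{\Biact[\cat C]{\cat C}}$, the subcategory $[\cat C, \cat C]^\braided_{\Biact[\cat C]{\cat C}}$ defined by Diagram~\eqref{diag:braided-bilinear} becomes a full (replete) subcategory of $\drinfeld(\cat C)$. The starting observation is that a braiding supplies \emph{two} canonical half-braidings on each object: for $c : \cat C$, both $d \mapsto \beta_{c,d}$ and $d \mapsto \beta^{-1}_{d,c}$ satisfy the coherence condition~\eqref{eq:upsilon-coherence} — this is precisely the content of the two hexagon axioms — so there are two functors $P_{\pm} \colon \cat C \to \drinfeld(\cat C)$ given on objects by $P_+(c) = (c, \beta_{c,-})$ and $P_-(c) = (c, \beta^{-1}_{-,c})$. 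I would first check these are fully faithful: composing either with the forgetful functor $\drinfeld(\cat C) \to \cat C$ yields the identity, which gives faithfulness, while the defining condition~\eqref{diag:drinfeld-center-mors} for a morphism between objects in the image of $P_\pm$ reduces to naturality of $\beta$ (in the first, resp. second, variable), which gives fullness. By Definition~\ref{def:symm-center}, $P_+(c)$ and $P_-(c)$ coincide exactly when $\beta_{c,d} = \beta^{-1}_{d,c}$ for all $d$, i.e. when $c \in \Sigma(\cat C)$; so $P_+$ restricts to a fully faithful functor $\Sigma(\cat C) \to \drinfeld(\cat C)$, and it remains to show its image is exactly $[\cat C, \cat C]^\braided_{\Biact[\cat C]{\cat C}}$.

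This is the heart of the argument. Unwinding the equivalence of Lemma~\ref{lemma:center-charact}, an object $(c, \upsilon)$ of $\drinfeld(\cat C)$ corresponds to the $\cat C$-bilinear endofunctor $\varphi = c \ctimes (-)$ whose left lineator $\ell$ is induced by the associator and whose right lineator is essentially $\upsilon^{-1}$ acting in the leftmost factor, $r_{a,b} \colon a \ctimes (c \ctimes b) \to c \ctimes (a \ctimes b)$ equal to $\upsilon_a^{-1} \ctimes b$ up to associators. I would substitute these formulas into the two squares of Diagram~\eqref{diag:braided-bilinear}, expand $\beta_{\varphi(c'),d} = \beta_{c \ctimes c', d}$ and $\beta_{c', \varphi(d)} = \beta_{c', c \ctimes d}$ via the relevant hexagon axiom, and cancel the isomorphism $c \ctimes \beta_{c',d}$ that then appears on both sides. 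The first square collapses to the single condition $\upsilon_d = \beta_{c,d}$ for all $d$ (taking $c'$ to be the monoidal unit extracts this pointwise identity from the general one, since $-\ctimes i$ is faithful), and the second collapses to $\upsilon_d = \beta^{-1}_{d,c}$ for all $d$. Hence $\varphi$ lies in $[\cat C, \cat C]^\braided_{\Biact[\cat C]{\cat C}}$ if and only if $\upsilon = \beta_{c,-} = \beta^{-1}_{-,c}$, equivalently $\varphi = P_+(c)$ with $c \in \Sigma(\cat C)$.

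Assembling the pieces, $[\cat C, \cat C]^\braided_{\Biact[\cat C]{\cat C}}$ is precisely the full subcategory of $\drinfeld(\cat C)$ on the objects $\{\,P_+(c) \mid c \in \Sigma(\cat C)\,\}$, so $P_+$ restricts to a functor $\Sigma(\cat C) \to [\cat C, \cat C]^\braided_{\Biact[\cat C]{\cat C}}$ that is fully faithful and bijective on objects, hence an equivalence (in fact an isomorphism) of categories, as claimed. The step I expect to be the main obstacle is the middle computation: keeping the associators and unitors in the right places while chasing the two hexagon identities. To streamline it I would invoke MacLane's coherence theorem to strictify the monoidal structure of $(\cat C, i, \ctimes)$ — leaving the braiding non-strict, since it cannot be strictified — and I would also need to reconcile the placement of $\ell$ versus $r$ in the two halves of Diagram~\eqref{diag:braided-bilinear} with the conventions fixed in Lemma~\ref{lemma:center-charact}.
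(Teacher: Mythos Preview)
Your proof is correct and follows essentially the same strategy as the paper's: both pass through Lemma~\ref{lemma:center-charact}, strictify the monoidal structure (leaving $\beta$ alone), specialise the two squares of Diagram~\eqref{diag:braided-bilinear} at the unit, and read off that the half-braiding $\upsilon$ is forced to equal $\beta_{c,-}$ from one square and $\beta^{-1}_{-,c}$ from the other, whence $c \in \Sigma(\cat C)$. Your framing via the two sections $P_\pm : \cat C \to \drinfeld(\cat C)$ and the explicit check of full faithfulness is more detailed than the paper's very terse argument, but the core computation and the key trick (evaluate at $j$) are identical.
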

\begin{proof}
	The proof amounts to checking that bilinear endomorphisms which satisfy~\eqref{diag:braided-bilinear} are given, up to isomorphism, by tensoring with a symmetric object.
	Assume $\cat C$ is strict monoidal, and let $(\varphi, r)$ be a bilinear endomorphism strict on the left (we use the same trick employed in Lemma~\ref{lemma:center-charact}).
	Drawing~\eqref{diag:braided-bilinear} for ${c=j}$ proves that $\beta_{d,\varphi(j)}$ and $\beta_{\varphi(j),d}$ are both identities.
	In fact, when $\cat C$ is strict, $r_{j,d} = 1$ as observed in Remark~\ref{rmk:coherence-as-inductive-def}.
	Vice versa, given a symmetric object, we readily prove tensoring with it gives a functor satisfying~\eqref{diag:braided-bilinear}.
\end{proof}

\begin{proof}[of Theorem~\ref{th:classification-braided-act}]
	By Lemma~\ref{lemma:symm-center-charact}, we can replace $\Sigma(\cat C)$ with $[\cat C, \cat C]_{\Biact[\cat C]{\cat C}}^\braided$.
	With this setup, everything works very much like the proof of Theorem~\ref{th:classification-monoidal-act}. One only has to be sure that the universal morphism $F_\action$ actually lands in $[\cat C, \cat C]_{\Biact[\cat C]{\cat C}}^\braided$ when $\action$ is braided, and vice versa.
	But Diagram~\eqref{diag:braided-actegory-axioms} (sealing $\action$ braidedness) looks exactly like Diagram~\eqref{diag:braided-bilinear}, concluding the proof.
\end{proof}

\begin{corollary}
	There is an equivalence
	\begin{equation}
		\SymMonCat \!\underset{\BrMonCat}{\downarrow}\! \Sigma(\cat C) \equi \Actt_{\BrMonCat}^\cart(\cat C)
	\end{equation}
	where the 2-category on the left amounts to the subcategory of $\BrMonCat/\Sigma(\cat C)$ given by symmetric monoidal categories.
\end{corollary}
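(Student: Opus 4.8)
The plan is to upgrade Theorem~\ref{th:classification-braided-act} --- which asserts that $\ctimes$ is (pseudo)terminal in $\Actt_{\BrMonCat}^\cart(\cat C)$ --- into a genuine equivalence of $2$-categories, exactly mirroring the passage from Theorem~\ref{th:classification-monoidal-act} to the corollary for $\drinfeld(\cat C)$. First I would recall that, by Lemma~\ref{lemma:symm-center-charact}, $\Sigma(\cat C)$ is equivalent to the full subcategory $[\cat C, \cat C]_{\Biact[\cat C]{\cat C}}^\braided$ of bilinear endomorphisms of the canonical self-biactegory $(\cat C, \ctimes, \ctimes)$ satisfying~\eqref{diag:braided-bilinear}, and observe that under this equivalence the evaluation action of $[\cat C, \cat C]_{\Biact[\cat C]{\cat C}}^\braided$ on $\cat C$ translates to the action $c \ctimes d := c \ctimes d$ of $\Sigma(\cat C)$ on $\cat C$ by tensoring on the left. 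This action is well-defined and braided precisely because objects of $\Sigma(\cat C)$ are symmetric, so the two ways of braiding $c$ past another object agree. Also, since braided monoidal functors automatically preserve symmetry (Remark~\ref{rmk:symmetric-mnoidal-actegory}), $\SymMonCat$ is the full sub-$2$-category of $\BrMonCat$ on symmetric objects, and $\SymMonCat \!\underset{\BrMonCat}{\downarrow}\! \Sigma(\cat C)$ is the full sub-$2$-category of $\BrMonCat/\Sigma(\cat C)$ on those $F : \cat M \to \Sigma(\cat C)$ with $\cat M$ symmetric.

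Next I would exhibit the two mutually pseudo-inverse passages, reading Theorem~\ref{th:classification-braided-act} ``backwards''. In one direction, a braided monoidal functor $F : \cat M \to \Sigma(\cat C)$ with $\cat M$ symmetric yields the action $m \action^F c := F(m) \ctimes c$; its strengths $\kappa^{\action^F}$ and $\chi^{\action^F}$ from~\eqref{eq:strengths} are assembled from the associators and unitors of $\ctimes$ together with the canonical isomorphism $F(m) \ctimes {-} \isotwoto {-} \ctimes F(m)$ available because $F(m) \in \Sigma(\cat C)$, and the braided-actegory axiom~\eqref{diag:braided-actegory-axioms} follows from $F$ intertwining the symmetry $\sigma$ on $\cat M$ with the braiding $\beta$ on $\cat C$, combined with~\eqref{diag:braided-bilinear}. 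In the other direction, a braided $\cat M$-action $\action$ on $\cat C$ is sent to its universal classifying functor $F_\action : \cat M \to \Sigma(\cat C)$, $F_\action(m) := m \action i$, which Theorem~\ref{th:classification-braided-act} already guarantees is braided monoidal and lands in $\Sigma(\cat C)$. The two round trips are witnessed by the coherent isomorphisms $m \action c \iso (m \action i) \ctimes c$ --- the computation already carried out in the preamble to Section~\ref{subsec:classifying} --- and $F(m) \ctimes i \iso F(m)$, the right unitor of $\cat C$.

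Finally I would check that these passages extend to the $2$-categorical data: a braided monoidal natural transformation $\alpha : F \twoto G$ induces the $\cat M$-linear transformation $\alpha \ctimes \cat C : \action^F \twoto \action^G$, whose linearity square~\eqref{diag:lin-transf-coherence} reduces to naturality of $\ctimes$ and monoidality of $\alpha$, and conversely $\cat M$-linear transformations between classifying functors are recovered by evaluating at $i$; this, together with the essential surjectivity and faithfulness on $1$-cells supplied by the two passages and the uniqueness in Theorem~\ref{th:classification-braided-act}, yields the claimed equivalence. I expect the main obstacle to be purely bookkeeping rather than conceptual: one must track carefully how the ``braided'' restriction on both sides --- symmetric $\cat M$ on the left, the superscript $\braided$ of Lemma~\ref{lemma:symm-center-charact} on the right --- match up, and verify $2$-functoriality of the correspondence. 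Since every ingredient is a mild specialization of the already-proven $\drinfeld(\cat C)$ case, no genuinely new difficulty should arise beyond confirming that the extra coherence~\eqref{diag:braided-bilinear} is exactly what cuts braided actions out of monoidal ones.
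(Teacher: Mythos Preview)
Your proposal is correct and takes essentially the same approach as the paper: the paper's own proof merely spells out the two inverse passages on objects, namely $m \action^F c := F(m) \ctimes c$ and $F_\action(m) := m \action i$, and defers all well-definedness to Theorem~\ref{th:classification-braided-act}. Your version is considerably more detailed---working out the round-trip isomorphisms and the extension to 1- and 2-cells explicitly---but the underlying correspondence and the reliance on the classification theorem are identical.
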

\begin{proof}
	To spell explicitly the correspondence on objects, given a braided functor $F : \cat M \to \Sigma(\cat C)$, where $\cat M$ is symmetric, we get the $\cat M$-action
	\begin{equation}
		m \action^F c := F(m) \ctimes c,
	\end{equation}
	while given a braided $\cat M$-action $\action$ on $\cat C$, we get the braided functor $\cat M \to \Sigma(\cat C)$ defined as
	\begin{equation}
		F_\action(m) := m \action i.
	\end{equation}
	The well-definedness of these objects follows from the main theorem.
\end{proof}

Now suppose $\cat C$ is symmetric. In that case, $\Sigma(\cat C) = \cat C$, therefore we automatically have the following:

\begin{corollary}[Symmetric actions classifier]
\label{cor:classification-symmetric-act}
	Let $(\cat{C}, i, \ctimes, \sigma)$ be a symmetric monoidal category.
	$\cat C$ \emph{classifies symmetric actions on $\cat C$}, meaning $\cat C$ acts symmetrically on $\cat C$ and every other symmetric action $\action$ of a symmetric monoidal category $\cat M$ factors uniquely through it:
	\begin{diagram}
		{\cat C \times \cat C} & {\cat C} \\
		{\cat M \times \cat C}
		\arrow["\action"', from=2-1, to=1-2]
		\arrow["\ctimes", from=1-1, to=1-2]
		\arrow["{\exists! F_\action \times \cat C}", dashed, from=2-1, to=1-1]
	\end{diagram}
	In other words, $\ctimes$ is terminal in the category $\Actt_{\SymMonCat}^\cart(\cat C)$ of braided actions on $\cat C$, thus inducing an equivalence
	\begin{equation}
		\SymMonCat/ \cat C \equi \Actt_{\SymMonCat}^\cart(\cat C).
	\end{equation}
\end{corollary}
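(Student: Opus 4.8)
The plan is to deduce this corollary directly from Theorem~\ref{th:classification-braided-act} by specialising to the case where $\cat C$ is symmetric, which reduces the whole statement to two small observations.

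First I would observe that when $\cat C$ carries a symmetry $\sigma$, every object of $\cat C$ lies in its symmetric center: the commuting triangle in Definition~\ref{def:symm-center} asks precisely that $\sigma_{d,c} \comp \sigma_{c,d} = \id_{c \ctimes d}$, which holds for all $c, d$ by the very definition of a symmetry. Hence $\Sigma(\cat C) = \cat C$ as braided monoidal categories, the inclusion $\Sigma(\cat C) \hookrightarrow \cat C$ being the identity, and in particular the braided self-action $\ctimes : \Sigma(\cat C) \times \cat C \to \cat C$ featuring in Theorem~\ref{th:classification-braided-act} is literally the canonical self-action of Example~\ref{ex:moncat-self-actions}.

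Second I would note that, once $\cat C$ is symmetric, the notions of braided action and of symmetric action of a symmetric monoidal category $\cat M$ on $\cat C$ coincide. Indeed, by Remark~\ref{rmk:symmetric-mnoidal-actegory} a symmetric monoidal actegory is just a braided monoidal actegory whose braiding $\beta$ on $\cat C$ happens to be symmetric; here $\beta = \sigma$ is symmetric by hypothesis, and the acting category is symmetric in both cases. This identifies the cartesian fibres $\Actt_{\BrMonCat}^\cart(\cat C) = \Actt_{\SymMonCat}^\cart(\cat C)$, and likewise the relevant slices, since a braided monoidal functor out of a symmetric $\cat M$ is automatically symmetric (symmetry being a preserved property). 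Combining these observations with Theorem~\ref{th:classification-braided-act} immediately yields that $\ctimes$ is terminal in $\Actt_{\SymMonCat}^\cart(\cat C)$, the unique factorisation of an arbitrary symmetric $\cat M$-action $\action$ through it being supplied, exactly as in the proof of that theorem, by the functor $F_\action(m) := m \action i$ — which now lands automatically in $\cat C = \Sigma(\cat C)$.

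The final equivalence $\SymMonCat / \cat C \equi \Actt_{\SymMonCat}^\cart(\cat C)$ then follows by rerunning the argument of the corollaries to Theorems~\ref{th:classification-monoidal-act} and~\ref{th:classification-braided-act}: terminality of $\ctimes$ in the cartesian fibre, together with the indexed structure $\Actt_{\SymMonCat}$, presents that fibre as the slice of $\SymMonCat$ over the classifying object $\Sigma(\cat C) = \cat C$, the correspondence sending a braided (equivalently symmetric) functor $F : \cat M \to \cat C$ to the action $m \action^F c := F(m) \ctimes c$ and an action $\action$ back to $F_\action(m) := m \action i$. I do not expect a real obstacle here; the one point that deserves care is the bookkeeping in the second step — verifying that \textquotedblleft braided action on a symmetric $\cat C$\textquotedblright\ and \textquotedblleft symmetric action on $\cat C$\textquotedblright\ are literally the same data, rather than merely equivalent notions, so that Theorem~\ref{th:classification-braided-act} transports verbatim without any further coherence check.
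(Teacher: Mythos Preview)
Your proposal is correct and follows the same approach as the paper: the paper's entire argument is the single observation, stated just before the corollary, that when $\cat C$ is symmetric one has $\Sigma(\cat C) = \cat C$, so that Theorem~\ref{th:classification-braided-act} specialises directly. Your write-up simply spells out in more detail the two bookkeeping points (why $\Sigma(\cat C) = \cat C$ and why braided actions on a symmetric $\cat C$ are the same as symmetric actions) that the paper leaves implicit.
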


\begin{example}
\label{ex:prob-on-msbl}
	Let $(\Msbl, \action)$ be the $\Prob$-actegory of Example~\ref{ex:stochastic-maps}.
	This is a monoidal actegory, since the action of a probability space $(\Omega, \mathcal F, \mathbb P)$ corresponds to the forgetful functor $\Prob \to \Msbl$ which is evidently braided monoidal, like $(\Msbl,1,\times)$.
\end{example}

\begin{example}
	In Equation~\eqref{eq:cat-actions-on-Mact}, we have exhibited an action $\lhd$ of $\Cat$ on\break ${(\Act{\cat M}, \cat M, \tensor)}$, given by $\cat M[-] \tensor {=}$.
	We can conclude this action is symmetric by noting the functor $\cat M[-] \tensor \cat M \iso \cat M[-]$ is indeed braided monoidal. In fact, we proved in Proposition~\ref{prop:free-act-is-monoidal} that $\cat M[\cat C] \tensor \cat M[\cat D] \equi \cat M[\cat C \times \cat D]$ and this makes $\cat M[-]$ evidently monoidal and braided.
\end{example}

\begin{remark}
	In Remark~\ref{rmk:prod-coprod-cartesian-subcats}, we observed how the classification result of Example~\ref{ex:curried-action-terminal} could be used to construct coproducts and products in $\Actt^\cart(\cat C)$.
	We observe that totally analogous definitions can be repeated for monoidal, braided and symmetric actions.
	This has repercussions on the theory of hybrid composition of optics by providing us with guarantees on the monoidal structure of the resulting category of hybrid optics (remember optics generated by monoidal actions are monoidal, as shown in Remark~\ref{rmk:mon-of-optics}).
\end{remark}


	\newpage
	\section{Conclusion}
\label{sec:conclusion}
In the past pages, we have been  on a journey through `actegory theory'.
We charted a territory sparsely surveyed before, and managed to uncover some new corners of this vast subject.
Specifically, we contemplated the role of monoidal structure in the different ways it can present itself, and described the links between these phenomena and those in the theory of optics.

Large swaths of actegory theory didn't make it in our maps.
Perhaps the largest and most relevant to our eyes is Tambara theory \cite{tambara, pastrostreet,roman2020profunctor} and the study of the proarrow equipment of actegories and Tambara modules.

There are also dark corners, still to be visited, like the exact nature of the relation between braiding and linear structures.
It seems it has never been contemplated the idea of a structure $\rho_{c,m} : c \actionn m \isoto m \action c$ on a biactegory $(\cat C, \action, \actionn)$, linking the left and right actions. It is, in some sense, the actegorical equivalent of a braiding, and in fact a braiding on the scalars induces one canonically (we have seen this in Proposition~\ref{prop:mirroring-biact}). Indeed, a `reflection structure' such as $\rho$ seems to arise anytime there is a strong monoidal functor $\cat M \to \cat M^\rev$.
A shadow of these structures can be observed in Lemma~\ref{lemma:symm-center-charact}, where we characterized the symmetric center of a monoidal category in terms of endomorphisms of $(\cat C, \ctimes, \ctimes)$ that somehow commute with $\beta$. In terms of reflection structures, this constitutes a natural notion of morphism.

In conclusion, let us mention a major perspective we didn't cover here---that of actegories as presheaves.
The data of a left $\cat M$-action, in fact, can be encoded `externally' as a functor $\twocat B\cat M \to \Cat$, whereas a right actegory would be given by $\twocat B\cat M^\op \to \Cat$ since $\twocat B(\cat M^\rev) = \twocat B\cat M^\op$.
The upshot is, actegories can be seen as very special indexed categories, opening up the extension of all the theory we sketched here to actions of categories, bicategories and ultimately double categories \cite{bakovic2008bigroupoid}.
This extension seems to be warranted if we aim to extend the theory of optics to `dependent' or `indexed optics', i.e.~if we desire to deploy dependent types in the current constructions.
The idea that actions of bicategories are instrumental to this objective has been advanced in \cite{braithwaite2021fibre} and then later in \cite{milewski2022compound}.

	\bibliographystyle{alpha}
	\bibliography{bibliography}

	\appendix
	\section{Definitions}
\label{appendix:defs}

\subsubsection*{\ref*{def:pseudomonad}. Definition.}
\defpseudomonad*%
satisfying the following coherence laws:
\begin{enumerate}[wide, labelwidth=!, labelindent=0pt, label=\Roman*)]
	\item Coherence of the associator:
	\begin{diagram}[sep=3.5ex]
	\label{diag:pseudomonad-cube-law}
		TTTT && TTT &&[-5ex]& TTTT && TTT \\
		&&& TT &&& TTT && TT \\
		TTT && TT &&& TTT \\
		& TT && T &&& TT && T
		\arrow["Tm"{description}, Rightarrow, from=3-1, to=3-3]
		\arrow["{m T}"{description}, Rightarrow, from=1-3, to=3-3]
		\arrow["{m T}"{description}, Rightarrow, from=3-1, to=4-2]
		\arrow["Tm"{description, pos=0.4}, Rightarrow, from=1-3, to=2-4]
		\arrow["\alpha"{description}, triple, curve={height=-6pt}, from=4-2, to=3-3]
		\arrow["\alpha"{description, pos=0.4}, triple, curve={height=6pt}, from=3-3, to=2-4]
		\arrow["m"{description}, Rightarrow, from=4-2, to=4-4]
		\arrow["m"{description}, Rightarrow, from=3-3, to=4-4]
		\arrow[""{name=0, anchor=center, inner sep=0}, "m"{description}, Rightarrow, from=2-4, to=4-4]
		\arrow["{m TT}"{description, pos=0.4}, Rightarrow, from=1-1, to=3-1]
		\arrow["TTm"{description}, Rightarrow, from=1-1, to=1-3]
		\arrow["{Tm T}"{description}, Rightarrow, from=1-6, to=2-7]
		\arrow["{m T}"{description, pos=0.3}, Rightarrow, from=2-7, to=4-7]
		\arrow["{\alpha T}"{description}, triple, curve={height=-6pt}, from=2-7, to=3-6]
		\arrow["Tm"{description, pos=0.3}, Rightarrow, from=2-7, to=2-9]
		\arrow["T\alpha"{description}, triple, curve={height=-6pt}, from=2-7, to=1-8]
		\arrow["m"{description}, Rightarrow, from=4-7, to=4-9]
		\arrow["m"{description}, Rightarrow, from=2-9, to=4-9]
		\arrow["TTm"{description}, Rightarrow, from=1-6, to=1-8]
		\arrow["Tm"{description}, Rightarrow, from=1-8, to=2-9]
		\arrow["{m TT}"{description}, Rightarrow, from=1-6, to=3-6]
		\arrow["{m T}"{description}, Rightarrow, from=3-6, to=4-7]
		\arrow["\alpha"{description}, triple, curve={height=6pt}, from=4-7, to=2-9]
		\arrow[shorten <=13pt, shorten >= 3pt, Rightarrow, no head, from=0, to=3-6]
	\end{diagram}
	\item Coherence of the unitors:
	\begin{diagram}[sep=4.5ex]
	\label{diag:pseudomonad-cone-law}
		TT \\
		& TTT && TT && TT \\
		\\
		& TT && T && T
		\arrow["m"{description}, Rightarrow, from=4-2, to=4-4]
		\arrow[""{name=0, anchor=center, inner sep=0}, "m"{description}, Rightarrow, from=2-4, to=4-4]
		\arrow["\alpha"{description}, triple, from=4-2, to=2-4]
		\arrow["{T m}"{description}, Rightarrow, from=2-2, to=2-4]
		\arrow[""{name=1, anchor=center, inner sep=0}, curve={height=18pt}, Rightarrow, no head, from=1-1, to=4-2]
		\arrow["{m T}"{description}, Rightarrow, from=2-2, to=4-2]
		\arrow["{Tj T}"{description}, Rightarrow, from=1-1, to=2-2]
		\arrow[""{name=2, anchor=center, inner sep=0}, curve={height=18pt}, Rightarrow, no head, from=2-4, to=1-1]
		\arrow[""{name=3, anchor=center, inner sep=0}, "m"{description}, Rightarrow, from=2-6, to=4-6]
		\arrow["{\rho T}"{pos=0.6}', triple, shorten <=4pt, shorten >=2pt, to=2-2, from=1]
		\arrow["T\lambda"{pos=0.6}, triple, shorten <=4pt, shorten >=2pt, to=2-2, from=2]
		\arrow[shorten <=28pt, shorten >=28pt, Rightarrow, no head, from=0, to=3]
	\end{diagram}
\end{enumerate}

\begin{proposition}
\label{prop:m-times-is-pseudomonad}
	The endofunctor $\cat M \times - : \Cat \to \Cat$ admits a pseudomonad structure given by $(j \times -, \mtimes \times -, \lambda \times -, \rho \times -, \alpha \times -)$.
\end{proposition}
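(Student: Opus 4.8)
The plan is to transport the entire monoidal structure of $\cat M$ along the operation $\cat A \mapsto \cat A \times (-)$, which is itself (strongly) monoidal. First I would record that $T := \cat M \times (-)$ is an honest strict $2$-functor $\Cat \to \Cat$: it sends a category, functor or natural transformation to its cartesian product with the corresponding fixed piece of data of $\cat M$, and these product operations commute on the nose with all composites and preserve identities. Next I would exhibit the pseudomonad data by whiskering: the unit is $j \times (-) : 1 \twoto T$, the multiplication is $\mtimes \times (-) : TT \twoto T$, and the modifications are $\lambda \times (-)$, $\rho \times (-)$ and $\alpha \times (-)$, obtained from the unitors and associator of $\cat M$ in the last slot. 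A routine check of sources and targets (for instance that $\mtimes \times (-)$ really goes $TT = \cat M \times \cat M \times (-) \twoto \cat M \times (-) = T$) confirms that these fit the shapes demanded by Definition~\ref{def:pseudomonad}.

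The core of the argument is then the verification of the two coherence laws, namely the cube law~\eqref{diag:pseudomonad-cube-law} and the cone law~\eqref{diag:pseudomonad-cone-law}. The observation that makes this painless is that, under the identifications above, every $2$-cell occurring in those pasting diagrams has the form `(a $2$-cell assembled from the monoidal data of $\cat M$) $\times\,\cat C$'; since whiskering by $\cat C$ is $2$-functorial, commutativity of a pasting diagram of this shape is entailed by commutativity of the corresponding diagram built purely from $\cat M$'s data. I would then point out that, so reduced, the cube law is exactly MacLane's pentagon axiom for $\cat M$ and the cone law is exactly the triangle axiom, both of which hold by hypothesis.

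There is also a slicker, essentially free route I would mention: $\cat A \mapsto \cat A \times (-)$ underlies a strong monoidal pseudofunctor $(\Cat, 1, \times) \to ([\Cat,\Cat], \id, \circ)$, with monoidal constraints the canonical isomorphisms $(\cat A \times \cat B)\times(-) \cong \cat A \times (\cat B \times (-))$ and $1 \times (-) \cong \id$, its coherence being immediate from that of the cartesian product in $\Cat$; and strong monoidal pseudofunctors carry pseudomonoids to pseudomonoids. Since pseudomonoids in $(\Cat,1,\times)$ are monoidal categories while pseudomonoids in $([\Cat,\Cat],\id,\circ)$ are pseudomonads \cite[§3]{marmolejo1999distributive}, applying this functor to $\cat M$ produces the desired pseudomonad, and unwinding the construction recovers precisely the data $(j\times(-),\ \mtimes\times(-),\ \lambda\times(-),\ \rho\times(-),\ \alpha\times(-))$. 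The only real obstacle in either approach is bookkeeping: in the hands-on version, matching the faces of the cube~\eqref{diag:pseudomonad-cube-law} and the subdiagrams of~\eqref{diag:pseudomonad-cone-law} against the edges of the pentagon and the triangle while tracking the whiskerings and interchanges involved; in the conceptual version, checking that $\cat A \mapsto \cat A \times (-)$ is genuinely strong monoidal with coherent constraints, which again reduces to coherence for $\times$ in $\Cat$. No new computation is needed, which is why the main text states the proposition without proof.
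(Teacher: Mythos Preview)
Your main argument is essentially the paper's: both reduce the cube law~\eqref{diag:pseudomonad-cube-law} to MacLane's pentagon and the cone law~\eqref{diag:pseudomonad-cone-law} to the triangle identity, noting that every $2$-cell in those pastings is of the form (monoidal coherence cell of $\cat M$)${}\times\cat C$. You are slightly more explicit than the paper about \emph{why} the reduction works (the $2$-functoriality of $-\times\cat C$), and you add a second, conceptual route---transporting the pseudomonoid $\cat M$ along the strong monoidal pseudofunctor $\cat A \mapsto \cat A \times (-) : (\Cat,1,\times) \to ([\Cat,\Cat],\id,\circ)$---which the paper does not mention; this alternative is a clean way to bypass the cube/cone bookkeeping entirely. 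One small inaccuracy: the paper \emph{does} supply a proof (in Appendix~\ref{appendix:defs}), contrary to your closing remark.
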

\begin{proof}
	Notice Equation~\eqref{diag:pseudomonad-cube-law} depicts two halves of a cube.
	The fillings of the faces allow to map between the six different ways to move from the top left vertex to the bottom right one.
	The two sides of the equation correspond to the two ways to map from the lower outer edge of the cube to the upper outer edge.
	Notice these maps are themselves obtained by applying the modifications $A$ in succession.
	There are exactly five such modifications appearing in the axiom, thus forming a pentagon.
	Such a pentagon has a composite of length two, given by the left hand side of~\eqref{diag:pseudomonad-cube-law}, and a composite of length three, given by the righ hand side.
	Therefore asking~\eqref{diag:pseudomonad-cube-law} to hold for $\cat M \times -$ is equivalent to the pentagonal coherence of the associator $\alpha$ of $\cat M$ \cite[Equation 1.2.5]{johnson2021}.

	A similar reasoning applies to the Equation~\eqref{diag:pseudomonad-cone-law}, which is equivalent to the triangular coherence involving the unitors $\lambda$, $\rho$ and the associator $\alpha$ \cite[Equation 1.2.4]{johnson2021}.
\end{proof}

\subsubsection*{\ref*{def:pseudoalgebra}. Definition.}
\defpseudoalgebra*%
satisfying the following coherence axioms:
\begin{enumerate}[wide, labelwidth=!, labelindent=0pt, label=\Roman*)]
	\item Compatibility with the unit:
	\begin{diagram}[sep=4ex]
		TTTx && TTx &&[-5ex] &[-5ex] TTTx && TTx \\
		& TTx && Tx &&&&& Tx \\
		TTx &&&& {=} & TTx && Tx \\
		& Tx && x &&& Tx && x
		\arrow["TTt", from=1-1, to=1-3]
		\arrow[""{name=0, anchor=center, inner sep=0}, "{Tm_x}"{description}, from=1-1, to=2-2]
		\arrow["Tt"', from=2-2, to=2-4]
		\arrow[""{name=1, anchor=center, inner sep=0}, "Tt", from=1-3, to=2-4]
		\arrow[""{name=2, anchor=center, inner sep=0}, "{m_{Tx}}"', from=1-1, to=3-1]
		\arrow["{m_x}"', from=3-1, to=4-2]
		\arrow[""{name=3, anchor=center, inner sep=0}, "{m_x}"{description}, from=2-2, to=4-2]
		\arrow["t", from=4-2, to=4-4]
		\arrow[""{name=4, anchor=center, inner sep=0}, "t", from=2-4, to=4-4]
		\arrow[""{name=5, anchor=center, inner sep=0}, "{m_{Tx}}"', from=1-6, to=3-6]
		\arrow["TTt", from=1-6, to=1-8]
		\arrow[""{name=6, anchor=center, inner sep=0}, "{m_x}"', from=3-6, to=4-7]
		\arrow["t"', from=4-7, to=4-9]
		\arrow["Tt", from=1-8, to=2-9]
		\arrow[""{name=7, anchor=center, inner sep=0}, "t", from=2-9, to=4-9]
		\arrow["Tt", from=3-6, to=3-8]
		\arrow[""{name=8, anchor=center, inner sep=0}, "{m_X}"{description}, from=1-8, to=3-8]
		\arrow[""{name=9, anchor=center, inner sep=0}, "t", from=3-8, to=4-9]
		\arrow["{T\mu_x}", shorten <=13pt, shorten >=13pt, Rightarrow, from=1, to=0]
		\arrow[shorten <=7pt, shorten >=7pt, Rightarrow, no head, from=3, to=2]
		\arrow["{\mu_x}"', shorten <=13pt, shorten >=13pt, Rightarrow, from=4, to=3]
		\arrow[shorten <=13pt, shorten >=13pt, Rightarrow, no head, from=8, to=5]
		\arrow["{\mu_x}"', shorten <=13pt, shorten >=13pt, Rightarrow, from=9, to=6]
		\arrow["{\mu_x}"'{pos=0.6}, shorten <=7pt, shorten >=7pt, Rightarrow, from=7, to=8]
	\end{diagram}
	\item Compatibility with the multiplication:
	\begin{diagram}[sep=4ex]
		&& Tx &&&& TTx \\
		Tx & TTx && x & {=} & Tx && Tx & x \\
		&& Tx &&&& TTx
		\arrow["{Tj_x}", from=2-1, to=2-2]
		\arrow["Tt", from=2-2, to=1-3]
		\arrow["{m_x}"', from=2-2, to=3-3]
		\arrow["t", from=1-3, to=2-4]
		\arrow["t"', from=3-3, to=2-4]
		\arrow["{\mu_x}", shorten <=6pt, shorten >=6pt, Rightarrow, from=1-3, to=3-3]
		\arrow["t", from=2-8, to=2-9]
		\arrow["{Tj_x}", from=2-6, to=1-7]
		\arrow["{Tj_x}"', from=2-6, to=3-7]
		\arrow["Tt", from=1-7, to=2-8]
		\arrow["{m_x}"', from=3-7, to=2-8]
		\arrow[""{name=0, anchor=center, inner sep=0}, from=2-6, to=2-8]
		\arrow["{T\eta_x}", shorten >=3pt, Rightarrow, from=1-7, to=0]
		\arrow["{\rho_x}", shorten <=3pt, Rightarrow, from=0, to=3-7]
	\end{diagram}
\end{enumerate}

\subsubsection*{\ref*{def:morphism-of-pseudoalgebras}. Definition.}
\deflaxmorphism*%
obeying the following coherence laws:
\begin{enumerate}[wide, labelwidth=!, labelindent=0pt, label=\Roman*)]
	\item Compatibility with the unitor:
	\begin{diagram}[sep=5.5ex]
		x & y &&&& x \\
		Tx & Ty &&&& Tx \\
		x & y &&&& x & y
		\arrow["f", from=1-1, to=1-2]
		\arrow["{j_y}", from=1-2, to=2-2]
		\arrow["{j_x}"', from=1-1, to=2-1]
		\arrow["Tf", from=2-1, to=2-2]
		\arrow["{t'}", from=2-2, to=3-2]
		\arrow["t"', from=2-1, to=3-1]
		\arrow[""{name=0, anchor=center, inner sep=0}, curve={height=-50pt}, Rightarrow, no head, from=1-2, to=3-2]
		\arrow["f"', from=3-1, to=3-2]
		\arrow["\ell"', shift left=1, shorten <=4pt, shorten >=6pt, Rightarrow, from=2-2, to=3-1]
		\arrow[""{name=1, anchor=center, inner sep=0}, curve={height=50pt}, Rightarrow, no head, from=1-6, to=3-6]
		\arrow["{j_x}"', from=1-6, to=2-6]
		\arrow["t"', from=2-6, to=3-6]
		\arrow["f"', from=3-6, to=3-7]
		\arrow["{\eta_y}", shorten <=4pt, Rightarrow, from=0, to=2-2]
		\arrow["{\eta_x}", shorten <=4pt, Rightarrow, from=1, to=2-6]
		\arrow[shorten <=23pt, shorten >=23pt, Rightarrow, no head, from=0, to=1]
	\end{diagram}
	\item Compatibility with the multiplicator:
	\begin{diagram}[sep=4ex]
		TTx && TTy &&& TTx && TTy & {} \\
		& Tx && Ty &&&&& Ty \\
		Tx &&&&& Tx && Ty \\
		& x && y &&& x && y
		\arrow["{m_x}"', from=1-1, to=3-1]
		\arrow["TTf", from=1-1, to=1-3]
		\arrow["Tt"', from=1-1, to=2-2]
		\arrow["t", from=3-1, to=4-2]
		\arrow["t"', from=2-2, to=4-2]
		\arrow["Tt'", from=1-3, to=2-4]
		\arrow["Tf", from=2-2, to=2-4]
		\arrow["t'", from=2-4, to=4-4]
		\arrow["f", from=4-2, to=4-4]
		\arrow["\ell"', shorten <=11pt, shorten >=11pt, Rightarrow, from=2-4, to=4-2]
		\arrow["T\ell"', shorten <=4pt, shorten >=4pt, Rightarrow, from=1-3, to=2-2]
		\arrow["{\mu_x}"', shorten <=4pt, shorten >=4pt, Rightarrow, from=2-2, to=3-1]
		\arrow["TTf", from=1-6, to=1-8]
		\arrow[""{name=0, anchor=center, inner sep=0}, "{m_y}"', from=1-6, to=3-6]
		\arrow["Tf", from=3-6, to=3-8]
		\arrow["{m_y}"', from=1-8, to=3-8]
		\arrow["Tt'", from=1-8, to=2-9]
		\arrow["t'", from=3-8, to=4-9]
		\arrow["t'", from=2-9, to=4-9]
		\arrow["t"', from=3-6, to=4-7]
		\arrow["f"', from=4-7, to=4-9]
		\arrow["{\mu_y}"', shorten <=4pt, shorten >=4pt, Rightarrow, from=2-9, to=3-8]
		\arrow["\ell"', shorten <=10pt, shorten >=10pt, Rightarrow, from=3-8, to=4-7]
		\arrow[shorten <=17pt, shorten >=28pt, Rightarrow, no head, from=2-4, to=0]
	\end{diagram}
\end{enumerate}

\subsubsection*{\ref*{def:monoidal-actegory}. Definition.}
\monact*

As observed in Remark~\ref{rmk:upsilon-is-redundant}, the only extra structure needed is the mixed interchanger:
\begin{equation}
	\iota_{m,c,n,d} : (m \mtimes n) \action (c \ctimes d) \isolongto (m \action c) \ctimes (n \action d).
\end{equation}
All in all, this structure needs to satisfy, on top of the two laws of actegories (Definition~\ref{def:left-actegory}), the following coherence laws regarding $\action$, $\mu$ and $\eta$ monoidality.

\begin{enumerate}[wide, labelwidth=!, labelindent=0pt, label=\Roman*)]
	\item Compatibility with the associator, for every $a, b, c : \cat C$ and $m, n, p : \cat M$:
	\begin{diagram}[column sep=12ex]
		\label{diag:monact-comp-ass}
		{((m \action a) \ctimes (n \action b)) \ctimes (p \action c)} & {(m \action a) \ctimes ((n \action b) \ctimes (p \action c))} \\
		{((m \mtimes n) \action (a \ctimes b)) \ctimes (p \action c)} & {(m \action a) \ctimes ((n \mtimes p) \action (b \ctimes c))} \\
		{((m \mtimes n) \mtimes p) \action ((a \ctimes b)\ctimes c)} & {(m \mtimes (n \mtimes p)) \action (a \ctimes (b \ctimes c))}
		\arrow["{\alpha^{\cat C}_{m \action a, n \action b, p\action c}}", from=1-1, to=1-2]
		\arrow["{\alpha^{\cat M}_{m,n,p} \action \alpha^{\cat C}_{a,b,c}}"', from=3-1, to=3-2]
		\arrow["{(m \action a) \ctimes \iota_{n,p,b,c}}"', to=1-2, from=2-2]
		\arrow["{\iota_{m,n\mtimes p, a, b \ctimes c}}"', to=2-2, from=3-2]
		\arrow["{\iota_{m,n,a,b} \ctimes (p \action c)}", to=1-1, from=2-1]
		\arrow["{\iota_{m \mtimes n, p, a \ctimes b,c}}", to=2-1, from=3-1]
	\end{diagram}
	\item Compatibility with unitors, for every $c : \cat C$ and $m : \cat M$:
	\begin{diagram}[column sep=6ex]
	\label{diag:monact-comp-unit}
		{i \ctimes (m \action x)} & {(j \action i) \ctimes (m \action x)} &[-5ex]&[-5ex] {(m \action x) \ctimes i} & {(m \action x) \ctimes (j \action i)} \\
		{m \action x} & {(j \mtimes m) \action (i \ctimes x)} && {m \action x} & {(m \mtimes j) \action (x \ctimes i)}
		\arrow["{\lambda^{\cat C}_{m \action x}}"', from=1-1, to=2-1]
		\arrow["{\eta_i \ctimes (m \action x)}"', from=1-2, to=1-1]
		\arrow["{\iota_{j,m,i,x}}"', to=1-2, from=2-2]
		\arrow["{\lambda^{\cat M}_m \action \lambda^{\cat C}_x}", from=2-2, to=2-1]
		\arrow["{\iota_{m,j,x,i}}"', to=1-5, from=2-5]
		\arrow["{(m \action x) \ctimes \eta_i}"', from=1-5, to=1-4]
		\arrow["{\rho^{\cat C}_{m\action x}}"', from=1-4, to=2-4]
		\arrow["{\rho^{\cat M}_m \action \rho^{\cat C}_x}", from=2-5, to=2-4]
	\end{diagram}
	\item Monoidality of $\eta$, for every $c, d : \cat C$:
	\begin{diagram}
	\label{diag:eta-monoidal}
		{(j \action c) \ctimes (j \action d)} & {c \ctimes d} \\
		{(j \mtimes j) \action (c \ctimes d)} & {j \action (c \ctimes d)}
		\arrow["{\eta_c \ctimes \eta_d}", from=1-1, to=1-2]
		\arrow["{\iota_{j,j,c,d}}", to=1-1, from=2-1]
		\arrow["{\lambda^{\cat M}_j}"', from=2-1, to=2-2]
		\arrow["{\eta_{c \ctimes d}}"', from=2-2, to=1-2]
	\end{diagram}
	\item Monoidality for $\mu$, for every $c, d : \cat C$ and $m, n, m', n' : \cat M$:
	\begin{diagram}[column sep=12ex]
	\label{diag:mu-monoidal}
		{(m \action (m' \action c)) \ctimes (n \action (n' \action d))} & {((m \mtimes m') \action c) \ctimes ((n \mtimes n') \action d)} \\
		{(m \mtimes n) \action ((m' \action c) \ctimes (n' \action d))} & {((m \mtimes m') \mtimes (n \mtimes n')) \action (c \ctimes d)} \\
		{(m \mtimes n) \action ((m' \mtimes n') \action (c \ctimes d))} & {((m \mtimes n) \mtimes (m' \mtimes n')) \action (c \ctimes d)}
		\arrow["{\mu_{m,m',c} \ctimes \mu_{n,n',d}}", from=1-1, to=1-2]
		\arrow["{\mu_{m \mtimes m', n \mtimes n', c \ctimes d}}"', from=3-1, to=3-2]
		\arrow["{(m \mtimes n) \action \iota_{m',n', c, d}}", to=2-1, from=3-1]
		\arrow["{\iota_{m, n, m' \action c, n'\action d}}", to=1-1, from=2-1]
		\arrow["{\iota_{m \mtimes m', n \mtimes n', c, d}}"', to=1-2, from=2-2]
		\arrow["{(m \mtimes \beta_{n, m'} \mtimes n') \action (c \ctimes d)}"', to=2-2, from=3-2]
	\end{diagram}
\end{enumerate}

\subsubsection*{\ref*{def:balanced-algebroidal-cat}. Definition.}
\balalgebroidalact*%
The extra coherence laws satisfied by $\lambda$, $\rho$, $\alpha$, $\ell$ and $\varepsilon$ are the following:

\begin{enumerate}[wide, labelwidth=!, labelindent=0pt, label=\Roman*)]
	\item Coherence for the left lineator $\ell$:
	\begin{diagram}[row sep=5ex, column sep=-3ex]
		\label{diag:bal-algebr-left-lin-mult}
		&[-2ex] {m \action (n \action (c \ctimes d))} && {m \action ((n \action c) \ctimes d)} &[-2ex]\\
		{(m \mtimes n) \action (c \ctimes d)} &&&& {(m \action (n \action c)) \ctimes d} \\
		&& {((m \mtimes n) \action c) \ctimes d}
		\arrow["{m \action \ell_{n, c, d}}", from=1-2, to=1-4]
		\arrow["{\mu_{m, n, c \ctimes d}}"', from=1-2, to=2-1]
		\arrow["{\ell_{m \mtimes n, c, d}}"', from=2-1, to=3-3]
		\arrow["{\mu_{m,n,c} \ctimes d}", from=2-5, to=3-3]
		\arrow["{\ell_{m, n \action c, d}}", from=1-4, to=2-5]
	\end{diagram}
	\begin{diagram}
		\label{diag:bal-algebr-left-lin-unit}
		{j \action (c \ctimes d)} && {(j \action c) \ctimes d} \\
		& {c \ctimes d}
		\arrow["{\eta_{c \ctimes d}}", from=2-2, to=1-1]
		\arrow["{\ell_{j, c, d}}", from=1-1, to=1-3]
		\arrow["{\eta_c \ctimes d}"', from=2-2, to=1-3]
	\end{diagram}
	\item Coherence for the equilibrator $\varepsilon$:
	\begin{diagram}[row sep=5ex, column sep=-3ex]
	\label{diag:equilib-ass}
		& {(n \action (m \action c)) \ctimes d} \\
		{(m \action c) \ctimes (n \action d)} && {((n \mtimes m) \action c) \ctimes d} \\
		{c \ctimes (m \action (n \action d))} && {((m \mtimes n) \action c) \ctimes d} \\
		& {c \ctimes ((m \mtimes n) \action d)}
		\arrow["{\varepsilon_{m \mtimes n, c, d}}", from=3-3, to=4-2]
		\arrow["{\varepsilon_{n,m \action c, c}}"', from=1-2, to=2-1]
		\arrow["{\varepsilon_{m,c,n\action d}}"', from=2-1, to=3-1]
		\arrow["{c \ctimes \mu^\action_{m,n,d}}"', from=3-1, to=4-2]
		\arrow["{\mu^\action_{n,m,c} \ctimes d}", from=1-2, to=2-3]
		\arrow["{(\beta_{n,m} \action c) \ctimes d}", from=2-3, to=3-3]
	\end{diagram}
	\begin{diagram}
	\label{diag:equilib-unit}
		{(j \action c) \ctimes d} && {c \ctimes (j \action d)} \\
		& {c \ctimes d}
		\arrow["{\eta^\action_c \ctimes d}", from=2-2, to=1-1]
		\arrow["{c \ctimes \eta^\action_d}"', from=2-2, to=1-3]
		\arrow["{\varepsilon_{j,c,d}}", from=1-1, to=1-3]
	\end{diagram}
	\item Linearity of $\lambda$ and $\rho$:
	\begin{diagram}
	\label{diag:lambda-rho-lin}
		{m \action (i \ctimes c)} & {m \action c} &[-5ex]&[-5ex] {m \action (c \ctimes i)} & {m \action c} \\
		{(m \action i) \ctimes c} & {i \ctimes (m \action c)} && {(m \action c) \ctimes i}
		\arrow["{m \action \lambda_c}", from=1-1, to=1-2]
		\arrow["{\lambda_{m \action c}}"', from=2-2, to=1-2]
		\arrow["{m \action \rho_c}", from=1-4, to=1-5]
		\arrow["{\ell_{m,c,i}}"', from=1-4, to=2-4]
		\arrow["{\rho_{m \action c}}"', from=2-4, to=1-5]
		\arrow["{\varepsilon_{m, i, c}}"', from=2-1, to=2-2]
		\arrow["{\ell_{m,i,c}}"', from=1-1, to=2-1]
	\end{diagram}
	\item Linearity of $\alpha$:
	\begin{diagram}[row sep=5ex, column sep=-3ex]
	\label{diag:alpha-left-lin}
		&[-2ex] {m \action ((a \ctimes b) \ctimes c)} && {m \action (a \ctimes (b \ctimes c))} &[-2ex]\\
		{(m \action (a \ctimes b)) \ctimes c} &&&& {(m \action a) \ctimes (b \ctimes c)} \\
		&& {((m \action a) \ctimes b) \ctimes c}
		\arrow["{m \action \alpha_{a,b,c}}", from=1-2, to=1-4]
		\arrow["{\ell_{m,a \ctimes b, c}}"', from=1-2, to=2-1]
		\arrow["{\ell_{m,a,b} \ctimes c}"', from=2-1, to=3-3]
		\arrow["{\alpha_{m \action a, b, c}}"', from=3-3, to=2-5]
		\arrow["{\ell_{m, a, b \ctimes c}}", from=1-4, to=2-5]
	\end{diagram}
	\item Balance of $\alpha$:
	\begin{diagram}
	\label{diag:alpha-balanced}
		{(m \action (a \ctimes b)) \ctimes c} && {(a \ctimes b) \ctimes (m \action c)} \\
		{((m \action a) \ctimes b)) \ctimes c} \\
		{(m \action a) \ctimes (b \ctimes c)} & {a \ctimes (m \action (b \ctimes c))} & {a \ctimes (b \ctimes (m \action c))}
		\arrow["{\alpha_{a,b,m\action c}}", from=1-3, to=3-3]
		\arrow["{\alpha_{m \action a, b,c}}"', from=2-1, to=3-1]
		\arrow["{\ell_{m,a,b} \ctimes c}"', from=1-1, to=2-1]
		\arrow["{\varepsilon_{m,a\ctimes b,c}}", from=1-1, to=1-3]
		\arrow["{a \ctimes \ell_{m,b,c}}"', from=3-2, to=3-3]
		\arrow["{\varepsilon_{m,a, b \ctimes c}}"', from=3-1, to=3-2]
	\end{diagram}
\end{enumerate}

\subsubsection*{\ref*{def:distributive-algebroidal-act}. Definition.}
\distalgebract*%
The extra coherence laws satisfied by $\lambda$, $\rho$, $\alpha$, $\ell$ and $\varepsilon$ are the following:
\begin{enumerate}[wide, labelwidth=!, labelindent=0pt, label=\Roman*)]
	\item Coherence for the absorber $\gamma$:
	\begin{diagram}
	\label{diag:absorber-laws}
		{m \action (n \action o)} && {m \action o} &[-2ex]&[-2ex] {j \action o} && o \\
		{(m \mtimes n) \action o} && o &&& o
		\arrow["{m \action \gamma_n}", from=1-1, to=1-3]
		\arrow["{\mu_{m,n,o}}"', from=1-1, to=2-1]
		\arrow["{\gamma_{m \mtimes n}}"', from=2-1, to=2-3]
		\arrow["{\gamma_m}", from=1-3, to=2-3]
		\arrow[Rightarrow, no head, from=2-6, to=1-7]
		\arrow["{\gamma_j}", from=1-5, to=1-7]
		\arrow["{\eta_o}", from=2-6, to=1-5]
	\end{diagram}
	\item Coherence for the distributor $\delta$:
	\begin{diagram}[row sep=5ex, column sep=-3ex]
		&[-3ex] {m \action (n \action (c \cplus d))} &[-10ex]&[-10ex] {m \action ((n \action c)\cplus(n \action d))} &[-12ex]\\
		{(m \mtimes n) \action (c \cplus d)} &&&& {(m \action (n \action c))\cplus (m \action (n \action d)))} \\
		&& {((m \mtimes n) \action c) \cplus ((m \mtimes n) \action d)}
		\arrow["{m \action \delta_{n,c,d}}", from=1-2, to=1-4]
		\arrow["{\delta_{m, n \action c, n \action d}}", from=1-4, to=2-5]
		\arrow["{\mu_{m,n,c} \cplus \mu_{m,n,d}}", from=2-5, to=3-3]
		\arrow["{\delta_{m \mtimes n, c,d}}"', from=2-1, to=3-3]
		\arrow["{\mu_{m,n,c \cplus d}}"', from=1-2, to=2-1]
	\end{diagram}
	\begin{diagram}
		{j \action (c \cplus d)} && {(j \action c) \cplus (j \action d)} \\
		& {c \cplus d}
		\arrow["{\delta_{j,c,d}}", from=1-1, to=1-3]
		\arrow["{\eta_{c \cplus d}}", from=2-2, to=1-1]
		\arrow["{\eta_c \cplus \eta_d}"', from=2-2, to=1-3]
	\end{diagram}
	\item Linearity of $\lambda$ and $\rho$:
	\begin{diagram}
		{m \action (o \cplus c)} & {m \action c} &[-5ex]&[-5ex] {m \action (c \cplus o)} & {m \action c} \\
		{(m \action o) \cplus (m \action c)} & {o \cplus (m \action c)} && {(m \action c) \cplus (m \action o)} & {(m \action c) \cplus o}
		\arrow["{\gamma_m \cplus (m \action c)}"', from=2-1, to=2-2]
		\arrow["{\lambda_{m \action c}}"', from=2-2, to=1-2]
		\arrow["{\delta_{m,o,c}}"', from=1-1, to=2-1]
		\arrow["{m \action \lambda_c}", from=1-1, to=1-2]
		\arrow["{(m \action c) \cplus \gamma_m}"', from=2-4, to=2-5]
		\arrow["{\delta_{m,c,o}}", from=1-4, to=2-4]
		\arrow["{\rho_{m \action c}}"', from=2-5, to=1-5]
		\arrow["{m \action \rho_c}", from=1-4, to=1-5]
	\end{diagram}
	\item Linearity of $\alpha$:
	\begin{diagram}[column sep=12ex]
		{m \action (a \cplus (b \cplus c))} && {m \action ((a \cplus b) \cplus c)} \\
		{(m \action a) \cplus (m \action (b \cplus c))} && {(m \action (a \cplus b)) \cplus (m \action c)} \\
		{(m \action a) \cplus ((m \action b) \cplus (m \action c))} && {((m \action a) \cplus (m \action b)) \cplus (m \action c)}
		\arrow["{\alpha_{m \action a, m \action b, m \action c}}"', from=3-1, to=3-3]
		\arrow["{m \action \alpha_{a,b,c}}", from=1-1, to=1-3]
		\arrow["{\delta_{m,a,b \cplus c}}"', from=1-1, to=2-1]
		\arrow["{(m \action a) \cplus \delta_{m,b,c}}"', from=2-1, to=3-1]
		\arrow["{\delta_{m,a \cplus b, c}}", from=1-3, to=2-3]
		\arrow["{\delta_{m,a,b} \cplus (m \action c)}", from=2-3, to=3-3]
	\end{diagram}
\end{enumerate}


	\section{Proofs}
\label{appendix:proofs}

\subsubsection*{\ref*{th:biact-are-bialg}. Theorem.}
\biactegoriesarebialgebras*
\begin{proof}
	The most interesting part of the proof concerns obtaining an $\cat N^\rev \times \cat M$-action from a given $(\cat M, \cat N)$-biactegory.

	Indeed, suppose given a left $\cat M$-action $(\action, \eta^\action, \mu^\action)$, a right $\cat N$-action $(\actionn, \eta^\actionn, \mu^\actionn)$ and a bimodulator $\zeta$.
	We already know how from Equation~\eqref{eq:seq-composition} how to define the carriers of the composite $\cat N^\rev \times \cat M$-action $\bothaction$:
	\begin{equation}
		(n,m) \bothaction c := (m \action c) \actionn n.
	\end{equation}
	The unitor is given canonically by:
	\begin{equation}
		\eta^\bothaction_c : c \nlongto{\eta^{\action}_c} j \actionn c \nlongto{\eta^{\actionn}_{j \action c}} (j \action c) \actionn i \equalto (i,j) \bothaction c.
	\end{equation}
	In defining the multiplicator, it is crucial to have $\zeta$:
	\begin{eqalign}
		\mu^\bothaction_{(n,m),(n',m'),c} :=\ &\ (n,m) \bothaction ((n',m') \bothaction c)\\
		&\equalto (m \action ((m' \action c) \actionn n')) \actionn n\\
		&\nlongto{\zeta_{m, m' \action c, n'} \actionn n} ((m \action (m' \action c)) \actionn n') \actionn n\\
		&\nlongto{(\mu^\action_{m,m',c} \actionn n') \actionn n} (((m \mtimes m') \action c) \actionn n') \actionn n\\
		&\nlongto{\mu^\actionn_{n, n', (m \mtimes m') \action c}} (m \mtimes m') \action c) \actionn (n' \ntimes n)\\
		&\equalto (n \revrel{\ntimes} n', m \mtimes m') \bothaction c.
	\end{eqalign}
	Using strictification and the coherence diagrams of $\zeta$, this data can be proven well-defined.
	For instance, from~\eqref{diag:bimod-coherence-triang2}, we can see $\zeta_{m,c,j}$ is an identity when $\action$ and $\actionn$ are strict.
	This immediately entails $\mu^\bothaction_{(j,j),(n',m'),c}$ is an identity, thereby satisfying~\eqref{diag:action-coherence-ltriang}.

	Given a $\cat N^\rev \times \cat M$-action $(\bothaction, \mu^\bothaction, \eta^\bothaction)$, we get a left $\cat M$-action $(\action, \eta^\action, \mu^\action)$ and a right $\cat N$-action $(\actionn, \eta^\actionn, \mu^\actionn)$ as follows:
	\begin{eqalign}
		m \action c := (i, m) \bothaction c,
		\qquad
		\eta^\action_c &:= \eta^{\bothaction}_c,
		\qquad
		\mu^\action_{m,m',c} := \mu^\bothaction_{(j,m),(j,m'),c}\\
		c \actionn n := (n, j) \bothaction c,
		\qquad
		\eta^\actionn_c &:= \eta^{\bothaction}_c,
		\qquad
		\mu^\actionn_{n,n',c} := \mu^\bothaction_{(n,j),(n',j),c}
	\end{eqalign}
	The bimodulator $\zeta$ is then given by
	\begin{eqalign}
		\zeta_{m,c,n}\ :=\ &\
		m \action (c \actionn n)\\
		&\equalto
		(j, m) \bothaction ((n, j) \bothaction c)\\
		&\nlongto{{\mu^\bothaction}}
		(j \revrel{\ntimes} n, m \mtimes j) \bothaction c\\
		&\nlongto{(\rho^{\cat N}, \rho^{\cat M})}
		(n, m) \bothaction c\\
		&\nlongto{({\lambda^{\cat N}}^{-1}\!\!,\; {\lambda^{\cat M}}^{-1})}
		(n \revrel{\ntimes} j, j \mtimes m) \bothaction c\\
		&\nlongto{{\mu^\bothaction}^{-1}}
		(n,j) \bothaction ((j,m) \bothaction c)\\
		&\equalto
		(m \action c) \actionn n.
	\end{eqalign}
	By strictification (Lemma~\ref{lemma:strictification}), this is a well-defined bimodulator (in fact, substituting the above in~\eqref{diag:bimod-coherence-pentag1}--\eqref{diag:bimod-coherence-triang2} produces diagrams involving only structure morphisms of $\bothaction$, $\cat N$ and $\cat M$, which become identities after the strictification).

	On morphisms, it is easy to observe every lax $(\cat M, \cat N)$-bilinear functor (Definition~\ref{def:bilinear-functor}) is also lax $\cat N^\rev \times \cat M$-linear, since this amounts to be lax $\cat N$-linear on the right and lax $\cat M$-linear on the left, and~\eqref{diag:bilinear-hexagon} corresponds to the fact that
	\begin{equation}
		b_{(m \mtimes j, j \mtimes n), c} = b_{(j \mtimes m, n \mtimes j),c}
	\end{equation}
	for an $\cat N^\rev \times \cat M$-linear structure $b$ on a given functor.
	For the same reasons, the opposite is true too.
	Finally, $(\cat M, \cat N)$-bilinear transformations (Definition~\ref{def:bilinear-transformation}) and transformations which are both $\cat M$-linear and $\cat N$-linear are exactly the same thing.
\end{proof}

\subsubsection*{\ref*{prop:biact-internal-hom}. Proposition.}
\biactinternalhom*
\begin{proof}
	We preemptively put ourselves in the situation in which $\cat C$, $\cat D$ and $\cat E$ are all strict, as well as their categories of scalars.
	Of course this doesn't mean $[\cat D, \cat E]_{\cat P}$ only contains strictly linear functors.
	Let's start to show it is a well-defined object, i.e.~that there are well-defined actions of $\cat N$ and $\cat M$ and a compatibility between them.

	Given a scalar $n:\cat N$, one defines its action on a right $\cat P$-linear functor pointwise $F:\cat D \to \cat E$:
	\begin{equation}
		(n \action F)(d) := n \action F(d)
	\end{equation}
	In this way we inherit all left $\cat N$-structure from $\cat E$, including a little help from its bimodulator to provide $n \action F$ with a $\cat P$-linear structure.
	On the right, a scalar $m:\cat M$ acts \emph{before} the functor is applied:
	\begin{equation}
		(F \action m)(d) := F(m \action d).
	\end{equation}
	Again, all the structure is inherited from $\cat D$, including what is needed for fully defining the $\cat P$-linear structure.
	Hence the right $\cat M$-structure on $[\cat D, \cat E]_{\cat P}$ thus defined is as strict as the one we assumed on $\cat D$:
	\begin{eqalign}
		&(F \action j)(d) = F(j \action d) = F(d),\\
		&((F \action m) \action n)(d) = F(m \action (n \action d)) = F((m \mtimes n) \action d) = (F \action (m \mtimes n))(d).
	\end{eqalign}
	Finally, the bimodulator interchanging these two actions is just the identity:
	\begin{equation}
		(n \action (F \action m))(d) = n \action F(m \action d) = ((n \action F) \action m)(d).
	\end{equation}
	To prove~\eqref{eq:internal-hom-adj}, we ground ourselves in the usual tensor-hom adjunction for categories. Let $\lambda$ denote the currying isomorphism.
	On objects, we have to prove that the balanced structure of a given a $(\cat N, \cat P)$-bilinear functor $G: \cat C \times \cat D \to \cat E$ corresponds to a right $\cat M$-linear structure on its curried version $\lambda G$, and similarly for the left $\cat N$-linear structures.
	Regarding the first, we have
	\begin{equation}
		(\lambda G(c) \action m)(d) = \lambda G(c)(m \action d) = G(c, m \action d) \iso[\varepsilon] G(c \action m, d) = \lambda G(c \action m)(d),
	\end{equation}
	and likewise for the second:
	\begin{equation}
		(n \action \lambda G(c))(d) = n \action (\lambda G(c)(d)) \iso[\ell] n \action G(c,d) = G(n \action c, d) = \lambda G(n \action c)(d).
	\end{equation}
	\emph{Vice versa}, given a strong $(\cat N, \cat M)$-bilinear $G : \cat C \to [\cat D, \cat E]_{\cat P}$, we get $\lambda^{-1}G : \cat C \times \cat D \to \cat E$ is balanced:
	\begin{equation}
		\lambda^{-1}G(c \action m, d) = G(c \action m)(d) \iso[\ell] (G(c) \action m)(d) = G(c)(m \action d) = \lambda^{-1}G(c, m \action d).
	\end{equation}
	Regarding morphisms, given a $(\cat N, \cat P)$-bilinear transformation $\xi_{c,d} : F(c,d) \twoto G(c,d)$, then we can curry this one too to get a family $\lambda\xi_c : \lambda F(c) \twoto \lambda G(c)$ of natural morphisms $(\lambda\xi_c)_d : \lambda F(c)(d) \twoto \lambda G(c)(d)$, which amount to the same data.
	By the previous discussion on linearity, the linear constraints also move across the currying without impediment.
	This concludes the proof.
\end{proof}

\subsubsection*{\ref*{cor:biactegories-are-moncat}. Corollary.}
\biactaremoncat*
\begin{proof}
	We complete the proof by defining the extra structure required to make $\Biact{\cat M}^\lax$ into a monoidal 2-category \cite[Explanation 12.1.3]{johnson2021}:
	\begin{enumerate}
		\item The unit for this product is the biactegory $(\cat M, \mtimes, \mtimes)$. In fact it can be easily verified that the natural isomorphism $\tau$ from Proposition~\ref{prop:tens-is-pseudocoeq} induces an equivalence
		\[\begin{tikzcd}
			{\cat M \tensor \cat C} && {\cat C} \\[-5ex]
			{(m, c)} && {m \action c} \\
			{(n, d)} && {n \action d}
			\arrow["{L}", from=1-1, to=1-3]
			\arrow[""{name=0, anchor=center, inner sep=0}, "{(\varphi, f)}"', from=2-1, to=3-1]
			\arrow[""{name=1, anchor=center, inner sep=0}, "{\varphi \action f}", from=2-3, to=3-3]
			\arrow[shorten <=40pt, shorten >=20pt, maps to, from=0, to=1]
		\end{tikzcd}\]
		and an analogous equivalence $R$ can be defined from the tensor $\cat C \tensor \cat M$.
		This functor is an equivalence since its inverse, $c \mapsto (j, c)$ is essentially surjective thanks to the additional family of isomorphisms $\tau$ in $\cat M \tensor \cat C$ (see Proposition~\ref{prop:tensor-actegory-desc}).
		The bilinear structure can be obtained by using the multiplicator of $\cat C$ as a left and right lineator.
		Therefore, $L$ and $R$ are, respectively, the left and right unitor for $\tensor$.
		\item The associator $A:(\cat D \tensor \cat C) \tensor \cat B \to \cat D \tensor (\cat C \tensor \cat B)$ rebrackets triples on objects and morphisms. Its linear structures are both identities:
		\begin{eqalign}
			\ell_{m,d,c,b} &:= m \actionn (d,(c,b)) \equalto (m \actionn d, (c,b)),\\
			r_{m,d,c,b} &:= (d,(c,b)) \action' m \equalto (d, (c,b) \action' m) \equalto (d, (c, b \action' m))
		\end{eqalign}
		where $\action'$ is the right $\cat M$-action on $\cat B$.
		\item The pentagonator is trivial.
		\item The middle 2-unitor is a natural, bilinear, invertible transformation which makes the following commute:
		\begin{diagram}[sep=4ex]
			{(\cat D \otimes_{\cat M} \cat M) \otimes_{\cat M} \cat C} && {\cat D \otimes_{\cat M} (\cat M \otimes_{\cat M} \cat C)} \\
			& {\cat D \otimes_{\cat M} \cat C} & {}
			\arrow[""{name=0, anchor=center, inner sep=0}, "A", from=1-1, to=1-3]
			\arrow["{R \otimes_{\cat M} \cat C}"', from=1-1, to=2-2]
			\arrow["{\cat D \otimes_{\cat M} L}", from=1-3, to=2-2]
			\arrow["\mathcal M", shorten <=3pt, Rightarrow, from=0, to=2-2]
		\end{diagram}
		It is defined as:
		\begin{equation}
			({\mathcal M}_{\cat D, \cat C})_{((d,m),c)} := (d, m \action c) \nlongto{\tau^{-1}} (d \actionn m, c),
		\end{equation}
		thus its bilinearity and invertibility are satisfied by definition.
		\item The left and right 2-unitors are the natural, bilinear, invertible transformations which make the following commute:
		\begin{diagram}[sep=4ex]
			{(\cat M \otimes_{\cat M} \cat D) \otimes_{\cat M} \cat C} && {\cat D \otimes_{\cat M} \cat C} \\
			& {\cat M \otimes_{\cat M} (\cat D \otimes_{\cat M} \cat C)} \\
			{\cat D \otimes_{\cat M} (\cat C \otimes_{\cat M} \cat M)} && {\cat D \otimes_{\cat M} \cat C} \\
			& {(\cat D \otimes_{\cat M} \cat C) \otimes_{\cat M} \cat M}
			\arrow["A"', from=1-1, to=2-2]
			\arrow["L"', from=2-2, to=1-3]
			\arrow[""{name=0, anchor=center, inner sep=0}, "{L \otimes_{\cat M} \cat C}", from=1-1, to=1-3]
			\arrow[""{name=1, anchor=center, inner sep=0}, "{\cat D \otimes_{\cat M} R}", from=3-1, to=3-3]
			\arrow["A", from=4-2, to=3-1]
			\arrow["R"', from=4-2, to=3-3]
			\arrow["{\mathcal L}", shorten <=3pt, Rightarrow, from=0, to=2-2]
			\arrow["{\mathcal R}", shorten <=3pt, Rightarrow, from=1, to=4-2]
		\end{diagram}
		and are both just identity cells:
		\begin{eqalign}
			({\mathcal L}_{\cat D, \cat C})_{((m,d),c)} &:= (m \actionn d, c) \equalto (m \actionn d, c),\\
			({\mathcal R}_{\cat D, \cat C})_{((d,c),m)} &:= (d, c \action m) \equalto (d, c \action m).
		\end{eqalign}
	\end{enumerate}
	Checking the three coherence diagrams for this structure is tedious but routine.
\end{proof}

\subsubsection*{\ref*{prop:waff-prod}. Proposition.}
\waffproduct*
\begin{proof}
	Following the idea outlined in Remark~\ref{rmk:waff-is-aff-comp}, one can verify easily that the repletion\footnote{Meaning we include not just affine endofunctors, but also all endofunctors naturally isomorphic to those.} of the wide subcategory of affine endofunctors on $\cat C$ is closed under composition, hence it is a monoidal subcategory of $([\cat C, \cat C], 1_{\cat C}, \circ)$.
	Then $(\cat C \times \cat M, (o,j), \waff)$ can be realized as a monoidal subcategory of this one, thus proving it is a well-defined monoidal category.

	Concretely, the associator for $\waff$ is given by
	\begin{eqalign}
		\alpha^\waff_{(c, m), (d, n), (e, p)} &: ((c, m) \waff (d, n)) \waff (e, p)\\
		&\equalto
			(c \cplus (m \action d), m \mtimes n) \waff (e, p) \\
		&\equalto
			((c \cplus (m \action d)) \cplus ((m \mtimes n) \cplus e), (m \mtimes n) \action p)\\
		&\nlongto{(\alpha^{\cplus}_{c, m \action d, (m \mtimes n) \cplus e},\ {\mu^\action}^{-1}_{m, n, p})}
			(c \cplus ((m \action d) \cplus ((m \mtimes n) \action e)), m \action (n \action p)) \\
		&\nlongto{(c \cplus ((m \action d) \cplus {\mu^\action}^{-1}_{m, n, e}),\ 1)}
			(c \cplus ((m \action d) \cplus (m \action (n \action e))), m \action (n \action p))\\
		&\nlongto{(c \cplus \delta^{-1}_{m, d, n \action e},\ 1)}
			(c \cplus (m \action (d \cplus (n \action e))), m \action (n \action p))\\
		&\equalto
			(c, m) \waff (d \cplus (n \action e), n \action p)\\
		&\equalto
			(c, m) \waff ((d, n) \waff (e, p)).
	\end{eqalign}

	Finally, the left and right unitors are defined respectively as

	\begin{minipage}{.45\textwidth}
		\begin{eqalign*}
			\lambda^{\waff}_{(c, m)}  &:  (o, j) \waff (c, m)\\
			&\equalto (o \cplus (j \action c), j \mtimes m)\\
			&\nlongto{(\lambda^{\cplus}_{j \action c}, \lambda^{\mtimes}_m)} (j \action c, m)\\
			&\nlongto{(\eta^{\action}_c, 1)} (c, m).
		\end{eqalign*}
	\end{minipage}%
	\begin{minipage}{.45\textwidth}
		\begin{eqalign}
			\rho^{\waff}_{(c, m)}  &:  (c, m) \waff (o, j)\\
			&\equalto (c \cplus (m \action o), m \mtimes j) \\
			&\nlongto{(c \cplus \gamma_m, \rho^{\mtimes}_m)} (c \cplus o), m)\\
			&\nlongto{(\rho^{\cplus}_c, 1)} (c, m).
		\end{eqalign}
	\end{minipage}

	Notice the role of the distributor and absorber in defining $\alpha^\waff$ and $\rho^\waff$.
\end{proof}

\begin{corollary}
\label{cor:waff-action}
	The monoidal category $(\cat C \times \cat M, (o,j), \waff)$ defined in Proposition~\ref{prop:waff-prod} acts on $\cat C$.
\end{corollary}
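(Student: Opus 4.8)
The plan is to exhibit the action directly, using the \emph{Aff action} $\ostar$ already introduced just before the corollary, namely $(c,m)\ostar d = c\cplus(m\action d)$, and to verify that the monoidal structure $(\,(o,j),\waff\,)$ constructed in Proposition~\ref{prop:waff-prod} is compatible with it. Concretely, I would first observe that the assignment $\Waff(\cat C,\cat M)\times\cat C\to\cat C$ sending $((c,m),d)$ to $c\cplus(m\action d)$ is functorial: on morphisms it uses the functoriality of $\cplus$, of $\action$, and the bifunctoriality of these two, all of which are part of the distributive algebroidal structure. This is the `carrier functor' of the would-be actegory.

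Next I would produce the unitor and multiplicator. The unitor $\eta^{\ostar}_d : d \isolongto (o,j)\ostar d = o\cplus(j\action d)$ is assembled from the unit-object unitor of $\cplus$ and the unitor $\eta^\action$ of the $\cat M$-action, i.e.\ $d \xrightarrow{(\lambda^{\cplus}_d)^{-1}} o\cplus d \xrightarrow{o\cplus\eta^\action_d} o\cplus(j\action d)$. The multiplicator
\[
\mu^{\ostar}_{(c,m),(d,n),e} : (c,m)\ostar((d,n)\ostar e)\isolongto \big((c,m)\waff(d,n)\big)\ostar e
\]
unpacks (using $(c,m)\waff(d,n)=(c\cplus(m\action d),\,m\mtimes n)$) to a map
\[
c\cplus\big(m\action(d\cplus(n\action e))\big)\isolongto \big(c\cplus(m\action d)\big)\cplus\big((m\mtimes n)\action e\big),
\]
which is built exactly from the same pieces that appear in the associator $\alpha^\waff$ of Proposition~\ref{prop:waff-prod}: the distributor $\delta$ to push $m\action-$ through $\cplus$, the multiplicator $\mu^\action$ of $\action$ to merge $m\action(n\action e)$ into $(m\mtimes n)\action e$, and the associator $\alpha^{\cplus}$ of $\cplus$ to rebracket. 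Indeed the cleanest way to organise this is to note that $\ostar$ is the composite of the monoidal-to-endofunctor inclusion $(c,m)\mapsto c\cplus(m\action-)$ (strong monoidal by the computation proving Proposition~\ref{prop:waff-prod}, since $\Waff(\cat C,\cat M)$ was realised there as a monoidal subcategory of $([\cat C,\cat C],1_{\cat C},\circ)$) followed by the evaluation action of $[\cat C,\cat C]$ on $\cat C$ (Example~\ref{ex:endomorphisms}). By Proposition~\ref{prop:curried-action}, a strong monoidal functor into $[\cat C,\cat C]$ \emph{is} an action, so the result follows formally.

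So the actual plan is short: I would state that $\ostar$ is obtained by composing the strong monoidal functor $\Waff(\cat C,\cat M)\to[\cat C,\cat C]$ (whose monoidality data $\epsilon,\mu$ are precisely the inverses of $\lambda^{\waff},\alpha^{\waff}$ read off as natural isomorphisms of endofunctors, using $o\cplus(j\action-)\iso\id_{\cat C}$ and the associator computation in Proposition~\ref{prop:waff-prod}) with the evaluation action, and then invoke Proposition~\ref{prop:curried-action} to conclude $(\cat C,\ostar)$ is a $\Waff(\cat C,\cat M)$-actegory. The coherence laws of Definition~\ref{def:left-actegory} for $\ostar$ are then automatically the image, under the tensor–hom bijection, of the monoidal-functor axioms for this composite, so no separate diagram chase is needed. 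The main obstacle is the bookkeeping of checking that the candidate monoidality isomorphisms $\epsilon : 1_{\cat C}\iso o\cplus(j\action-)$ and $\mu_{(c,m),(d,n)} : (c\cplus(m\action-))\circ(d\cplus(n\action-))\iso (c\cplus(m\action d))\cplus((m\mtimes n)\action-)$ actually satisfy the two monoidal-functor coherence squares — but this is exactly the content of the fact, already established inside the proof of Proposition~\ref{prop:waff-prod}, that $\alpha^\waff$ and the unitors $\lambda^\waff,\rho^\waff$ satisfy the pentagon and triangle; hence it can be dispatched by a pointer to that proof rather than redone. I would therefore keep the corollary's proof to a paragraph: define $\ostar$, identify it with the composite above, cite Example~\ref{ex:endomorphisms}, Proposition~\ref{prop:curried-action}, and Proposition~\ref{prop:waff-prod}, and optionally record the explicit formulas for $\eta^{\ostar}$ and $\mu^{\ostar}$ for the reader's convenience.
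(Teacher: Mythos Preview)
Your proposal is correct and is essentially the paper's own argument: the paper observes that the proof of Proposition~\ref{prop:waff-prod} already realizes $\Waff(\cat C,\cat M)$ as a monoidal subcategory of $([\cat C,\cat C],1_{\cat C},\circ)$, and then obtains the action by restriction of scalars along that inclusion (equivalently, via Proposition~\ref{prop:curried-action}/Example~\ref{ex:endomorphisms}), recording the explicit $\eta^{\ostar}$ and $\mu^{\ostar}$ afterwards. One small wording caution: the monoidality data of the functor $\Waff(\cat C,\cat M)\to[\cat C,\cat C]$ are not literally ``the inverses of $\lambda^\waff,\alpha^\waff$'' (those live in $\Waff$, not in $[\cat C,\cat C]$) --- they are built from the same ingredients ($\eta^\action$, $\lambda^{\cplus}$, $\delta$, $\mu^\action$, $\alpha^{\cplus}$), which is presumably what you meant.
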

\begin{proof}
	We established in the previous proof that $(\cat C \times \cat M, (o,j), \waff)$ is a monoidal subcategory of $([\cat C, \cat C], 1_{\cat C}, \circ)$, thus we obtain an action on $\cat C$ from the latter by restriction of scalars along the inclusion $\cat C \times \cat M \to [\cat C, \cat C]$.

	Concretely, the unitor and multiplicator of this action are given by:
	\begin{eqalign}
		\eta^\ostar_c &: (o, j) \ostar c \equalto o \cplus (j \action c) \nlongto{\lambda^\cplus_{j \action c}} j \action c \nlongto{\eta^\action_c} c,
	\end{eqalign}
	\begin{eqalign}
		\mu^{\ostar}_{(a, m), (b, n), c} &: (a,  m) \ostar ((b, n) \ostar c)\\
		&\equalto (a, m) \ostar (b \cplus (n \action c))\\
		&\equalto a \cplus (m \action (b \cplus (n \action c)))\\
		&\nlongto{a \cplus \delta_{m, b, n \action c}} a \cplus ((m \action b) \cplus (m \action (n \action c)))\\
		&\nlongto{a \cplus ((m \action b) \cplus \alpha^{-1}_{m, n, c})} a \cplus ((m \action b) \cplus ((m \mtimes n) \action c))\\
		&\nlongto{\alpha^{-1}_{a, m \action b, (m \mtimes n) \action c}} (a \cplus (m \action b)) \cplus ((m \mtimes n) \action c)\\
		&\equalto (a \cplus (m \action b), m \mtimes n) \ostar c \\
		&\equalto ((a, m) \waff (b, m)) \ostar c.
	\end{eqalign}
\end{proof}

\subsubsection*{\ref*{lemma:center-charact}. Lemma.}
\centercharact*
\begin{proof}
	Without loss of generality, suppose $\cat C$ is strict.
	Furthermore, given a bilinear endomorphism $(\varphi, \ell, r)$, we know by Lemma~\ref{lemma:strictification-functor} we can consider $\ell$ to be trivial.%
	\footnote{In other words, we are factoring the equivalence we have to construct through that defined in Lemma~\ref{lemma:strictification-functor}.}

	So given $(\varphi, r)$ we obtain an object $\varphi(j)$ and a natural isomorphism
	\begin{equation}
		\upsilon_c := \varphi(j) \ctimes c \nlongto{r_{j,c}} \varphi(j \ctimes c) \equalto \varphi(c \ctimes j) \equalto c \ctimes \varphi(j).
	\end{equation}
	This satisfies Equation~\eqref{eq:upsilon-coherence} by a reduction to Diagram~\eqref{diag:bilinear-hexagon}.

	Vice versa, given $(c, \upsilon)$, we get a bilinear endomorphism $- \ctimes c$, whose left linear structure is trivial and right linear structure is given by
	\begin{eqalign}
		r_{d,b} &: d \ctimes c \ctimes b \nlongto{d \ctimes \upsilon_{b}} d \ctimes b \ctimes c.
	\end{eqalign}
	Well-definedness of this bilinear structure follows from the properties of $\upsilon$, as above.

	Now given $(\varphi, r)$, going back and forth this correspondence yields $(- \ctimes \varphi(j), - \ctimes r)$.
	Hence we must check that $(- \ctimes \varphi(j), - \ctimes r) = (\varphi, r)$ as right $\cat C$-linear functors.
	The functor parts coincide by strictiness of the left $\cat C$-linear structure of $\varphi$.
	If we check whether the identity is a valid right linear natural isomorphism, we are led to contemplate the commutativity of the following diagram (we omitted some identity morphisms arising from strictification):
	\begin{diagram}[sep=4ex]
		{c \ctimes \varphi(j) \ctimes d} & {\varphi(c) \ctimes d} \\
		{c \ctimes \varphi(d)} & {\varphi(c \ctimes d)}
		\arrow["{c \ctimes r_{j,d}}"', from=1-1, to=2-1]
		\arrow["{r_{c,d}}", from=1-2, to=2-2]
		\arrow[Rightarrow, no head, from=2-1, to=2-2]
		\arrow[Rightarrow, no head, from=1-1, to=1-2]
	\end{diagram}
	It is easy to convince oneself this is what is left of Diagram~\eqref{diag:bilinear-hexagon} when $\ell$ and $\zeta$ are both identities.

	Instead, when $(x, \upsilon)$ is given, after a back and forth we get back the pair comprised of $j \ctimes x = x$ and the natural isomorphism $j \ctimes \upsilon_z = \upsilon_z$.

	Finally, let us define this isomorphism on morphisms. Given a right linear transformation $\xi : (\varphi, r ) \twoto (\psi, r')$, we have to prove its image $\xi_j : (\varphi(j), r_j) \to (\varphi(j), r'_j)$, we know the top square in the following diagram commutes:
	\begin{diagram}
		{\varphi(j) \ctimes d} & {\psi(j) \ctimes d} \\
		{\varphi(j \ctimes d)} & {\psi(j \ctimes d)} \\
		{d \ctimes \varphi(j)} & {d \ctimes \psi(j)}
		\arrow["{\xi_j \ctimes d}", from=1-1, to=1-2]
		\arrow["{r_{j,d}}"', from=1-1, to=2-1]
		\arrow["{r'_{j,d}}", from=1-2, to=2-2]
		\arrow["{\xi_{j \ctimes d}}"', from=2-1, to=2-2]
		\arrow[from=3-1, to=3-2]
		\arrow[Rightarrow, no head, from=2-1, to=3-1]
		\arrow[Rightarrow, no head, from=2-2, to=3-2]
	\end{diagram}
	Since the bottom one commutes by assumption on $\varphi$, the composite square proves $\xi_j$ satisfies the required condition (namely Diagram~\eqref{diag:drinfeld-center-mors}).
	Contemplating the same diagram proves that given $f:(c, \upsilon) \to (d, \tau)$, we get a well-defined right linear transformation $\xi : (- \ctimes c, - \ctimes \upsilon) \twoto (- \ctimes d, - \ctimes \tau)$ whose components are defined as $\xi_{b} := b \ctimes c \nlongto{b \ctimes f} b \ctimes d$.
\end{proof}


\end{document}